\pdfoutput=1
\documentclass[a4paper,10pt]{article}
 \usepackage{fullpage}
 \usepackage{ae,lmodern}
\usepackage[english,greek,frenchb,french]{babel}
\usepackage[utf8]{inputenc}  
\usepackage[T1]{fontenc}
\usepackage{url,csquotes}
\usepackage[hidelinks,hyperfootnotes=false]{hyperref}
\usepackage{float}
\usepackage{amsfonts,amssymb,enumerate}
\usepackage[shortlabels]{enumitem}
\usepackage{amsmath}
\usepackage{authblk}
\allowdisplaybreaks[1]
\usepackage{amsthm}
\usepackage{graphicx}
\usepackage{bbm}
\usepackage{listings}
\usepackage{titling}
\usepackage{dsfont}
\usepackage{color}
\usepackage{bmpsize}
\newcommand{\E}{\mathbb{E}}
    \newcommand{\Prb}{\mathbb{P}}
	\newcommand{\cP}{\mathcal{P}}
		
				\newcommand{\cS}{\mathcal{S}}
				\newcommand{\cI}{\mathcal{I}}
				\newcommand{\cJ}{\mathcal{J}}

		\newcommand{\cE}{\mathcal{E}}
		
		\newcommand{\cF}{\mathcal{F}}
		\newcommand{\fE}{\mathfrak{E}}
				\newcommand{\fF}{\mathfrak{F}}
				\newcommand{\fT}{\mathfrak{T}}
				
		\newcommand{\cM}{\mathcal{M}}
		\newcommand{\cG}{\mathcal{G}}
		\newcommand{\cL}{\mathcal{L}}
				\newcommand{\cO}{\mathcal{O}}
		\newcommand{\cU}{\mathcal{U}}
	\newcommand{\cB}{\mathcal{B}}
		\newcommand{\B}{B(n)}
			\newcommand{\bR}{\mathrm{R}}
			\newcommand{\br}{\mathrm{r}}
		\newcommand{\cV}{\mathcal{V}}
				\newcommand{\cA}{\mathcal{A}}
					\newcommand{\cR}{\mathcal{R}}
							\newcommand{\cN}{\mathcal{N}}
							
		\newcommand{\cH}{\mathcal{H}}
	\newcommand{\sR}{\mathbb{R}}
	\newcommand{\sN}{\mathbb{N}}
	\newcommand{\vv}{v}

		\newcommand{\sS}{\mathbb{S}}
	\DeclareMathOperator{\diam}{diam}

				\DeclareMathOperator{\card}{card}

				\DeclareMathOperator{\cyl}{cyl}

\DeclareMathOperator{\disc}{disc}								\DeclareMathOperator{\diver}{div}
								\DeclareMathOperator{\capa}{Cap}
    \newcommand{\sZ}{\mathbb{Z}}
    \newcommand{\sC}{\mathcal{C}}

    \newcommand{\ep}{\varepsilon}
    \newcommand{\dis}{\mathfrak{d}}

    \newcommand{\ind}{\mathds{1}}
 \theoremstyle{plain}   
 \newtheorem{thm}{Theorem}[section]
\newtheorem{lem}[thm]{Lemma}
\newtheorem{cor}[thm]{Corollary}

\newtheorem{prop}[thm]{Proposition}
\newtheorem{rk}[thm]{Remark}
\newtheorem{hypo}{Hypothesis}
\numberwithin{equation}{section}

\title{Large deviation principle for the cutsets and lower large deviation principle for the maximal flow in first passage percolation
\thanks{Research was partially supported by the ANR project PPPP (ANR-16-CE40-0016) and the Labex MME-DII (ANR 11-LBX-0023-01).}}
\date{}
\author{Barbara Dembin\thanks{ETH Zürich. barbara.dembin@math.ethz.ch} , Marie Théret\thanks{Modal'X, UPL, Univ Paris Nanterre, F92000 Nanterre France and FP2M, CNRS FR 2036.
marie.theret@parisnanterre.fr} }


\begin{document}
\selectlanguage{english}
\maketitle
{\bf Abstract:} We consider the standard first passage percolation model in the rescaled lattice $\sZ^d/n$ for $d\geq 2$ and a bounded domain $\Omega$ in $\sR^d$. We denote by $\Gamma^1$ and $\Gamma^2$ two disjoint subsets of $\partial \Omega$ representing respectively the sources and the sinks, \textit{i.e.}, where the water can enter in $\Omega$ and escape from $\Omega$. A cutset is a set of edges that separates $\Gamma ^1$ from $\Gamma^2$ in $\Omega$, it has a capacity given by the sum of the capacities of its edges. Under some assumptions on $\Omega$ and the distribution of the capacities of the edges, we already know a law of large numbers for the sequence of minimal cutsets $(\cE_n^{min})_{n\geq 1}$: the sequence $(\cE_n^{min})_{n\geq 1}$ converges almost surely to the set of solutions of a continuous deterministic problem of minimal cutset in an anisotropic network. We aim here to derive a large deviation principle for cutsets and deduce by contraction principle a lower large deviation principle for the maximal flow in $\Omega$. 
\section{Introduction}
\subsection{First definitions and main results}
\subsubsection{The environment, minimal cutsets}\label{section:env}
We use here the same notations as in \cite{CT1}.
 Let $n\geq 1$ be an integer. We consider the graph $(\sZ^d_n,\E^d_n)$ having for vertices $\sZ^d _n=\sZ^d/n$ and for edges $\E_n^d$, the set of pairs of points of $\sZ_n^d$ at Euclidean distance $1/n$ from each other.  With each edge $e\in\E_n^d $ we associate a capacity $t(e)$, which is a random variable with value in $\sR^+$. The family $(t(e))_{e\in\E_n^d}$ is independent and identically distributed with a common law $G$. We interpret this capacity as a rate of flow, \textit{i.e.}, it corresponds to the maximal amount of water that can cross the edge per second. Throughout the paper, we work with a distribution $G$ on $\sR^+$ satisfying the following hypothesis.
\begin{hypo} \label{hypo:G} There exists $M>0$ such that $G([M,+\infty[)=0$ and $G(\{0\})<1-p_c(d)$.
\end{hypo}
\noindent Here $p_c(d)$ denotes the critical parameter of Bernoulli bond percolation on $\sZ^d$.

Let $\Omega$ be a bounded domain in $\sR^d$. Let $\Gamma^1$ and $\Gamma ^2$ be two disjoint subsets of the boundary $\partial \Omega$ of $\Omega$ that represent respectively the sources and the sinks. We aim to study the minimal cutsets that separate  $\Gamma^1$ from $\Gamma^2$ in $\Omega$ for the capacities $(t(e))_{e\in\E_n^d}$. We shall define discretized versions for those sets. For $x=(x_1,\dots,x_d)\in\sR^d$, we define $$\|x\|_2=\sqrt{\sum_{i=1}^dx_i^2},\qquad\|x\|_1=\sum_{i=1}^d|x_i|\qquad\text{and}\qquad \|x\|_\infty =\max\big\{\,|x_i|,\,i=1,\dots,d\,\big\}.$$
We use the subscript $n$ to emphasize the dependence on the lattice $(\sZ^d_n,\E^d _n)$. Let $\Omega_n$, $\Gamma_n$, $\Gamma_n^1$ and $\Gamma_n^2$ be the respective discretized version of $\Omega$, $\Gamma$, $\Gamma^1$ and $\Gamma^2$:
\begin{align*}
&\Omega_n=\left\{\,x\in \sZ^d_n:\, d_\infty(x,\Omega)<\frac{1}{n}\right\},\\
&\Gamma_n=\big\{\,x\in \Omega_n:\, \exists y\notin \Omega_n,\,\langle x,y\rangle\in\E_n^d\,\big\}\,,\\
&\Gamma_n ^i=\left\{\,x\in \Gamma_n:\, d_\infty(x,\Gamma^i)<\frac{1}{n}, \,  d_\infty(x,\Gamma^{3-i})\geq\frac{1}{n}\right\},\qquad \text{for $i=1,2$},
\end{align*}
where $d_\infty$ is the $L^\infty$ distance associated with the norm $\|\cdot\|_\infty$ and $\langle x,y\rangle$ represents the edge whose endpoints are $x$ and $y$. 
  We denote by $\Pi_n$ the set of edges that have both endpoints in $\Omega_n$, \textit{i.e.},
  $$\Pi_n=\left\{\,e=\langle x,y\rangle\in\E_n^d: \,x,y\in \Omega_n\,\right\}\,.$$
 Throughout the paper, $\Omega,\Gamma^1,\Gamma^2$ satisfy the following hypothesis:
\begin{hypo}\label{hypo:omega}
The set $\Omega$ is an open bounded connected subset of $\sR^d$, it is a Lipschitz domain and its boundary $\Gamma=\partial \Omega$ is included in a finite number of oriented hypersurfaces of class $\sC^1$ that intersect each other transversally. The sets $\Gamma^1$ and $\Gamma^2$ are two disjoint subsets of $\Gamma$ that are open in $\Gamma$ such that $\inf\{\|x-y\|_2,\, x\in\Gamma^1,\,y\in\Gamma^2\}>0$, and their relative boundaries $\partial_\Gamma \Gamma^1$ and $\partial_\Gamma \Gamma ^2$ have null $\cH^{d-1}$ measure, where $\cH^{d-1}$ denotes the $(d-1)$-dimensional Hausdorff measure.
\end{hypo}
 \noindent {\bf $(\Gamma_n ^1,\Gamma_n ^2)$-cutset in  $\Omega_n$.} A set of edges $\cE_n\subset\Pi_n$ is a $(\Gamma_n ^1,\Gamma_n ^2)$-cutset in  $\Omega_n$ if for any path $\gamma$ from $\Gamma_n^1$ to $\Gamma_n^2$ in $\Omega_n$, $\gamma\cap \cE_n\neq\emptyset$. We denote by $\sC_n(\Gamma^1,\Gamma^2,\Omega)$ the set of $(\Gamma_n ^1,\Gamma_n ^2)$-cutset in $\Omega_n$. For $\cE_n\in\sC_n(\Gamma^1,\Gamma^2,\Omega)$, we define its capacity $V(\cE_n)$ and its associated measure $\mu_n(\cE_n)$ by
$$V(\cE_n)=\sum_{e\in\cE_n}t(e)\,$$
and 
 \begin{align}
 \mu_n(\cE_n)=\frac{1}{n^{d-1}}\sum_{e\in\cE_n}t(e)\delta_{c(e)}
 \end{align}
where $c(e)$ denotes the center of the edge $e$ and $\delta_{c(e)}$ the dirac mass at $c(e)$.
The set $\cE_n$ is a discrete set, but in the limit it is more convenient to work with a continuous set. 
We first define $\br(\cE_n)\subset \sZ_n^d$ by
$$\br(\cE_n)=\left\{\,x\in \sZ_n^d: \text{ there exists a path from $x$ to $\Gamma_n^1$ in $(\sZ_n^d,\Pi_n\setminus \cE_n)$}\right\}\,.$$ 
We define upon $\br(\cE_n)$ a continuous version $\bR(\cE_n)$ by setting  
$$\bR(\cE_n)=\br(\cE_n)+\frac{1}{2n} [-1,1]^d\,.$$
Hence, we have $\bR(\cE_n)\cap\sZ_n^d=\br(\cE_n)$.

 \noindent {\bf Minimal cutsets.}  A set $\cE_n\in \sC_n(\Gamma^1,\Gamma^2,\Omega)$ is a minimal cutset in $\Omega_n$ if we have $$V(\cE_n)=\inf\left\{V(\cF_n):\cF_n\in \sC_n(\Gamma^1,\Gamma^2,\Omega)\right\}\,.$$ We denote by $\sC_n(0)$ the set of minimal cutsets in $\Omega_n$. We will often use the notation $\cE_n^{min}$ to denote an element of $\sC_n(0)$ chosen according to a deterministic rule. Minimal cutsets are the analogous in dimension $d-1$ of geodesics in the classical interpretation of first passage percolation. Indeed, geodesics minimize the sum of times along paths that are one-dimensional objects, whereas minimal cutsets minimize the sum of capacities along surfaces that are $(d-1)$-dimensional objects.

\noindent 
{\bf Almost minimal cutsets.} Let $\ep>0$. A set $\cE_n\in \sC_n(\Gamma^1,\Gamma^2,\Omega)$ is a $(\Gamma_n ^1,\Gamma_n ^2)$ $\ep$-cutset in $\Omega_n$ if for any $\cF_n\in \sC_n(\Gamma^1,\Gamma^2,\Omega)$, we have $$V(\cE_n)\leq V(\cF_n)+\ep n^{d-1}\,.$$ We denote by $\sC_n(\ep)$ the set of $(\Gamma_n ^1,\Gamma_n ^2)$ $\ep$-cutset in $\Omega_n$.
Note that the typical size of an almost minimal cutset is of surface order, that is of order $n^{d-1}$ (see for instance lemma \ref{lem:Zhang}).

\noindent {\bf Maximal flow.} We define $\phi_n$ the maximal flow between $\Gamma_n^1$ and $\Gamma_n^2$ in $\Omega_n$ as follows
$$\phi_n(\Gamma^1,\Gamma^2,\Omega)=\inf\left\{V(\cE_n): \,\cE_n\in\sC_n(\Gamma^1,\Gamma^2,\Omega)\right\}\,.$$
The reason why this quantity is called a maximal flow is due to the max-flow min-cut theorem that states that the study of the minimal capacity of a cutset is the dual problem of the study of the maximal flow. Just as the study of minimal capacity is linked with the study of cutsets, the study of maximal flow is linked with the study of streams (a stream is a function that describes a stationary circulation of water in the lattice). We won't define rigorously what a stream is and its link with maximal flow. We refer for instance to the companion paper \cite{dembin2020large} where we study large deviation principle for admissible streams to obtain an upper large deviation principle for the maximal flow.


\subsubsection{Presentation of the limiting objects and main results}
We want to define the possible limiting objects for $\bR(\cE_n)$ and $\mu_n(\cE_n)$ where $\cE_n\in\sC_n(\ep)$.

\noindent {\bf Continuous cutsets.} 
We denote by $\sC_{<\infty}$ the set of subsets of $\Omega$ having finite perimeter in $\Omega$, \textit{i.e.}, 
\[\sC_{<\infty}=\left\{E\text{ Borelian subset of $\sR^d$}: \, E\subset\Omega, \,\cP(E,\Omega)<\infty\right\}\,.\]
 When $E$ is regular enough, its perimeter in the open set $\Omega$ corresponds to $\cH^{d-1}(\partial E\cap \Omega)$. We will give a more rigorous definition of the perimeter later (see \eqref{eq:defper}).
Let $E\in \sC_{<\infty}$.
We want to build from $\partial E$ a continuous surface that would be a continuous cutset between $\Gamma^1$ and $\Gamma^2$ (we don't give a formal definition of what a continuous cutset is). However, a continuous path from $\Gamma^1$ to $\Gamma^2$ does not have to intersect $\partial E$ in general. For regular sets $E$ and $\Omega$, such a path should intersect 
\[\widehat \fE= (\partial E \cap \Omega )\cup ( \Gamma^1\setminus \partial E)\cup(\partial E\cap\Gamma^2)\]
We define $\fE$ as a more regular version of $\widehat \fE$ (see figure \ref{figintro}),
\begin{align}\label{def:fE}
\fE= (\partial^* E \cap \Omega )\cup (\partial ^* \Omega \cap(( \Gamma^1\setminus \overline{\partial^*E})\cup(\partial^ * E\cap\Gamma^2))\,,
\end{align}
where $\overline X$ denotes the closure of the set $X$ and $\partial^*$ is the reduced boundary, we will give a rigorous definition later (see section \ref{sect:tools}).
\begin{figure}[H]
\begin{center}
\def\svgwidth{0.7\textwidth}
   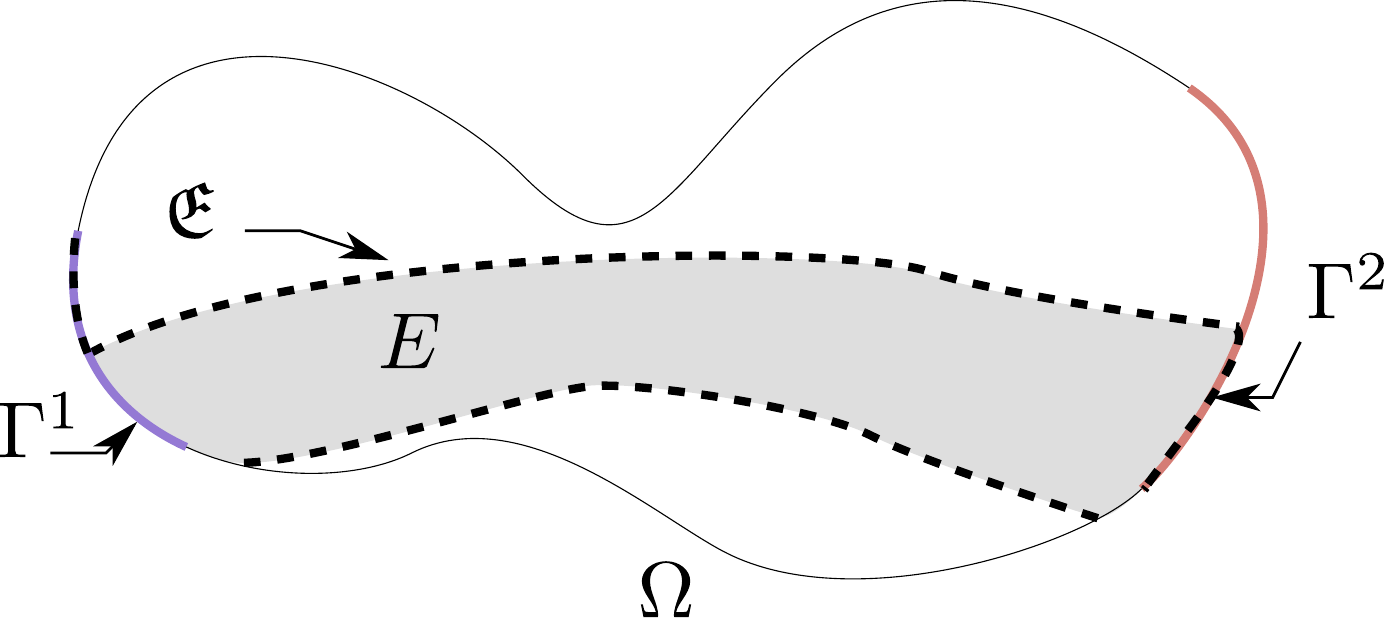
   \caption{\label{figintro}Representation of the set $\fE$ (the dotted lines) for some $E\in\sC_{<\infty}$.}
   \end{center}
\end{figure}

 Note that in order to obtain a lower large deviation principle for the minimal cutsets, it is not enough to keep track of the localization of the cutset. Indeed, if we only know the localization of the discrete minimal cutset it does not give information on its capacity. We will not only need the total capacity of a cutset but also its local distribution. For a given localization of the minimal cutset and a given capacity, there are different macroscopic configurations where there exists a discrete cutset at the given localization with the given capacity. However this different macroscopic configurations do not have necessarily the same cost, \textit{i.e.}, the same probability to be observed. This is why we need to know locally the capacity. In the continuous setting, this boils down to introducing a function $f:\fE\rightarrow \sR_+$. The local capacity at a point $x$ in $\fE$ will be given by $f(x)$. Naturally, the total capacity will be obtained by summing these contributions.
We define 
$\capa(E,f)$ as the capacity of the cutset $\fE$ equiped with a local capacity $f(x)$ at $x$:
$$\capa(E,f)=\int_{\fE}f(x)d\cH^{d-1}(x)\,.$$
The smaller $f(x)$ is the bigger the cost of having this local capacity is. We will see that there is a local capacity that is in some sense costless. For $x\in\fE$ and $n(x)$ the associated normal exterior unit vector (of $E$ or $\Omega$ depending whether $x\in \partial^* E$ or $x\in\partial ^* \Omega$), the typical local capacity is $\nu_G(n(x))$ where $\nu_G$ is a called the flow constant. In other words, the probability that the local capacity is close to $\nu_G(n(x))$ at $x$ is almost $1$.

We say that $(E,f)$ is minimal if for any $F\subset \Omega$ of finite perimeter we have
$$\capa(E,f)\leq \int_{\overline{\fF}\cap \fE}f(y)d\cH^{d-1}(y)+\int_{(\fF\setminus \overline{\fE})\cap \partial \Omega}\nu_G(n_{\Omega}(y))d\cH^{d-1}(y)+ \int_{(\fF\setminus \overline{\fE})\setminus \partial \Omega}\nu_G(n_{F}(y))d\cH^{d-1}(y)\,$$
where $\fF$ is defined for $F$ in \eqref{def:fE}.
This condition will be useful in what follows. This condition is very natural, if we start with a minimal cutset that is close to $\fE$ and with local capacity $f$, we expect that the limiting object inherits this property. On $\fE$ the local capacity is $f$ but everywhere else the local capacity is the typical one given by the flow constant $\nu_G$. The right hand side may be interpreted as the capacity of $\fF$ in the environment where the local capacity on $\fE$ is $f$.
Let $\fT$ be the following set:
$$\fT= \left\{\, (E,f)\in \sC_{<\infty}\times L^\infty(\fE\rightarrow \sR,\cH^{d-1}): \begin{array}{c}(E,f) \text{ is minimal, } \capa(E,f)\leq 10d^2M \cH^{d-1}(\Gamma^1)\,,\\f(x)\leq \nu_G(n_E(x)) \text{ $\cH^{d-1}$-a.e. on $\partial ^* E\cap \Omega$},\\f(x)\leq \nu_G(n_\Omega(x)) \text{ $\cH^{d-1}$-a.e. on $\partial ^* \Omega$}
\end{array}\right\}\,.$$
We also define $\fT_{\cM}$ as follow
$$\fT_{\cM}=\left\{\,(E,f\cH^{d-1}|_{\fE}):\,(E,f)\in\fT\,\right\}\,.$$
For two Borelian sets $E$ and $F$, we define $\dis(E,F)=\cL^d(E\Delta F)$ where $\Delta$ denotes the symmetric difference and $\cL^d$ is the $d$-dimensional Lebesgue measure.
\begin{rk}\label{rk:1}Let $\cE_n\in\sC_n(\Gamma^1,\Gamma^2,\Omega)$. If $\lim_{n\rightarrow\infty}\dis(\bR(\cE_n),E)=0$ for some $E\in\cB(\sR^d)$ and $\mu_n(\cE_n)$ weakly converges towards $\mu$, 
then we do not necessarily have that $\mu$ is absolutely continuous with respect to $\cH^{d-1}|_\fE$. More generally, for any sequence $(E_n)_{n\geq 1}$ of Borelian sets of $\sR^d$ such that $\lim_{n\rightarrow\infty}\dis(E_n,E)=0$, we do not have necessarily that $\cH^{d-1}|_{\partial E_n}$ weakly converges towards $\cH^{d-1}|_{\partial E}$. However, we can prove that if instead of studying any sequence of cutsets we study a sequence of minimal cutsets, then in the limit $\mu$ will be supported on $\fE$. But, since it is too difficult to ensure we build a configuration of the capacities of the edges with a minimal cutset at a given localization (it is difficult to ensure that the cutset we have built is indeed minimal), we will work instead with almost minimal cutsets, \textit{i.e.}, $\ep$-cutsets. They are more flexible than minimal cutsets and their continuous limit is in the set $\fT_\cM$.
\end{rk}
We denote by $\sS^{d-1}$ the unit sphere in $\sR^d$.
Let $(E,f)\in \fT$. For any $v\in\sS^{d-1}$, we denote by $\cJ_v$ the rate function associated with the lower large deviation principle for the maximal flow in a cylinder oriented in the direction $v$ (see theorem \ref{thm:lldtau}). We can interpret $\cJ_v(\lambda)$ as the cost of having a local capacity $\lambda$ which is abnormally small in the direction $v$ ($\lambda<\nu_G(v)$). It will be properly defined later in theorem \ref{thm:lldtau}.
We define the following rate function:
$$\cI(E,f)=\int_{\partial ^* E\cap \Omega}\cJ_{n_E(x)}(f(x))d\cH^{d-1}(x)+\int_{\partial ^*\Omega \cap ((\Gamma^1\setminus \overline{\partial^*E} )\cup (\Gamma^2\cap \partial^*E))}\cJ_{n_\Omega(x)}(f(x))d\cH^{d-1}(x)\,\,.$$
Roughly speaking $\cI(E,f)$ is the total cost of having a cutset $\fE$ with the local capacities given by $f$, the overall cost is equal to the sum of the contributions of the local costs over the continuous cutset $\fE$.

We denote by $\cM(\sR^d)$ the set of positive measures on $\sR^d$.
We endow $\cM(\sR^d)$ with the weak topology $\mathcal{O}$. We denote by $\cB$ the $\sigma$-field generated by $\mathcal{O}$. We endow the set $\cB(\sR^d)$ of Borelian sets of $\sR^d$ with the topology $\cO'$ associated with the distance $\dis$.
 We denote by $\cB'$ the $\sigma$-field associated with this distance. Let $n\geq 1$ and $\ep>0$, $\Prb_n^\ep$ denotes the following probability:
$$\forall A\in \cB'\otimes \cB\qquad\Prb_n^\ep(A)=\Prb(\exists \cE_n\in\sC_n(\ep):(\bR(\cE_n),\mu_n(\cE_n))\in A)\,.$$
We define the following rate function $\widetilde{I}$ on $\cB(\sR^d)\times \cM(\sR^d)$ as follows:
\[\forall (E,\nu)\in \cB(\sR^d)\times \cM(\sR^d)\qquad \widetilde{I}(E,\nu) = \left\{
    \begin{array}{ll}
        +\infty  & \mbox{if } (E,\nu)\notin\fT_{\cM}\\
        \cI(E,f) & \mbox{if } \nu=f\cH^{d-1}|_{\fE} \text{ with }\,(E,f)\in\fT\,.
    \end{array}
\right.
\]
The following theorem is the main result of this paper. 
\begin{thm}[Large deviation principle on cutsets]\label{thm:pgd} Let $G$ that satisfies hypothesis \ref{hypo:G}. Let $(\Omega,\Gamma^1,\Gamma^2)$ that satisfies hypothesis \ref{hypo:omega}. The sequence $(\Prb_n^\ep)_{n\geq 1}$ satisfies a large deviation principle with speed $n^{d-1}$ governed by the good rate function $\widetilde{I}$ and with respect to the topology $\mathcal{O}'\otimes\cO$ in the following sense: for all $A\in\cB'\otimes\cB$
\begin{align*} -\inf\big\{\widetilde{I}(\nu ):\,\nu\in \mathring{A}\big\}\leq\lim_{\ep\rightarrow 0}\liminf_{n\rightarrow\infty}\frac{1}{n^{d-1}}\log\Prb_n^\ep(A)\leq \lim_{\ep\rightarrow 0}\limsup_{n\rightarrow\infty}\frac{1}{n^{d-1}}\log\Prb_n^\ep(A)\leq -\inf\big\{\widetilde{I}(\nu ):\,\nu\in \overline{A}\big\}\,.\end{align*}
\end{thm}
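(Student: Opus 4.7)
My plan is to reduce to a \emph{weak} LDP, namely to establish, for every $(E,\nu)\in\fT_{\cM}$, the local identity
\begin{equation*}
-\widetilde{I}(E,\nu) \;=\; \lim_{U\downarrow(E,\nu)}\,\lim_{\ep\to 0}\,\lim_{n\to\infty}\frac{1}{n^{d-1}}\log\Prb_n^\ep(U)\,,
\end{equation*}
where $U$ runs over a neighborhood basis of $(E,\nu)$ in $\cO'\otimes\cO$, and then to upgrade it to the full LDP via exponential tightness. Tightness is provided by Zhang's lemma (lemma~\ref{lem:Zhang}), which guarantees that every almost minimal cutset has cardinality at most $Cn^{d-1}$ with probability exponentially close to $1$; this controls simultaneously $\dis(\bR(\cE_n),\cdot)$ and the total mass of $\mu_n(\cE_n)$, so any neighborhood of the slice of $\fT_{\cM}$ on which $\capa(E,f)\leq 10d^2M\cH^{d-1}(\Gamma^1)$ captures all of the probability up to an exponentially small error. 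The goodness of $\widetilde{I}$ will follow from standard $BV$ compactness for $(E_k)$ with uniformly bounded perimeter combined with weak compactness of the measures $f_k\cH^{d-1}|_{\fE_k}$ of bounded total mass.

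\textbf{Upper bound.} Fix $(E,f)\in\fT$ and a neighborhood $U$ of $(E,f\cH^{d-1}|_\fE)$. Using rectifiability of $\partial^*E$ and the $\cC^1$ regularity of the pieces of $\partial\Omega$, I would cover $\fE$, up to an $\cH^{d-1}$-null set, by a finite disjoint family of small balls $B_i=B(x_i,r_i)$ on which $\fE$ is well approximated by a hyperplane orthogonal to a unit vector $n_i$ and $f$ is close to a constant $f_i\leq \nu_G(n_i)$. On the event $(\bR(\cE_n),\mu_n(\cE_n))\in U$, the edges of $\cE_n$ contained in a thin slab $S_i$ around the approximating hyperplane in $B_i$ must cut the two horizontal faces of $S_i$, and the sum of their capacities is close to $f_i\cH^{d-1}(\fE\cap B_i)$. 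The lower large deviation estimate for the maximal flow in a cylinder (theorem~\ref{thm:lldtau}) then bounds this event by $\exp(-n^{d-1}\cJ_{n_i}(f_i)\cH^{d-1}(\fE\cap B_i)(1+o(1)))$, and independence of the $t(e)$ across disjoint slabs turns the joint estimate into a product, producing on summation the total cost $\cI(E,f)(1+o(1))$.

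\textbf{Lower bound.} The construction runs in reverse. Starting from $(E,f)\in\fT$, I would first replace $\fE$ by a polyhedral surface $\fE^\delta$ obtained by a Reshetnyak-type approximation of $\partial^*E$ (together with a piecewise flat approximation of the $\cC^1$ pieces of $\partial\Omega$ entering the boundary terms of $\fE$), and $f$ by a piecewise constant $f^\delta$ faithful to the pointwise bounds of the definition of $\fT$. On each face $F_j$ of $\fE^\delta$ with normal $n_j$ and value $f_j^\delta$ I would consider the event that, in a thin cylinder $\cyl(F_j,h)$ oriented along $n_j$, the discrete minimal cut has capacity at most $f_j^\delta\cH^{d-1}(F_j)$. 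By theorem~\ref{thm:lldtau} this event has probability at least $\exp(-n^{d-1}\cJ_{n_j}(f_j^\delta)\cH^{d-1}(F_j)(1+o(1)))$, and the $F_j$ being disjoint the events are independent. On the joint occurrence, and using typical capacities outside the cylinders, a gluing procedure (adding buffer edges of surface order that vanishes as $\delta\to 0$) produces a cutset $\cE_n\in\sC_n(\Gamma^1,\Gamma^2,\Omega)$ with $\bR(\cE_n)$ close to $E$ and $\mu_n(\cE_n)$ close to $f\cH^{d-1}|_\fE$. Letting $\delta\to 0$ and using the continuity of $\cI$ along the approximation recovers the sharp lower bound.

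\textbf{Main obstacle.} The delicate step is to show that the cutset built in the lower bound really is an $\ep$-cutset. Local control of the capacity near $\fE$ is insufficient: an extremely cheap cutset elsewhere in $\Omega$ could prevent $\cE_n$ from being $\ep$-close to the true minimum. This is precisely where the minimality clause in the definition of $\fT$ enters, as it ensures, through a comparison between the local continuous cost of any competitor $F$ and $\capa(E,f)$, that the true discrete minimum conditionally on our construction is within $\ep n^{d-1}$ of it. Making this rigorous requires a uniform exponential upper bound on the probability that some other cutset beats ours, and this in turn reduces to an upper large deviation input obtained via a subadditive argument based on $\nu_G$ and imported from the companion paper~\cite{dembin2020large}. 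The boundary contributions on $\Gamma^1\setminus\overline{\partial^*E}$ and $\partial^*E\cap\Gamma^2$ require a boundary analogue of theorem~\ref{thm:lldtau}, since the separating surface must lie along $\partial\Omega$ on those pieces; this will be the main technical layer of both bounds.
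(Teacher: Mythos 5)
Your high-level architecture (local estimates plus exponential tightness plus goodness of the rate function) matches the paper's, and your upper-bound covering-by-balls strategy is essentially the paper's Section 5. But there is a significant gap in the way you treat the concentration of the limiting objects, and your lower bound takes a route the paper does not take and which, as written, still has a hole.

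The central omission is the entire content of the paper's Section 3 (Proposition~\ref{prop:admissible}). You argue that Zhang's lemma gives exponential concentration near ``the slice of $\fT_{\cM}$,'' but controlling $\card(\cE_n)$ only forces $(\bR(\cE_n),\mu_n(\cE_n))$ to land near the compact set $\sC_\beta\times\mathbb{M}$, which is much larger than $\fT_{\cM}$. It says nothing about whether the weak limit of $\mu_n(\cE_n)$ is absolutely continuous with respect to $\cH^{d-1}|_{\fE}$, nothing about the pointwise bound $f\leq\nu_G(n_\bullet)$, and nothing about the minimality clause. This is exactly the difficulty flagged in Remark~\ref{rk:1}: for general sequences of cutsets the limit measure can charge mass off $\fE$ (long thin filaments). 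Ruling this out is the reason the paper works with $\ep$-cutsets and devotes all of Section 3 to proving that any $(E,\nu)\notin\fT_{\cM}$ has a neighborhood of arbitrarily small $\Prb_n^\ep$-probability. Without an argument of this type, the upper bound cannot close: the infimum of $\widetilde I$ over $\overline A$ could be $+\infty$ (because $\overline A$ is disjoint from $\fT_{\cM}$) while the covering-by-balls bound you run only exploits the behavior near $\fE$ and cannot see the off-$\fE$ mass.

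For the lower bound you propose a Reshetnyak/polyhedral approximation of $\fE$ together with a claim that $\cI$ is ``continuous along the approximation.'' The paper instead runs a Vitali covering of $\fE$ by disjoint balls (Proposition~\ref{prop:utilisationvitali}) and does not pass through polyhedral surfaces; $\cI$ is only lower semi-continuous, not continuous, so your continuity claim needs justification and is the very kind of step the ball-covering with inequalities like \eqref{eq:lbe8} is designed to make precise. More importantly, your resolution of the ``main obstacle'' (ensuring the constructed cutset is an $\ep$-cutset) by importing an upper large deviation input from the companion paper is not what is needed, and it is not what the paper does. The paper's Step 3 of Proposition~\ref{prop:lowerbound} is self-contained: it uses compactness of $\sC_\beta$ to reduce all possible competitors to a finite family $\cA$, then for each competitor $F\in\cA$ constructs a second Vitali covering of $\fF$ and conditions on events $\cE^{(1)},\cE^{(2)}$ that the local flow in those balls is not abnormally low (via the lower-deviation lemma~\ref{lem: Gxrv}, not upper deviations). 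The subtlety is that $\cE^{(4)}$ (lowering the flow near $\fE$) and $\cE^{(1)}$ (forbidding abnormally low flow away from $\fE$) must be shown to be compatible, which the paper handles by disjointness of the supporting balls and a careful choice of $\delta_F$.

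Finally, your upper bound implicitly passes to the limit along a covering where both the normal direction $n_i$ and the local value $f_i$ vary with the ball. To transfer the finite-covering estimate to $\cI(E,f)$ you need joint lower semi-continuity of $(v,\lambda)\mapsto\cJ_v(\lambda)$ in both arguments (Corollary~\ref{cor:liminf}), which is not a consequence of the one-variable lower semi-continuity in Theorem~\ref{thm:lldtau}; the paper proves it from a weak triangle inequality for $\cJ$ (Proposition~\ref{prop:triangineq}). You should flag this as an additional technical input rather than take it for granted.
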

\begin{rk}Because of the limit in $\ep$, it is not a proper large deviation principle. Unfortunately, we were not able to obtain a large deviation principle on $\cE_n^{min}$ because of the lower bound (see remark \ref{rk:1}). However, this result is enough to prove a lower large deviation principle on the maximal flow.
\end{rk}
We can deduce from theorem \ref{thm:pgd}, by a contraction principle, the existence of a rate function governing the lower large deviations of $\phi_n(\Gamma^1,\Gamma^2,\Omega)$.
Let $J$ be the following function defined on $\sR^+$:
$$\forall \lambda\geq 0\qquad J(\lambda)=\inf\left\{\widetilde{I}(E,\nu) : \,(E,\nu)\in\cB(\sR^d)\times\cM(\sR^d),\, \nu(\sR^d)=\lambda\right\}\,$$
and define $\lambda_{min}$ as
$$\lambda_{min}=\inf\left\{ \lambda\geq 0: J(\lambda)<\infty\right\}\,.$$
The real number $\phi_\Omega>0$ will be defined in \eqref{eq:def:phiomega}, it is the almost sure limit of $\phi_n(\Gamma^1,\Gamma^ 2,\Omega)/n^{d-1}$.
We have the following lower large deviation principle for the maximal flow.
\begin{thm}[Lower large deviation principle for the maximal flow]\label{thm:lldmf} 
Let $G$ that satisfies hypothesis \ref{hypo:G}. Let $(\Omega,\Gamma^1,\Gamma^2)$ that satisfies hypothesis \ref{hypo:omega}. The sequence $(\phi_n(\Gamma^1,\Gamma^ 2,\Omega)/n^{d-1},n\in\sN)$ satisfies a large deviation principle of speed $n^{d-1}$ governed by the good rate function $J$. Moreover, the map $J$ is finite on $]\lambda_{min},\phi_\Omega]$, infinite on $[0,\lambda_{min}[\cup]\phi_\Omega,+\infty[$,
and we have
\begin{align}\label{eq:imppgd}
\forall \lambda <\lambda_{min}\quad \exists N\geq 1\quad \forall n\geq N \qquad \Prb(\phi_n(\Gamma^1,\Gamma^2,\Omega)\leq\lambda n ^{d-1})=0\,.
\end{align}
\end{thm}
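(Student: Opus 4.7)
The plan is to derive Theorem \ref{thm:lldmf} from Theorem \ref{thm:pgd} by a contraction principle applied to the total-mass map. The key identity is that for every $\mathcal{E}_n\in\sC_n(\epsilon)$ one has $\mu_n(\mathcal{E}_n)(\sR^d)=V(\mathcal{E}_n)/n^{d-1}$, and the $\epsilon$-cutset definition gives
$$\frac{\phi_n(\Gamma^1,\Gamma^2,\Omega)}{n^{d-1}}\leq\mu_n(\mathcal{E}_n)(\sR^d)\leq\frac{\phi_n(\Gamma^1,\Gamma^2,\Omega)}{n^{d-1}}+\epsilon.$$
Using $\mathcal{E}_n^{min}\in\sC_n(\epsilon)$ for every $\epsilon\geq 0$, I would first check that $\{\phi_n\leq\lambda n^{d-1}\}=\{\exists\mathcal{E}_n\in\sC_n(\epsilon):V(\mathcal{E}_n)\leq\lambda n^{d-1}\}$ for every $\epsilon\geq0$, which lets me dispense with the outer $\lim_{\epsilon\to 0}$ present in Theorem \ref{thm:pgd}. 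I would then introduce the map $T:\cB(\sR^d)\times\cM(\sR^d)\to\sR^+$ given by $T(E,\nu)=\nu(\sR^d)$, and verify that $T$ is continuous on the effective domain $\fT_\cM$: since every $\nu\in\fT_\cM$ is supported in the bounded set $\overline\Omega$, testing weak convergence against a compactly supported bump function equal to $1$ on $\overline\Omega$ gives total-mass convergence. The contraction principle then yields a genuine LDP for $\phi_n/n^{d-1}$ at speed $n^{d-1}$ with rate function $J$, and $J$ is automatically a good rate function since $\widetilde I$ is good and $T$ is continuous.

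For the effective domain of $J$, the definition of $\lambda_{min}$ directly gives $J\equiv+\infty$ on $[0,\lambda_{min}[$. To obtain $J\equiv+\infty$ on $]\phi_\Omega,+\infty[$, I would plug a continuous minimal cutset $F^{*}$ (realizing the typical value $\phi_\Omega$) into the minimality inequality defining $\fT$: since $f\leq\nu_G\circ n$ almost everywhere on $\fE$ and $\capa(F^{*},\nu_G\circ n_{F^{*}})=\phi_\Omega$, the minimality inequality forces $\capa(E,f)\leq\phi_\Omega$ for every $(E,f)\in\fT$, hence $\nu(\sR^d)\leq\phi_\Omega$ throughout $\fT_\cM$. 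Finiteness of $J$ on the whole interval $]\lambda_{min},\phi_\Omega]$ would follow by an interpolation between a near-minimizer $(E_0,f_0)$ at $\lambda_{min}$ and the typical configuration $(E^{*},\nu_G\circ n_{E^{*}})$ at $\phi_\Omega$: scaling the local capacity by a parameter $t\in[0,1]$ while keeping the support $\fE$ fixed produces an admissible pair in $\fT$ whose total capacity depends continuously on $t$, hitting every intermediate value.

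The delicate point is the deterministic statement \eqref{eq:imppgd}, which cannot be extracted from the LDP alone, the LDP only giving exponential smallness. Here the plan is to exploit Hypothesis \ref{hypo:G}: since $G(\{0\})<1-p_c(d)$, one can fix $\delta>0$ with $G([0,\delta])<1-p_c(d)$, so the edges of capacity $\leq\delta$ form a subcritical Bernoulli percolation on $\sZ^d$. A Menger-type duality argument, combined with the standard result that any cut separating $\Gamma_n^1$ from $\Gamma_n^2$ in $\Omega_n$ has cardinality at least of order $n^{d-1}$, then implies that almost surely for $n\geq N$ every $(\Gamma_n^1,\Gamma_n^2)$-cutset contains at least $c n^{d-1}$ edges of capacity $>\delta$, yielding a deterministic lower bound $\phi_n\geq\lambda_0 n^{d-1}$ for some $\lambda_0>0$. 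The main obstacle I anticipate is showing that this combinatorial threshold $\lambda_0$ coincides with the variationally defined $\lambda_{min}$: the upper bound $\lambda_0\leq\lambda_{min}$ would come from the previous LDP (if $\lambda<\lambda_0$ then the probability is zero, so $J(\lambda)=+\infty$), while the reverse would require constructing, for any $\lambda>\lambda_0$, a configuration $(E,f)\in\fT$ with $\capa(E,f)=\lambda$, which is precisely the type of interpolation outlined above.
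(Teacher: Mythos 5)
Your overall strategy---contracting along the total-mass map $T(E,\nu)=\nu(\sR^d)$ and observing that the event $\{\phi_n\leq\lambda n^{d-1}\}$ is $\ep$-independent, which removes the outer $\lim_{\ep\to 0}$ in Theorem \ref{thm:pgd}---is indeed the route the paper takes, and the continuity of $T$ restricted to measures supported in $\overline\Omega$ is the right observation. The paper carries this out via direct local estimates (Proposition \ref{prop:propJ}) rather than invoking the contraction principle as a black box, because the weak LDP of Theorem \ref{thm:pgd} is not a genuine LDP and $T$ is not globally continuous; one must use the goodness of $\widetilde I$ to confine the analysis to compact level sets. Your argument for $J\equiv+\infty$ on $]\phi_\Omega,+\infty[$ via the minimality inequality is the same as the paper's.

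The real divergence is in \eqref{eq:imppgd}, and here your plan contains a concrete gap that the paper's argument sidesteps. You propose a subcritical-percolation plus Menger-duality argument producing a deterministic threshold $\lambda_0$, and you yourself flag the problem of identifying $\lambda_0$ with $\lambda_{min}$. In fact the percolation step is both unnecessary and insufficient. Unnecessary: writing $\delta_G=\inf\{t:\Prb(t(e)\leq t)>0\}$, almost surely every edge satisfies $t(e)\geq\delta_G$, so $\phi_n\geq\delta_G\cdot(\text{minimal cutset cardinality})$ with no percolation input whatsoever (if $\delta_G=0$ there is nothing to prove since $\lambda_{min}=0$). Insufficient: the percolation threshold you would obtain depends on an auxiliary parameter $\delta$ and has no built-in relation to the variational object $\lambda_{min}$. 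The missing idea is the variational identification $\lambda_{min}=\delta_G\,\cI_0(F_0)$, where $\cI_0(F)=\int_{\fF}\|n_\bullet(x)\|_1\,d\cH^{d-1}(x)$ and $F_0$ achieves its infimum over a compact sublevel set of the perimeter. The paper proves $\lambda_{min}\geq\delta_G\cI_0(F_0)$ from the constraint $f\geq\delta_G\|n_\bullet\|_1$ a.e.\ on $\fE$ for any $(E,f)\in\fT$ with $\cI(E,f)<\infty$, and $\lambda_{min}\leq(1+\ep)\delta_G\cI_0(F_0)$ by exhibiting the explicit admissible pair $(F_0,(1+\ep)\delta_G\|n_\bullet\|_1)$. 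With this identification, if $\phi_n\leq\lambda n^{d-1}$ with $\lambda<\lambda_{min}$ then there is a cutset with $\card(\cE_n)\leq\lambda n^{d-1}/\delta_G$; combined with $\cI_0(\bR(\cE_n))\leq\card(\cE_n)/n^{d-1}$ and lower semicontinuity of $\cI_0$ on a compact level set, one gets $\cI_0(E)\leq\lambda/\delta_G<\cI_0(F_0)$ for a limit point $E$, a contradiction. This is a purely deterministic anisotropic isoperimetric argument; no percolation is involved.

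One further point: for finiteness of $J$ on $]\lambda_{min},\phi_\Omega]$, your ``scale the local capacity by $t$'' interpolation is problematic, because the map $f\mapsto tf$ does not respect the minimality constraint defining $\fT$, which is a global inequality that is not scale-invariant. The paper instead uses the convex interpolation $\alpha f+(1-\alpha)f_E$ where $f_E=\nu_G\circ n_\bullet$ is the typical local capacity, and then exploits the convexity of $\cJ_v$ together with $\cJ_v(\nu_G(v))=0$ to conclude $J(\lambda)\leq\alpha\,\cI(E,f)$. Your plan should use this convex combination with the typical capacity, not a pure scaling.
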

\begin{rk}The property \eqref{eq:imppgd} is used to prove upper large deviation principle for the maximal flow in the companion paper \cite{dembin2020large}. It also gives the precise role of $\lambda_{min}$ in our study. 
\end{rk}
\subsubsection{Upper large deviations for the maximal flows}
In the companion paper \cite{dembin2020large}, we proved the upper large deviation principle for the maximal flow.

\begin{thm}[Upper large deviation principle for the maximal flow]\label{thm:uldmf} 
Let $G$ that satisfies hypothesis \ref{hypo:G}. Let $(\Omega,\Gamma^1,\Gamma^2)$ that satisfies hypothesis \ref{hypo:omega}. The sequence $(\phi_n(\Gamma^1,\Gamma^ 2,\Omega)/n^{d-1},n\in\sN)$ satisfies a large deviation principle of speed $n^d$ with the good rate function $\widetilde J_u.$ Moreover, there exists $\lambda_{max}>0$ such that the map $\widetilde J_u$ is convex on $\sR_+$, infinite on $[0,\lambda_{min}[\cup]\lambda_{max},+\infty[$, $\widetilde J_u $ is null on $[\lambda_{min},\phi_\Omega]$ and strictly positive on $]\phi_\Omega,+\infty[$. 
\end{thm}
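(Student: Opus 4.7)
The plan is to exploit the duality between cutsets and streams: lower deviations of $\phi_n$ are governed by cutsets, which are $(d-1)$-dimensional and give speed $n^{d-1}$, whereas upper deviations force an abnormally favorable bulk behavior of the capacities and should therefore be captured by admissible streams, i.e.\ discrete vector fields defined on edges, satisfying Kirchhoff's node law in the interior of $\Omega_n$, with no flux through $\partial \Omega_n\setminus(\Gamma_n^1\cup\Gamma_n^2)$, and pointwise bounded by $t(e)$. A stream carrying flow of order $\lambda n^{d-1}$ with $\lambda>\phi_\Omega$ has to use an $\Omega$-volume of edges with atypically large capacities, which has cost $\exp(-cn^d)$; this is what produces the volume speed announced in the theorem.

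At the continuous level I would introduce the set $\cS$ of admissible streams $\sigma\in L^\infty(\Omega,\sR^d)$ with $\diver\sigma=0$ in the sense of distributions on $\Omega$, vanishing normal trace on $\partial\Omega\setminus(\Gamma^1\cup\Gamma^2)$, pointing outward on $\Gamma^2$ and inward on $\Gamma^1$, and the flow functional $\flow(\sigma)=\int_{\Gamma^2}\sigma\cdot n_\Omega\,d\cH^{d-1}$. Following the companion paper, the empirical stream $\sigma_n$ associated with a near-optimal discrete stream satisfies a large deviation principle at speed $n^d$, with a good rate function of the form
\[\cI_s(\sigma)=\int_\Omega U(\sigma(x))\,dx\]
where $U:\sR^d\to[0,+\infty]$ is the convex lower semicontinuous function built from the log-Laplace transforms of the edge capacities $t(e)$, restricted to vectors of $\ell^\infty$-norm at most $M$ (from Hypothesis~\ref{hypo:G}). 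Establishing this stream LDP is done by a Cramér-type argument on mesoscopic boxes plus a gluing procedure that preserves the divergence-free constraint.

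The theorem then follows by the contraction principle applied to the continuous linear functional $\flow:\cS\to\sR_+$: since $\phi_n/n^{d-1}=\sup\{\flow(\sigma_n):\sigma_n\text{ admissible discrete stream}\}$ converges as $n\to\infty$, the contraction yields
\[\widetilde J_u(\lambda)=\inf\big\{\cI_s(\sigma):\,\sigma\in\cS,\ \flow(\sigma)=\lambda\big\}.\]
Convexity of $\widetilde J_u$ is inherited from the convexity of $\cI_s$ and the linearity of $\flow$. The fact that $\widetilde J_u\equiv 0$ on $[\lambda_{min},\phi_\Omega]$ follows because Theorem~\ref{thm:lldmf} shows $\Prb(\phi_n/n^{d-1}\simeq\lambda)\geq\exp(-C(\lambda)n^{d-1})$ on this interval, which is negligible at speed $n^d$; strict positivity on $]\phi_\Omega,+\infty[$ comes from a direct Cramér lower bound at the bulk scale, since any $\sigma\in\cS$ with $\flow(\sigma)>\phi_\Omega$ must somewhere satisfy $|\sigma(x)|>\nu_G(n(x))/(\text{const})$ on a set of positive Lebesgue measure, forcing $U(\sigma(x))>0$ there. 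Finiteness on $[\lambda_{min},\lambda_{max}]$ with $\lambda_{max}=\sup\{\flow(\sigma):\sigma\in\cS,\ \|\sigma\|_\infty\leq M\}$ encodes the hard truncation by $M$, while the left endpoint $\lambda_{min}$ is dictated by the percolative constraint $G(\{0\})<1-p_c(d)$ as in Theorem~\ref{thm:lldmf}.

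The hardest step, by far, is the stream LDP itself. Exponential tightness and the upper bound are handled by standard subadditive/Cramér arguments, but the lower bound requires, given any target $\sigma\in\cS$, the construction of a discrete admissible stream whose local empirical distribution of $t(e)$ matches the tilted law prescribed by $\sigma(x)$ on each mesoscopic box, and whose node law holds exactly on every vertex. Any local Cramér tilt breaks the conservation of mass at the boundaries between boxes, and one must correct the mismatch by small rerouting perturbations without spoiling the capacity constraints and without paying more than $o(n^d)$ in probability. This surgery, inherited from the companion paper, is where essentially all the technical work of Theorem~\ref{thm:uldmf} is concentrated.
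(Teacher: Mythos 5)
This statement is not proved in the paper: Theorem~\ref{thm:uldmf} is stated as a result of the companion paper \cite{dembin2020large}, and the present article only cites it (see the sentence just before the theorem, and the remark after Theorem~\ref{thm:lldmf} explaining that property~\eqref{eq:imppgd} is used in \cite{dembin2020large} to establish it). There is therefore no proof here to compare against.

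That said, your sketch is consistent with the outline the authors give of the companion paper's strategy: upper deviations are of volume order because one must raise the capacities of a positive fraction of edges, so the natural object is the stream (a $d$-dimensional object), one proves an LDP for streams at speed $n^d$, and one contracts along the flow functional. You also correctly record the role of Theorem~\ref{thm:lldmf} in forcing $\widetilde J_u\equiv 0$ on $[\lambda_{min},\phi_\Omega]$ (surface-order lower tails are subexponential at speed $n^d$), and the role of the hard truncation by $M$ in producing the finite $\lambda_{max}$. But you should be explicit that this is a programme, not a proof: as you yourself note, the entire technical content of the theorem lives in the stream LDP, in particular in the lower bound, where one must approximate a target divergence-free field $\sigma$ by discrete streams satisfying the node law exactly, tilted box by box, and then repair the mass mismatches between adjacent mesoscopic boxes without violating the pointwise constraint $|\sigma_n(e)|\leq t(e)$ and while keeping the probabilistic cost $o(n^d)$. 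None of that is argued here; it is precisely the content of \cite{dembin2020large}. A genuinely self-contained proof along your lines would also need to justify the passage from the discrete to the continuous stream space (compactness of $\cS$, lower semicontinuity of $\cI_s$, continuity of $\flow$ for the relevant topology) before invoking the contraction principle, and to verify that the contracted rate function agrees with $\widetilde J_u$ as defined in \cite{dembin2020large}.
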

We refer to \cite{dembin2020large} for a precise definition of $\widetilde J_u $.
Theorems \ref{thm:lldmf} and \ref{thm:uldmf} give the full picture of large deviations of $\phi_n(\Gamma^1,\Gamma^ 2,\Omega)$. The lower large deviations are of surface order since it is enough to decrease the capacities of the edges along a surface to obtain a lower large deviations event. The upper large deviations are of volume order, to create an upper large deviations event, we need to increase the capacities of constant fraction of the edges. This is the reason why to study lower large deviations, it is natural to study cutsets that are $(d-1)$-dimensional objects, whereas to study the upper large deviations, we study streams (functions on the edges that describe how the water flows in the lattice) that are $d$-dimensional objects. Actually, theorem \ref{thm:lldmf} is used in \cite{dembin2020large} to prove theorem \ref{thm:uldmf}. Theorem \ref{thm:lldmf} justifies the fact that the lower large deviations are not of the same order as the order of the upper large deviations.

\subsection{Background} We now present the mathematical background needed in what follows. We present a maximal flow in cylinders with good subadditive properties and give a rigorous definition of the limiting objects.

\subsubsection{Probabilistic background}\label{sect:flowconstant}
Let $A$ be a non-degenerate hyperrectangle, \textit{i.e.}, a rectangle of dimension $d-1$ in $\sR^d$. Let $\vv\in\sS^{d-1}$ such that $\vv$ is not contained in an hyperplane parallel to $A$. 
Let $h>0$.
If $\vv$ is one of the two unit vectors normal to $A$, we denote by $\cyl(A,h)$ the following cylinder of height $2h$: 
$$\cyl(A,h)=\big\{\,x+t\vv:\,x\in A,\,t\in[-h,h]\,\big\}\,.$$
We have to define discretized versions of the bottom half $B'(A,h)$ and the top half $T'(A,h)$ of the boundary of the cylinder $\cyl(A,h)$,
%
that is if we denote by $z$ the center of $A$:
\begin{align}\label{eq:defT(A,h)}
T'(A,h)=\big\{\,x\in\sZ^d_n\cap \cyl(A,h)\,:\,(z-x)\cdot\vv> 0 \text{ and }\exists y\notin \cyl(A,h),\,\langle x,y\rangle\in\E^d_n\,\big\}\,,
\end{align}
\begin{align}\label{eq:defB(A,h)}
B'(A,h)=\big\{\,x\in\sZ^d_n\cap \cyl(A,h)\,:\,(z-x)\cdot\vv<0 \text{ and }\exists y\notin \cyl(A,h),\,\langle x,y\rangle\in\E^d_n\,\big\}\,,
\end{align}
where $\cdot$ denotes the standard scalar product in $\sR^d$.
We denote by $\tau_n(A,h)$ the maximal flow from the upper half part to the lower half part of the boundary of the cylinder, \textit{i.e.}, 
$$\tau_n(A,h)=\inf\left\{V(E): \, E\text{ cuts $T'(A,h)$ from $B'(A,h)$ in $\cyl(A,h)$}\right\}\,.$$
The random variable $\tau_n(A,h)$ has good subadditivity properties since minimal cutsets in adjacent cylinders can be glued together along the common side of these cylinders by adding a negligible amount of edges. Therefore, by applying ergodic subadditive theorems in the multi-parameter case, we can obtain the convergence of $\tau_n(A,h)/n^{d-1}$.
\begin{thm}[Rossignol-Théret \cite{RossignolTheretcont}]\label{thm:defnulgn}
 Let $G$ be a measure on $\sR^+$ such that $G(\{0\})<1-p_c(d)$. For any $v\in\sS^{d-1}$, there exists a constant $\nu_G(v)>0$ such that for any non-degenerate hyperrectangle $A$ normal to $\vv$, for any $h>0$, we have
 $$\lim_{n\rightarrow \infty}\frac{\tau_n(A,h)}{\cH^{d-1}(A)n^{d-1}}=\nu_G(v)\text{ a.s.}.$$
\end{thm}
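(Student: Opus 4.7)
My plan follows the classical subadditive-ergodic route adapted to flows in cylinders (as carried out in \cite{RossignolTheretcont}). First, by scaling invariance of the i.i.d.\ environment, the law of $\tau_n(A,h)$ on the rescaled lattice $\sZ^d/n$ coincides with that of the analogous maximal flow $\tau(nA, nh)$ on $\sZ^d$, so it suffices to prove almost sure convergence of $\tau(nA,nh)/(n^{d-1}\cH^{d-1}(A))$ to a direction-dependent constant. Fixing first a rational direction and axis-aligned hyperrectangles $A$ normal to $v$, the key subadditivity lemma is that when $A=A_1\sqcup A_2$ is split into two adjacent pieces, one glues minimal cutsets in $\cyl(A_1,h)$ and $\cyl(A_2,h)$ by a collar of at most $O\bigl(h\,\cH^{d-2}(\partial A_1\cap\partial A_2)\bigr)$ edges, each of capacity at most $M$. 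Feeding this into the multi-parameter subadditive ergodic theorem of Akcoglu-Krengel yields convergence of $\E[\tau(nA,nh)]/(n^{d-1}\cH^{d-1}(A))$ to some constant $\nu_G(v)\geq 0$, uniformly in the shape of $A$.

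To upgrade expectation convergence to almost sure convergence I would prove a concentration estimate. The crude bounded-differences bound $\Var(\tau)=O(n^d)$ is too weak for $d=2$; the remedy is to restrict the Efron-Stein/Talagrand differences to edges actually belonging to some minimal cutset, a family of cardinality $O(n^{d-1})$ by a Zhang-type surface bound (lemma~\ref{lem:Zhang} in the paper). This yields $\Var(\tau)=O(n^{d-1})$ and Gaussian concentration at the natural scale, so Borel-Cantelli delivers the almost sure limit along integers $n$. Independence of $h$ I would handle by projecting a minimal cutset in $\cyl(A,h)$ onto the hyperplane supporting $A$ and completing it to a cutset of $\cyl(A,h')$ by a vertical skirt along $\partial A$, which adds only $O(n^{d-2})$ edges and is thus negligible at order $n^{d-1}$. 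The extension from rational to arbitrary $v\in\sS^{d-1}$ requires either a direct tilted-cylinder subadditive argument (with careful bookkeeping of lattice approximations of tilted hyperplanes) or the observation that $v\mapsto\nu_G(v)$ already defined on rational directions is the trace of a $1$-homogeneous convex function and thereby extends continuously.

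The positivity $\nu_G(v)>0$ relies decisively on Hypothesis~\ref{hypo:G}: since $G(\{0\})<1-p_c(d)$, the edges of strictly positive capacity stochastically dominate a supercritical Bernoulli bond percolation, and by Menger's theorem any cutset separating $T'(A,h)$ from $B'(A,h)$ must intersect a linear-in-surface-area collection of vertex-disjoint paths carrying positive weight, yielding a uniform lower bound of order $n^{d-1}$. The main obstacle in the whole scheme is the concentration step: extracting the sharp $n^{d-1}$ variance scaling in low dimension hinges on localizing the Efron-Stein perturbations to edges pivotal for $\tau$, which in turn leans on the a priori surface-order bound on minimal cutsets and on a nontrivial convex-distance inequality. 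A secondary delicate point is the passage from rational to irrational directions, where naive lattice discretizations of tilted cylinders do not immediately respect the subadditive structure and some form of Wulff-type interpolation is needed.
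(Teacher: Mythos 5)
This result is quoted from \cite{RossignolTheretcont} rather than proved in the paper, so I can only compare your sketch against the approach of the cited reference. Your skeleton is right --- subadditive gluing, multi-parameter ergodic theorem, rational-to-arbitrary directions by convexity, positivity via supercritical percolation of edges with capacity bounded below --- but two steps go wrong. The ergodic step is misapplied: the multi-parameter subadditive ergodic theorem of Akcoglu--Krengel already gives \emph{almost sure} convergence, not merely convergence of expectations, so the separate concentration step you propose is unnecessary for the law of large numbers. It is also unsound as proposed: $\Var(\tau)=O(n^{d-1})$ does not follow from restricting Efron--Stein to a Zhang-type surface family, because the set of edges pivotal for $\tau$ is not a fixed $O(n^{d-1})$-size family (minimal cutsets can be exponentially numerous, and resampling one edge may switch which cutset is optimal), and sublinear-variance bounds of this quality for the maximal flow are a hard open problem in their own right rather than a corollary of the surface bound. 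If you really wanted a concentration-plus-Borel--Cantelli route, the crude $O(n^d)$ bounded-difference bound together with a polynomial subsequence argument would already suffice; the sharp order should not be invoked.

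The second and more fundamental gap is the moment hypothesis. The theorem as stated assumes only that $G$ is a measure on $\sR^+$ with $G(\{0\})<1-p_c(d)$, with no control on the tail. That is precisely the point of \cite{RossignolTheretcont}: it establishes the flow constant with no moment assumption at all, whereas earlier proofs (Rossignol--Th\'eret 2010, and Kesten in $d=3$) required moments to feed the subadditive ergodic theorem and to bound the cost of the gluing collar. Your argument uses the uniform capacity bound $M$ for that collar and implicitly needs $\E[\tau_n]<\infty$, neither of which is available under the hypotheses as written. Under the paper's standing Hypothesis~\ref{hypo:G} the gap closes, but then you are proving a strictly weaker statement than the one being quoted. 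The moment-free argument of \cite{RossignolTheretcont} proceeds by truncating $G$ at a large level, applying the subadditive machinery to the truncated law, and then comparing truncated and un-truncated flows uniformly in $n$ using Zhang's surface-order bound on minimal cutsets; that truncation-and-comparison mechanism is the real content of the result, and your sketch does not touch it.
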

The function $\nu_G$ is called the flow constant. This is the analogue of the time constant defined upon the geodesics in the standard first passage percolation model where the random variables $(t(e))_e$ represent passage times.
\begin{rk} If $G(\{0\})\geq 1-p_c(d)$, the flow constant is null (see Zhang \cite{Zhang}). It follows that $\phi_\Omega=0$ and there is no interest of studying lower large deviations.
\end{rk}

We present here some result on upper large deviations for the random variable $\tau_n(A,h)$ that will be useful in what follows. It states that the upper large deviation for the random variable $\tau_n(A,h)$ are of volume order.
\begin{thm}[Théret \cite{TheretUppertau14}] \label{thm:theretuppertau} Let $\vv$ be a unit vector, $A$ be an hyperrectangle orthogonal to $\vv$ and $h>0$.
Let us assume that $G$ satisfies hypothesis \ref{hypo:G}. For every $\lambda>\nu_G(v)$, we have
\[\liminf_{n\rightarrow\infty} -\frac{1}{\cH^{d-1}(A)n^d}\log\Prb\left(\frac{\tau_n(A,h)}{\cH^{d-1}(A)n^{d-1}}\geq \lambda\right) >0\,.\]
\end{thm}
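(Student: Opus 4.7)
The plan is to obtain volume-order decay by stacking many pairwise edge-disjoint cutsets of $\cyl(A,h)$ along the flow direction, using independence of the capacities to turn a Cramér-type bound on each cutset into a volume-rate bound globally.

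By max-flow min-cut, for any cutset $C$ separating $T'(A,h)$ from $B'(A,h)$, $\tau_n(A,h)\le V(C)$. Applied to flat cutsets $C_z$ consisting of the lattice edges of $\cyl(A,h)$ crossing the hyperplane at signed height $z$ (orthogonal to $\vv$), this gives
\[
\{\tau_n(A,h)\ge \lambda\cH^{d-1}(A)n^{d-1}\} \subseteq \bigcap_z \{V(C_z)\ge \lambda\cH^{d-1}(A)n^{d-1}\}.
\]
Taking the $K_n\sim 2hn$ cutsets at the distinct lattice heights of $\cyl(A,h)$ makes $C_{z_1},\ldots,C_{z_{K_n}}$ pairwise edge-disjoint, so independence of $(t(e))_e$ yields
\[
\Prb(\tau_n(A,h)\ge \lambda\cH^{d-1}(A)n^{d-1}) \le \prod_{i=1}^{K_n} \Prb(V(C_{z_i})\ge \lambda\cH^{d-1}(A)n^{d-1}).
\]
Each $V(C_{z_i})$ is a sum of $N_n\sim \cH^{d-1}(A)n^{d-1}$ i.i.d.\ variables of law $G$ bounded by $M$, so Cramér's theorem gives $\Prb(V(C_{z_i})\ge \lambda N_n) \le \exp(-I_G(\lambda) N_n(1+o(1)))$ with a Cramér rate $I_G(\lambda)>0$ whenever $\lambda>\E_G[t]$. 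Multiplying over the $K_n\sim 2hn$ independent factors yields the desired $\exp(-c(\lambda) n^d)$ in that regime.

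The main obstacle is the intermediate range $\nu_G(v)<\lambda\le\E_G[t]$, where every flat cutset has typical capacity exceeding $\lambda\cH^{d-1}(A)n^{d-1}$ and the Cramér bound above is vacuous. Here one must use smarter, locally optimised cuts. I would slice $\cyl(A,h)$ into $O(1)$ constant-height sub-cylinders $\cyl(A,h_i)$ stacked along $\vv$, use $\tau_n(A,h)\le \min_i \tau_n^{(i)}$, and for each sub-cylinder apply a Talagrand-type concentration around its mean (the certifying set being a min-cut of size $\sim n^{d-1}$, with bounded increments $M$) to obtain $\Prb(\tau_n^{(i)}\ge \lambda\cH^{d-1}(A)n^{d-1}) \le \exp(-c_1(\lambda) n^{d-1})$ for any $\lambda>\nu_G(v)$; multiplication over the $O(1)$ independent slabs only yields a surface-order bound, however. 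To promote this to the full $n^d$ rate, one either nests a second layer of flat-cut independence inside each slab, or (more cleanly) performs an exponential tilt of $G$ to $G_\theta$ with $\nu_{G_\theta}(v)=\lambda$: the Radon--Nikodym cost on the $\Theta(n^d)$ edges of $\cyl(A,h)$ contributes the factor $\exp(-c(\lambda) n^d)$, while under the tilted law the event $\{\tau_n\ge\lambda\cH^{d-1}(A)n^{d-1}\}$ becomes typical by the law of large numbers for $\tau_n$. Making the rate strictly positive uniformly as $\lambda\downarrow\nu_G(v)^+$, and in particular ensuring that $\nu_{G_\theta}$ depends continuously on $\theta$, is the technical heart of the proof.
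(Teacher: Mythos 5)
Your flat-cut argument is sound in its regime, and you correctly diagnose that its Cramér bound is vacuous once $\lambda$ drops below the mean capacity of a flat cut (which, for an arbitrary direction $\vv$, is $\E_G[t]\,\|\vv\|_1$, not $\E_G[t]$ — the flat cut in a tilted cylinder is denser than the cross-section by a factor $\|\vv\|_1$; a minor slip). The genuine difficulty of the theorem is exactly the range $\nu_G(\vv)<\lambda\le \E_G[t]\|\vv\|_1$, and that is where your proposal has a gap.

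The exponential-tilting step, as you describe it, does not give an upper bound. Tilting $G$ to $G_\theta$ and writing
$\Prb_G(A)=\E_{G_\theta}\bigl[\prod_e \tfrac{dG}{dG_\theta}(t(e))\,\ind_A\bigr]$
is an identity, and the observation that the Radon--Nikodym factor is $\approx \exp(-c n^d)$ on the $G_\theta$-typical configurations, while $A$ is $G_\theta$-typical, is the standard argument for a large-deviation \emph{lower} bound on $\Prb_G(A)$, not an upper bound. To turn it into an upper bound one would have to dominate the Radon--Nikodym factor on all of $A$, which fails: on $A$ there are configurations where $\sum_e t(e)$ is small (the product $Z_\theta^{|E|}e^{-\theta\sum_e t(e)}$ is then huge). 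In fact the event $\{\tau_n\ge\lambda\cH^{d-1}(A)n^{d-1}\}$ does \emph{not} force the total edge-weight to be abnormally large at volume order — raising the capacities only in a thin slab already raises $\tau_n$. So there is no Chernoff-type handle on a global sum, and the tilting heuristic collapses. Your fallback, a ``second layer of flat-cut independence inside each slab,'' is not fleshed out and runs into the same obstruction: the flat cuts inside a slab again have mean $\E_G[t]\|\vv\|_1 \cH^{d-1}(A)n^{d-1}$, so no decay is obtained for $\lambda$ in the intermediate range.

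The standard resolution (and, up to presentation, what Théret does in the cited paper) is to slice the cylinder into $\sim n/K$ sub-cylinders of constant \emph{lattice} height $K$ (i.e.\ continuous height $K/n$), not constant continuous height. You then have $\sim n$ independent factors, and for each slab you use surface-order concentration of the slab max-flow $\tau^{(i)}_n$ to get $\Prb(\tau^{(i)}_n\ge \lambda\cH^{d-1}(A)n^{d-1})\le \exp(-c(K,\lambda)n^{d-1})$ once $\lambda$ exceeds the \emph{slab} flow constant $\nu^{(K)}_G(\vv)$; multiplying over $\sim n/K$ slabs yields $\exp(-c n^d)$. The missing lemma — and it is genuinely the technical heart you were looking for — is that the slab flow constant $\nu^{(K)}_G(\vv)$ decreases to $\nu_G(\vv)$ as $K\to\infty$, so that for any fixed $\lambda>\nu_G(\vv)$ a suitable $K$ can be chosen. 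Without this convergence the concentration bound has nothing to bite on in the intermediate regime, which is precisely where your proof stalls.
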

\subsubsection{Some mathematical tools and definitions}\label{sect:tools}
Let us first recall some mathematical definitions.
For a subset $X$ of $\sR^d$, we denote by $\overline{X}$ the closure of $X$, by $\mathring{X}$ the interior of $X$. Let $a\in\sR^d$, the set $a+X$ corresponds to the following subset of $\sR^d$
$$a+X=\{a+x:\,x\in X\}\,.$$
For $r>0$, the $r$-neighborhood $\cV_i(X,r)$ of $X$ for the distance $d_i$, that can be Euclidean distance if $i=2$ or the $L^\infty$-distance if $i=\infty$, is defined by
$$\cV_i(X,r)=\left\{\,y\in\sR^d:\,d_i(y,X)<r\,\right\}\,.$$
We denote by $B(x,r)$ the closed ball centered at $x\in\sR^d$ of radius $r>0$. Let $v\in\sS^{d-1}$. We define the upper half ball $B^+(x,r,v)$ and the lower half ball $B^-(x,r,v)$ as follows:
$$B^+(x,r,v)=\left\{y\in B(x,r): (y-x)\cdot v\geq 0\right\}\qquad\text{and}\qquad B^-(x,r,v)=\left\{y\in B(x,r): (y-x)\cdot v< 0\right\}\,.$$
We also define the disc $\disc(x,r,v)$ centered at $x$ of radius $r$ and of normal vector $v$ as follows:
\[\disc(x,r,v)=\left\{y\in B(x,r): (y-x)\cdot v =0\right\}\,.\]
For $n\geq 1$, we define the discrete interior upper boundary $\partial ^+_n B(x,r,v)$ and the discrete interior lower boundary $\partial ^-_n B(x,r,v)$ as follows:
\[\partial ^+_n B(x,r,v)=\left\{y\in B(x,r)\cap\sZ_n^d:\exists z\in\sZ_n^d\setminus B(x,r), (z-x)\cdot v\geq 0\text{ and } \|z-y\|_1=\frac{1}{n}\right\}\,\]
and
\[\partial ^-_n B(x,r,v)=\left\{y\in B(x,r)\cap\sZ_n^d:\exists z\in\sZ_n^d\setminus B(x,r), (z-x)\cdot v<0\text{ and } \|z-y\|_1=\frac{1}{n}\right\}\,.\]
For $U\subset \sZ_n^d$, we define its edge boundary $\partial ^e U$ by
\begin{align}\label{eq:defpartiale}
\partial ^e U=\left\{\,\langle x, y \rangle\in \E_n^d:\, x\in U,\,y\notin U\,\right\}\,.
\end{align}
For $F\in\cB(\sR^d)$ and $\delta>0$, we denote by $\cB_{\dis}(F,\delta)$ the closed ball centered at $F$ or radius $\delta$ for the topology associated with the distance $\dis$, \textit{i.e.},
\[B_{\dis}(F,\delta)=\left\{E\in\cB(\sR^d): \,\cL^d(F\Delta E)\leq \delta\right\}\,.\]
Let $\sC_b(\sR^d,\sR)$ be the set of continuous bounded functions from $\sR^d$ to $\sR$.
We denote by $\sC^k_c(A,B)$ for $A\subset \sR^p$ and $B\subset\sR^q$, the set of functions of class $\sC^k$ defined on $\sR^p$, that takes values in $B$ and whose domain is included in a compact subset of $A$. The set of functions of bounded variations in $\Omega$, denoted by $BV(\Omega)$, is the set of all functions $u\in L^1(\Omega\rightarrow\sR,\cL^d)$ such that
\[\sup\left\{\int_\Omega\diver h \,d\cL^d:\,  h\in\sC_c^\infty(\Omega,\sR^d),\,\forall x\in\Omega \quad h(x)\in B(0,1)\right\}<\infty\,.\]

\noindent{\bf Rectifiability and Minkowski content.}
We will need the following proposition that enables to relate the measure of a neighborhood of a set $E$ with its Hausdorff measure. 
Let $p\geq 1$. Let $M$ be a set such that $\cH^p (M)<\infty$. We say that a set $M$ is $p$-rectifiable if there exists countably many Lipschitz maps $f_i:\sR^p \rightarrow \sR^d$ such that
\[\cH^p \left(M\setminus \bigcup _{i\in\sN} f_i(\sR^p)\right)=0\,.\] 
\begin{prop}\label{prop:minkowski}
Let $p\geq 1$. Let $M$ be a subset of $\sR^d$ that is $p$-rectifiable. Then we have
$$\lim_{r\rightarrow 0 }\frac{ \cL^d(\cV_2(M,r))}{\alpha_{d-p} r^{d-p}}= \cH^{p}(M)\,$$
where $\alpha_{d-p}$ denote the volume of the unit ball in $\sR^{d-p}$.
In particular, for $p=d-1$, we have
$$\lim_{r\rightarrow 0 }\frac{ \cL^d(\cV_2( M,r))}{2r}= \cH^{d-1}(M)\,.$$
\end{prop}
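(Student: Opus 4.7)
The result is the classical identification of the Minkowski content of a rectifiable set with its Hausdorff measure (Federer, \emph{Geometric Measure Theory}, 3.2.39). I would either invoke this directly or sketch the proof along the following lines. The overall strategy is a reduction to pieces on which the Minkowski content can be computed pointwise, followed by a covering argument that transfers the local identity to the global one.

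\textbf{Step 1: Reduction to $C^1$ pieces.} By definition of $p$-rectifiability, there are Lipschitz maps $f_i:\sR^p\to\sR^d$ with $\cH^p(M\setminus\bigcup_i f_i(\sR^p))=0$. Using Rademacher's theorem together with Whitney's extension theorem, each $f_i(\sR^p)$ can be covered, up to an $\cH^p$-null set, by countably many $C^1$-embedded $p$-dimensional submanifolds $N_{i,j}$ of $\sR^d$. Discarding overlaps, one may assume that $M$ is the disjoint union of a $\cH^p$-null set $N_0$ and countably many pairwise disjoint Borel pieces $M_k\subset N_{i_k,j_k}$ with $\sum_k\cH^p(M_k)=\cH^p(M)<\infty$.

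\textbf{Step 2: Local computation.} For each $M_k$, the tubular neighborhood theorem applied to the $C^1$-submanifold $N_{i_k,j_k}$ gives, for $r$ small enough (compared to the reach of a compact portion of $N_{i_k,j_k}$), an explicit diffeomorphic parametrization of $\cV_2(M_k,r)$ by the normal bundle of radius $r$. Integrating in fibers and using that the Jacobian of the normal exponential map tends to $1$ as $r\to 0$, I obtain
\begin{equation*}
\lim_{r\to 0}\frac{\cL^d(\cV_2(M_k,r))}{\alpha_{d-p}r^{d-p}}=\cH^p(M_k).
\end{equation*}

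\textbf{Step 3: Passage to the countable union.} The upper bound
\begin{equation*}
\limsup_{r\to 0}\frac{\cL^d(\cV_2(M,r))}{\alpha_{d-p}r^{d-p}}\leq\cH^p(M)
\end{equation*}
follows from $\cV_2(M,r)\subset\bigcup_k\cV_2(M_k,r)\cup\cV_2(N_0,r)$, subadditivity of $\cL^d$, Step~2, and the fact that $\cH^p(N_0)=0$ implies $\cL^d(\cV_2(N_0,r))=o(r^{d-p})$ (this last point is Federer's density estimate for $\cH^p$-null sets, proved by a Vitali covering by small balls). Truncating the union at a finite $K$ gives $\cL^d(\cV_2(M,r))\geq\cL^d(\bigcup_{k\leq K}\cV_2(M_k,r))$; for fixed $K$ the pieces are at positive mutual distance (after possibly shrinking them by compact exhaustion), so for $r$ small the neighborhoods are disjoint and Step~2 yields $\liminf\geq\sum_{k\leq K}\cH^p(M_k)$. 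Letting $K\to\infty$ gives the matching lower bound. The specialization to $p=d-1$ uses $\alpha_1=2$.

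\textbf{Main obstacle.} The delicate point is the $o(r^{d-p})$ control on $\cV_2(N_0,r)$ for the $\cH^p$-null remainder, and, symmetrically, the disjointness of tubes for finitely many pieces: both rely on carefully chosen Vitali-type coverings and on the local reach of the auxiliary $C^1$ manifolds. Everything else is routine once Step~1 is performed; the cleanest route in the paper is simply to cite Federer's theorem.
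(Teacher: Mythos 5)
The paper does not give a proof; it simply cites Definition 3.2.37 and Theorem 3.2.39 of Federer's \emph{Geometric Measure Theory}. Your primary suggestion, to invoke that theorem directly, coincides with the paper's approach, so that part is fine.

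The accompanying sketch, however, has a genuine gap in Step~3, and it is worth seeing exactly where. You assert that $\cH^p(N_0)=0$ implies $\cL^d(\cV_2(N_0,r))=o(r^{d-p})$, calling this a density estimate ``proved by a Vitali covering by small balls.'' This is false for general $\cH^p$-null sets: take $p=1$, $d=2$, and $N_0=(\mathbb{Q}\cap[0,1])\times\{0\}$. Then $N_0$ is $1$-rectifiable and $\cH^1(N_0)=0$, yet $\cV_2(N_0,r)\supset[0,1]\times(-r,r)$, so $\cL^2(\cV_2(N_0,r))\geq 2r$ and the $1$-dimensional upper Minkowski content is at least $1$. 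The statement becomes true once $N_0$ is closed (or compact), but in that form it is itself a corollary of Federer~3.2.39, so using it as an ingredient in a self-contained proof of 3.2.39 is circular. Federer's actual argument disposes of the null remainder through the structure theorem and a compact exhaustion, and this is exactly where the closedness hypothesis in 3.2.39 does real work. The interchange of $\limsup_r$ with the infinite sum over $k$ in the same step is likewise not automatic and also needs a truncation plus compactness argument rather than bare subadditivity.

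Incidentally, the same missing hypothesis is already absorbed, silently, into the paper's statement of the proposition: Federer~3.2.39 is stated for \emph{closed} $(\cH^p,p)$-rectifiable sets of finite $\cH^p$-measure, while the proposition as written only requires $\cH^p(M)<\infty$ and $p$-rectifiability, and the example above shows that this is too weak. The paper's actual uses involve compact surfaces (spheres, cylinders, pieces of discs) or closures carrying the same $\cH^{d-1}$-mass, so no harm results there, but your Step~3 is precisely where the omitted closedness bites.
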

\noindent  This proposition is a consequence of the existence of the $(d-1)$-dimensional Minkowski content. We refer to Definition 3.2.37 and Theorem 3.2.39 in \cite{FED}.\newline

\noindent {\bf Sets of finite perimeter and surface energy.}
The perimeter of a Borel set $E$ of $\sR^d$ in an open set $\Omega$ is defined as 
\begin{align}\label{eq:defper}
\cP(E,\Omega)=\sup \left\{\int _E \diver f(x)\,d\cL^d(x):\, f\in C^\infty_c(\Omega,B(0,1))\right\}\,, 
\end{align}
where $C^\infty_c(\Omega,B(0,1))$ is the set of the functions of class $\sC^\infty$ from $\sR^d$ to $B(0,1)$ having a compact support included in $\Omega$, and $\diver$ is the usual divergence operator. The perimeter $\cP(E)$ of $E$ is defined as $\cP(E,\sR^d)$. The topological boundary of $E$ is denoted by $\partial E$.  \newline

\noindent {\bf The reduced boundary.}
For $E$ a set of finite perimeter, we denote by $\chi_E$ its characteristic function. The distributional derivative $\nabla \chi_E$ of $\chi_E$  is a vector Radon measure and $\cP(E,\Omega)=\|\nabla \chi_E\|(\Omega)$ where 
$\|\nabla \chi_E\|$ is the total variation measure of $\nabla\chi_E$.
The reduced boundary $\partial ^* E$ of $E$ is a subset of $\partial E$ such that, at each point $x$ of $ \partial ^* E$, it is possible to define a normal vector $n_E(x)$ to $E$ in a measure-theoretic sense, that is points such that
$$\forall r>0\qquad \|\nabla \chi_E\|(B(x,r))>0$$
and
$$\lim_{r\rightarrow 0}-\frac{\nabla \chi_E(B(x,r))}{\|\nabla \chi_E\|(B(x,r))}=n_E(x)\,.$$
For a point $x\in\partial ^* E$, we have
\begin{align}\label{eq:propnormvec}
\lim_{r\rightarrow 0}\frac{1}{r^d}\cL^d((E\cap B(x,r))\Delta B^-(x,r,n_E(x)))=0\,,
\end{align}
and for $\cH^{d-1}$ almost every $x$ in $\partial^* E$,
\begin{align}\label{eq:propsurfboule}
\lim_{r\rightarrow 0}\frac{1}{\alpha_{d-1}r^{d-1}}\cH^{d-1}(\partial ^* E\cap B(x,r))=1\,
\end{align}
where $\alpha_{d-1}$ is the volume of the unit ball in $\sR^{d-1}$.
Moreover, we have
$$\forall A\in\cB(\sR^d)\qquad \|\nabla \chi_E\|(A)=\cH^{d-1}(\partial ^* E\cap A)\,.$$
By De Giorgi's structure theorem (see for instance theorem 15.9 in \cite{maggi_2012}), the reduced boundary is $(d-1)$-rectifiable for any set of finite perimeter. This result will enable to apply proposition \ref{prop:minkowski} to the reduced boundary. In what follows, we won't recall this result and we will use proposition \ref{prop:minkowski} without justification.
 
Let $\beta>0$. We define the set $\sC_\beta$ as the sets of Borelian subsets of $\Omega$ of perimeter less than $\beta$
\begin{align}\label{eq:defcbeta}
\sC_\beta =\Big\{\,F\in\cB(\sR^d) : F\subset \Omega,\,\cP(F, \Omega)\leq \beta\,\Big\}
\end{align}
endowed with the topology associated to the distance $\dis$. For this topology, the set $\sC_\beta$ is compact.

The following little lemma will appear several times in what follows. We refer to \cite{Cerf:StFlour} for a proof of this lemma.
\begin{lem}[Lemma 6.7 in \cite{Cerf:StFlour}] \label{lem:estimeanalyse}
Let $f_1,\dots,f_r$ be $r$ non-negative functions defined on $]0,1[$. Then,
$$\limsup_{\ep\rightarrow 0}\ep\log \left(\sum_{i=1}^rf_i(\ep)\right)=\max_{1\leq i\leq r}\limsup_{\ep\rightarrow0}\ep\log f_i(\ep)\,.$$
\end{lem}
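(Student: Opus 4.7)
The plan is to prove the standard ``principle of the largest term'' via two matching inequalities. Writing $L_i := \limsup_{\ep \to 0} \ep \log f_i(\ep) \in [-\infty, +\infty]$ and $L := \max_{1 \le i \le r} L_i$, I aim to show that $\limsup_{\ep \to 0} \ep \log \sum_{i=1}^r f_i(\ep)$ equals $L$.

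The lower bound $\ge L$ will be immediate: since $\sum_{j=1}^r f_j(\ep) \ge f_i(\ep)$ for each $i$, passing to $\ep \log$ (with the convention $\log 0 = -\infty$) and then to $\limsup$ gives $\limsup_{\ep\to 0} \ep \log \sum_j f_j(\ep) \ge L_i$ for every $i$; maximizing over $i$ finishes.

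For the upper bound $\le L$, I would fix $\delta > 0$. By definition of $\limsup$, for each $i$ there exists $\ep_i \in \,]0,1[\,$ with $\ep \log f_i(\ep) \le L_i + \delta \le L + \delta$ on $\,]0, \ep_i[$, hence $f_i(\ep) \le e^{(L+\delta)/\ep}$ there. Taking $\ep_0 = \min_i \ep_i > 0$, for $\ep \in \,]0, \ep_0[\,$ I get $\sum_i f_i(\ep) \le r\, e^{(L+\delta)/\ep}$, whence $\ep \log \sum_i f_i(\ep) \le \ep \log r + L + \delta$. Letting $\ep \to 0$ kills the $\ep \log r$ term, yielding $\limsup \le L + \delta$; then I send $\delta \to 0$.

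The only mild subtlety lies in the degenerate cases, namely when some $f_i$ vanishes for small $\ep$ or when $L = -\infty$. Both are handled transparently: the first by the extended-real convention $\log 0 = -\infty$, and the second by replacing $L + \delta$ with an arbitrary large negative constant $-C$ in the argument above and sending $C \to +\infty$ at the end. There is no real obstacle; the lemma is purely analytic and hinges only on the elementary estimate $\sum_{i=1}^r a_i \le r \max_i a_i$, which trades a sum of $r$ terms for a single term at the cost of a factor of $r$ that vanishes on the exponential scale $\ep \log(\cdot)$.
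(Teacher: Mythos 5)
Your proof is correct and is precisely the standard ``principle of the largest term'' argument; the paper itself gives no proof but simply cites Lemma 6.7 of \cite{Cerf:StFlour}, whose proof is the same two-sided estimate you present. The only point worth flagging is that your upper-bound argument implicitly assumes $L<+\infty$ (so that $L+\delta$ is a genuine finite bound), but that case is trivially true, and you correctly handle $L=-\infty$, so nothing is missing.
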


\subsection{State of the art}
\subsubsection{Law of large numbers for minimal cutset in a domain}
We work here with the same environment as in section \ref{section:env}. 
For any Borelian set $F\subset \Omega$ such that $\cP(F,\Omega)<\infty$, we define its capacity $\cI_\Omega(F)$ as follows
\[\cI_\Omega(F)=\int_{\Omega\cap\partial ^* F}\nu_G(n_F(x))d\cH^{d-1}(x)+\int_{\Gamma^2\cap\partial ^* F}\nu_G(n_F(x))d\cH^{d-1}(x)+ \int_{\Gamma^1\cap \partial ^*(\Omega\setminus F)}\nu_G(n_\Omega(x))d\cH^{d-1}(x)\,.\]
Note that by theorem \ref{thm:defnulgn}, the minimal capacity properly renormalized in a cylinder centered at a point $x\in\partial ^* F$ in the direction $n_F(x)$ in the lattice $(\sZ_n^d,\E_n^d)$ is $\nu_G(n_F(x))$. The capacity $\cI_\Omega(F)$ corresponds to $\capa(F,f)$ where $f(x)=\nu_G(n_F(x))$ for $x\in \partial ^* F$ and $f(x)=\nu_G(n_\Omega(x))$ for $x \in \Gamma^1\cap \partial ^*(\Omega\setminus F)$. 
We can interpret the capacity $\cI_\Omega(F)$ as the capacity of the continuous cutset $\fF$. We can prove that, almost surely, there exist cutsets in $\sC_n(\Gamma^1,\Gamma^2,\Omega)$ localized in some sense close to $\fF$ and of capacity of order $\cI_\Omega(F) n ^{d-1}$.
Denote by $\phi_\Omega$ the minimal continuous capacity, \textit{i.e.},
\begin{align}\label{eq:def:phiomega}
\phi_\Omega=\inf\left\{\cI_\Omega(F): \,F\subset\Omega,\,\cP(F,\Omega)<\infty\right\}\,.
\end{align}
Let us denote by $\Sigma^a$ the set of continuous cutsets that achieves $\phi_\Omega$
\[\Sigma ^ a=\left\{F\subset \Omega: \,\cP(F,\Omega)<\infty,\, \cI_\Omega(F)=\phi_\Omega\right\}\,.\]

In \cite{CT1}, Cerf and Théret proved a law of large numbers for the maximal flow and the minimal cutset in the domain $\Omega$. The minimal cutset converges in some sense towards $\Sigma^a$. 

\begin{thm}\label{thm:CerfTheret}[Cerf-Théret \cite{CT1}] Let $G$ that satisfies hypothesis \ref{hypo:G} and such that $G(\{0\})<1-p_c(d)$ (to ensure that $\nu_G$ is a norm). Let $(\Omega,\Gamma^1,\Gamma^2)$ that satisfies hypothesis \ref{hypo:omega}. Then the sequence $(\cE_n^{min})_{n\geq 1}$ (we recall that $\cE_n^{min}\in\sC_n(0)$) converges  almost surely for the distance $\dis$ towards the set $\Sigma^a$, that is,
\[a.s.,\,\quad \lim_{n\rightarrow\infty}\inf_{F\in\Sigma^a}\dis(\bR(\cE_n^{min}),F)=0\,.\]
Moreover, we have
\[\lim_{n\rightarrow\infty}\frac{\phi_n(\Gamma^1,\Gamma^2, \Omega)}{n^{d-1}}=\phi_\Omega>0\,.\]
\end{thm}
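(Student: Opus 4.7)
\emph{Strategy.} The theorem has two components: a law of large numbers $\phi_n/n^{d-1} \to \phi_\Omega$, and a shape-type convergence of $\bR(\cE_n^{min})$ to $\Sigma^a$ in $\dis$. My plan is to establish matching upper and lower bounds on $\phi_n/n^{d-1}$, and then promote these bounds to the shape statement by a compactness argument in the space $(\sC_\beta, \dis)$. The only probabilistic inputs required are Theorem \ref{thm:defnulgn} (convergence of the slab flow towards $\nu_G$), Theorem \ref{thm:theretuppertau} (volume-order upper tails for $\tau_n$), and a surface-order bound on $|\cE_n^{min}|$ (Lemma \ref{lem:Zhang}); everything else is geometric.

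\emph{Upper bound.} Fix $F \subset \Omega$ with $\cI_\Omega(F) < \infty$. Since $\partial^* F$ is $(d-1)$-rectifiable, Besicovitch's covering theorem yields, for any $\eta > 0$, a finite disjoint family of discs $(\disc(x_i, r_i, v_i))_{1 \le i \le N}$ covering $\fF$ up to $\cH^{d-1}$-measure $\eta$, with $v_i = n_F(x_i)$ in $\Omega$ and $v_i = n_\Omega(x_i)$ on $\partial \Omega$. By \eqref{eq:propnormvec} we may further assume that $F \cap B(x_i, r_i)$ differs from the half-ball $B^-(x_i, r_i, v_i)$ by Lebesgue measure $o(r_i^d)$. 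In each thin cylinder $\cyl(\disc(x_i, r_i, v_i), h)$, Theorem \ref{thm:defnulgn} produces, a.s.\ for $n$ large, a cutset separating top from bottom of capacity at most $(\nu_G(v_i) + \eta)\, \alpha_{d-1} r_i^{d-1} n^{d-1}$. Gluing these local cutsets along the lateral faces of adjacent cylinders and patching the uncovered residue of $\fF$ with all edges lying in its $1/n$-neighbourhood costs at most $C\eta M n^{d-1}$ extra capacity. The output is a global element of $\sC_n(\Gamma^1, \Gamma^2, \Omega)$, so
\[\limsup_{n \to \infty} \frac{\phi_n(\Gamma^1, \Gamma^2, \Omega)}{n^{d-1}} \le \cI_\Omega(F) + C\eta \quad \text{a.s.}\]
Letting $\eta \to 0$ and taking the infimum over $F$ yields $\limsup \phi_n/n^{d-1} \le \phi_\Omega$ a.s.

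\emph{Lower bound and shape theorem.} By Lemma \ref{lem:Zhang}, a.s.\ there exists $\beta > 0$ such that $\cP(\bR(\cE_n^{min}), \Omega) \le \beta$ for all sufficiently large $n$, so $\bR(\cE_n^{min}) \in \sC_\beta$, a set which is compact for $\dis$. Extract a subsequence $\bR(\cE_{n_k}^{min}) \to F$ in $\dis$. To show $F \in \Sigma^a$ it suffices to bound $V(\cE_{n_k}^{min})$ from below in terms of $\cI_\Omega(F)$: cover $\fF$ up to $\cH^{d-1}$-measure $\eta$ by a disjoint family $(\disc(x_i, r_i, v_i))$ as above. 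In each thin cylinder the $L^1$-convergence $\bR(\cE_{n_k}^{min}) \to F$ together with \eqref{eq:propnormvec} implies that, for $k$ large, $\bR(\cE_{n_k}^{min})$ approximates the half-cylinder with normal $v_i$; a Fubini slicing in the height coordinate then shows that $\cE_{n_k}^{min}$ restricted to the cylinder cuts its top from its bottom. Its local capacity is thus at least $\tau_{n_k}(\disc(x_i, r_i, v_i), h)$, which by Theorem \ref{thm:defnulgn} is asymptotic to $\nu_G(v_i) \alpha_{d-1} r_i^{d-1} n_k^{d-1}$. Summing over the disjoint family and sending $\eta \to 0$ gives
\[\liminf_{k \to \infty} \frac{V(\cE_{n_k}^{min})}{n_k^{d-1}} \ge \cI_\Omega(F) \ge \phi_\Omega.\]
Combining with the upper bound forces equality, so $\cI_\Omega(F) = \phi_\Omega$ and $F \in \Sigma^a$. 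Since this holds for every subsequential limit, $\inf_{F \in \Sigma^a} \dis(\bR(\cE_n^{min}), F) \to 0$ a.s.

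\emph{Main obstacle.} The delicate step is the localization in the lower bound: one must argue that the intersection of a \emph{global} combinatorial cutset $\cE_n^{min}$ with a small cylinder around $x \in \partial^* F$ is itself a cutset of that cylinder in the sense used to define $\tau_n$. The $L^1$-convergence afforded by $\dis$ does not directly imply such a path-based separation property, because cutset-ness is a measure-zero condition. The remedy is Fubini slicing at a generic height together with \eqref{eq:propnormvec}, which picks out a slice on which $\bR(\cE_n^{min})$ agrees with the correct half-space up to a $\cL^{d-1}$-null set, making the combinatorial separation on that slice automatic. A parallel geometric subtlety appears in the upper bound near $\partial \Omega$, where cylinders straddling $\Gamma$ must be glued; here the piecewise $\sC^1$ regularity from Hypothesis \ref{hypo:omega} provides the local flatness needed to control the overhead. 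The rest — subadditivity of $\tau_n$, Besicovitch covering, and bounding the perimeter of $\bR(\cE_n^{min})$ via Lemma \ref{lem:Zhang} — is standard.
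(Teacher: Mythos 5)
Your lower bound and shape-theorem steps track what the paper attributes to Cerf--Théret \cite{CT1}: compactness of $\bR(\cE_n^{min})$ in $(\sC_\beta,\dis)$ via Zhang's estimate, then $\liminf V(\cE_{n_k}^{min})/n_k^{d-1} \ge \cI_\Omega(F) \ge \phi_\Omega$ by local application of Theorem \ref{thm:defnulgn}. Two small imprecisions: for a disjoint cover of a rectifiable set by balls you want the Vitali covering theorem (Theorem \ref{thm:vitali}), not Besicovitch; and the localization remedy is not really ``a slice on which things agree a.e.''\ but a pigeonhole choice of a height $\gamma_0$ at which the number of misplaced vertices is small enough that a low-capacity patch closes the local cutset (cf.\ Lemma \ref{lem: Gxrv}), so the local capacity is $\tau_{n_k}(\cdot)$ only up to an $O(\sqrt\delta)\,r^{d-1}n^{d-1}$ correction, not exactly at least $\tau_{n_k}$.

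Your upper bound, however, diverges from the cited proof. The paper states explicitly that Cerf--Théret obtained the remaining inequality $\limsup \phi_n/n^{d-1}\le\phi_\Omega$ by ``working with maximal streams (which is the dual object associated with minimal cutset),'' not by the direct cutset-gluing you propose. The cutset route is in principle viable --- Section \ref{sec:lb} of the present paper carries out an elaborate version of it --- but your sketch misidentifies the main obstacle for it. It is not primarily the gluing near $\partial\Omega$. The real issue is that a discrete path from $\Gamma^1_n$ to $\Gamma^2_n$ crosses the \emph{topological} boundary $\partial F$, while $\fF$ and your patching term are defined from the \emph{reduced} boundary $\partial^* F$. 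Since $\partial F$ can be strictly larger than $\overline{\partial^* F}$, the union of local cutsets and patching edges need not be a $(\Gamma^1_n,\Gamma^2_n)$-cutset at all: a path may cross $\partial F$ far from $\fF$ and from every disc. This is precisely the difficulty the paper flags in Step 4 of Proposition \ref{prop:admissible} and resolves by first modifying $F$ on a Lebesgue-null set so that $\partial F = \overline{\partial^* F}$ (a free modification, invisible to $\cI_\Omega$ and to $\dis$), and then running a careful case analysis on where a crossing edge lands (inside a ball, in the patched neighbourhood, or straddling $\Gamma$). Without that argument your upper bound has a gap at its most critical step, and the gluing should not be presented as routine; it is the hard part of any cutset-side proof of this inequality, which is presumably why Cerf--Théret went through streams instead.
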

They first prove that from each subsequence of $(\bR(\cE_n^{min}))_{n\geq 1}$ they can extract a subsequence that converges for the distance $\dis$ towards a set $F\subset \Omega$ such that $\cP(F,\Omega)<\infty$.
Using locally the law of large numbers for the maximal flow in a cylinder (theorem \ref{thm:defnulgn}), they prove that
\[\liminf_{n\rightarrow \infty}\frac{V(\cE_n^{min})}{n^{d-1}}\geq \cI_\Omega(F)\geq \phi_\Omega\,.\] 
The remaining part of the proof required working with maximal streams (which is the dual object associated with minimal cutset). We refer to the companion paper \cite{dembin2020large} for a more precise study of maximal streams.

\subsubsection{Lower large deviations for the maximal flow}
\noindent{\bf Lower large deviation principle in a cylinder.}
In \cite{Rossignol2010}, Rossignol and Théret proved a lower large deviation principle for the variable $\tau$. The rate function they obtain is going to be our basic brick to build the rate function for lower large deviation principle in a general domain.
\begin{thm}[Large deviation principle for $\tau$]\label{thm:lldtau} Suppose that $G(\{0\})<1-p_c(d)$ and that $G$ has an exponential moment. Let $h>0$. Then for every vector $\vv\in\sS^{d-1}$, for every non degenerate hyperrectangle $A$ normal to $\vv$, the sequence 
$$\left(\frac{\tau_n(A,h)}{\cH^{d-1}(A)n^{d-1}},n\in\sN\right)$$ satisfies a large deviation principle of speed $\cH^{d-1}(A)n^{d-1}$ governed by the good rate function $\cJ_{\vv}$. 
Moreover, we know that that $\cJ_{\vv}$ is convex on $\sR_+$, infinite on $[0,\delta_G\|v\|_1[\cup]\nu_G(v),+\infty[$ where $\delta_G=\inf\{t, \Prb(t(e)\leq t)>0\}$, equal to $0$ at $\nu_G(v)$, and if $\delta_G\|\vv\|_1<\nu_G(v)$, we also know that $\cJ_{\vv}$ is finite on $]\delta_G\|\vv\|_1,\nu_G(v)]$, continuous and strictly decreasing on $[\delta_G\|\vv\|_1,\nu_G(v)]$ and strictly positive on $]\delta_G\|\vv\|_1,\nu_G(v)[$.
\end{thm}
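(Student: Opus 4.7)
The plan is to build $\cJ_v$ as a subadditive logarithmic limit and then deduce both the LDP and the qualitative facts from this definition together with monotonicity and convexity. For $\lambda\le \nu_G(v)$, set
\[\cJ_v(\lambda)\;=\;-\lim_{n\to\infty}\frac{1}{\cH^{d-1}(A)\,n^{d-1}}\log \Prb\!\left(\frac{\tau_n(A,h)}{\cH^{d-1}(A)\,n^{d-1}}\le \lambda\right).\]
To prove that this limit exists, I would tile $A$ into congruent normal sub-hyperrectangles $A_1,\ldots,A_m$ and observe that low-capacity cutsets in the independent sub-cylinders $\cyl(A_i,h/2)$ can be glued along their common vertical sides by a \emph{skirt} of $O(n^{d-2})$ edges, yielding a cutset in $\cyl(A,h)$ of capacity at most $\lambda\cH^{d-1}(A)\,n^{d-1}+O(n^{d-2})$. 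By independence this produces a super-multiplicative inequality on the probabilities, and a standard multi-parameter subadditive argument (as used for $\nu_G$ itself in theorem \ref{thm:defnulgn}) delivers the limit. The same gluing, applied to two stacked slabs of relative heights $\alpha$ and $1-\alpha$, yields convexity $\cJ_v(\alpha\lambda_1+(1-\alpha)\lambda_2)\le \alpha\cJ_v(\lambda_1)+(1-\alpha)\cJ_v(\lambda_2)$.

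Next, I would derive the LDP from this definition together with the obvious monotonicity of $\lambda\mapsto\Prb(\tau_n\le\lambda\cH^{d-1}(A) n^{d-1})$. The upper bound for closed sets $F\subset[0,\nu_G(v)]$ is immediate from the definition; for $F$ meeting $]\nu_G(v),+\infty[$ one invokes theorem \ref{thm:theretuppertau}, which gives $\Prb(\tau_n\ge(\nu_G(v)+\delta)\cH^{d-1}(A)n^{d-1})=\exp(-\Theta(n^d))$, negligible at speed $n^{d-1}$. For the lower bound on an open set $U$, pick any $\lambda\in U$ where $\cJ_v$ is finite and continuous, take $\varepsilon>0$ with $[\lambda-\varepsilon,\lambda+\varepsilon]\subset U$, and write
\[\Prb\!\bigl(\tau_n/(\cH^{d-1}(A)n^{d-1})\in U\bigr)\;\ge\;\Prb\!\bigl(\tau_n\le(\lambda+\varepsilon)\cH^{d-1}(A)n^{d-1}\bigr)-\Prb\!\bigl(\tau_n\le(\lambda-\varepsilon)\cH^{d-1}(A)n^{d-1}\bigr);\]
strict monotonicity of $\cJ_v$ makes the first term dominant and yields the required bound, then $\varepsilon\to 0$ using continuity.

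The qualitative facts are then as follows. One has $\cJ_v(\nu_G(v))=0$ by theorem \ref{thm:defnulgn}, and $\cJ_v(\lambda)=+\infty$ for $\lambda>\nu_G(v)$ by theorem \ref{thm:theretuppertau}. That $\cJ_v(\lambda)=+\infty$ on $[0,\delta_G\|v\|_1[$ comes from a \emph{projection argument}: for each coordinate direction $e_i$, any $(T'(A,h),B'(A,h))$-cutset must contain at least $|v_i|\cH^{d-1}(A) n^{d-1}(1-o(1))$ edges parallel to $e_i$, since its projection onto the hyperplane normal to $e_i$ has to cover the projection of $A$; summing gives at least $\|v\|_1 \cH^{d-1}(A) n^{d-1}(1-o(1))$ edges, each of capacity $\ge\delta_G$ almost surely, so $\tau_n\ge\delta_G\|v\|_1\cH^{d-1}(A) n^{d-1}(1-o(1))$ a.s. For finiteness on $]\delta_G\|v\|_1,\nu_G(v)]$ (when this interval is non-empty), I would exhibit an event of probability $\exp(-O(n^{d-1}))$ realizing a cutset of capacity close to $\lambda\cH^{d-1}(A)n^{d-1}$: condition the $O(n^{d-1})$ edges crossing a carefully chosen flat cutset to have capacity in a small window centered at $\lambda/\|v\|_1$. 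Continuity and strict monotonicity on $[\delta_G\|v\|_1,\nu_G(v)]$ follow from convexity, finiteness on $]\delta_G\|v\|_1,\nu_G(v)]$, and $\cJ_v(\nu_G(v))=0$.

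The \emph{main obstacle} is the finiteness step on $]\delta_G\|v\|_1,\nu_G(v)]$: one must not only force a particular flat surface to have small capacity but also verify, against a random background everywhere else, that no cheaper cutset exists exploiting atypical low-capacity edges elsewhere. Handling this requires quantitative LLN-type control on $\tau_n$ in the complementary region of the cylinder and a careful choice of the conditioned surface so that the induced bias does not enlarge the set of competing cutsets beyond the control one has. Once finiteness is secured at a single interior point, convexity propagates it throughout the interval and closes the argument.
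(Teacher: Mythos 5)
The theorem you were asked to prove is not proved in this paper: it is cited as a result of Rossignol and Th\'eret, \cite{Rossignol2010}, stated without proof and used as a black box ("the rate function they obtain is going to be our basic brick\ldots"). There is therefore no internal proof to compare your sketch against. In broad outline your strategy is in the spirit of that reference — a super-multiplicative definition of $\cJ_\vv$ obtained by gluing low-capacity cutsets, promotion to a full LDP via monotonicity of $\lambda\mapsto\Prb(\tau_n\le\lambda\cH^{d-1}(A)n^{d-1})$ together with the speed-$n^d$ upper deviations of theorem \ref{thm:theretuppertau}, and separate arguments for the qualitative facts — but two specific steps would break as written.

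First, the convexity argument via ``two stacked slabs of relative heights $\alpha$ and $1-\alpha$'' does not work: cutsets in two sub-cylinders stacked along the $v$-direction each already separate the top from the bottom of the full cylinder on their own, so the minimal capacity in the big cylinder is bounded above by the \emph{minimum} of the two pieces, which only recovers monotonicity, not convexity. To obtain $\cJ_\vv(\alpha\lambda_1+(1-\alpha)\lambda_2)\le\alpha\cJ_\vv(\lambda_1)+(1-\alpha)\cJ_\vv(\lambda_2)$ you must instead partition $A$ itself, in its own hyperplane, into two sub-rectangles of surface measures $\alpha\cH^{d-1}(A)$ and $(1-\alpha)\cH^{d-1}(A)$, realize the two abnormal capacities in those sub-cylinders independently, and glue the two cutsets across their common wall by $O(n^{d-2})$ extra edges. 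Second, the ``main obstacle'' you identify for finiteness on $]\delta_G\|v\|_1,\nu_G(v)]$ — verifying that no cheaper cutset exists elsewhere — is not an obstacle at all. Since $\tau_n$ is the \emph{infimum} of $V$ over all cutsets, the event $\{\tau_n\le\lambda\cH^{d-1}(A)n^{d-1}\}$ occurs as soon as one exhibits any single cutset of capacity at most $\lambda\cH^{d-1}(A)n^{d-1}$; additional cheap cutsets elsewhere only make $\tau_n$ smaller and never invalidate the inequality. Hence it suffices to force the $\Theta(n^{d-1})$ edges crossing a flat translate of $A$ to have capacity in a small window just above $\delta_G$ (possible with per-edge probability bounded away from $0$ as soon as $\lambda/\|v\|_1>\delta_G$, by definition of the essential infimum) and use independence to get the $\exp(-Cn^{d-1})$ lower bound directly, with no control on the rest of the cylinder required.
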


\noindent{\bf Lower large deviations for the maximal flow in a domain.} We here work with the same environment as in section \ref{section:env}. Cerf and Théret proved in \cite{CT3} that the lower large deviations for the maximal flow $\phi_n$ through $\Omega$ are of surface order.
\begin{thm}If $(\Omega,\Gamma^1,\Gamma^2)$ satisfy hypothesis \ref{hypo:omega} and the law $G$ of the capacity admits an exponential moment, \textit{i.e.}, there exists $\theta>0$ such that
$$\int_{\sR^+ }\exp(\theta x)dG(x)<\infty$$ and if $G(\{0\})<1-p_c(d)$, then there exists a finite constant $\phi_\Omega>0$ such that 
\[\forall \lambda<\phi_\Omega \qquad\limsup_{n\rightarrow \infty}\frac{1}{n^{d-1}}\log \Prb(\phi_n(\Gamma^1,\Gamma^2,\Omega)\leq \lambda n ^{d-1})<0\,.\]
\end{thm}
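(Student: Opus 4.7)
The plan is a compactness-plus-union-bound argument built on two inputs: the convergence $\phi_n/n^{d-1}\to\phi_\Omega$ provided by theorem \ref{thm:CerfTheret}, and the local lower large deviation estimate for the flow $\tau_n$ in a cylinder (theorem \ref{thm:lldtau}). The underlying picture is that any cutset with anomalously small total capacity must, on a macroscopic fraction of its area, carry anomalously small local flow, and each such local event is exponentially rare at speed $n^{d-1}$.

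By max-flow min-cut, $\phi_n=V(\cE_n^{min})$ for $\cE_n^{min}\in\sC_n(0)$. First I would use lemma \ref{lem:Zhang} to restrict to the event that $|\cE_n^{min}|=O(n^{d-1})$, outside an event exponentially small in $n^{d-1}$; then the continuous region $\bR(\cE_n^{min})$ lies in the compact set $\sC_\beta$ of \eqref{eq:defcbeta} for some $\beta$ depending only on $\Omega,\Gamma^1,\Gamma^2,G$. Fix $\lambda<\phi_\Omega$ and choose $\eta>0$ with $\lambda+3\eta<\phi_\Omega$. For each $F\in\sC_\beta$, use the $(d-1)$-rectifiability of $\partial^*F$ and a Vitali/Besicovitch covering argument, combined with proposition \ref{prop:minkowski}, to pick finitely many disjoint flat disks $D(x_j,s_j,n_F(x_j))$ with $x_j\in\partial^* F\cap\Omega$, plus disks in $\Gamma^1\setminus\overline{\partial^*F}$ and $\Gamma^2\cap\partial^*E$ along $\partial^*\Omega$, such that the associated disjoint cylinders $\cyl(D_j,h_j)\subset\Omega$ satisfy
$$\sum_j \nu_G(n_F(x_j))\,\cH^{d-1}(D_j)\ \geq\ \cI_\Omega(F)-\eta\ \geq\ \phi_\Omega-\eta.$$
Pick $r_F>0$ so small that whenever $\dis(E,F)<r_F$, every $(\Gamma_n^1,\Gamma_n^2)$-cutset in $\Omega_n$ whose continuous region is within $r_F$ of $F$ contains, inside each $\cyl(D_j,h_j)$, a cutset between $T'(D_j,h_j)$ and $B'(D_j,h_j)$; this is the geometric content of being close to $F$ for $\dis$ on the relevant scale. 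Compactness of $\sC_\beta$ produces a finite cover by balls $B_{\dis}(F_i,r_{F_i})$, $i=1,\dots,N$.

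On the event $\{\bR(\cE_n^{min})\in B_{\dis}(F_i,r_{F_i})\}\cap\{V(\cE_n^{min})\le\lambda n^{d-1}\}$, the disjointness of the cylinders and the cutset property give
$$\sum_j \tau_n(D_j^{F_i},h_j^{F_i})\ \leq\ V(\cE_n^{min})\ \leq\ \lambda n^{d-1},$$
while $\sum_j (\nu_G(n_{F_i}(x_j))-\eta)\cH^{d-1}(D_j^{F_i})>\lambda$ by choice of $\eta$. Therefore at least one $\tau_n(D_j^{F_i},h_j^{F_i})$ is below $(\nu_G(n_{F_i}(x_j))-\eta)\cH^{d-1}(D_j^{F_i})n^{d-1}$, which by theorem \ref{thm:lldtau} has probability bounded by $\exp(-c_{i,j}\,n^{d-1})$ for some $c_{i,j}>0$. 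A union bound over the finite cover and the finite collection of disks for each $F_i$ yields $\Prb(\phi_n\leq\lambda n^{d-1})\leq \exp(-c\,n^{d-1})$ with $c=\min_{i,j}c_{i,j}>0$.

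The main obstacle, and the reason the argument is delicate rather than routine, is the geometric step guaranteeing that any discrete cutset $\cE_n$ with $\bR(\cE_n)$ in a small $\dis$-ball around $F$ actually induces a top–bottom cutset inside each of the chosen cylinders. This requires choosing the disks small enough that $\partial^*F$ looks essentially flat in $\cyl(D_j,h_j)$, the cylinders deep enough that a small $\cL^d$-error cannot let the continuous cutset slip past without intersecting each cylinder, and a careful treatment of the pieces of $\fF$ lying on $\partial\Omega\cap(\Gamma^1\cup\Gamma^2)$ where the Lipschitz regularity of $\Omega$ from hypothesis \ref{hypo:omega} is used to flatten the boundary locally. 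The exponential moment assumption is needed to absorb the $O(n^{d-2})$ correction edges created at the cylinder boundaries into the exponential scale $n^{d-1}$.
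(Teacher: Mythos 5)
Your overall architecture matches the sketch the paper gives for this background result from Cerf--Th\'eret \cite{CT3}: control $\card(\cE_n^{min})$ via lemma \ref{lem:Zhang}, work in the compact set $\sC_\beta$, cover $\sC_\beta$ finitely for the distance $\dis$, cover $\partial^*F$ by small disjoint cylinders so that the ``typical'' capacity in the cylinders already exceeds $\lambda$, then apply theorem \ref{thm:lldtau} and a union bound. So the strategy is right.

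There is, however, a genuine gap in the key geometric step. You write that whenever $\dis(\bR(\cE_n),F)<r_F$, the cutset $\cE_n$ \emph{contains} inside each $\cyl(D_j,h_j)$ a cutset between $T'(D_j,h_j)$ and $B'(D_j,h_j)$, and then deduce $\sum_j\tau_n(D_j,h_j)\le V(\cE_n^{min})$. As stated this is false: closeness in $\dis$ (a volumetric, $\cL^d$ notion) does not prevent $\bR(\cE_n)$ from developing thin strands of negligible volume that escape the cylinders and create paths from $T'$ to $B'$ avoiding $\cE_n\cap\cyl(D_j,h_j)$. To turn $\cE_n\cap\cyl(D_j,h_j)$ into a genuine top--bottom cutset you must \emph{add} edges that cut these strands; the correct tool is a pigeonhole over heights inside the cylinder (exactly as in the proof of lemma \ref{lem: Gxrv}) which yields $O(r_F\,n^{d-1})$ extra edges of total capacity $O(r_F\,n^{d-1})$ for bounded capacities, after which $\sum_j\tau_n(D_j,h_j)\le V(\cE_n^{min})+C r_F\,n^{d-1}$ and the conclusion follows by taking $r_F$ small relative to $\eta$. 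Relatedly, your closing remark that the correction edges are ``$O(n^{d-2})$'' is not right --- since $r_F$ is a fixed small number, not vanishing with $n$, the correction is of surface order $\propto r_F\,n^{d-1}$ and must be absorbed by the smallness of $r_F$, not by the scale; the exponential moment of $G$ is needed for the cylinder estimate of theorem \ref{thm:lldtau} and to control the capacity of the added edges when $G$ is unbounded, not to kill a subleading-order term.
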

\begin{rk} Note that this constant $\phi_\Omega$ is the same than in \eqref{eq:def:phiomega}.
\end{rk}
To prove this result, on the lower large deviation event, they consider the continuous subset $E_n=\bR(\cE_n^{min})$ where $\cE_n ^{min}$ is a minimal cutset $\cE_n ^{min}$. Since we can control the number of edges in a minimal cutset thanks to the work of Zhang \cite{Zhang2017}, we can control the perimeter of $E_n$ in $\Omega$ and work with high probability with a continuous subset $E_n$ with perimeter less than some constant $\beta>0$. The set $E_n$ belongs to the compact set $\sC_\beta$, this enables to localize the set $E_n$ close to some set $F\in\sC_\beta$. To conclude, the idea is to say that since the capacity of $\cE_n^{min}$ is strictly smaller that $\cI_\Omega(F)n ^{d-1}$, then there exists locally a region on the boundary of $F$ where the flow is abnormally low. We can relate this event with lower large deviations for the maximal flow in a small cylinder, whose probability decay speed is of surface order. This large deviation result was used to prove the convergence of the rescaled maximal flow $\phi_n/n^{d-1}$ towards $\phi_\Omega$. This strategy was already using the study of a cutset but was too rough to derive a large deviation principle.

\subsection{Sketch of the proof}
\noindent{\bf Step 1. Admissible limiting object.} One of the main difficulty of this work was to identify the right object to work with. The aim is that the limiting objects must be measures supported on surfaces. In particular, we want to prove that the limiting object for $(\bR(\cE_n),\mu_n(\cE_n))$ is contained in the set $\fT_\cM$. Thanks to the compactness of the set $\sC_\beta$, if we have a control on the perimeter of $\bR(\cE_n)$ we can easily prove that up to extraction, it converges towards a continuous Borelian subset $E$ of $\Omega$. If we work with general cutsets without any restriction, the limiting object for $\mu_n(\cE_n)$ may not be a measure supported on the surface $\fE$. This is due to the potential presence of long thin filaments for the set $\cE_n$ that are of negligible volume for the set $\bR(E_n)$ but in the limit these filaments can create measure outside of $\fE$. Working with a minimal cutset $\cE_n^{min}$ prevents the existence of these long filaments that are not optimal to minimize the capacity when $G(\{0\})<1-p_c(d)$. This will ensure that the weak limit of $\mu_n(\cE_n^{min})$ is supported on $\fE$. However, working with minimal cutset leads to a major difficulty for the lower bound (see the next step). One solution is to work with almost minimal cutsets, \textit{i.e.}, cutsets in $\sC_n(\ep)$. We prove that in some sense the limiting objects for $((R(\cE_n),\mu_n(\cE_n)), \,\cE_n\in\sC_n(\ep))_{n\geq 1}$ are contained in the set $\fT_\cM$. \newline

\noindent{\bf Step 2. Lower bound.} For any $(E,\nu)\in\fT_\cM$ such that $\widetilde{I}(E,\nu)<\infty$, we prove that for any neighborhood $U$ of $(E,\nu)$ the probability that there exists a cutset $\cE_n\in\sC_n(\ep)$ such that that $(R(\cE_n),\mu_n(\cE_n))\in U$ is at least $\exp(-n^{d-1}\widetilde{I}(E,\nu))$. Write $\nu=f\cH^{d-1}|_\fE$.
To prove this result, we build a configuration where the expected event occurs using elementary events as building blocks. We first cover almost all the boundary of $E$ by a finite family $(B(x_i,r_i,v_i))_{i\in I}$ of disjoint closed balls such that on each ball $\partial E$ is almost flat and $f$ is almost constant. Using result for lower large deviations for the maximal flow in cylinders, we can prove that the probability that there exists in the ball $B(x,r,v)$ a cutset separating $\partial ^-_n B(x,r,v)$ from  $\partial ^+_n B(x,r,v)$ of capacity smaller than $f(x)\alpha_{d-1} r^{d-1}$ is of probability at least $\exp(-\alpha_{d-1} r^{d-1}\cJ_v(f(x)))$. Let us denote by $\cE^{(i)}$ this event associated with the ball $B(x_i,r_i,v_i)$ and $\cE_n(i)$ the cutset given in the definition of the event (if there are several possible choices, we pick one according to a deterministic rule). We can build a set of edges $\cF_0$ of negligible cardinal such that $\cE_n:=\cF_0\cup\cup_{i\in I}\cE_n(i)\in\sC_n(\Gamma^1,\Gamma^2,\Omega)$.
 If the radii of the balls are small enough depending on the neighborhood $U$, we can prove that on  the event $\cap_{i\in I}\cE^{(i)}$, we have $(\bR(\cE_n),\mu_n(\cE_n))\in U$.
Since the balls are disjoint, we have
\begin{align*}
\Prb(\exists \cE_n\in\sC_n(\Gamma^1,\Gamma^2,\Omega): (\bR(\cE_n),\mu_n(\cE_n))\in U)&\geq
\Prb\left(\bigcap_{i\in I}\cE^{(i)}\right)=\prod_{i\in I}\Prb(\cE^{(i)})\\&\geq \exp\left(-\sum_{i\in I}\cJ_{v_i}(f(x_i))\alpha_{d-1} r_i^{d-1} n ^{d-1}\right)\\&\approx \exp\left(-\widetilde{I}(E,\nu)n^{d-1}\right)\,.
\end{align*}
It remains to prove that $\cE_n$ is almost minimal, this is the main difficulty of this step. To do so, we have to ensure that everywhere outside the balls, the flow is not abnormally low. This step is very technical. Using this strategy, we did not manage to prove that the cutset we have built is minimal but only almost minimal.\newline

\noindent {\bf Step 3. Upper bound.} In the lower bound section, we build a configuration upon elementary events on balls. Here, we do the reverse. We deconstruct the configuration into a collection of elementary events on disjoint balls. Fix $(E,\nu)\in\fT_\cM$, write $\nu=f\cH^{d-1}|_\fE$. We first cover almost all the boundary of $E$ by a finite family $(B_i=B(x_i,r_i,v_i))_{i\in I}$ of disjoint closed balls such that on each ball $\partial E$ is almost flat and $f$ is almost constant. We pick a neighborhood $\cU$ of $(E,\nu)$ adapted to this covering such that 
for $\cE_n\in\sC_n(\Gamma^1,\Gamma^2,\Omega)$ such that $(\bR(\cE_n),\mu_n(\cE_n))\in \cU$, we have that for each $i\in I$ the set $\cE_n\cap B_i$ is almost a cutset between $\partial_n^-B_i$ and $\partial_n^+ B_i$ in $B_i$ (up to adding a negligible number of edges) and $V(\cE_n\cap B_i)\leq f(x_i)\alpha_{d-1}r_i ^{d-1}$. Hence, on the event  
$\{\exists\cE_n\in\sC_n(\Gamma^1,\Gamma^2,\Omega):(\bR(\cE_n),\mu_n(\cE_n))\in \cU\}$, the event $\cap_{i\in I }\cE^{(i)}$ occurs where $\cE^{(i)}$ was defined in the previous step. We can prove using estimates on lower large deviations on cylinders that $\Prb(\cE^{(i)})\leq \exp(-\alpha_{d-1}r_i^{d-1}\cJ_{v_i}(f(x_i))n^{d-1})$.
Since the balls are disjoint it follows that
\begin{align*}
\Prb\left(\exists\cE_n\in\sC_n(\Gamma^1,\Gamma^2,\Omega):(\bR(\cE_n),\mu_n(\cE_n))\in \cU\right)\leq\Prb\left(\bigcap_{i\in I }\cE^{(i)}\right)&=\prod_{i\in I}\Prb(\cE^{(i)})\\ &\leq \exp\left(-\sum _{i\in I}\alpha_{d-1}r_i^{d-1}\cJ_{v_i}(f(x_i))n^{d-1}\right)\\
&\approx \exp(-\widetilde{I}(E,\nu)n ^{d-1})\,.
\end{align*}

The remaining of the proof uses standard tools of large deviations theory. The proof of this large deviation principle for almost minimal cutsets enables to deduce by a contraction principle a lower large deviation principle for the maximal flow.\newline

\noindent {\bf Organization of the paper.} In section 
\ref{sec:prelwork}, we prove some useful results such as a covering theorem and lower large deviations for the maximal flow in a ball. In section \ref{sec:admissible} (corresponding to step 1), we study the properties of the limiting objects to prove that they belong with high probability to a neighborhood of the set $\fT_\cM$. 
In section \ref{sec:lb} (corresponding to step 2) and \ref{sec:ub} (corresponding to step 3), we prove local estimates on the probability $\Prb_n^\ep(U)$ for some neighborhoods $U$ to be able to deduce a large deviation principles for the almost minimal cutsets. Finally, in section \ref{sect:goodtaux}, we conclude the proof of the two main theorems \ref{thm:pgd} and \ref{thm:lldmf}.

\section{Preliminary work}\label{sec:prelwork}
\subsection{Vitali covering}
We will use the Vitali covering theorem for $\cL^d$. A collection of sets $\cU$ is called a Vitali class for a Borelian set $\Omega$ of $\sR^d$, if for each $x\in\Omega$ and $\delta>0$, there exists a set $U\in\cU$ such that $x\in U$ and $0<\diam U<\delta$ where $\diam U$ is the diameter of the set $U$ for the Euclidean distance. We now recall the Vitali covering theorem for $\cH^{d-1}$ (Theorem 1.10 in \cite{FAL})
\begin{thm}[Vitali covering theorem]\label{thm:vitali} Let $F\subset \sR^d$ such that $\cH^{d-1}(F)<\infty$ and $\cU$ be a Vitali class of closed sets for $F$. Then we may select a countable disjoint sequence $(U_i)_{i\in I}$ from $\cU$ such that 
$$\text{either}\quad \sum_{i\in I}(\diam U_i)^{d-1}=+\infty \qquad \text{or} \quad \cH^{d-1}\left(F\setminus \bigcup_{i\in I}U_i\right)=0\,.$$
\end{thm}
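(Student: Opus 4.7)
The plan is to use a greedy selection scheme coupled with the standard enlargement trick. First, since a disjoint subsequence in $\cU$ with diameters tending to $+\infty$ would already witness the first alternative, I may reduce to the case $d_0:=\sup\{\diam U:U\in\cU\}<\infty$. Pick $U_1\in\cU$ with $\diam U_1>d_0/2$. Inductively, having chosen $U_1,\dots,U_{n-1}$, set $d_n=\sup\{\diam U:U\in\cU,\,U\cap\bigcup_{i<n}U_i=\emptyset\}$; if $d_n>0$, choose $U_n$ disjoint from $U_1,\dots,U_{n-1}$ with $\diam U_n>d_n/2$, otherwise stop. If the process terminates at some finite stage, then for any $x\in F\setminus\bigcup_{i<n}U_i$ the Vitali property produces some $U\in\cU$ with $x\in U$ and $U\cap\bigcup_{i<n}U_i=\emptyset$, contradicting $d_n=0$; hence $F\subset\bigcup U_i$ and the second alternative is immediate.

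Now assume the process produces an infinite sequence. If $\sum_i(\diam U_i)^{d-1}=+\infty$ the first alternative holds, so suppose this series converges, which forces $\diam U_i\to 0$. I would then show $\cH^{d-1}(F\setminus\bigcup_i U_i)=0$. Fix $x\in F\setminus\bigcup_i U_i$: the Vitali property gives $U\in\cU$ with $x\in U$ and $\diam U$ arbitrarily small, and since $\diam U_n\to 0$, $U$ cannot be disjoint from every $U_n$. Let $n$ be the smallest index with $U\cap U_n\neq\emptyset$. At stage $n$, $U$ was still a candidate, so $\diam U\leq d_n<2\diam U_n$, and since $U$ meets $U_n$ the point $x$ lies within distance $\diam U+\diam U_n<3\diam U_n$ of $U_n$.

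Hence for every $N\geq 1$, $F\setminus\bigcup_{i<N}U_i\subset\bigcup_{n\geq N}\widehat{U}_n$, where $\widehat{U}_n$ denotes the $2\diam U_n$-neighbourhood of $U_n$ and has diameter at most $5\diam U_n$. Since $\diam U_n\to 0$, this cover has mesh going to $0$, so plugging it into the definition of the $(d-1)$-dimensional Hausdorff measure yields
$$\cH^{d-1}\Bigl(F\setminus\bigcup_i U_i\Bigr)\leq\cH^{d-1}\Bigl(F\setminus\bigcup_{i<N}U_i\Bigr)\leq 5^{d-1}\sum_{n\geq N}(\diam U_n)^{d-1},$$
and letting $N\to\infty$ gives the second alternative. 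The main obstacle is purely geometric: for balls this is the classical $5r$-covering lemma, but here $\cU$ consists of arbitrary closed sets, so one must carefully verify via the triangle inequality that every $U\in\cU$ meeting $U_n$ with $\diam U<2\diam U_n$ is indeed contained in $\widehat{U}_n$, and that $\diam\widehat{U}_n\leq 5\diam U_n$. Once this geometric lemma is in place, the argument above is routine bookkeeping, so in this paper I would expect the authors to simply invoke the reference to \cite{FAL}.
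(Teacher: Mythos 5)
The paper does not give a proof here; it invokes Theorem~1.10 in \cite{FAL} directly, and your argument is precisely the standard proof of that result (the greedy half-maximal-diameter selection plus the enlargement-by-a-constant-factor trick, adapted from the $5r$-covering lemma to arbitrary closed sets). So in substance you are reconstructing the reference's proof, and the structure is sound: the selection scheme, the termination argument using positive distance from a finite (hence closed) union, the observation that $\diam U_n > d_n/2$ forces $\diam U_n \to 0$ whenever the series converges, the bound $\diam U \leq d_n < 2\diam U_n$ at the first index $n$ where $U$ meets $U_n$, and the resulting cover of $F\setminus\bigcup_{i<N}U_i$ by the enlarged sets $\widehat{U}_n$, $n\geq N$, whose mesh tends to zero as $N\to\infty$.

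One small repair is needed at the very start. Your reduction to $\sup\{\diam U : U\in\cU\}<\infty$ is justified by claiming that otherwise one could extract a \emph{disjoint} subsequence with diameters tending to $+\infty$, but that does not follow (for instance $\cU$ could contain a single huge set plus only tiny ones). The reduction is nonetheless legitimate, for a different reason: the Vitali property only uses sets of arbitrarily small diameter, so discarding from $\cU$ every set of diameter $\geq 1$ still leaves a Vitali class for $F$, and any disjoint sequence extracted from the truncated class is a fortiori a disjoint sequence from $\cU$. With that correction, the argument goes through and matches the cited proof.
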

\noindent We next recall the Besicovitch differentiation theorem in $\sR ^d$
(see for example theorem 13.4 in \cite{Cerf:StFlour}):
\begin{thm}[Besicovicth differentiation theorem]\label{thm:Besicovitch} Let $\mu$ be a finite positive Radon measure on $\sR^d$. For any Borel function $f \in L^1(\mu)$, the quotient
$$\frac{1}{\mu(B(x,r))}\int_{B(x,r)}f(y)d\mu(y)$$
converges $\mu$-almost surely towards $f(x)$ as $r$ goes to $0$. 
 \end{thm}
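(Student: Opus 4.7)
The plan is to reduce the pointwise convergence to a maximal function estimate via a density argument, then to obtain the maximal function estimate from a covering lemma. Throughout, let me write $A_r f(x) = \frac{1}{\mu(B(x,r))} \int_{B(x,r)} f(y)\,d\mu(y)$ for the averaging operator.

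\textbf{Step 1 (easy case, continuous functions).} First I would handle the case when $f$ is continuous and compactly supported. If $f$ is continuous at $x$, then given $\eta > 0$ I can find $\delta > 0$ such that $|f(y) - f(x)| < \eta$ for all $y \in B(x,\delta)$. For any $r < \delta$, provided $\mu(B(x,r)) > 0$, I get $|A_r f(x) - f(x)| < \eta$. The set of $x$ with $\mu(B(x,r)) = 0$ for all small $r$ is $\mu$-null, so the convergence holds $\mu$-a.e.\ (in fact everywhere on $\operatorname{supp}\mu$).

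\textbf{Step 2 (maximal inequality, the main obstacle).} The core estimate is the weak-type $(1,1)$ bound for the maximal operator
\[
M_\mu g(x) = \sup_{r > 0} \frac{1}{\mu(B(x,r))} \int_{B(x,r)} |g(y)|\,d\mu(y),
\]
namely
\[
\mu\bigl(\{x : M_\mu g(x) > t\}\bigr) \leq \frac{C_d}{t}\,\|g\|_{L^1(\mu)} \qquad \forall t > 0,
\]
with $C_d$ a constant depending only on $d$. This is where the real work lies and where the classical Vitali covering theorem is insufficient because $\mu$ need not be doubling; one must use the Besicovitch covering theorem, which says that any family of closed balls with a uniform bound on the radii centered at points of a bounded set $A \subset \sR^d$ admits a countable subfamily covering $A$ with overlap bounded by a purely dimensional constant $\xi_d$. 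Given this, for $t > 0$ the set $E_t = \{M_\mu g > t\}$ is covered by balls $B(x,r_x)$ each satisfying $\int_{B(x,r_x)} |g|\,d\mu > t\,\mu(B(x,r_x))$; I extract a Besicovitch subfamily $(B_i)_{i}$ and bound
\[
\mu(E_t) \leq \sum_i \mu(B_i) \leq \frac{1}{t} \sum_i \int_{B_i} |g|\,d\mu \leq \frac{\xi_d}{t} \int_{\sR^d} |g|\,d\mu.
\]

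\textbf{Step 3 (density argument).} Since $\mu$ is a finite Radon measure on $\sR^d$, the space $\sC_c(\sR^d, \sR)$ is dense in $L^1(\mu)$. For any $f \in L^1(\mu)$ and $\eta > 0$, pick a continuous compactly supported $g$ with $\|f - g\|_{L^1(\mu)} < \eta$, and write $h = f - g$. Set
\[
\Lambda f(x) = \limsup_{r \to 0} |A_r f(x) - f(x)|.
\]
By Step 1, $\Lambda g \equiv 0$, and by linearity $\Lambda f \leq \Lambda h \leq M_\mu h + |h|$. Thus for $\lambda > 0$,
\[
\{\Lambda f > \lambda\} \subset \{M_\mu h > \lambda/2\} \cup \{|h| > \lambda/2\},
\]
and by the maximal inequality together with Markov's inequality,
\[
\mu\bigl(\{\Lambda f > \lambda\}\bigr) \leq \frac{2(C_d + 1)}{\lambda}\,\|h\|_{L^1(\mu)} \leq \frac{2(C_d+1)\eta}{\lambda}.
\]
Letting $\eta \to 0$ gives $\mu(\{\Lambda f > \lambda\}) = 0$ for each $\lambda > 0$; taking a countable union over $\lambda \to 0$ yields $\Lambda f = 0$ $\mu$-a.e., which is the desired conclusion. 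The hardest ingredient is the Besicovitch covering theorem in Step 2, whose proof is a delicate geometric argument in $\sR^d$; everything else is essentially the standard functional-analytic packaging.
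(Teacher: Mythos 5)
The paper does not prove this statement: it is quoted verbatim as a standard tool, with a pointer to Theorem 13.4 of \cite{Cerf:StFlour}, so there is no "paper's own proof" to compare against. Your argument is the classical one and is correct as a sketch. The three-step structure (verify the result for $g \in \sC_c(\sR^d,\sR)$, prove a weak-type $(1,1)$ bound for the uncentered maximal operator $M_\mu$, then pass to general $f \in L^1(\mu)$ by density together with the inclusion $\{\Lambda f > \lambda\} \subset \{M_\mu h > \lambda/2\} \cup \{|h| > \lambda/2\}$) is exactly how the theorem is proved in the references the paper relies on. You also correctly put your finger on the one genuinely nontrivial ingredient: because $\mu$ is an arbitrary finite Radon measure with no doubling hypothesis, the Vitali-type covering theorem that the paper does state (its Theorem \ref{thm:vitali}, tailored to $\cH^{d-1}$) is not applicable, and one must instead invoke the Besicovitch covering theorem, whose bounded-overlap constant $\xi_d$ is purely dimensional and independent of $\mu$; this is what makes the chain $\mu(E_t) \leq \sum_i \mu(B_i) \leq t^{-1}\sum_i \int_{B_i}|g|\,d\mu \leq \xi_d t^{-1}\|g\|_{L^1(\mu)}$ legitimate. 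Two small points you glossed over but which are routine: the Besicovitch covering theorem applies to bounded sets, so one should first intersect $E_t$ with a large ball and then let its radius tend to infinity; and to justify Step~1 one should observe that $\{x : \mu(B(x,r)) = 0 \text{ for some } r > 0\} = \sR^d \setminus \operatorname{supp}\mu$ is $\mu$-null because it is an open set that can be covered by countably many $\mu$-null balls. Neither affects the substance.
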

 We recall that for any set $E\in\sC_{<\infty}$, the set $\fE$ denotes the continuous cutset associated with $E$ that was defined in \eqref{def:fE}. We will use at several moments in the proof the Vitali covering theorem. To avoid repeating several times the same arguments, we here present a general result for covering a surface by disjoint closed balls that satisfy a list of properties. These properties are typical for balls centered at points in the surface provided that their radius is small enough.
%
%
%
 \begin{prop}[Covering $\fE$ by balls]\label{prop:utilisationvitali}Let $E\in\sC_{<\infty}$. Let $\ep\in]0,1/2]$.
 Let $(P_1)_{x,r},\dots,(P_m)_{x,r}$ be a family of logical proposition depending on $x\in \sR^d$, $\ep$ and $r>0$.
 We assume that there exists $\cR$ such that $\cH^{d-1}(\fE\setminus \cR)=0$ and
 \[\forall x\in \cR\qquad \exists r_x>0\qquad\text{such that}\qquad\forall i\in\{1,\dots,m\}\quad \forall\,0< r\leq r_x\qquad (P_i)_{x,r}\text{ holds}\,.\]
 Then, there exists a finite family of disjoint closed balls $(B(x_i,r_i,v_i))_{i\in I}$ with $v_i=n_\Omega(x_i)$ (respectively $n_E(x_i)$) for $x_i\in \partial ^* \Omega\setminus \overline{\partial ^* E}$ (resp. $x_i\in \partial ^* E$) such that 
\begin{align*}
\cH^{d-1}(\fE\setminus \cup_{i\in I}B(x_i,r_i)))\leq \ep\,,
\end{align*}
\begin{align*}
\forall i \in I\quad  \forall\, 0<r\leq r_i\qquad \left|\frac{1}{\alpha_{d-1} r^{d-1}}\cH^{d-1}(\fE\cap B(x_i,r))-1\right|\leq \ep\,,
\end{align*}
\begin{align*}
\forall i \in I\quad  \forall\, 0<r\leq r_i\qquad \forall j\in\{1,\dots,m\}\qquad (P_j)_{x_i,r}\quad \text{holds}\,.
\end{align*}
 \end{prop}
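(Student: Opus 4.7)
The plan is to apply the Vitali covering theorem \ref{thm:vitali} to a carefully chosen class of closed balls centered at points where all the requested properties hold at every sufficiently small scale. Since $E$ has finite perimeter in $\Omega$ and $\Omega$ is a Lipschitz domain (hypothesis \ref{hypo:omega}), the reduced boundaries $\partial^* E$ and $\partial^* \Omega$ are both $(d-1)$-rectifiable by De Giorgi's structure theorem, hence so is $\fE \subset \partial^* E \cup \partial^* \Omega$, and $\cH^{d-1}(\fE) < \infty$. A classical density result for rectifiable sets then yields
\[\lim_{r\to 0}\frac{\cH^{d-1}(\fE \cap B(x,r))}{\alpha_{d-1}r^{d-1}} = 1\]
for $\cH^{d-1}$-almost every $x \in \fE$. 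Let $\cR'$ be the subset of $\cR$ formed by those density points of $\fE$ that additionally lie in $\partial^* E$ or in $\partial^* \Omega \setminus \overline{\partial^* E}$, so that $v(x)$ is unambiguously prescribed by the rule of the statement; by the above and the hypothesis on $\cR$, $\cH^{d-1}(\fE \setminus \cR') = 0$.

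For each $x \in \cR'$, I would pick $\widetilde r_x \in (0, r_x]$ small enough that for every $0 < r \leq \widetilde r_x$ the two estimates
\[\left|\frac{1}{\alpha_{d-1}r^{d-1}}\cH^{d-1}(\fE \cap B(x,r)) - 1\right| \leq \ep \quad\text{and}\quad (P_j)_{x,r}\ \text{holds for all}\ j \in \{1,\dots,m\}\]
are simultaneously valid. The collection $\cU = \{\, B(x, r) : x \in \cR',\ 0 < r \leq \widetilde r_x \,\}$ is then a Vitali class for $\cR'$. Theorem \ref{thm:vitali} produces a countable disjoint sequence $(B(x_i, r_i))_{i \geq 1}$ from $\cU$. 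The first Vitali alternative $\sum_i (\diam B(x_i, r_i))^{d-1} = +\infty$ would, by disjointness and the density lower bound $\cH^{d-1}(\fE \cap B(x_i, r_i)) \geq (1-\ep)\alpha_{d-1}r_i^{d-1}$, force $\cH^{d-1}(\fE) = +\infty$, contradicting the finite perimeter bounds. Hence the second alternative holds, i.e.\ $\cH^{d-1}(\fE \setminus \bigcup_{i\geq 1} B(x_i, r_i)) = 0$.

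Because $\sum_i \cH^{d-1}(\fE \cap B(x_i, r_i)) \leq \cH^{d-1}(\fE) < \infty$, I would truncate to a finite index set $I$ such that $\cH^{d-1}(\fE \setminus \bigcup_{i\in I} B(x_i, r_i)) \leq \ep$, and set $v_i = n_E(x_i)$ or $v_i = n_\Omega(x_i)$ according to the dichotomy in the statement. The main obstacle is the density step: the limit must be read with respect to $\fE$ itself, not merely with respect to the pieces $\partial^* E$ or $\partial^* \Omega$. I would handle this either by invoking a general density theorem for rectifiable sets, or, more elementarily, by splitting $\fE$ into its three constituent pieces and using \eqref{eq:propsurfboule}, the openness of $\Omega$, the openness of $\Gamma^1, \Gamma^2$ in $\Gamma$, and the $\cH^{d-1}$-negligibility of $\partial_\Gamma \Gamma^1 \cup \partial_\Gamma \Gamma^2$ supplied by hypothesis \ref{hypo:omega} to transfer densities from $\partial^* E$ and $\partial^* \Omega$ to $\fE$.
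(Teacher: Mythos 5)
Your proposal is correct and follows essentially the same strategy as the paper's own proof: establish the $\cH^{d-1}$-density-one property of $\fE$ at almost every point, intersect with $\cR$, apply the Vitali covering theorem, exclude the divergent alternative via the density lower bound together with $\cH^{d-1}(\fE)<\infty$, and truncate to a finite family. You are in fact somewhat more careful than the paper, which records the density of $\partial^*E$ (resp.\ $\partial\Omega$) via \eqref{eq:propsurfboule} and then silently uses it as the density of $\fE$; your final paragraph explicitly identifies this transfer as the point needing justification and offers two valid ways to close it (the general density theorem for rectifiable sets, or the elementary case analysis using openness of $\Omega$ and of $\Gamma^1,\Gamma^2$ in $\Gamma$ together with the separation hypotheses).
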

 At this point, the logical propositions $(P_i)_{x,r}$ may seem a bit abstract. To better understand the kind of applications, we will do, let us give an example of such a proposition:
$$ (P_1)_{x,r}:= \qquad  \ll x\in \partial^* \Omega \implies \cL^d((\Omega\cap B(x,r))\Delta B^-(x,r,n_\Omega(x)))\leq \ep \alpha_d r^d \gg\,.$$
 
 \begin{proof}[Proof of proposition \ref{prop:utilisationvitali}]
 We follow the proof of lemma 14.6 in \cite{Cerf:StFlour}.
 Let $\ep$ be a positive constant, with $\ep<1/2$. 
 
\noindent {\bf First case: $x\in\partial ^* E$. }By inequality \eqref{eq:propsurfboule}, we have for $\cH^{d-1}$-almost every $x\in \partial ^* E$
$$\lim_{r\rightarrow 0}\frac{1}{\alpha_{d-1}r^{d-1}}\cH^{d-1}(\partial ^* E\cap B(x,r))=1\,.$$
We denote by $\cR_1$ the set of points in $\partial^* E$ such that the equality holds. Hence, we have 
\begin{align}\label{eq:neg1}
\cH^{d-1}(\partial^* E\setminus \cR_1)=0\,.
\end{align}
It yields that for $x\in\cR^1$, there exists a positive constant $r_1(x,\ep)>0$ such that for any $r \leq r_1(x,\ep)$
\[\left|\frac{1}{\alpha_{d-1} r^{d-1}}\cH^{d-1}(\partial^*  E\cap B(x,r))-1\right|\leq \ep\,.\]
\noindent {\bf Second case: $x\in  \partial ^* \Omega \cap (\Gamma ^1\setminus \overline{\partial ^*E})$.}
By inequality \eqref{eq:propsurfboule}, we have for $\cH^{d-1}$-almost every $x\in \partial ^* \Omega$
$$\lim_{r\rightarrow 0}\frac{1}{\alpha_{d-1}r^{d-1}}\cH^{d-1}(\partial  \Omega\cap B(x,r))=1\,.$$
We denote by $\cR_2$ the set of points in $ \partial ^* \Omega \cap (\Gamma ^1\setminus \overline{\partial ^*E})$ such that the previous equality holds. Hence, we have 
\begin{align}\label{eq:neg2}
\cH^{d-1}\left((\partial ^* \Omega \cap (\Gamma^1\setminus \overline{\partial^*E}))\setminus \cR_2\right)=0\,.
\end{align}
It yields that for $x\in\cR_2$, there exists a positive constant $r_1(x,\ep)\in]0,\ep]$ such that for any $r\leq r_1(x,\ep)$
\[\left|\frac{1}{\alpha_{d-1} r^{d-1}}\cH^{d-1}(\partial^*  \Omega\cap B(x,r))-1\right|\leq \ep\,.\]


\noindent {\bf Extract a countable covering by balls.} The family of balls
\[\big(B(x,r),\, x\in(\cR_1\cup\cR_2)\cap\cR,\,r<\min(1,r_1(x,\ep),r_x)\big)\]
is a Vitali class for $(\cR_1\cup\cR_2)\cap \cR$. By Vitali Covering theorem (theorem \ref{thm:vitali}), we may select from this family a countable (or finite) disjoint sequence of balls $(B(x_i,r_i),i\in I)$  such that 
either $$\sum_{i\in I }r_i^{d-1}=+\infty \qquad\text{or}\qquad \cH^{d-1}\left(((\cR_1\cup\cR_2)\cap\cR)\setminus \bigcup_{i\in I }B(x_i,r_i)\right)=0\,.$$
We know that $E$ and $\Omega$ both have finite perimeter.
Since the balls in the family $(B(x_i,r_i),i\in I)$ are disjoint, we have
$$\sum_{i\in I }\cH^{d-1}(B(x_i,r_i)\cap\fE)\leq \cH^{d-1}(\fE)<\infty\,.$$
We recall that for any $i\in I$, we have
$$\left|\alpha_{d-1}r_i^{d-1}- \cH^{d-1}(B(x_i,r_i)\cap\fE)\right|\leq \alpha_{d-1}r_i^{d-1}\ep\,.$$
Hence, we have
$$\sum_{i\in I }r_i^{d-1}\leq \frac{1}{\alpha_{d-1}(1-\ep)}\sum_{i\in I }\cH^{d-1}(B(x_i,r_i)\cap\fE)<\infty\,.$$
As a result, we obtain that 
\begin{align}\label{eq:neg3}
\cH^{d-1}\left((\cR_1\cup\cR_2)\cap\cR \setminus \bigcup_{i\in I }B(x_i,r_i)\right)=0\,.
\end{align}
Consequently, we can extract from $I$ a finite set $I_0$ such that 
\begin{align*}
\cH^{d-1}\left((\cR_1\cup\cR_2)\cap\cR \setminus \bigcup_{i\in I _0}B(x_i,r_i)\right)\leq \ep\,.
\end{align*}
We set
$v_i=n_\Omega(x_i)$ (respectively $n_E(x_i)$) for $x_i\in \partial ^* \Omega\setminus \overline{\partial ^* E}$ (resp. $x_i\in \partial ^* E$).
Since $\cH^{d-1}(\fE\setminus ((\cR_1\cup\cR_2)\cap\cR))=0$, this concludes the proof.
 \end{proof}
 \subsection{Localization of $\Omega$ inside balls centered at the boundary of $\Omega$}
The following result will be important in what follows. Since the boundary of $\Omega$ is smooth, the intersection of $\Omega$ with a small ball $B(x,r)$ centered at $x\in \Gamma$ is close to $B^-(x,r,n_\Omega(x))$. In the following lemma, we localize precisely the symmetric difference between $\Omega\cap B(x,r)$ and $B^-(x,r,n_\Omega(x))$.
 \begin{lem}\label{lem:Omegabord}Let $\Omega$ that satisfies hypothesis \ref{hypo:G}. Let $x\in \partial^ * \Omega$. For any $\delta>0$, there exists $r_0$ depending on $\delta$, $x$ and $\Omega$ such that
 \[\forall\, 0<r\leq r_0\qquad(\Omega\cap B(x,r))\Delta B^- (x,r,n_\Omega(x))\subset\left\{\,y\in B(x,r):|(y-x)\cdot n_\Omega(x)|\leq \delta \|y-x\|_2\,\right\}\,.\]
 \end{lem}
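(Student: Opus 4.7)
The plan is to exploit the local $C^1$ regularity of $\partial \Omega$ near $x$ via a graph parametrization and then read off the cone inclusion from a first-order expansion. After translating and rotating so that $x = 0$ and $n_\Omega(x) = e_d$, I first argue that $x$ lies on the smooth part of $\partial \Omega$. By hypothesis \ref{hypo:omega}, $\partial \Omega$ is contained in a finite union of $C^1$ hypersurfaces $\Sigma_1, \dots, \Sigma_k$ meeting transversally; at a transverse intersection point the blow-up of $\Omega$ is a dihedral sector with nonzero opening angle rather than a half-space, which is incompatible with the existence of a measure-theoretic normal in the sense of the reduced boundary. Hence any $x \in \partial^* \Omega$ lies in the relative interior of a single $\Sigma_j$.

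Consequently, there is a neighborhood $V$ of $0$ and a $C^1$ function $\varphi : U \to \sR$, with $U$ a neighborhood of $0$ in $\sR^{d-1}$, such that $\Sigma_j \cap V = \{(y', \varphi(y')) : y' \in U\}$ and, with the sign fixed by the orientation $n_\Omega(0) = e_d$, $\Omega \cap V = \{(y', y_d) \in V : y_d < \varphi(y')\}$. The orientation also forces $\varphi(0) = 0$ and $\nabla \varphi(0) = 0$, so by differentiability at $0$, for any $\delta > 0$ there exists $r_1 > 0$ with $|\varphi(y')| \leq \delta \|y'\|_2$ whenever $\|y'\|_2 \leq r_1$.

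I would then choose $r_0 \leq r_1$ small enough that $B(0, r_0) \subset V$ and conclude as follows. For $r \leq r_0$ and $y = (y', y_d)$ in the symmetric difference $(\Omega \cap B(0,r)) \Delta B^-(0,r,e_d)$, either $y \in \Omega$ and $y_d \geq 0$, which forces $0 \leq y_d < \varphi(y')$, or $y \notin \Omega$ and $y_d < 0$, which forces $\varphi(y') \leq y_d < 0$; in either case $|y_d| \leq |\varphi(y')| \leq \delta \|y'\|_2 \leq \delta \|y\|_2$, yielding the claim since $\|y\|_2 = \|y - x\|_2$ and $|(y-x) \cdot n_\Omega(x)| = |y_d|$. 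The only delicate step is the first paragraph, namely ruling out that a reduced-boundary point could sit at a transverse intersection of the hypersurfaces $\Sigma_j$; once that is settled, the argument reduces to the standard first-order Taylor expansion of the local graph function.
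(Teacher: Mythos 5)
Your proof is correct, and it takes a cleaner and somewhat different route than the paper's. Both arguments begin identically, by ruling out that a reduced-boundary point can sit at a transverse intersection, and both pass to a local graph parametrization of $\Gamma$ and $\Omega$ near $x$. The divergence is in how the quantitative estimate is extracted. The paper fixes $r_0$ so that the $C^1$ normal vector field satisfies $\|n_\Omega(y)-n_\Omega(x)\|_2\leq\delta$ on $B(x,r_0)\cap\Gamma$, applies the mean value theorem to the restriction of the graph function along the segment from $x$ to $y$ to produce an intermediate point $z\in\Gamma$ with $n_\Omega(z)\cdot(y-x)=0$, deduces the estimate for boundary points via Cauchy--Schwarz, and then propagates it to the misplaced interior and exterior points $w$ by a separate algebraic manipulation of $w=w'+t\,n_\Omega(x)$. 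You instead use only the first-order Taylor expansion of $\varphi$ at $0$, i.e.\ $\varphi(0)=0$, $\nabla\varphi(0)=0$ and hence $|\varphi(y')|\leq\delta\|y'\|_2$ for $\|y'\|_2$ small, and then read off $|y_d|\leq|\varphi(y')|\leq\delta\|y'\|_2\leq\delta\|y\|_2$ uniformly for every point $y$ of the symmetric difference, with no case split between boundary, interior and exterior points. Your version is more economical in its hypotheses (it only needs differentiability of $\varphi$ at the single point $0$, not $C^1$ regularity in a neighborhood, though of course the latter is available) and it avoids a small algebraic slip in the paper's final display, where the asserted equality $\delta(\|w'-x\|_2+t)=\delta\|w'+t\,n_\Omega(x)-x\|_2$ should in fact be the inequality $\geq$ by Pythagoras, so the paper's chain only yields $t\leq\tfrac{\delta}{1-\delta}\|w-x\|_2$ rather than $t\leq\delta\|w-x\|_2$; this is harmless for the statement of the lemma (take $\delta$ smaller) but your argument does not run into it at all.
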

 \begin{proof}Let $x\in\partial ^*\Omega$. We recall that $\Gamma=\partial \Omega$ is contained in the union of a finite number of oriented hypersurfaces of class $\sC^1$ that intersect each other transversally. We claim that if $x\in \partial ^*\Omega$, it cannot be contained in the transversal intersection. Indeed, by definition for points in a transversal intersection, we cannot properly define an exterior unit normal vector at points in the intersection. It follows that $\Gamma$ is locally $\sC^1$ around $x$. Let $\delta>0$. There exists $r_0>0$ such that 
 \[\forall\, 0<r\leq r_0\quad \forall y \in B(x,r)\cap\Gamma\qquad \|n_\Omega(y)-n_\Omega(x)\|_2\leq\delta\,.\]
 Since $\Omega$ is a Lipschitz domain, up to choosing a smaller $r$, there exists an hyperplane $H$ containing $x$ of normal vector $n_\Omega(x)$ and $\phi: H\rightarrow \sR$ a Lipschitz function such that
\[B(x,r)\cap \Gamma = \left\{y+\phi(y) n_\Omega(x): y\in H\cap B(x,r)\right\}\,\]
and
\[B(x,r)\cap \Omega = \left\{y+t n_\Omega(x): y\in H\cap B(x,r),\, t\leq \phi(y)\right\}\cap B(x,r)\,\]
 Let $y\in B(x,r)\cap\partial\Omega $. 
 \begin{figure}[H]
\begin{center}
\def\svgwidth{0.6\textwidth}
   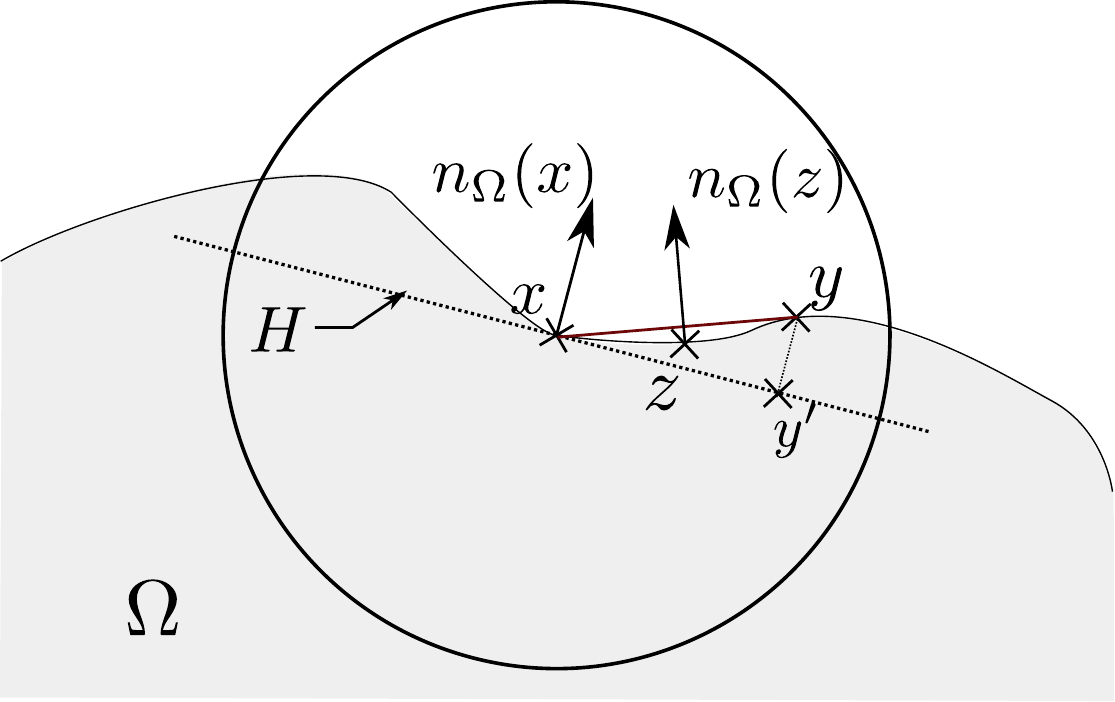
   \caption{\label{fig:boundaryomega}Reprentation of $B(x,r)$, $H$, $y$ and $z$.}
   \end{center}
\end{figure}
Let us denote by $x'$ and $y'$ the points in $H$ such that $x=x'+\phi(x') n_\Omega(x)$ and $y=y'+\phi(y') n_\Omega(x)$ (actually, $x'=x$ and $\phi(x')=0$, see figure \ref{fig:boundaryomega}). Let us denote by $\phi_0$ the following mapping
$$\forall s\in[0,1]\qquad \phi_0(s)=\phi((1-s)x'+sy')\,.$$
Note that since $\partial \Omega \cap B(x,r)$ is contained in a $\sC^1$ hypersurface, the mapping $\phi_0\in\sC^1(\sR, \sR)$.
By the mean value theorem, there exists $s\in]0,1[$ such that $\phi_0'(s)=\phi_0(1)-\phi_0(0)=\phi(y')-\phi(x')$. In other words, the vector $y'-x'+(\phi(y')-\phi(x')) \, n_\Omega(x)=y-x$ belongs to the tangent space of the point $z=((1-s)x'+sy'+\phi_0(s) n_\Omega(x))\in \Gamma$. Consequently, we have $$n_{\Omega}(z)\cdot (y-x)=0\,.$$
 Hence, we get using Cauchy Schwartz inequality
 \[|(y-x)\cdot n_\Omega(x)|=|(y-x)\cdot(n_\Omega(z)-n_\Omega(x))| \leq \|y-x\|_2 \|n_\Omega(z)-n_\Omega(x)\|_2\leq\delta \|y-x\|_2\,\]
 and
  \[\partial \Omega \cap B(x,r)\subset \left\{\,y\in B(x,r):|(y-x)\cdot n_\Omega(x)|\leq \delta \|y-x\|_2\,\right\}\,.\]
  Let $w\in\Omega \cap B^ +(x,r,n_\Omega(x))$. There exists $w'\in H\cap B(x,r)$ and $0\leq t\leq\phi(w')$ such that $w=w'+t n_\Omega(x)$. If $\phi(w')=0$, then we have $w\in  \left\{\,y\in B(x,r):|(y-x)\cdot n_\Omega(x)|\leq \delta \|y-x\|_2\,\right\}$. Let us assume $\phi(w')>0$. 
 Since $w'+\phi(w')n_\Omega(x)\in \Gamma$, we have
  \[\phi(w')=|(w'+\phi(w')n_\Omega(x)-x)\cdot n_\Omega(x)|\leq \delta \|w'+\phi(w')n_\Omega(x)-x\|_2\,.\]
It follows that
  \begin{align*}
  |(w-x)\cdot n_\Omega(x)|= |(w'-x)\cdot n_\Omega(x)+ t|=t&= \frac{t}{\phi(w')} \phi(w')\leq \frac{t}{\phi(w')}  \delta \|w'+\phi(w') n_\Omega(x)-x\|_2\\&\leq \delta\frac{t}{\phi(w')} \|w'-x\|_2+\delta t\leq  \delta(\|w'-x\|_2+ t) \\&=\delta \|w'+t n_\Omega(x)-x\|_2
  \end{align*}
  and $w\in  \left\{\,y\in B(x,r):|(y-x)\cdot n_\Omega(x)|\leq \delta \|y-x\|_2\,\right\}$. As a result, we get
  $$\Omega \cap B^ +(x,r,n_\Omega(x))\subset  \left\{\,y\in B(x,r):|(y-x)\cdot n_\Omega(x)|\leq \delta \|y-x\|_2\,\right\}
  \,.$$
   By the same arguments, we can prove that 
     $$\Omega ^ c\cap B^ -(x,r,n_\Omega(x))\subset  \left\{\,y\in B(x,r):|(y-x)\cdot n_\Omega(x)|\leq \delta \|y-x\|_2\,\right\}
  \,.$$
  The result follows.
 \end{proof}
 \subsection{Lower large deviations for the maximal flow in a ball}
 \subsubsection{Upper bound on the probability of lower large deviations for the maximal flow in a ball}\label{section:upperboundball}
 Note that the starting point to understand large deviations for the maximal flow in general domain is to first understand large deviations in cylinders. Several times in this paper, we will cover surfaces by disjoint balls. Hence, we will need to understand how the maximal flow behaves in a ball. From our knowledge on large deviations in cylinders, we can deduce results for large deviations in balls that is the basic brick we need in this paper.
We will need the following lemma that is an adaptation of what is done in section 6 in \cite{CT3}. Let $n\geq 1$. Let $x\in\sR^d$, $v\in\sS^{d-1}$ and $r$, $\delta$, $\zeta$ be positive constants.
We first define 
$G_n(x,r,v,\delta,\zeta)$ to be the event that there exists a set $U\subset B(x,r)\cap \sZ_n^d$ such that:
 $$\card (U\Delta (B^-(x,r,v)\cap\sZ ^d_n))\leq  4\delta \alpha_d r^d n^d$$
 and
 $$V((\partial ^eU)\cap B(x,r))\leq \zeta \alpha_{d-1}r^{d-1}n ^{d-1}\,$$
 where we recall that $\partial ^e U$ denotes the edge boundary of $U$ and was defined in \eqref{eq:defpartiale}. The set $(\partial ^eU)\cap B(x,r)$ correspond to the edges in $\partial ^e U$ that have both endpoints in $B(x,r)$.
 \begin{lem}\label{lem: Gxrv} There exists a constant $\kappa_0$ depending only on $d$ and $M$ such that for any $x\in\sR^d$, $v\in\sS^{d-1}$ and $r$, $\delta$, $\zeta$ positive constants, we have
\[\limsup_{n\rightarrow \infty} \frac{1}{n^{d-1}}\Prb \left(G_n(x,r,v,\delta,\zeta)\right)\leq -g(\delta)\alpha_{d-1}r^{d-1} \cJ_v\left(\frac{\zeta+\kappa_0\sqrt{\delta}}{g(\delta)}\right)\,\]
where $g(\delta)=(1-\delta)^{(d-1)/2}$.
\end{lem}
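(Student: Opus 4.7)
The plan is to reduce the event $G_n(x,r,v,\delta,\zeta)$ to a lower-deviation event for the maximal flow in a suitable cylinder inscribed in $B(x,r)$, and to invoke the LDP of Theorem~\ref{thm:lldtau}. I would set $h = \sqrt{\delta}\,r$ and choose a hyperrectangle $A$ normal to $v$, centered at $x$, with $\cH^{d-1}(A) = g(\delta)\alpha_{d-1}r^{d-1}$; such an $A$ can be inscribed in the disc $\disc(x,r\sqrt{1-\delta},v)$, up to an arbitrarily small area loss that I absorb in $\kappa_0$. The identity $(r\sqrt{1-\delta})^2 + h^2 = r^2$ ensures $\cyl(A,h) \subset B(x,r)$. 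The claim will then follow from constructing, out of any witness $U$ of $G_n(x,r,v,\delta,\zeta)$, a bona fide $(T'(A,h), B'(A,h))$-cutset in $\cyl(A,h)$ whose capacity exceeds that of $\partial^e U \cap B(x,r)$ by at most $\kappa_0\sqrt{\delta}\,\alpha_{d-1}r^{d-1}n^{d-1}$.

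\textbf{Surgery to produce a cutset in the cylinder.} Set $V_{\cyl} = U \cap \cyl(A,h) \cap \sZ_n^d$. The edges of $\partial^e V_{\cyl}$ lying inside $\cyl(A,h)$ form a subset of $\partial^e U \cap B(x,r)$, so have total capacity at most $\zeta\alpha_{d-1} r^{d-1} n^{d-1}$. However $V_{\cyl}$ need not contain $B'(A,h)$ nor be disjoint from $T'(A,h)$. I would repair this via a double pigeonhole argument over the $\sim 2hn$ horizontal slabs of thickness $1/n$ that partition $\cyl(A,h)$: since $U \Delta B^-(x,r,v)$ has at most $4\delta\alpha_d r^d n^d$ vertices, there exist levels $t_0^- \in [-h,-h/2]$ and $t_0^+ \in [h/2,h]$ at which the corresponding slabs each contain at most $c(d)\delta r^{d-1} n^{d-1}/h$ symmetric-difference vertices. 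I then define $V'_{\cyl}$ to equal $\cyl(A,h) \cap \sZ_n^d$ below level $t_0^-$, to agree with $V_{\cyl}$ between $t_0^-$ and $t_0^+$, and to be empty above level $t_0^+$; the resulting set contains $B'(A,h)$ and is disjoint from $T'(A,h)$, and the new edges in $\partial^e V'_{\cyl}$ (those crossing $t_0^\pm$) are bounded, up to a factor $2d$, by the symmetric-difference counts on those two slabs. A final closing-up along the lateral boundary of $\cyl(A,h)$, adding all edges within distance $1/n$ of that surface, introduces at most $c(d)\,r^{d-2} h\, n^{d-1}$ further edges. With $h = \sqrt{\delta}\,r$, both error contributions are of order $\sqrt{\delta}\, r^{d-1} n^{d-1}$; since every added edge has capacity at most $M$ by Hypothesis~\ref{hypo:G}, the resulting edge set $\cE$ is a $(T'(A,h),B'(A,h))$-cutset of capacity at most $(\zeta + \kappa_0\sqrt{\delta})\alpha_{d-1}r^{d-1}n^{d-1}$ for a suitable $\kappa_0 = \kappa_0(d,M)$.

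\textbf{Conclusion and main obstacle.} The above yields the event inclusion
\[G_n(x,r,v,\delta,\zeta) \subset \left\{\frac{\tau_n(A,h)}{\cH^{d-1}(A)\,n^{d-1}} \leq \frac{\zeta+\kappa_0\sqrt{\delta}}{g(\delta)}\right\},\]
and the upper-bound half of the LDP in Theorem~\ref{thm:lldtau} with speed $\cH^{d-1}(A)n^{d-1}$ then gives
\[\limsup_{n\to\infty}\frac{1}{n^{d-1}}\log\Prb(G_n(x,r,v,\delta,\zeta)) \leq -g(\delta)\alpha_{d-1}r^{d-1}\,\cJ_v\!\left(\frac{\zeta+\kappa_0\sqrt{\delta}}{g(\delta)}\right),\]
as required. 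The main obstacle is precisely the surgery step: the hypothesis controls $U\Delta B^-(x,r,v)$ only at volume order, whereas the capacity of the added cutset edges has to be controlled at surface order. The scale $h \sim \sqrt{\delta}\,r$ is the unique one that balances the volume-divided-by-$h$ error from the two gluing slabs against the $r^{d-2}h$ error from the lateral boundary, and this tradeoff is what produces the characteristic $\sqrt{\delta}$ correction appearing in the rate function.
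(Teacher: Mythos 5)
There is a genuine gap in your proof, and it is in the very first step. You propose to work with a \emph{single} hyperrectangle $A$ inscribed in the disc $\disc(x,r\sqrt{1-\delta},v)$ with $\cH^{d-1}(A)=g(\delta)\alpha_{d-1}r^{d-1}$, i.e.\ with area equal to that of the disc itself, claiming the area loss from inscription can be made ``arbitrarily small'' and absorbed into $\kappa_0\sqrt{\delta}$. This is false for $d\geq 3$: the largest hyperrectangle that fits inside a $(d-1)$-dimensional disc of radius $\rho$ has volume $(2\rho/\sqrt{d-1})^{d-1}$, so the ratio to the disc area $\alpha_{d-1}\rho^{d-1}$ is the constant $(2/\sqrt{d-1})^{d-1}/\alpha_{d-1}<1$ (e.g.\ $2/\pi$ for $d=3$), independent of $\delta$. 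Replacing the disc by a smaller $A$ degrades the estimate in the wrong direction: since $\cJ_v$ is non-increasing, the rate $\cH^{d-1}(A)\,\cJ_v\bigl(\zeta'/(\cH^{d-1}(A))\bigr)$ decreases when $\cH^{d-1}(A)$ shrinks, so you would obtain a prefactor bounded away from $\alpha_{d-1}r^{d-1}$ and would lose the crucial property $g(\delta)\to 1$ as $\delta\to 0$, which is used downstream (e.g.\ in the proof of Proposition~\ref{prop:upperbound} where $\ep$ is ultimately sent to $0$).

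This is exactly the obstacle the paper's proof is designed to overcome: instead of one hyperrectangle it uses a finite family $(A_i)_{i\in J}$ of disjoint hyperrectangles whose union exhausts the disc up to an area $\kappa$ that genuinely can be made arbitrarily small (equation~\eqref{eq:condrect}). That change is not free. Once there are several cylinders, paths can leave one $\cyl(A_i,\gamma_0)$ and re-enter through a neighbour, so each $A_i$ must carry a lateral sheath $\cP_i(n)$; and more importantly, a bound on $\sum_i\tau_n(A_i,\gamma_0)$ does not decompose into a product of per-cylinder events, so the paper discretizes the capacity profile into a finite grid $\cS$, uses independence across the disjoint cylinders, applies Theorem~\ref{thm:lldtau} to each $A_i$ separately, recombines the per-cylinder rates using the convexity of $\cJ_v$, and finally sends $\kappa,\ep_0\to 0$ via lower semi-continuity. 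Your single-cylinder surgery (pigeonhole over horizontal slabs, with the scale $h\sim\sqrt{\delta}\,r$ chosen to balance the volume-over-$h$ gluing error against the lateral $r^{d-2}h$ error) is otherwise entirely parallel to the paper's, and it is indeed the right way to make the repair cost $O(\sqrt{\delta})$; but the reduction to a single application of Theorem~\ref{thm:lldtau} fails, and with it the rest of the outline.
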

\begin{proof}[Proof of lemma \ref{lem: Gxrv}]
We aim to prove that on the event $G_n(x,r,v,\delta,\zeta)$, we can build a cutset that separates the upper half part of $\partial B(x,r,v)$ (upper half part according to the direction $v$) from the lower half part that has a capacity close to $\zeta \alpha_{d-1}r^{d-1}n ^{d-1}$. To do so, we build from the set $U$ an almost flat cutset in the ball. The fact that $\card (U\Delta B^-(x,r,v))$ is small implies that $\partial^e U$ is almost flat and is close to $\disc(x,r,v)$. However, this does not prevent the existence of long thin strands that might escape the ball and prevent $U$ from being a cutset in the ball. The idea is to cut these strands by adding edges at a fixed height. We have to choose the appropriate height to ensure that the extra edges we needed to add to cut these strands are not too many, so that we can control their capacity. The new set of edges we create by adding to $U$ these edges will be in a sense a cutset. The last thing to do is then to cover the $\disc(x,r,v)$ by hyperrectangles in order to use the rate function that controls the decay of the probability of having an abnormally low flow in a cylinder.
Let $\rho>0$ be a small constant that we will choose later depending on $\delta$. We define 
$$\gamma_{max}=\rho r\,.$$
This constant $\gamma_{max}$ will represent the height of a cylinder of basis $\disc(x,r',v)$. We have to choose $r'$ in such a way that $\cyl(\disc(x,r',v),\gamma_{max})\subset B(x,r)$.
We set 
$$r'=r \sqrt{1-\rho^2}\,.$$
On the event $G_n(x,r,v,\delta,\zeta)$, we consider a fixed set $U$ satisfying the properties described in the definition of the event.
For each $\gamma$ in $\{1/n,\dots , (\lfloor n\gamma_{max}\rfloor -1)/n\}$, we define
\[D(\gamma)=\cyl(\disc(x,r',v),\gamma),\]
\[\partial ^+_n D(\gamma)=\left\{y\in D(\gamma)\cap\sZ_n^d:\exists z\in\sZ_n^d, (z-x)\cdot v>\gamma\text{ and } \|z-y\|_1=\frac{1}{n}\right\}\,\]
and
\[\partial ^-_n D(\gamma)=\left\{y\in D(\gamma)\cap\sZ_n^d:\exists z\in\sZ_n^d, (z-x)\cdot v<-\gamma\text{ and } \|z-y\|_1=\frac{1}{n}\right\}\,.\]
The sets $\partial^+_n D(\gamma)\cup\partial ^-_nD(\gamma)$ are pairwise disjoint for different $\gamma$.
Moreover, we have
$$\sum_{\gamma=1/n,\dots,(\lfloor n\gamma_{max}\rfloor -1)/n}\card(U\cap \partial^+_n D(\gamma))+\card(U^c\cap \partial^-_n D(\gamma))\leq \card(U\Delta (B^-(x,r,v)\cap\sZ_n^d))\leq  4\delta \alpha_d r^d n^d\,.$$
By a pigeon-hole principle, there exists $\gamma_0$ in $\{1/n,\dots,(\lfloor n\gamma_{max}\rfloor -1)/n\}$ such that 
$$\card(U\cap \partial^+_n D(\gamma_0))+\card(U^c\cap \partial^-_n D(\gamma_0))\leq \frac{4\delta \alpha_d r^d n^d}{\lfloor n\gamma_{max}\rfloor -1}\leq \frac{5\delta \alpha_d r^dn^{d-1}}{\gamma_{max}}=\frac{5\delta \alpha_d r^{d-1}n^{d-1}}{\rho}$$
for $n$ large enough. If there are several choices for $\gamma_0$, we pick the smallest one.
We denote by $X=U\cap D(\gamma_0)$. We define by $X^+$ and $X^-$ the following set of edges:
$$X^+=\{\langle y, z\rangle\in \E_n^d: y\in \partial ^+_nD(\gamma_0)\cap X, z\notin D(\gamma_0)\}\,,$$
$$X^-=\{\langle y, z\rangle\in \E_n^d: y\in \partial ^-_nD(\gamma_0)\cap X^c, z\notin D(\gamma_0)\}\,.$$
Let us control the number of edges in $X^+\cup X^-$:
$$\card(X^+ \cup X^-)\leq 2d\left(\card(U\cap \partial^+_n D(\gamma_0))+\card(U^c\cap \partial^-_n D(\gamma_0))\right)\leq 2d \frac{5\delta \alpha_d r^{d-1}n^{d-1}}{\rho}=C_d\delta \rho ^{-1}r^{d-1}n^{d-1}\,$$
where $C_d=10d\alpha_d$.
We want to relate the event $G_n(x,r,v,\delta,\zeta)$ to flows in cylinders. There exists a constant $c_d$ depending only on the dimension such that for any positive $\kappa$, there exists a finite collection of closed disjoint hyperrectangles $(A_i)_{i\in J}$ included in $\disc(x,r',v)$ such that 
\begin{align}\label{eq:condrect}
\sum_{i\in J}\cH^{d-1}(A_i)\geq \alpha_{d-1}r'^{d-1}-\kappa\qquad\text{and}\qquad \sum_{i\in J}\cH^{d-2}(\partial A_i)\leq c_d r'^{d-2}\,.
\end{align}
Since all the hyperrectangles are closed and disjoint, we have
$$\xi=\min\{d_2(A_i,A_j):i\neq j\in J\}>0\,.$$ 
For any $i\in J$, we denote by $\cP_i(n)$ the edges with at least one endpoint in $\cV_2(\cyl(\partial A_i,\gamma_0),d/n)$. For $n$ sufficiently large, we have $\xi>4d/n$ and all the sets $\cP_i(n)$ are pairwise disjoint.
Moreover, using proposition \ref{prop:minkowski}, we have
\begin{align*}\sum_{i\in J}\card(\cP_i(n))\leq 2d\sum_{i\in J}\cL^d(\cV_2(\cyl(\partial A_i,\gamma_0),2d/n))n^{d}& \leq \sum_{i\in J}16d^2\cH^{d-2}(\partial A_i)2\gamma_0n^{d-1}\\&\leq 32d^2\rho rc_dr^{d-2} n^{d-1}\leq 32d^2 c_d\rho r^{d-1}n ^{d-1}\,.
\end{align*}
We define $$\cE_i=\cP_i(n)\cup \big((X^+ \cup X^-\cup(\partial ^e U\cap D(\gamma_0)))\cap \cyl(A_i,\gamma_{max})\big) \,.$$
We can check that $\cE_i$ is a cutset for $\tau_n(A_i,\gamma_0)$ (we don't prove here that the set $\cE_i$ is a cutset, we refer to the proof of lemma \ref{lem:inclusion} for a proof that a set is a cutset). The sets $\cE_i$ are pairwise disjoint and 
\begin{align*}
\sum_{i\in J} V(\cE_i)&\leq V\left(\partial ^e U\cap B(x,r)\right)+ M\left( \sum_{i\in J}\card(\cP_i(n))+ \card(X^+\cup X^-)\right)\\&\leq \left(\zeta \alpha_{d-1}+ (C_d\delta \rho^{-1}+32d^2c_d\rho)M\right) r^{d-1}n^{d-1}\,.
\end{align*}
 Set $\rho = \sqrt{\delta}$.
We obtain that
\begin{align}\label{eq:conclem1}
\Prb \left(G_n(x,r,v,\delta,\zeta)\right)\leq \Prb\left(\exists(\cE_i)_{i\in J}:\begin{array}{c} \forall i \in J\quad\cE_i\subset \E_n^d \text{ is a cutset in $\cyl(A_i,\gamma_{max})$ and }\\\sum_{i\in J} V(\cE_i)\leq (\zeta + \kappa_0\sqrt \delta)\alpha_{d-1}r^{d-1}n^{d-1}\end{array}\right)\,
\end{align}
where $\kappa_0= M(C_d+32d^2c_d)/\alpha_{d-1}$.
Let $\ep_0>0$.
We want to sum on all possible values of $$\left(\left\lceil\frac{ V(\cE_i)}{\ep_0 r^{d-1}n^{d-1}}\right\rceil\ep_0 r^{d-1}n^{d-1},i\in J\right)\,.$$ 
It is easy to check that 
$$\sum_{i\in J}\left\lceil \frac{V(\cE_i)}{\ep_0 r^{d-1}n^{d-1}}\right \rceil\ep_0 r^{d-1}n^{d-1}\leq \sum_{i\in J}V(\cE_i)+ |J|\ep_0 r^{d-1}n^{d-1}\leq ((\zeta +\kappa_0\sqrt \delta )\alpha_{d-1}+|J|\ep_0)r^{d-1}n^{d-1}\,,$$
where $|J|$ denotes the cardinality of $J$.
There are at most 
$((\zeta +\kappa_0\sqrt \delta )\alpha_{d-1}/\ep_0+|J|)^{|J|}$ possible values for the family
$$\left(\left\lceil \frac{V(\cE_i)}{\ep_0 r^{d-1}n^{d-1}}\right \rceil,i\in J\right)\,.$$ Hence, the number of possible values is finite and does not depend on $n$. Let $\mathcal{S}$ be the following set $$\cS=\left\{(\beta_i)_{i\in J}\in\sN^ {|J|}:\,\sum_{i\in J}\beta_i\ep_0 r^{d-1}n^{d-1}\leq ((\zeta +\kappa_0\sqrt \delta )\alpha_{d-1}+|J|\ep_0)r^{d-1}n^{d-1}\right\}\,.$$
By inequality \eqref{eq:conclem1}, we obtain
\begin{align}\label{eq:conclem2}
\Prb \left(G_n(x,r,v,\delta,\zeta)\right)&\leq \Prb\left(\exists(\cE_i)_{i\in J}:\begin{array}{c} \forall i \in J\quad\cE_i\subset \E_n^d \text{ is a cutset in $\cyl(A_i,\gamma_{max})$ and }\\\sum_{i\in J} V(\cE_i)\leq (\zeta + \kappa_0\sqrt \delta)\alpha_{d-1}r^{d-1}n^{d-1}\end{array}\right)\,\nonumber\\
&\leq \sum_{(\beta_i)_{i\in J }\in\cS}\Prb\left(\exists(\cE_i)_{i\in J}: \begin{array}{c}\,\forall i \in J\quad \cE_i \text{ is a cutset for $\tau_n(A_i,\gamma_{max})$ and } \\\quad \left\lceil V(\cE_i)/\ep_0r^{d-1}n^{d-1}\right\rceil=\beta_i\end{array}\right)\,.
\end{align}
Since the cylinders are all disjoint, using the independence, we have
\begin{align*}
 \forall (\beta_i)_{i\in J} \in\cS\qquad\Prb&\left(\exists(\cE_i)_{i\in J}: \,\forall i \in J\quad \cE_i \text{ is a cutset for $\tau_n(A_i,\gamma_{max})$ and } \quad \left\lceil\frac{ V(\cE_i)}{\ep_0r^{d-1}n^{d-1}}\right\rceil=\beta_i\right)
\\
&\qquad\leq \prod_{i\in J}\Prb\left(\tau_n(A_i,\gamma_{\max})\leq \beta_i\ep_0r^{d-1}{n^{d-1}}\right)\,.
\end{align*}
Thanks to theorem \ref{thm:lldtau}, it follows that 
\begin{align*}
\limsup_{n\rightarrow \infty}\frac{1}{n^{d-1}}&\log\Prb\left(\exists(\cE_i)_{i\in J}: \,\forall i \in J\quad \cE_i \text{ is a cutset for $\tau_n(A_i,\gamma_{max})$ and } \quad \left\lceil\frac{ V(\cE_i)}{\ep_0r^{d-1}n^{d-1}}\right\rceil=\beta_i\right)
\\
&\leq \sum_{i\in J}\limsup_{n\rightarrow \infty}\frac{1}{n^{d-1}}\log\Prb\left(\tau_n(A_i,\gamma_{\max})\leq \beta_i\ep_0r^{d-1}{n^{d-1}}\right)\\
&\leq-\sum_{i\in J}\cH^{d-1}(A_i)\cJ_v\left(\frac{\beta_i\ep_0 r^{d-1}}{\cH^{d-1}(A_i)}\right)\,.
\end{align*}
Using that $\cJ_v$ it is a decreasing function, inequality \eqref{eq:condrect} and the convexity of $\cJ_v$, we have for $(\beta_i)_{i\in J }\in\cS$,
\begin{align*}
\cJ_v\left(\frac{((\zeta +\kappa_0\sqrt \delta )\alpha_{d-1}+|J|\ep_0)r^{d-1}}{\alpha_{d-1}r'^{d-1}-\kappa}\right)&\leq \cJ_v\left(\frac{((\zeta +\kappa_0\sqrt \delta )\alpha_{d-1}+|J|\ep_0)r^{d-1}}{\cH^{d-1}(\cup_{i\in J}A_i)}\right) \\
&\leq\cJ_v\left(\frac{1}{\cH^{d-1}(\cup_{i\in J}A_i)}\sum_{i\in J}\cH^{d-1}(A_i)\left(\frac{\beta_i\ep_0 r^{d-1}}{\cH^{d-1}(A_i)}\right)\right)\\
&\leq \sum_{i\in J}\frac{\cH^{d-1}(A_i)}{\cH^{d-1}(\cup_{i\in J}A_i)}\cJ_v\left(\frac{\beta_i\ep_0 r^{d-1}}{\cH^{d-1}(A_i)}\right)\,.
\end{align*}
Combining the two previous inequalities and \eqref{eq:condrect}, we obtain
\begin{align*}
\limsup_{n\rightarrow \infty}\frac{1}{n^{d-1}}&\log\Prb\left(\exists(\cE_i)_{i\in J}: \,\forall i \in J\quad \cE_i \text{ is a cutset for $\tau_n(A_i,\gamma_{max})$ and } \quad \left\lceil\frac{ V(\cE_i)}{\ep_0r^{d-1}n^{d-1}}\right\rceil=\beta_i\right)\\&\leq -\cH^{d-1}(\cup_{i\in J}A_i)\cJ_v\left(\frac{((\zeta +\kappa_0\sqrt \delta )\alpha_{d-1}+|J|\ep_0)r^{d-1}}{\alpha_{d-1}r'^{d-1}-\kappa}\right)\\
&\leq -(\alpha_{d-1}r'^{d-1}-\kappa)\cJ_v\left(\frac{((\zeta +\kappa_0\sqrt \delta )\alpha_{d-1}+|J|\ep_0)r^{d-1}}{\alpha_{d-1}r'^{d-1}-\kappa}\right)\,.
\end{align*}
Combining this inequality with inequality \eqref{eq:conclem2} and lemma \ref{lem:estimeanalyse} gives
\begin{align*}
\limsup_{n\rightarrow \infty}\frac{1}{n^{d-1}}\log\Prb \left(G_n(x,r,v,\delta,\zeta)\right)\leq -(\alpha_{d-1}r'^{d-1}-\kappa)\cJ_v\left(\frac{((\zeta +\kappa_0\sqrt \delta )\alpha_{d-1}+|J|\ep_0)r^{d-1}}{\alpha_{d-1}r'^{d-1}-\kappa}\right)\,.
\end{align*}
Since $\cJ_v$ is a good rate function, it is lower semi-continuous, hence,
$$\liminf_{\kappa\rightarrow 0}\liminf_{\ep_0\rightarrow 0}\cJ_v\left(\frac{((\zeta +\kappa_0\sqrt \delta )\alpha_{d-1}+|J|\ep_0)r^{d-1}}{\alpha_{d-1}r'^{d-1}-\kappa}\right)\geq \cJ_v\left(\frac{(\zeta + \kappa_0\sqrt \delta)\alpha_{d-1}r^{d-1}}{\alpha_{d-1}r'^{d-1}}\right)\,.$$
As a result, we obtain
\begin{align*}
\limsup_{n\rightarrow\infty}& \frac{1}{n^{d-1}} \log\Prb (G_n(x,r,v,\delta,\zeta))\leq -\alpha_{d-1}r^{d-1}(1-\delta)^{(d-1)/2}\cJ_v\left(\frac{\zeta + \kappa_0\sqrt \delta}{(1-\delta)^{(d-1)/2}}\right)
\end{align*}
where we use that $r'=r\sqrt{1-\delta}$. By setting $g(\delta)=(1-\delta)^{(d-1)/2}$, the result follows.
\end{proof}

\subsubsection{Lower bound on the probability of lower large deviations for the maximal flow in a ball}
Let $n\geq 1$. Let $x\in\sR^d$, $v\in\sS^{d-1}$ and $r$, $\delta$, $\zeta$ be positive constants.
We first define 
$\overline G_n(x,r,v,\delta,\zeta)$ as the event that there exists a cutset $\cE_n$ in $B(x,r)\cap\Omega_n$ that cuts $\partial^+_n B(x,r,v)\cup ((\Gamma_n^1\cup \Gamma_n^ 2)\setminus \partial ^-_n B(x,r,v))$ from $\partial^-_n B(x,r,v)$ such that
 $$\cE_n\subset \cyl(\disc(x,r,v),2\delta r)$$
 and
 $$V(\cE_n)\leq \zeta \alpha_{d-1}r^{d-1}n ^{d-1}\,.$$

\begin{lem}\label{lem:phicassimple}Let $\eta>0$. Let $\delta>0$ such that
\begin{align}\label{cond:deltalemphi}
M 10d^2(1-(\sqrt{1-4\delta^2}(1-2d\delta))^{d-1}+ 4\delta \alpha_{d-2})\leq \frac{\eta}{4} \alpha_{d-1}\,.
\end{align}
\begin{itemize}[$\bullet$]
\item  Let $x\in\Omega$ and $r>0$ such that $B(x,r)\subset \Omega$. For any $v\in\sS^{d-1}$,
we have
$$\limsup_{n\rightarrow \infty}\frac{1}{n^{d-1}}\log\Prb\left(\overline G_n(x,r,v,1,\nu_G(v)+\eta)^ c\right)=-\infty\,.$$
For any $\zeta>0$,
$$
\liminf_{n\rightarrow \infty}\frac{1}{n^{d-1}}\log \Prb\left(\overline G_n(x,r,v,\delta,\zeta+\eta)\right)\geq- \alpha_{d-1}r ^{d-1}\cJ_v(\zeta)\,.
$$
\item
Let $x\in\partial ^* \Omega \cap (\Gamma^1\cup \Gamma^2)$. Let $r>0$ such that 
\[(\Omega\cap B(x,r,n_\Omega(x)))\Delta B^- (x,r,n_\Omega(x))\subset\left\{\,y\in B(x,r):|(y-x)\cdot n_\Omega(x)|\leq \delta \|y-x\|_2\,\right\}\,,\]
we have
$$\limsup_{n\rightarrow \infty}\frac{1}{n^{d-1}}\log\Prb\left(\overline G_n\left(x,r,n_\Omega(x),1,\nu_G(n_\Omega(x))+\eta\right)^ c\right)=-\infty\,.$$
For any $\zeta>0$,
$$
\liminf_{n\rightarrow \infty}\frac{1}{n^{d-1}}\log \Prb\left(\overline G_n(x,r,n_\Omega(x),\delta,\zeta+\eta)\right)\geq- \alpha_{d-1}r ^{d-1}\cJ_{n_\Omega(x)}(\zeta)\,.
$$
\end{itemize}
\end{lem}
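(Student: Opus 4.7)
The plan is to reduce the lemma to statements about maximal flows in cylinders, for which we already have Theorems \ref{thm:lldtau} and \ref{thm:theretuppertau}. For each bullet the common construction is to inscribe in $\disc(x,r,v)$ (or $\disc(x,r,n_\Omega(x))$ in the boundary case) a hyperrectangle $A$ whose area is very close to $\alpha_{d-1}r^{d-1}$, and to build the desired cutset as the union of a cutset for $\tau_n(A,\delta r)$ together with a small number of closing edges which ensure that the final object separates the prescribed parts of the boundary of $B(x,r)$ (or of $B(x,r)\cap\Omega_n$) and stays inside $\cyl(\disc(x,r,v),2\delta r)$. Condition \eqref{cond:deltalemphi} on $\delta$ is precisely what guarantees that the closing edges, weighted by the maximal capacity $M$, contribute at most $(\eta/4)\alpha_{d-1}r^{d-1}n^{d-1}$ to the total capacity: the factor $1-(\sqrt{1-4\delta^2}(1-2d\delta))^{d-1}$ bounds (up to constants) the relative volume of the ball not covered by $\cyl(A,\delta r)$, while $4\delta\alpha_{d-2}$ accounts for a sealing layer near $\partial A$.

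\textbf{Upper bound on the complement.} For the first statement of each bullet, take $\delta=1$, so that the localization constraint on the cutset is vacuous. Choose, as in the proof of Lemma \ref{lem: Gxrv}, a hyperrectangle $A\subset\disc(x,r,v)$ with $\cH^{d-1}(A)\geq\alpha_{d-1}r^{d-1}-\kappa$ for some small $\kappa=\kappa(\eta)>0$. Theorem \ref{thm:theretuppertau} applied with $\lambda=\nu_G(v)+\eta/2>\nu_G(v)$ gives constants $c>0$ and $N\geq 1$ such that
\[\Prb\bigl(\tau_n(A,r)\geq(\nu_G(v)+\eta/2)\cH^{d-1}(A)n^{d-1}\bigr)\leq e^{-cn^d}\]
for all $n\geq N$. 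On the complementary event, any cutset $\cE_n$ realizing $\tau_n(A,r)$ can be augmented with all edges lying in the region of the ball not covered by $\cyl(A,r)$ and, in the boundary case, with edges sealing $(\Gamma_n^1\cup\Gamma_n^2)\setminus\partial_n^-B(x,r,v)$. The number of such auxiliary edges is at most a constant times $\kappa n^{d-1}$, so that their total capacity is bounded by $M\kappa n^{d-1}\leq(\eta/2)\alpha_{d-1}r^{d-1}n^{d-1}$ once $\kappa$ is chosen small enough. This shows that $\overline G_n(x,r,v,1,\nu_G(v)+\eta)^c$ is contained in an event of probability $e^{-cn^d}$, hence the desired $\limsup=-\infty$.

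\textbf{Lower bound.} For the second statement, fix $\zeta,\eta>0$ and $\delta$ satisfying \eqref{cond:deltalemphi}. Pick again $A\subset\disc(x,r,v)$ with $\cH^{d-1}(A)\geq\alpha_{d-1}r^{d-1}-\kappa$, $\kappa=\kappa(\eta)>0$ small. By the lower bound in the large deviation principle of Theorem \ref{thm:lldtau} applied to $\cyl(A,\delta r)$,
\[\liminf_{n\to\infty}\frac{1}{\cH^{d-1}(A)n^{d-1}}\log\Prb\bigl(\tau_n(A,\delta r)\leq(\zeta+\eta/2)\cH^{d-1}(A)n^{d-1}\bigr)\geq-\cJ_v(\zeta),\]
which, upon letting $\kappa\to 0$ and using the lower semicontinuity of $\cJ_v$, produces a lower bound of $-\alpha_{d-1}r^{d-1}\cJ_v(\zeta)$ on the $\liminf$ of $n^{-(d-1)}\log$ of this probability. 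On this event, fix a cutset $\cE_n$ of $\tau_n(A,\delta r)$ achieving the capacity bound. To turn $\cE_n$ into an element of $\overline G_n(x,r,v,\delta,\zeta+\eta)$ I would add (i) a \emph{curtain} of edges covering $\cyl(\partial A,\delta r)$, guaranteeing that $\cE_n$ together with the curtain separates $\partial_n^+B(x,r,v)$ from $\partial_n^-B(x,r,v)$ in the whole ball, and (ii) a thin layer of edges sealing the gap between $\cyl(A,\delta r)$ and the complement of $\cyl(\disc(x,r,v),2\delta r)$. By \eqref{cond:deltalemphi}, the number of these extra edges is at most $(\eta/2M)\alpha_{d-1}r^{d-1}n^{d-1}$, hence the total capacity remains below $(\zeta+\eta)\alpha_{d-1}r^{d-1}n^{d-1}$, as required.

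\textbf{Boundary case and main difficulty.} When $x\in\partial^*\Omega\cap(\Gamma^1\cup\Gamma^2)$, the assumption on $r$ forces $(\Omega\cap B(x,r))\Delta B^-(x,r,n_\Omega(x))$ to be confined to the cone $\{|(y-x)\cdot n_\Omega(x)|\leq\delta\|y-x\|_2\}\subset\cyl(\disc(x,r,n_\Omega(x)),2\delta r)$, so that the central cylinder still lies inside $\Omega$, and Lemma \ref{lem:Omegabord} controls the deviation of $\partial\Omega\cap B(x,r)$ from the hyperplane $\disc(x,r,n_\Omega(x))$. The construction is then the same as above, the only difference being that the sealing curtain must in addition cover $(\Gamma_n^1\cup\Gamma_n^2)\setminus\partial_n^-B(x,r,v)$, which again is contained in the same cylindrical neighborhood. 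The main technical obstacle is step (i): one has to check rigorously, in the spirit of Lemma \ref{lem: Gxrv}, that gluing a cylindrical cutset to its curtain indeed yields a set that separates $\partial_n^+B(x,r,v)$ from $\partial_n^-B(x,r,v)$ in $B(x,r)\cap\Omega_n$, and to verify that the explicit geometric counts of lateral and curtain edges match the constant in \eqref{cond:deltalemphi}.
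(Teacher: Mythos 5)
Your plan hinges on inscribing a \emph{single} hyperrectangle $A\subset\disc(x,r,v)$ with $\cH^{d-1}(A)\geq\alpha_{d-1}r^{d-1}-\kappa$ for small $\kappa$. This is geometrically impossible once $d\geq 3$: the largest box inscribed in a $(d-1)$-dimensional ball of radius $r$ is the hypercube of side $2r/\sqrt{d-1}$, whose $(d-1)$-volume is only a fixed fraction of $\alpha_{d-1}r^{d-1}$ (already $2/\pi$ for $d=3$, decreasing with $d$). So the hyperrectangle you feed into Theorems \ref{thm:lldtau} and \ref{thm:theretuppertau} cannot be made to carry $(1-\kappa)$ of the disc's area. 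This is exactly why the paper does \emph{not} use one big hyperrectangle: both in Lemma \ref{lem: Gxrv} (a finite collection of disjoint hyperrectangles exhausting the disc) and in the actual proof of Lemma \ref{lem:phicassimple} (a paving of $\disc(x,r',v)$ by $\sim(1/\delta)^{d-1}$ disjoint translates $S_i$ of a small hyper-square of side $\delta r$, each used with its own cylinder $\cyl(S_i,h_{max})$, the cutsets being glued along negligible seams and the contributions combined via independence / FKG). The multiplicity of boxes is essential, not a bookkeeping convenience.

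Two further points would need repair even if the covering issue were fixed. First, for the $\limsup=-\infty$ statement you take the cylinder of full height $r$; the lateral sealing needed to turn a cutset of $\tau_n(A,r)$ into a cutset of $\partial_n^+B(x,r,v)$ from $\partial_n^-B(x,r,v)$ in the whole ball is then a curtain $\cV_2(\cyl(\partial A,r),d/n)$, which has $\Theta(r^{d-1}n^{d-1})$ edges with a dimension-dependent constant that does not vanish as $\kappa\to 0$. Your claim that the auxiliary edges number at most a constant times $\kappa n^{d-1}$ is therefore false, and the resulting capacity estimate does not close. The paper avoids this by using small cylinders of height $h_{max}=2\delta r$, so that the seam contribution is $O(n^{d-2})$ and the annular contribution scales with the factor $1-(\sqrt{1-4\delta^2}(1-2d\delta))^{d-1}$ which is controlled by $\delta$. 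Second, condition \eqref{cond:deltalemphi} encodes the specific edge counts of the paper's tiling; importing it verbatim as the bound on your (different) sealing layer is not justified and would require re-deriving the constants for a single-box construction — but as noted that construction cannot be made to work anyway. Finally, for the boundary case the paper does not merely observe that the central cylinder stays in $\Omega$: it shifts the cylinder inward by $\frac{3}{2}\delta r$ in the $-n_\Omega(x)$ direction and uses the cone bound from Lemma \ref{lem:Omegabord} to verify the shifted cylinder is in $\Omega$ and that the sealing still cuts the appropriate boundary; your sketch does not account for this.
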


\begin{proof}[Proof of lemma \ref{lem:phicassimple}] Let $\eta>0$ and $\delta$ we will choose later depending on $\eta$.

\noindent {\bf First case: $x\in\Omega$.}
Let $v\in\sS^{d-1}$ and $r>0$.
 We set $h_{max}=2\delta r$ and
$$r'=r\sqrt{1-4\delta^2}\,.$$
With such a choice of $r'$, we have $\cyl(\disc(x,r',v),h_{max})\subset B(x,r)$.
Let $(e_1,\dots,e_{d-1},v)$ be an orthonormal basis. 
Let $S_{\delta}$ be the hyper-square of side-length $\delta r$ of normal vector $v$ having for expression $[0,\delta r[^{d-1}\times\{0\}$ in the basis  $(e_1,\dots,e_{d-1},v)$. We can pave $\disc(x,r',v)$ with a family $(S_i)_{i\in I }$ of translates of $S_\delta$ such that the $S_i$ are pairwise disjoint, there are all included in $\disc(x,r',\vv)$ and $\disc(x,r'(1-2d\delta) ,v)\subset\disc(x,r'-d\delta r ,v)\subset \cup_{i \in I }S_i$. We denote by $\cE_n(i)$ the cutset that achieves the infimum in $\tau_n(S_i,h_{max})$. If there are several possible choices, we use a deterministic rule to break ties.
Let $\cF_0$ be the set of edges with at least one endpoint included in 
$$\cV_2\big(\disc(x,r,v)\setminus \disc(x,r'(1-2d\delta) ,v), d/n\big)\cup \bigcup_{i\in I }\cV_2(\partial S_i, d/n)\,.$$
Note that $\cH^{d-2}(\partial S_\delta)=2(d-1)(\delta r) ^{d-2}$ where $\partial S_\delta$ denotes the relative boundary, \textit{i.e.}, the boundary of $S_\delta$ in the hyperplane $\{x\in\sR^d:x\cdot v=0\}$.
Using proposition \ref{prop:minkowski}, we have for $n$ large enough 
\begin{align*}
\card(\cF_0)&\leq 4d n^d \cL^d\left( \cV_2(\disc(x,r,v)\setminus \disc(x,r'(1-2d\delta) ,v), d/n)\cup \bigcup_{i\in I }\cV_2(\partial S_i, d/n)\right)\\
&\leq 10d^2\alpha_{d-1}r^{d-1}\left(1-\left(\sqrt{1-4\delta^2}(1-2d\delta)\right)^{d-1}\right) n^{d-1} +16d^4 |I|\delta^{d-2}r^{d-2}\alpha_2 n ^{d-2}\\
&\leq 10d^2r^{d-1}\left(1-\left(\sqrt{1-4\delta^2}(1-2d\delta)\right)^{d-1}\right) n^{d-1} +16d^4 \frac{r^{d-2}}{\delta} \alpha_2\alpha_{d-1}n ^{d-2}\,.
\end{align*}
We choose $\delta$ small enough such that 
$$ M 10d^2\left(1-\left(\sqrt{1-4\delta^2}(1-2d\delta)\right)^{d-1}\right)\leq \frac{\eta}{4} \alpha_{d-1}\,.$$
Then, we choose $n$ large enough such that 
$$16d^4 M\frac{r^{d-2}}{\delta} \alpha_2\alpha_{d-1}n ^{d-2}\leq \frac{\eta}{4} \alpha_{d-1}r^{d-1} n^{d-1}\,.$$
With this choice, we have
$$M\card(\cF_0)\leq\frac{ \eta}{2}\alpha_{d-1}r^{d-1}n^{d-1}\,.$$

We notice that the set $\cF_0\cup \left(\cup_{i\in I}\cE_n(i)\right)$ is a cutset that cuts $\partial ^+_n B(x,r,v)$ from $\partial ^-_n B(x,r,v)$ in 
$B(x,r)\cap \Omega_n$.
It follows that
\begin{align*}
\Prb\left(\overline G_n(x,r,v,1,\nu_G(v)+\eta)^c\right) &\leq\Prb\left(V\left(\cF_0\cup \left(\cup_{i\in I}\cE_n(i)\right)\right)\geq (\nu_G(v)+\eta)\alpha_{d-1}r ^{d-1}n^{d-1}\right)
\\&\leq \Prb\left(\sum_{i\in I }\tau_n(S_i,h_{max})\geq |I|\left(\nu_G(v)+\frac{\eta}{2}\right)(\delta r) ^{d-1}n^{d-1}\right)\\
&\leq \sum_{i\in I}\Prb\left(\tau_n(S_i,h_{max})\geq \left(\nu_G(v)+\frac{\eta}{2}\right)(\delta r) ^{d-1}n^{d-1}\right)\,.
\end{align*} 
By theorem \ref{thm:theretuppertau}, it follows that 
$$\limsup_{n\rightarrow \infty}\frac{1}{n^{d-1}}\log\Prb\left(\overline G_n(x,r,v,1,\nu_G(v)+\eta)^ c\right)=-\infty\,.$$

Let $\zeta>0$.
The set $\cF_n=\cF_0\cup \left(\cup_{i\in I}\cE_n(i)\right)$ is a cutset that cuts $\partial ^+_n B(x,r,v)$ from $\partial ^-_n B(x,r,v)$ in 
$B(x,r)$ and is contained in $\cyl(\disc(x,r,v),2\delta r)$.
On the event $\cap_{i\in I}\{\tau_n(S_i,h_{max})\leq \zeta (\delta r )^{d-1}n^{d-1}\}$, we have
\begin{align*}
V(\cF_n)&\leq M\card(\cF_0)+ \sum_{i\in I}\tau_n(S_i,h_{max})\\
&\leq \eta \alpha_{d-1} r^{d-1}n^{d-1}+ |I|\zeta (\delta r )^{d-1}n^{d-1}\\
&\leq (\zeta +\eta)\alpha_{d-1}r^{d-1}n^{d-1}\,
\end{align*}
and the event $\overline G _n (x,r,v,\delta,\zeta +\eta)$ occurs.
Besides, we have using the independence,
\begin{align*}
\Prb\left(\cap_{i\in I}\{\tau_n(S_i,h_{max})\leq \zeta (\delta r )^{d-1}n^{d-1}\}\right)=\prod_{i\in I}\Prb(\tau_n(S_i,h_{max})\leq \zeta (\delta r )^{d-1}n^{d-1})\,.
\end{align*}
It follows by theorem \ref{thm:lldtau} that 
\begin{align*}
\liminf_{n\rightarrow \infty}&\frac{1}{n^{d-1}}\log \Prb(\overline G_n(x,r,v,\delta,\zeta+\eta))\\
&\geq \sum_{i\in I}\liminf_{n\rightarrow \infty}\frac{1}{n^{d-1}}\log \Prb(\tau_n(S_i,h_{max})\leq \zeta (\delta r )^{d-1}n^{d-1})\\
&\geq -\frac{\alpha_{d-1}r ^{d-1}}{(\delta r) ^{d-1}}(\delta r) ^{d-1}\cJ_v(\zeta)=- \alpha_{d-1}r ^{d-1}\cJ_v(\zeta)\,.
\end{align*}
The result follows. 

\noindent {\bf Second case: $x\in\partial^ *\Omega$.} We will only prove the case where $x\in \Gamma^1$ since the proof for $x\in\Gamma^2$ is similar.
The proof is similar to the first case with extra technical difficulties. In particular, we cannot pave $\disc(x,r,n_{\Omega}(x))$ directly because the cutset may exit $\Omega$ since $B(x,r)\not\subset \Omega$. To fix this issue, we are going to  move this cylinder slightly in the direction $-n_{\Omega(x)}$.
It is easy to check that 
\begin{align*}
\left\{\,y\in B(x,r):|(y-x)\cdot n_\Omega(x)|\leq \delta\|y-x\|_2\,\right\}&\subset \left\{\,y\in B(x,r):|(y-x)\cdot n_\Omega(x)|\leq \delta r\,\right\}\\&\subset \cyl(\disc(x,r,n_\Omega(x)),\delta r)\,.
\end{align*}
Set $$x'= x-\frac{3}{2}\delta r n_\Omega(x)\,.$$
Using that $\{y\in B(x,r): (y-x)\cdot n_\Omega(x)\leq -\delta r\}\subset \Omega$ (see figure \ref{fig:omeg}), we have
$$\cyl(\disc(x',r',n_\Omega(x)),\delta r/2)\subset \cyl(\disc(x,r',n_\Omega(x)),2\delta r)\cap \Omega\subset B(x,r)\cap \Omega\,.$$
\begin{figure}[H]
\begin{center}
\def\svgwidth{0.8\textwidth}
   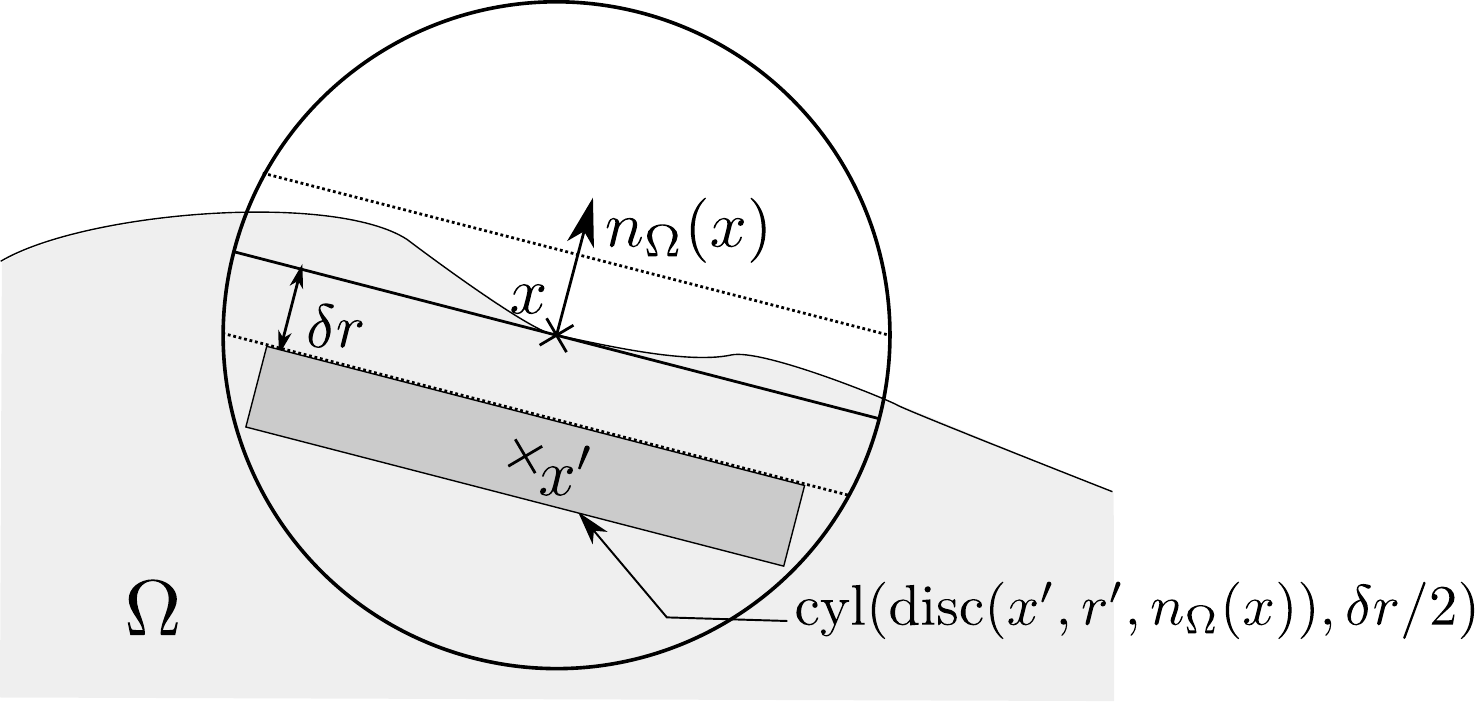
   \caption{\label{fig:omeg}The case $x\in\partial ^* \Omega$.}
   \end{center}
\end{figure}

Let $(e_1,\dots,e_{d-1},n_\Omega(x))$ be an orthonormal basis. 
Let $S_{\delta}$ be the hyper-square of side-length $\delta r$ of normal vector $n_\Omega(x)$ having for expression $[0,\delta r[^{d-1}\times\{0\}$ in the basis  $(e_1,\dots,e_{d-1},n_\Omega(x))$. We can pave $\disc(x',r',n_\Omega(x))$ with a family $(S_i)_{i\in I }$ of translates of $S_\delta$ such that the $S_i$ are pairwise disjoint, there are all included in $\disc(x',r',n_\Omega(x))$ and $\disc(x',r'(1-2d\delta) ,n_\Omega(x))\subset \cup_{i \in I }S_i$. We denote by $\cE_n(i)$ the cutset that achieves the infimum in $\tau_n(S_i,h_{max}/4)$. If there are several possible choices, we use a deterministic rule to break ties.
Let $\cF_0$ be the set of edges with at least one endpoint included in 
\begin{align*}
\cV_2\big(\disc(x',r,n_\Omega(x))\setminus \disc(x',r'(1-2d\delta) ,n_\Omega(x)), d/n\big)\cup \left(\bigcup_{i\in I }\cV_2(\partial S_i, d/n)\right)\\\hfill\cup \left(\cV_2(\partial B(x,r) \cap \cyl(\disc(x,r,n_\Omega(x)),2\delta r),d/n)\right)\,.
\end{align*}
Note that
\begin{align*}
\cH^{d-1}\left(\partial B(x,r) \cap \cyl(\disc(x,r,n_\Omega(x)),2\delta r)\right)&= 2\int _0^{2\delta r}\cH^{d-2}\left(\partial B(x,r)\cap \disc(x+u\,n_{\Omega}(x),r,n_{\Omega(x)})\right)du\\
&\leq 4\delta \alpha_{d-2}r^{d-1}\,.
\end{align*}
By the same computations as in the previous case, we have
\[\card(\cF_0)\leq 10d^2\left(1-\left(\sqrt{1-4\delta^2}(1-2d\delta)\right)^{d-1}+4\delta \alpha_{d-2}\right)n^{d-1}+ 16d^4 \frac{r^{d-2}}{\delta} \alpha_2\alpha_{d-1}n ^{d-2}\,.\]
We choose $\delta$ small enough such that 
$$ M 10d^2\left(1-\left(\sqrt{1-4\delta^2}(1-2d\delta)\right)^{d-1}+4\delta \alpha_{d-2}\right)\leq \frac{\eta}{4} \alpha_{d-1}\,.$$
Then, we choose $n$ large enough such that 
$$16d^4 M\frac{r^{d-2}}{\delta} \alpha_2\alpha_{d-1}n ^{d-2}\leq \frac{\eta}{4} \alpha_{d-1}r^{d-1} n^{d-1}\,.$$
With this choice
$$M\card(\cF_0)\leq \frac{\eta}{2}\alpha_{d-1}r^{d-1}n^{d-1}\,.$$
Set $\cF_n=\cF_0 \cup\left(\cup_{i \in I}\cE_n(i)\right)$.
The set $\cF_n$ may not be included in $\Pi_n$, but we claim that $\cF_n\cap \Pi_n$ is a cutset that cuts  $\partial^+_n B(x,r,n_\Omega(x))\cup (\Gamma_n^1\setminus \partial ^-_n B(x,r,n_\Omega(x)))$ from $\partial ^-_n B(x,r,n_\Omega(x))$ in $B(x,r)\cap \Omega_n$. 
Let $\gamma$ be a path from $\partial^+_n B(x,r,n_\Omega(x))\cup (\Gamma_n^1\setminus \partial ^-_n B(x,r,n_\Omega(x)))$ to $\partial ^-_n B(x,r,n_\Omega(x))$ in $\Omega_n\cap B(x,r)$.
We write $\gamma=(x_0,e_1,x_1\dots,e_p,x_p)$. If $x_0\in\Gamma_n^1$, since $\Gamma_n^1\cap B(x,r)\subset \cyl(\disc(x,r,n_\Omega(x)),\delta)$, then we have $x_0\cdot n_\Omega(x)\geq -\delta r$.  If $x_0\in\partial^+_n B(x,r,n_\Omega(x))$, then we have $x_0\cdot n_\Omega(x)\geq 0$

$\triangleright$ Let us assume $x_p\cdot n_\Omega(x)\geq -\delta r$. Since $x_p\in \partial ^-_n B(x,r,n_\Omega(x))$, we have $x_p\cdot n_\Omega(x)\leq 0$. It follows that $x_p\in \cV_2(\partial B(x,r) \cap \cyl(\disc(x,r,n_\Omega(x)),\delta r),d/n)$ and $\langle x_{p-1},x_p\rangle\in\cF_0$.

$\triangleright$ Let us assume $x_p\cdot n_\Omega(x)< -\delta r$. If $\gamma\cap \cV_2\big(\disc(x',r,n_\Omega(x))\setminus \disc(x',r'(1-2d\delta) ,n_\Omega(x)), d/n\big)\cup \bigcup_{i\in I }\cV_2(\partial S_i, d/n)\neq \emptyset$ then $\gamma\cap\cF_0\neq\emptyset$. Let us assume that $\gamma\cap \cV_2\big(\disc(x',r,n_\Omega(x))\setminus \disc(x',r'(1-2d\delta) ,n_\Omega(x)), d/n\big)\cup \bigcup_{i\in I }\cV_2(\partial S_i, d/n)=\emptyset$. Then, there exists $i\in I$ such that there exists an excursion of $\gamma$ in $\cyl(S_i,\delta r/2)$ from the bottom half to the top half of the cylinder. It follows that $\gamma\cap\cE_n(i)\neq \emptyset$.
The set $\cF_n$ is indeed a cutset.
We conclude similarly as in the first case.
\end{proof}

Lemma \ref{lem:phicassimple} can only be applied for radius that does not depend on $n$. We will work with balls with radius $r'>0$ depending on $n$. The following lemma enables to use the lemma \ref{lem:phicassimple} when the radius $r'$ depends on $n$ but is close to some fixed $r>0$.
\begin{lem} \label{lem:inclusion}Let $0<\ep\leq 1/4$. There exists $\kappa_d\geq 1$ a constant depending only on $d$ such that
\begin{itemize}[$\bullet$]
\item  For any $\delta>0$, for any $x\in\Omega$ and $r>0$ such that $B(x,r)\subset \Omega$, for any $v\in\sS^{d-1}$, for $n$ large enough depending on $r$ and $\ep$, we have
$$\forall r'\in \left[(1-\sqrt \ep)r,r\right]\qquad\overline G_n(x,(1-\sqrt\ep)r,v,\delta,\zeta)\subset \overline G_n(x,r',v,\delta,\zeta+\kappa_d M\sqrt\ep) 
\,.$$
\item For any $0<\delta\leq\ep$, for any $x\in\partial ^* \Omega \cap (\Gamma^1\cup \Gamma^2)$, for any $r>0$ such that 
\begin{align}\label{eq:am1bis}
\forall& \,0<r_0\leq r\nonumber\\&(\Omega\cap B(x,r_0,n_\Omega(x)))\Delta B^- (x,r_0,n_\Omega(x))\subset\left\{\,y\in B(x,r_0):|(y-x)\cdot n_\Omega(x)|\leq \delta \|y-x\|_2\,\right\}
\end{align}
and
\begin{align}\label{eq:am2bis}
  \forall\, 0<r_0\leq r\qquad \left|\frac{1}{\alpha_{d-1} r_0^{d-1}}\cH^{d-1}(\partial^*\Omega\cap B(x,r_0))-1\right|\leq \ep\,,
\end{align}
for any $n$ large enough depending on $r$ and $\ep$, we have
$$\forall r'\in \left[(1-\sqrt \ep)r,r\right]\quad\forall \delta_0\in[\delta,1]\qquad\overline G_n(x,(1-\sqrt\ep)r,v,\delta_0,\zeta)\subset \overline G_n(x,r',v,\delta_0,\zeta+\kappa_d M\sqrt\ep) \,.
$$
\end{itemize}
\end{lem}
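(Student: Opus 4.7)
The plan is, in both cases, to start from the witnessing cutset $\cE_n$ for the smaller-ball event $\overline G_n(x,(1-\sqrt\ep)r,v,\cdot,\zeta)$ and patch it by adding a thin annular collar of edges near the hyperplane $H_x=\{y:(y-x)\cdot v=0\}$ that plugs the gap between $B(x,(1-\sqrt\ep)r)$ and $B(x,r')$. The key observation is that the collar lives on the \emph{annular} disc $\disc(x,r',v)\setminus\disc(x,(1-\sqrt\ep)r,v)$, whose $(d-1)$-measure is bounded by $(d-1)\sqrt\ep\,\alpha_{d-1}r^{d-1}$ (using $1-(1-\sqrt\ep)^{d-1}\leq(d-1)\sqrt\ep$), so its capacity contribution is of the correct order $M\sqrt\ep\,\alpha_{d-1}r^{d-1}n^{d-1}$.

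In the first case, given $\cE_n\subset\cyl(\disc(x,(1-\sqrt\ep)r,v),2\delta(1-\sqrt\ep)r)$ with $V(\cE_n)\leq\zeta\alpha_{d-1}(1-\sqrt\ep)^{d-1}r^{d-1}n^{d-1}$, I would define
\[
\cA=\Bigl\{\langle y,z\rangle\in\E_n^d:y,z\in B(x,r'),\,(y-x)\cdot v\geq 0>(z-x)\cdot v,\,\max(\|y-x\|_2,\|z-x\|_2)\geq(1-\sqrt\ep)r\Bigr\},
\]
i.e., the edges crossing $H_x$ with at least one endpoint in the annulus. The first step is to verify that $\cE_n\cup\cA$ is a cutset of the required type in $B(x,r')\cap\Omega_n$: any path $\gamma$ from $\partial^+_n B(x,r',v)$ to $\partial^-_n B(x,r',v)$ must cross $H_x$ somewhere (modulo discrete $\pm 1/n$ effects, which can be absorbed by including crossings at both heights $x\cdot v$ and $x\cdot v-1/n$ in $\cA$, a modification that does not affect the order of magnitude). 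If the first crossing edge is entirely in the annulus or has an annular endpoint, it belongs to $\cA$; otherwise both endpoints lie in $B(x,(1-\sqrt\ep)r)$, and tracking $\gamma$ backward to its first entry into the smaller ball from above $H_x$ and forward to its first exit below $H_x$ exhibits a subpath from $\partial^+_n B(x,(1-\sqrt\ep)r,v)$ to $\partial^-_n B(x,(1-\sqrt\ep)r,v)$ inside $B(x,(1-\sqrt\ep)r)\cap\Omega_n$, which therefore meets $\cE_n$.

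The second step is the capacity estimate. By proposition~\ref{prop:minkowski} applied to the $(d-1)$-rectifiable annular disc, for $n$ large enough in terms of $r$ and $\ep$,
\[
|\cA|\leq 2d\,\cL^d\bigl(\cV_2(\disc(x,r',v)\setminus\disc(x,(1-\sqrt\ep)r,v),d/n)\bigr)n^d\leq C(d)\bigl(r'^{d-1}-(1-\sqrt\ep)^{d-1}r^{d-1}\bigr)n^{d-1},
\]
hence $V(\cA)\leq C'(d)M\sqrt\ep\,\alpha_{d-1}r^{d-1}n^{d-1}$. Summing with $V(\cE_n)$ and using $r'^{d-1}\geq(1-\sqrt\ep)^{d-1}r^{d-1}\geq 2^{-(d-1)}r^{d-1}$ (valid since $\ep\leq 1/4$) gives $V(\cE_n\cup\cA)\leq(\zeta+\kappa_d M\sqrt\ep)\alpha_{d-1}r'^{d-1}n^{d-1}$ for some $\kappa_d=\kappa_d(d)\geq 1$. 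Finally, $\cA$ lies in an $O(1/n)$-thick slab around $H_x$, which sits inside $\cyl(\disc(x,r',v),2\delta r')$ for $n$ large enough, while $\cE_n$ already lives in the sub-cylinder.

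The second case proceeds identically with $v=n_\Omega(x)$ and $\delta_0\geq\delta$ in place of $\delta$. The hypothesis \eqref{eq:am1bis} confines $\partial\Omega\cap B(x,r')$ to the cone $\{|(y-x)\cdot n_\Omega(x)|\leq\delta\|y-x\|_2\}$, hence to the thin slab around $H_x$ where $\cA$ is built, so the path argument goes through once one also accounts for paths whose ``upper'' extremity lies on $(\Gamma_n^1\cup\Gamma_n^2)\setminus\partial^-_n B(x,r',v)$; hypothesis \eqref{eq:am2bis} replaces the bare Minkowski bound by a comparable one with an extra factor $(1+\ep)$, so only the numerical value of $\kappa_d$ needs adjusting. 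The main obstacle of the whole argument is the careful verification of the cutset property in step one, especially handling the $\pm 1/n$ discrete boundary effects in the definitions of $\partial^\pm_n B(x,r,v)$ and, in the second case, the interaction of $\cA$ with $\partial\Omega$ near $\Gamma^1\cup\Gamma^2$; once these are dealt with, everything else is a routine Minkowski computation.
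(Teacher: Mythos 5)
Your case-1 argument is essentially the paper's: you patch $\cE_n$ with a collar of edges concentrated near the annular disc and control its cardinality by Minkowski content. The paper takes for $\cR_n$ all edges with an endpoint in a $d/n$-neighborhood of the annular disc $\disc(x,r',v)\setminus\disc(x,(1-\sqrt\ep)r,v)$; your $\cA$ (edges straddling $H_x$ with an annular endpoint) is essentially the same set up to the $\pm 1/n$ wiggle room, and the capacity estimate is identical. Your sketch of the cutset verification via ``first crossing of $H_x$'' needs the same care the paper's ``$\gamma$ must cross $\disc(x,(1-\sqrt\ep)r,v)$'' does, but the idea and the level of rigor match.

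Your case-2 argument, however, has a genuine gap. Taking $\cA$ to be only the edges that cross $H_x$ in the annulus is not enough: the source of the cut now also includes $\Gamma_n^1\setminus\partial^-_nB(x,r',n_\Omega(x))$, and by \eqref{eq:am1bis} the portion of $\Gamma$ inside $B(x,r')$ lies anywhere in the double cone $\{|(y-x)\cdot n_\Omega(x)|\le\delta\|y-x\|_2\}$, so $\Gamma_n^1\cap(B(x,r')\setminus B(x,(1-\sqrt\ep)r))$ contains points of \emph{negative} height $(y-x)\cdot n_\Omega(x)<0$, as deep as $-\delta r$. A path starting at such a point can descend to $\partial^-_nB(x,r',n_\Omega(x))$ entirely inside the annulus and entirely below $H_x$: it never crosses $H_x$, so it misses $\cA$, and it never enters $B(x,(1-\sqrt\ep)r)$, so it misses $\cE_n$. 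Confinement of $\Gamma$ to a slab does not help here, because your $\cA$ only covers the single-lattice-spacing layer where edges cross $H_x$, not the full slab of half-height $\delta r$ (which has volume order, so you cannot simply take all of it either). The paper's fix is to add to the collar the surface-order set of edges whose endpoint lies near $(B(x,r')\setminus B(x,(1-\sqrt\ep)r))\cap\partial^*\Omega$, together with edges near $(\partial B(x,r')\cup\partial B(x,(1-\sqrt\ep)r))\cap\cyl(\disc(x,r',n_\Omega(x)),2\delta r')$; those two extra surface pieces catch exactly the paths you are missing (a source vertex on $\Gamma_n^1$ in the annulus is immediately on the collar, and a source vertex on $\partial^+_nB(x,r',n_\Omega(x))\cap\Omega_n$ is, by \eqref{eq:am1bis}, near the big sphere intersected with the slab). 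This is where \eqref{eq:am2bis} is actually used: to bound the Minkowski content of $\Gamma\cap$annulus by $\alpha_{d-1}((1+\ep)r'^{d-1}-(1-\ep)((1-\sqrt\ep)r)^{d-1})$, giving the extra $O(\sqrt\ep\,\alpha_{d-1}r'^{d-1}n^{d-1})$ edges, still of the correct surface order. You invoke \eqref{eq:am2bis} as a cosmetic ``$(1+\ep)$'' correction, but your collar as stated does not include the piece to which \eqref{eq:am2bis} applies, so the hypothesis does no work in your argument.
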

\begin{proof}
\noindent {\bf First case: $x\in\Omega$.}
Let us assume the event $\overline G_n(x,(1-\sqrt\ep)r,v,\delta,\zeta)$ occurs. Let $\cE_n$ be a cutset in $B(x,(1-\sqrt\ep)r)\cap\Omega_n$ that cuts $\partial^+_n B(x,(1-\sqrt\ep)r,v)$ from $\partial^-_n B(x,(1-\sqrt\ep)r,v)$ such that
 $$\cE_n\subset \cyl(\disc(x,(1-\sqrt\ep)r,v),2\delta( 1-\sqrt\ep)r)$$
 and
 $$V(\cE_n)\leq \zeta \alpha_{d-1}((1-\sqrt\ep)r)^{d-1}n ^{d-1}\,.$$
 We define $\cR_n$ to be the set of edges with at least one endpoint in $$\cV_2( \disc(x,r',v)\setminus \disc(x,r(1-\sqrt\ep),v),d/n)\,.$$
By proposition \ref{prop:minkowski}, we have for $n$ large enough
\begin{align*}
\card(\cR_n)&\leq 4d\cL^d(\cV_2( \disc(x,r',v)\setminus \disc(x,r(1-\sqrt\ep),v),d/n))n^{d}\\&\leq 10d^ 2\cH^ {d-1}\left( \disc(x,r',v)\setminus \disc(x,r(1-\sqrt\ep),v)\right)n^{d-1}\\
&\leq 10d^ 2\left(1-(1-\sqrt\ep)^{d-1}\right)\alpha_{d-1}r^{d-1}n^{d-1}\\
&\leq 10d^ 3\sqrt\ep\alpha_{d-1}r^{d-1}n^{d-1}\\
&\leq 10d^ 3\sqrt\ep\alpha_{d-1} \left(\frac{r'}{1-\sqrt\ep}\right)^{d-1}n^{d-1}\leq 10d^3 2^{d-1}\sqrt\ep\alpha_{d-1}r'^{d-1}n^{d-1}
\end{align*}
where we use that $(1-x)^{d-1}\geq 1-(d-1)x$ for any $x\in[0,1]$ and that $\ep\leq 1/4$.
 Let us prove that $\cE_n \cup \cR_n$ is a cutset in $B(x,r')\cap\Omega_n$ that cuts $\partial^+_n B(x,r',v)$ from $\partial^-_n B(x,r',v)$.
 Let $\gamma$ be a path from $\partial^+_n B(x,r',v)$ to $\partial^-_n B(x,r',v)$ in $B(x,r')$. We write $\gamma=(x_0,e_1,x_1\dots,e_p,x_p)$. The path $\gamma$ must cross $\disc(x,r',v)$ from top to bottom. If $\gamma\cap \cR_n=\emptyset$ then $\gamma $ must cross $\disc(x,(1-\sqrt\ep)r,v)$. We have $\gamma\cap\partial^-_n B(x,(1-\sqrt\ep)r,v)\neq \emptyset$ and  $\gamma\cap\partial^+_n B(x,(1-\sqrt\ep)r,v)\neq \emptyset$. We set 
$$m=\inf\{k\geq 0: x_k\in\partial^-_n B(x,(1-\sqrt\ep)r,v)\}$$
and
$$l=\sup\{k\leq m: x_k\in\partial^+_n B(x,(1-\sqrt\ep)r,v)\}\,.$$
The subpath of $\gamma$ between $l$ and $m$ remains in $B(x,(1-\sqrt\ep)r)$, it follows that $\gamma\cap \cE_n\neq \emptyset$. 
Hence, $\cE_n \cup \cR_n$ is a cutset in $B(x,r')\cap\Omega_n$ that cuts $\partial^+_n B(x,r',v)$ from $\partial^-_n B(x,r',v)$.
Moreover, we have 
 $$\cE_n\cup \cR_n\subset \cyl(\disc(x,r',v),2\delta r')$$
and $$V(\cE_n\cup\cR_n)\leq (\zeta +10d^32^{d-1}M\sqrt\ep)\alpha_{d-1}r'^{d-1}n ^{d-1}\,.$$
Hence, the event $\overline G_n(x,r',v,\delta,\zeta+\kappa_d M\sqrt\ep)$ occurs where $\kappa_d$ will be chosen in such a way that $\kappa_d \geq 10d^32^{d-1}$.

\noindent {\bf Second case: $x\in\partial^ *\Omega$.} We will only prove the case where $x\in \Gamma^1$ since the proof for $x\in\Gamma^2$ is similar.
Let us assume the event $\overline G_n(x,(1-\sqrt\ep)r,n_\Omega(x),\delta_0,\zeta)$ occurs. Let $\cE_n$ be a cutset in $B(x,(1-\sqrt\ep)r)\cap\Omega_n$ that cuts $\partial^+_n B(x,(1-\sqrt\ep)r,n_\Omega(x))\cup (\Gamma_n^ 1\setminus \partial^-_n B(x,(1-\sqrt\ep)r,n_\Omega(x)))$ from $\partial^-_n B(x,(1-\sqrt\ep)r,n_\Omega(x))$ such that
 $$\cE_n\subset \cyl(\disc(x,(1-\sqrt\ep)r,n_\Omega(x)),2\delta_0 (1-\sqrt\ep)r)$$
 and
 $$V(\cE_n)\leq \zeta \alpha_{d-1}((1-\sqrt\ep)r)^{d-1}n ^{d-1}\,.$$
We define $\cR_n $ to be the set of edges with at least one endpoint in 
\begin{align*}
&\cV_2( (B(x,r')\setminus B(x,r(1-\sqrt\ep)))\cap \partial^*\Omega ,d/n)\\&\quad\cup \cV_2\left( (\partial B(x,r')\cup \partial B(x,r(1-\sqrt\ep)))\cap \cyl(\disc(x,r',n_\Omega(x)),2\delta r') ,d/n\right) \,,
\end{align*}
(see figure \ref{fig:repEnRn}).
\begin{figure}[H]
\begin{center}
\def\svgwidth{0.6\textwidth}
   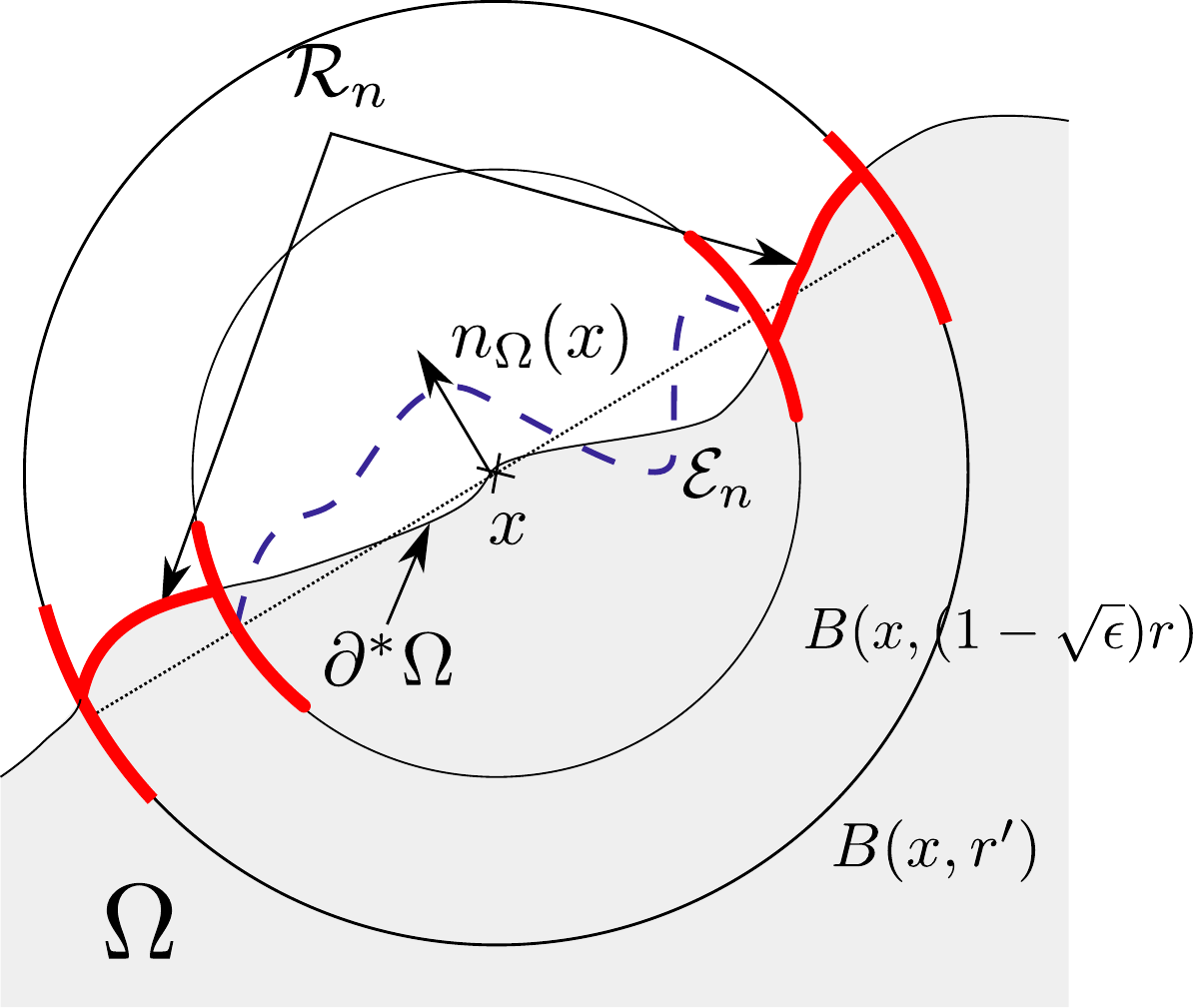
   \caption{\label{fig:repEnRn}Representation of the sets $\cE_n$ and $\cR_n$.}
   \end{center}
\end{figure}
\noindent By proposition \ref{prop:minkowski} and \eqref{eq:am2bis}, we have
\begin{align*}
\card(\cR_n)&\leq 10d^ 2\left(\cH^ {d-1}\left( B(x,r')\cap\partial^*\Omega\right)- \cH^ {d-1}\left( B(x,r(1-\sqrt\ep))\cap\partial^*\Omega\right)+8\delta \alpha_{d-2}r^ {d-1}\right)n^{d-1}\\
&\leq 10d^ 2\left((1+\ep) -(1-\ep)(1-\sqrt\ep)^{d-1}+8\delta \frac{\alpha_{d-2}}{\alpha_{d-1}} \right)\alpha_{d-1}r^{d-1}n^{d-1}\\
&\leq 10d^ 2\left(2\ep + (d-1)\sqrt\ep +8\ep \frac{\alpha_{d-2}}{\alpha_{d-1}} \right)\alpha_{d-1}r^{d-1}n^{d-1}\\
&\leq 10d^ 22^{d-1}\sqrt\ep\left(d+1 +8 \frac{\alpha_{d-2}}{\alpha_{d-1}} \right)\alpha_{d-1}r'^{d-1}n^{d-1}
\end{align*}
where we recall that $\delta\leq \ep$ and $\ep\leq 1/4$.
Let us prove that $\cE_n \cup \cR_n$ is a cutset in $B(x,r')\cap\Omega_n$ that cuts $\partial^+_n B(x,r',n_\Omega(x))\cup (\Gamma_n^ 1\setminus \partial^-_n B(x,r',n_\Omega(x)))$ from $\partial^-_n B(x,r',n_\Omega(x))$ (see figure \ref{fig:repEnRn}).
 Let $\gamma$ be a path from $\partial^+_n B(x,r',n_\Omega(x))\cup (\Gamma_n^ 1\setminus \partial^-_n B(x,r',n_\Omega(x)))$ to $\partial^-_n B(x,r',n_\Omega(x))$ in $B(x,r')\cap\Omega_n$. We write $\gamma=(x_0,e_1,x_1\dots,e_p,x_p)$. 
 
 $\triangleright$ Let us assume that $x_0\in \Gamma_n^1$. If $x_0\in \Gamma_n^1\setminus B(x,(1-\sqrt\ep)r)$ then $x_0\in \cV_2( (B(x,r')\setminus B(x,r(1-\sqrt\ep)))\cap \partial^*\Omega ,d/n)$ and $\gamma\cap\cR_n\neq\emptyset$. If $x_0\in \Gamma_n^1\cap  B(x,(1-\sqrt\ep)r)$, set 
 $$m=\inf\{k\geq 0: x_k\in\partial_n B(x,(1-\sqrt\ep)r)\}\,.$$
 If $x_m\in \partial^+_nB(x,(1-\sqrt\ep)r,n_\Omega(x))$, then $$x_m \in \Omega_n \cap B^+(x,r',n_{\Omega}(x))\subset \cV_2(\Omega \cap B^+(x,r',n_{\Omega}(x)),d/n)$$ and by \eqref{eq:am1bis} we have $$x_m\in \cV_2( \partial B(x,r(1-\sqrt\ep))\cap \cyl(\disc(x,r',n_\Omega(x)),\delta r') ,d/n)\,.$$ Hence, $\gamma\cap\cR_n\neq\emptyset$. If  $x_m\in \partial^-_nB(x,(1-\sqrt\ep)r,n_\Omega(x))$, then $\cE_n\cap \gamma\neq\emptyset$ by definition of the cutset $\cE_n$.
 
  $\triangleright$ Let us assume that $x_0\in \partial^+_n B(x,r',n_\Omega(x))\cap \Omega_n$, then $$x_0 \in \Omega_n \cap B^+(x,r',n_{\Omega}(x))\subset \cV_2(\Omega \cap B^+(x,r',n_{\Omega}(x)),d/n)$$ and by \eqref{eq:am1bis}, we have $x_0\in\cV_2( \partial B(x,r'(1-\sqrt\ep))\cap \cyl(\disc(x,r',n_\Omega(x)),\delta r') ,d/n)$. It follows that $\gamma\cap\cR_n\neq\emptyset$,

Hence, $\cE_n \cup \cR_n$ is a cutset in $B(x,r')\cap\Omega_n$ that cuts $\partial^+_n B(x,r',n_\Omega(x))\cup  (\Gamma_n^ 1\setminus \partial^-_n B(x,r',n_\Omega(x)))$ from $\partial^-_n B(x,r',n_\Omega(x))$.
Moreover, we have 
 $$\cE_n\cup \cR_n\subset \cyl(\disc(x,r',n_\Omega(x)),2\delta_0 r')$$
and $$V(\cE_n\cup\cR_n)\leq (\zeta +\kappa_d M\sqrt\ep)\alpha_{d-1}r'^{d-1}n ^{d-1}\,$$
where $$\kappa_d=10d^ 22^{d-1}\left(d+1 +8 \frac{\alpha_{d-2}}{\alpha_{d-1}} \right)\,.$$
The event $\overline G_n(x,r',n_\Omega(x),\delta_0,\zeta+\kappa_d M\sqrt\ep)$ occurs.

\end{proof}

\section{Properties of the limiting objects}\label{sec:admissible} 
This section corresponds to the step 1 of the sketch of the proof in the introduction. The aim is to prove that the limiting objects must be measures supported on surfaces, in particular, that they are contained in the set $\fT_\cM$. We prove that if $(E,\nu)$ is not in $\fT_\cM$, then it is very unlikely that there exists an almost minimal cutset in a small neighborhood of $(E,\nu)$. We prove this result by contradiction, if we have a lower bound on the probability of being in any neighborhood of $(E,\nu)$, then $(E,\nu)$ must be in $\fT_\cM$. To prove such a result, we prove that $(E,\nu)$ satisfies all the required properties to be in the set $\fT_\cM$.
\begin{prop}\label{prop:admissible}Let $(E,\nu)\in \cB(\sR^d)\times\cM(\sR^ d)\setminus \fT_{\cM}$.
Then, for any $K>0$, there exists a neighborhood $U$ of $(E,\nu)$ such that
$$\lim_{\ep\rightarrow 0}\limsup_{n\rightarrow \infty}\frac{1}{n^{d-1}}\log \Prb(\exists \cE_n\in\sC_n(\ep):\,(\bR(\cE_n),\mu_n(\cE_n))\in U)\leq -K\,.$$
\end{prop}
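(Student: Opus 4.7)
I proceed by case analysis on which defining property of $\fT_{\cM}$ fails for $(E,\nu)$; in each case I exhibit a neighborhood $U$ with the required exponential decay, and the intersection of finitely many such neighborhoods yields the result. First the simple cases: if $E\not\subset\overline\Omega$ or $\nu(\sR^d\setminus\overline\Omega)>0$, then since $\bR(\cE_n)$ lies within $d/n$ of $\Omega$ and $\mu_n(\cE_n)$ is supported in $\cV_\infty(\Omega,2/n)$, a small enough neighborhood $U$ is deterministically disjoint from any admissible image. If $\nu(\sR^d)>10d^2M\cH^{d-1}(\Gamma^1)$, the bound $V(\cE_n)\leq \phi_n+\ep n^{d-1}$ combined with the deterministic estimate $\phi_n\leq C_dM\cH^{d-1}(\Gamma^1)n^{d-1}$ obtained from an explicit cutset tangent to $\Gamma_n^1$ (with $C_d\leq 10d^2$) forces $\mu_n(\cE_n)(\sR^d)\leq 10d^2M\cH^{d-1}(\Gamma^1)$ for $\ep$ small, excluding a suitable weak neighborhood.

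Next, suppose $\nu=f\cH^{d-1}|_\fE$ violates the pointwise bound $f(x)\leq\nu_G(n_E(x))$ on a set $S\subset\partial^*E\cap\Omega$ of positive $\cH^{d-1}$-measure (the $\partial^*\Omega$ case is symmetric). Pick $\eta>0$ so that $S_\eta=\{x\in S:f(x)\geq\nu_G(n_E(x))+3\eta\}$ has positive measure, and apply Proposition~\ref{prop:utilisationvitali} to cover $\cH^{d-1}$-almost all of $S_\eta$ by a finite disjoint family $(B(x_i,r_i))_{i\in I}$ on which $\partial^*E$ is nearly flat. Choose $U$ so that $(\bR(\cE_n),\mu_n(\cE_n))\in U$ forces $\mu_n(\cE_n)(B(x_i,r_i))\geq(\nu_G(n_E(x_i))+2\eta)\alpha_{d-1}r_i^{d-1}$ for every $i\in I$. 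A re-routing argument exploiting $\ep$-minimality then shows that $\tau_n$ of a cylinder inscribed in $B(x_i,r_i)$ exceeds $(\nu_G(n_E(x_i))+\eta)\alpha_{d-1}r_i^{d-1}n^{d-1}$, which is an upper large deviation event of speed $n^d$ controlled by Theorem~\ref{thm:theretuppertau}. Independence across the disjoint balls gives an overall probability $\exp(-cn^d)\ll\exp(-Kn^{d-1})$.

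It remains to treat the case when all quantitative bounds hold, yet either $\nu$ carries mass outside $\fE$ or $(E,f)$ is not minimal. If $\nu(\sR^d\setminus\fE)=m>0$, decompose $\cE_n=\partial^e\br(\cE_n)\sqcup\cF_n$; since $\partial^e\br(\cE_n)$ is itself a cutset, $\ep$-minimality yields $V(\cF_n)\leq\ep n^{d-1}$, so $\mu_n(\cF_n)(\sR^d)\leq\ep$. A filament-removal argument using $G(\{0\})<1-p_c(d)$ and $\bR(\cE_n)\to E$ shows that $\mu_n(\partial^e\br(\cE_n))$ concentrates on $\fE$ in the limit, forcing $\nu(\sR^d\setminus\fE)\leq\ep$; this contradicts $m>0$ for $\ep<m$. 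If $(E,f)$ is not minimal, there exists $F\subset\Omega$ of finite perimeter and $\eta>0$ with
\[
\int_{\overline{\fF}\cap\fE} f\,d\cH^{d-1} + \int_{(\fF\setminus\overline{\fE})\cap\partial\Omega} \nu_G(n_\Omega)\,d\cH^{d-1} + \int_{(\fF\setminus\overline{\fE})\setminus\partial\Omega} \nu_G(n_F)\,d\cH^{d-1} \leq \capa(E,f)-3\eta.
\]
On the event $(\bR(\cE_n),\mu_n(\cE_n))\in U$, I build a competitor $\cE_n'$ by keeping $\cE_n\cap\fF\cap\fE$ and adding typical locally-flat cutsets on $\fF\setminus\overline{\fE}$ produced by Lemma~\ref{lem:phicassimple}; with overwhelming probability $V(\cE_n')\leq V(\cE_n)-2\eta n^{d-1}$, contradicting $\ep$-minimality for $\ep<2\eta$.

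\textbf{Main obstacle.} The hardest step is the competitor construction in the non-minimality case: gluing $\cE_n\cap\fF$ with freshly built local cutsets on $\fF\setminus\overline{\fE}$ must preserve the global cutset property in $\Omega_n$ while spending at most $o(n^{d-1})$ additional capacity along the interface $\partial\fF\cap\fE$. This requires a Vitali cover at a scale small enough that the seam contribution is negligible and a careful deployment of Lemma~\ref{lem:phicassimple} together with Theorem~\ref{thm:theretuppertau} to control the bridging edges. A related subtlety arises in the filament-removal argument, which must quantify the capacity savings from removing redundant edges against the $\ep$-budget uniformly in the geometry of $\cE_n$.
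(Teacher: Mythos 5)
Your proposal proves the contrapositive of the paper's argument: instead of assuming the probability decay fails for every neighborhood and deducing one by one that $(E,\nu)$ satisfies each defining property of $\fT_{\cM}$, you assume $(E,\nu)\notin\fT_{\cM}$, locate a failed property, and build a neighborhood. This is logically the same route with the same ingredients (Lemma~\ref{lem:Zhang}, Proposition~\ref{prop:utilisationvitali}, Lemma~\ref{lem:phicassimple}, Theorem~\ref{thm:theretuppertau}, competitor constructions), so I do not regard it as genuinely different. However, there are two concrete gaps.

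First, you never address the case $\cP(E,\Omega)=\infty$. Membership in $\sC_{<\infty}$ is part of the definition of $\fT$, and more importantly it is a logical prerequisite for every later case you treat, since $\fE$ is built from the reduced boundary $\partial^*E$, which is only meaningful when $E$ has finite perimeter. The paper handles this in Step~2: Lemma~\ref{lem:Zhang} gives that with probability at least $1-C_1\exp(-Kn^{d-1})$ every cutset of capacity $\leq 11d^2M\cH^{d-1}(\Gamma^1)n^{d-1}$ has at most $\beta n^{d-1}$ edges, hence $\cP(\bR(\cE_n),\Omega)\leq\beta$, and lower semi-continuity of $F\mapsto\cP(F,\Omega)$ under $\dis$-convergence transfers the bound to $E$. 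Without this your later steps are not even well posed.

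Second, your treatment of the non-absolutely-continuous case is insufficient. You only consider $\nu(\sR^d\setminus\fE)>0$, but $\fT_{\cM}$ requires $\nu=f\cH^{d-1}|_{\fE}$ with $(E,f)\in\fT$, and a measure can be entirely concentrated on $\fE$ yet singular with respect to $\cH^{d-1}|_{\fE}$ (for instance an atom on $\fE$ when $d\geq 2$). Moreover, the central claim of your filament-removal argument --- that $\mu_n(\partial^e\br(\cE_n))$ concentrates on $\fE$ once $\bR(\cE_n)\to E$ in $\dis$ --- does not follow from $\dis$-convergence alone, as Remark~\ref{rk:1} explicitly warns. Your decomposition $\cE_n=\partial^e\br(\cE_n)\sqcup\cF_n$ with $V(\cF_n)\leq\ep n^{d-1}$ is a correct and useful observation, but it only controls the extra edges, not where the boundary of $\bR(\cE_n)$ lives; quantifying the capacity savings of excising a filament against the cost of capping it off, uniformly over the possible filament geometries, is non-trivial and is not done. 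The paper's Step~4 sidesteps all of this by applying the Lebesgue decomposition $\nu=\nu_\lambda+\nu_s$ with $\nu_\lambda=f\cH^{d-1}|_{\fE}$, building a competitor cutset supported on a Vitali cover of $\fE$ placed at $\cH^{d-1}|_{\fE}$-generic points (where, by Besicovitch, $\nu(B(x,r))\approx\int_{B(x,r)\cap\fE}f\,d\cH^{d-1}$), and using $\ep_0$-minimality to derive $\nu(\sR^d)\leq\int_{\fE}f\,d\cH^{d-1}+o(1)$, so $\nu_s=0$. That argument simultaneously rules out mass outside $\fE$ and singular mass on $\fE$, and I recommend you replace your filament-removal sketch with it.
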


\begin{proof}
Let $(E,\nu)\in \cB(\sR^d)\times\cM(\sR^d)$. We assume that there exists $K>0$ such that for any neighborhood $U$ of $(E,\nu)$ that
\begin{align*}
\lim_{\ep\rightarrow 0}\limsup_{n\rightarrow \infty}\frac{1}{n^{d-1}}\log \Prb(\exists \cE_n\in\sC_n(\ep):\,(\bR(\cE_n),\mu_n(\cE_n))\in U)\geq -K\,.
\end{align*}
It is clear that the map $\ep\mapsto\Prb(\exists \cE_n\in\sC_n(\ep):\,(\bR(\cE_n),\mu_n(\cE_n))\in U)$ is non-decreasing. Hence, we get for any neighborhood $U$ of $(E,\nu)$
\begin{align}\label{eq:Enu}
\forall \ep>0\qquad\limsup_{n\rightarrow \infty}\frac{1}{n^{d-1}}\log \Prb(\exists \cE_n\in\sC_n(\ep):\,(\bR(\cE_n),\mu_n(\cE_n))\in U)\geq -K\,.
\end{align}
We aim to prove that $(E,\nu)\in \fT_{\cM}$.
We check one after the other that all the required properties are satisfied.
\noindent {\bf Step 1. We prove that $\cL^d(E\setminus \Omega)=0$,  $\nu((\overline{\Omega})^c)=0$ and $\nu(\overline{\Omega})\leq 10d^2M \cH^{d-1}(\Gamma^1)$.} Let $\delta>0$. Let $f$ be a continuous function compactly supported with its support included in $(\overline{\Omega})^c$. Let $g$ be a continuous function compactly supported that takes its values in $[0,1]$ such that $g=1$ on $\cV_2(\Omega,1)$. We set 
$$U=\left\{\,F\in\cB(\sR^d): \dis(F,E)\leq \delta \,\right\}\times \left\{\mu\in\cM(\sR^d):|\mu(f)-\nu(f)|\leq\delta, \,|\nu(g)-\mu(g)|\leq \delta\right\}\,.$$
Fix $\ep>0$. Thanks to inequality \eqref{eq:Enu}, there exists an increasing sequence $(a_n)_{n\geq 1}$ such that
$$\Prb(\exists \cE_{a_n}\in\sC_{a_n}(\ep):\,(\bR(\cE_{a_n}),\mu_{a_n}(\cE_{a_n}))\in U)>0\,.$$
For short, we will write $n$ instead of $a_n$.
On the event $\{\exists \cE_n\in\sC_n(\ep):\,(\bR(\cE_n),\mu_n(\cE_n))\in U\}$, we consider
 $\cE_n\in\sC_n(\ep)$ such that $(\bR(\cE_n),\mu_n(\cE_n))\in U$ (if there are several possible choices, we pick one according to a deterministic rule). It follows that for $n$ large enough, using proposition \ref{prop:minkowski}
\begin{align*}
\cL^d(E\setminus \Omega)\leq \cL^d (\bR(\cE_n)\setminus \Omega)+ \cL^d(E\setminus \bR(\cE_n))\leq \cV_2\left(\partial \Omega,\frac{d}{n}\right)+\cL^d(\bR(\cE_n)\Delta E)\leq 2\delta
\end{align*} 
and 
\begin{align}\label{eq:star1} 
 \nu(g)\leq \mu_n(\cE_n)(g)+\delta\,.
 \end{align}
Note that since the support of $f$ is included in $\overline \Omega ^c$, for $n$ large enough, for any $e\in\Pi_n$ we have $f(c(e))=0$. It follows that $\mu_n(\cE_n)(f)=0$ and
\begin{align*}
 \nu(f)\leq \mu_n(\cE_n)(f)+\delta\leq \delta\,.
 \end{align*}
Set $$\cF_n=\{\langle x,y \rangle \in\E_n^d: x\in\Gamma_n^1, y\in\Omega_n\}\,.$$
It is clear that $\cF_n\in\sC_n(\Gamma^1,\Gamma^2,\Omega)$.
Thanks to proposition \ref{prop:minkowski}, we have for $n$ large enough
\begin{align*}
V(\cF_n)\leq 2d M \card(\Gamma_n^1)\leq 2dM\cL^d(\cV_2(\Gamma^1, 2d/n))n^d\leq 10d^2M \cH^{d-1}(\Gamma^1)n^{d-1}\,.
\end{align*}
 Besides, by definition of $\sC_n(\ep)$ we have 
\begin{align}\label{eq:Vencont}
V(\cE_n)\leq V(\cF_n) +\ep n^{d-1}\leq 10d^2M \cH^{d-1}(\Gamma^1)n^{d-1}+\ep n^{d-1}\,
\end{align}
 and so
\begin{align}\label{eq:star2}
\mu_n(\cE_n)(g)\leq \frac{V(\cE_n)}{n^{d-1}}\leq 10d^2M \cH^{d-1}(\Gamma^1)+\ep\,.
\end{align}
Consequently, we get using \eqref{eq:star1} and \eqref{eq:star2}
$$\nu(\overline{\Omega})\leq\nu(g)\leq 10d^2M \cH^{d-1}(\Gamma^1)+\delta+\ep\,.$$
Finally, by letting $\delta$ and $\ep$ go to $0$, we obtain that $$\cL^d(E\setminus \Omega)=0\,,\qquad\nu(\overline{\Omega})\leq 10d^2M \cH^{d-1}(\Gamma^1) $$ and $\nu(f)=0$. Since $\nu(f)=0$ for any continuous function having its support included in $(\overline{\Omega})^c$, we conclude that $$\nu((\overline{\Omega})^c)=0\,.$$

\noindent {\bf Step 2. We prove that $\cP(E,\Omega)<\infty$.}  We will need the following lemma that is an adaptation of theorem 1 by Zhang in \cite{Zhang2017}. We postpone its proof after the proof of this proposition.
\begin{lem}\label{lem:Zhang} There exists $C_1>0$ depending on $G$ and $\Omega$ such that for any $K>0$, there exists $\beta$ depending on $K$, $G$ and $\Omega$ such that for all $n\geq 1$
$$\Prb(\exists \cE_n\in \sC_n(\Gamma^1,\Gamma^ 2,\Omega):\, V(\cE_n)\leq 11d^2M \cH^{d-1}(\Gamma^1) n^{d-1},\,\card(\cE_n)\geq \beta n ^{d-1})\leq C_1 \exp(-Kn^{d-1})\,.$$
\end{lem}
Let $\beta$ be such that for all $n\geq 1$
\begin{align}\label{eq:star3}
\Prb(\exists \cE_n\in \sC_n(\Gamma^1,\Gamma^ 2,\Omega):\, V(\cE_n)\leq 11d^2M \cH^{d-1}(\Gamma^1) n^{d-1},\,\card(\cE_n)\geq \beta n ^{d-1})\leq C_1\exp(-2Kn^{d-1})\,.
\end{align}
Let $U_0$ be a neighborhood of $E$ and $U_1$ be a neighborhood of $\nu$. We have for $\ep$ small enough using inequality \eqref{eq:Vencont}
\begin{align*}
\Prb&(\exists \cE_n\in\sC_n(\ep):\,(\bR(\cE_n),\mu_n(\cE_n))\in U_0\times U_1)\\&\qquad\leq \Prb(\exists \cE_n\in\sC_n(\ep):\,\bR(\cE_n)\in U_0, \,\card(\cE_n)\leq \beta n ^{d-1})\\
&\qquad\quad+\Prb(\exists \cE_n\in \sC_n(\Gamma^1,\Gamma^ 2,\Omega):\, V(\cE_n)\leq 11d^2M \cH^{d-1}(\Gamma^1) n^{d-1},\,\card(\cE_n)\geq \beta n ^{d-1})\,.
\end{align*}
Combining the previous inequality, inequalities \eqref{eq:Enu} and \eqref{eq:star3} and lemma \ref{lem:estimeanalyse}, we get
\begin{align*}
\limsup_{n\rightarrow \infty}\frac{1}{n^{d-1}}\log &\Prb(\exists \cE_n\in \sC_n(\Gamma^1,\Gamma^ 2,\Omega):\,\bR(\cE_n)\in U_0, \,\card(\cE_n)\leq \beta n ^{d-1})\\&\geq\limsup_{n\rightarrow \infty}\frac{1}{n^{d-1}}\log \Prb(\exists \cE_n\in\sC_n(\ep):\,(\bR(\cE_n),\mu_n(\cE_n))\in U_0\times U_1)\geq -K\,.
\end{align*}
For $n\geq 1$, we set $V_n=\{F\in\cB(\sR^d):\dis(F,E)\leq 1/n)$.
We can build an increasing sequence of integers $(a_n)_{n\geq 1}$ such that 
$$\Prb(\exists \cE_{a_n} \text{ $(\Gamma_{a_n}^1\cup\Gamma_{a_n}^2)$-cutset such that $\bR (\cE_{a_n})\in V_n$ and $\card(\cE_{a_n})\leq \beta {a_n} ^{d-1}$})>0\,.$$
We can choose a deterministic sequence of cutsets $(\cE_{a_n})_{n\geq1}$ such that for any $n\geq1$, $\dis(E,\bR(\cE_{a_n}))\leq 1/n$. Besides, it is easy to check that $$\cP(\bR(\cE_{a_n}),\Omega)\leq \frac{\card(\cE_{a_n})}{n^{d-1}}\leq\beta\,.$$ Since the map $F\mapsto \cP(F,\Omega)$ is lower semi-continuous for the topology induced by $\dis$, we get
$$\cP(E,\Omega)\leq \limsup_{n\rightarrow\infty} \cP(\bR(\cE_{a_n}),\Omega)\leq \beta<\infty\,.$$

\noindent {\bf Step 3. We prove that for any $x\in\fE$ $\limsup_{r\rightarrow 0} \frac{\nu(B(x,r))}{\alpha_{d-1}r^{d-1}}\leq \nu_G(n_\bullet(x))$.} Here $\bullet$ corresponds to $E$ or $\Omega$ depending on $x$.

{\noindent $\bullet$ \bf Case $x\in \partial^* E\cap \Omega$.} 
Let $\delta\in]0,1[$. Let $r_0>0$ be small enough depending on $\delta$ such that
\begin{align}\label{eq:choixr}
\forall 0<r\leq r_0\qquad\cL^d((E\cap B(x,r))\Delta B^-(x,r,n_E(x)))\leq \delta \alpha_d r^d\,.
\end{align}
Up to choosing a smaller $r_0$, we can assume that $B(x,r_0)\subset\Omega$ (we recall that $\Omega$ is open). Let $r\leq r_0/2$.
Let $\ep>0$. Since $\nu(B(x,r))<\infty$ there exists a continuous function $f$ taking its values in $[0,1]$ with support included in $B(x,r)$ such that 
\begin{align}\label{eq:nuf}
\nu(f)\geq \nu(B(x,r))-\ep \alpha_{d-1}r^{d-1}\,.
\end{align}
Set $$U=\left\{F\in\cB(\sR^d): \cL^d(F\Delta E)\leq \delta\alpha_d r ^d\right\}\times\left\{\mu\in\cM(\sR^d):|\mu(f)-\nu(f)|\leq \ep\alpha_{d-1}r ^{d-1}\right\}\,.
$$
Let $\ep_0>0$. Thanks to inequality \eqref{eq:Enu}, there exists an increasing sequence $(a_n)_{n\geq 1}$ such that
$$\Prb(\exists \cE_{a_n}\in\sC_{a_n}(\ep_0):\,(\bR(\cE_{a_n}),\mu_{a_n}(\cE_{a_n}))\in U)>0\,.$$
For short, we will write $n$ instead of $a_n$.
On the event $\{\exists \cE_n\in\sC_n(\ep_0):\,(\bR(\cE_n),\mu_n(\cE_n))\in U\}$, we choose $\cE_n\in\sC_n(\ep_0)$ such that $(\bR(\cE_n),\mu_n(\cE_n))\in U$ (if there are several possible choices, we pick one according to a deterministic rule).
To shorten the notation, we write $\mu_n$ for $\mu_n(\cE_n)$ and $E_n$ for $\bR(\cE_n)$.
Using inequality \eqref{eq:nuf}, we have 
\begin{align}\label{eq:ineqmun}
\mu_n(B(x,r))\geq \mu_n(f)\geq \nu(f)-\ep\alpha_{d-1}r ^{d-1}\geq \nu(B(x,r))-2\ep\alpha_{d-1}r ^{d-1}\,.
\end{align}
Set $r'=(1+\sqrt{\delta})r$. Note that $r'\leq 2r\leq r_0$. Thanks to inequality \eqref{eq:choixr}, we have
\begin{align*}
\cL^d((E_n\cap B(x,r'))\Delta B^-(x,r',n_E(x)))&\leq \cL^d((E\cap B(x,r'))\Delta B^-(x,r',n_E(x)))+\cL^d(E\Delta E_n)\nonumber\\&\leq 2\delta \alpha_d r'^d
\end{align*}
and for $n$ large enough 
\begin{align}\label{eq:difsym}
\card\left((E_n\cap B(x,r'))\Delta B^-(x,r',n_E(x))\cap\sZ_n^ d\right)\leq 4\delta \alpha_d r'^d\,.
\end{align}
 For each $\gamma$ in $\{1/n,\dots , (\lfloor nr\sqrt{\delta}\rfloor -1)/n\}$, we define
\[B(\gamma)=B(x,r'-\gamma,n_E(x))\,.\]
The sets $\partial^+_n B(\gamma)\cup\partial ^-_nB(\gamma)$ are pairwise disjoint for different $\gamma$.
Moreover, using inequality \eqref{eq:difsym}, we have for $n$ large enough
\begin{align*}
\sum_{\gamma=1/n,\dots,(\lfloor nr\sqrt{\delta}\rfloor -1)/n}\card(E_n\cap \partial^+_n B(\gamma))+\card(E_n^c\cap \partial^-_n B(\gamma))&\leq \card(E_n\Delta (B^-(x,r',v)\cap\sZ_n^d)\\&\leq  4\delta \alpha_d r'^d n^d\,.
\end{align*}
By a pigeon-hole principle, there exists $\gamma_0$ in $\{1/n,\dots,(\lfloor nr\sqrt{\delta}\rfloor -1)/n\}$ such that for $n$ large enough
\begin{align}\label{eq:contxgamma0}
\card(E_n\cap \partial^+_n B(\gamma_0))+\card(E_n^c\cap \partial^-_n B(\gamma_0))\leq \frac{4\delta \alpha_d r'^d n^d}{\lfloor nr\sqrt{\delta}\rfloor -1}\leq \frac{6\delta \alpha_d r^dn^{d}}{nr \sqrt{\delta}}=6\sqrt{\delta} \alpha_d r^{d-1}n^{d-1}\,.
\end{align}
 If there are several possible choices for $\gamma_0$, we pick the smallest one. We denote by $X^+$ and $X^-$ the following sets of edges:
$$X^+=\{\langle y, z\rangle\in \E_n^d: y\in \partial ^+_nB(\gamma_0)\cap E_n\}\,,\qquad X^-=\{\langle y, z\rangle\in \E_n^d: y\in \partial ^-_nB(\gamma_0)\cap E_n^c\}\,.$$
Let us control the number of edges in $X^+\cup X^-$ using inequality \eqref{eq:contxgamma0}:
\begin{align}\label{eq:controleM}
\card(X^+ \cup X^-)\leq 2d\left(\card(E_n\cap \partial^+_n B(\gamma_0))+\card(E_n^c\cap \partial^-_n B(\gamma_0))\right)\leq 12d \sqrt{\delta} \alpha_d r^{d-1}n^{d-1}\,.
\end{align}
\begin{figure}[H]
\begin{center}
\def\svgwidth{0.6\textwidth}
   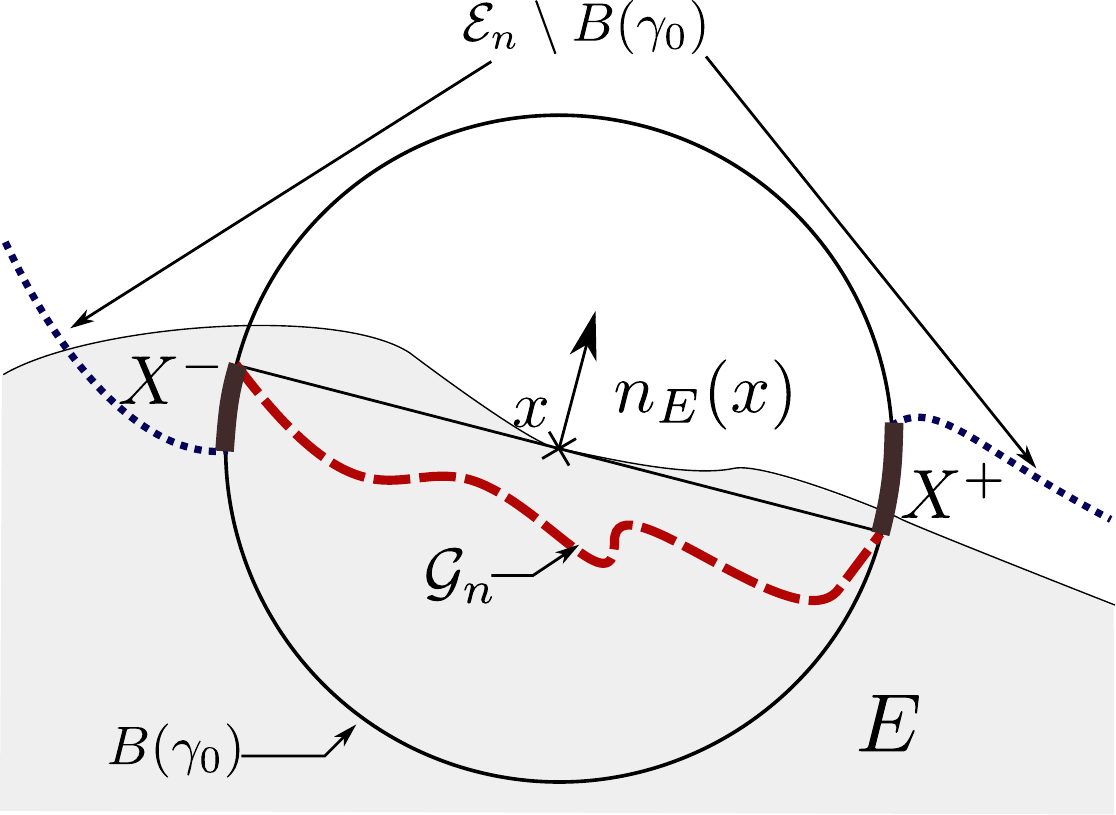
   \caption{\label{fig:repFn}Representation of the cutset $\cF_n$.}
   \end{center}
\end{figure}
\noindent Let $\cG_n$ be a minimal cutset for $\phi_n(\partial^-_n B(\gamma_0),\partial^+_nB(\gamma_0),B(\gamma_0))$.
Set $$\cF_n=(\cE_n\setminus B(\gamma_0))\cup X^+\cup X^-\cup \cG_n\,.$$ We claim that $\cF_n$ is a $(\Gamma_n^1,\Gamma_n^2)$-cutset (see figure \ref{fig:repFn}).
Let $\gamma$ be a path from $\Gamma_n^1$ to $\Gamma_n^2$ in $\Omega_n$. We write $\gamma=(x_0,e_1,x_1\dots,e_p,x_p)$.

$\triangleright$ If $\gamma\cap ((\cE_n\setminus B(\gamma_0))\cup X^+\cup X^-)\neq \emptyset$ then $\gamma\cap\cF_n\neq\emptyset$.

$\triangleright$  Let us assume that $\gamma\cap ((\cE_n\setminus B(\gamma_0))\cup X^+\cup X^-)= \emptyset$. Since $\gamma\cap\cE_n\neq \emptyset $, then we have $\gamma\cap (\cE_n\cap B(\gamma_0))\neq\emptyset$. Besides, we have $\gamma \cap E_n^ c\cap \partial_n B(\gamma_0)\neq\emptyset $ and $\gamma \cap E_n\cap \partial_n B(\gamma_0)\neq\emptyset $.
Set 
$$m=\inf\{k\geq 0: x_k \in E_n^ c\cap \partial_n B(\gamma_0)\}\,\qquad\text{and}\qquad
l=\sup\{k\leq m: x_k\in E_n\cap \partial_n B(\gamma_0)\}\,.$$
Moreover, since $\gamma\cap (X^+\cup X^-)=\emptyset$, we have $x_{l}\in \partial ^-_n B(\gamma_0)$ and $x_{m}\in \partial^+_n B(\gamma_0)$. By construction, the portion of $\gamma$ between $x_{l}$ and $x_{m}$ is strictly inside $B(\gamma_0)$.
 Since $\cG_n$ is a cutset between $\partial^-_n B (\gamma_0)$ and $\partial ^+_n B(\gamma_0)$, it follows that $\gamma\cap \cG_n\neq\emptyset$. The set $\cF_n$ is a $(\Gamma_n^1,\Gamma_n^2)$-cutset.
Since $\cE_n\in\sC_n(\ep_0)$, we have $V(\cF_n)\geq V(\cE_n)-\ep_0 n ^{d-1}$. Using inequality \eqref{eq:ineqmun}, it follows that 
\begin{align*}
V(\cG_n)+M\card(X^+\cup X^-)&\geq V(\cE_n\cap B(\gamma_0))-\ep_0n ^{d-1}\\&\geq \mu_n( B(x,r))n^{d-1}-\ep_0 n ^{d-1}\\
&\geq  \nu(B(x,r))n^{d-1}-2\ep\alpha_{d-1}r^{d-1}n^{d-1}-\ep_0 n^{d-1}
\end{align*}
and using inequality \eqref{eq:controleM}
\[ \nu(B(x,r))\leq \frac{\phi_n(\partial^-_n B(\gamma_0),\partial^+_n B(\gamma_0),B(\gamma_0))}{n^{d-1}}+ 12dM\sqrt{\delta}\alpha_d r^{d-1}+ 2\ep\alpha_{d-1}r^{d-1}+\ep_0 \,. \]
We can choose $\ep_0$ small enough depending on $r$ and $\delta$ such that 
$$\frac{\ep_0}{\alpha_{d-1}r^{d-1}}\leq \ep \,.$$
It follows that 
\begin{align*}
 \frac{\nu(B(x,r))}{\alpha_{d-1}r^{d-1}}&\leq \frac{\phi_n(\partial^-_n B(\gamma_0),\partial^+_n B(\gamma_0),B(\gamma_0))}{\alpha_{d-1}r^{d-1}n^{d-1}}+ 12dM\sqrt{\delta}\frac{\alpha_d}{\alpha_{d-1}} + 3\ep \\
 &\leq \frac{\phi_n(\partial^-_n B(\gamma_0),\partial^+_n B(\gamma_0),B(\gamma_0))}{\alpha_{d-1}(r'-\gamma_0)^{d-1}n^{d-1}}(1+\sqrt{\delta}) ^{d-1}+ 12dM\sqrt{\delta}\frac{\alpha_d}{\alpha_{d-1}} + 3\ep\,. 
 \end{align*}
 Let $\kappa_d>0$ be given by lemma \ref{lem:inclusion}.
Let us assume that 
$$\frac{\nu(B(x,r))}{\alpha_{d-1}r^{d-1}}\geq(\nu_G(n_E(x))+\ep+\kappa_d M\sqrt \ep)(1+\sqrt{\delta}) ^{d-1}+ 12dM\sqrt{\delta}\frac{\alpha_d}{\alpha_{d-1}} + 3\ep\,.$$
It follows that 
$$\frac{\phi_n(\partial^-_n B(\gamma_0),\partial^+_n B(\gamma_0),B(\gamma_0))}{\alpha_{d-1}(r'-\gamma_0)^{d-1}n^{d-1}}\geq \nu_G(n_E(x))+\ep+\kappa_d M\sqrt \ep\,.$$
Then using lemma \ref{lem:inclusion}
\begin{align*}
\{\exists \cE_n\in\sC_n(\ep_0):\,(\bR(\cE_n),\mu_n(\cE_n))\in U\}&\subset \overline G_n(x,r'-\gamma_0,n_E(x),1, \nu_G(n_E(x))+\ep+ \kappa_ dM\sqrt \ep)^ c\\&\subset \overline G_n(x,r'(1-\sqrt\ep),n_E(x),1, \nu_G(n_E(x))+\ep)^ c\,
\end{align*}
where we used that $$r'(1-\sqrt{\ep})\leq r'(1-\sqrt{\delta})\leq r'-\gamma_0\leq r'\,.$$
Thanks to lemma \ref{lem:phicassimple}, it yields that
$$\limsup_{n\rightarrow\infty}\frac{1}{n^{d-1}}\log \Prb(\exists \cE_n\in\sC_n(\ep_0):\,(\bR(\cE_n),\mu_n(\cE_n))\in U)=-\infty\,.$$
This contradicts inequality \eqref{eq:Enu}, thus
\begin{align*}
 \frac{\nu(B(x,r))}{\alpha_{d-1}r^{d-1}}&\leq (\nu_G(n_E(x))+\ep+\kappa_d M\sqrt \ep)(1+\sqrt{\delta}) ^{d-1}+ 12dM\sqrt{\delta}\frac{\alpha_d}{\alpha_{d-1}} + 3\ep\,.
 \end{align*}
It follows that 
$$\limsup_{r\rightarrow 0} \frac{\nu(B(x,r))}{\alpha_{d-1}r^{d-1}}\leq (\nu_G(n_E(x))+\ep+10d^3M\sqrt \ep)(1+\sqrt{\delta}) ^{d-1}+ 12dM\sqrt{\delta}\frac{\alpha_d}{\alpha_{d-1}} + 3\ep$$
and by letting $\delta$ and $\ep$ go to $0$:
$$\limsup_{r\rightarrow 0} \frac{\nu(B(x,r))}{\alpha_{d-1}r^{d-1}}\leq \nu_G(n_E(x))\,.$$

{\noindent $\bullet$ \bf Case $x\in \partial ^*\Omega \cap ((\Gamma^1\setminus \overline{\partial^*E} )\cup (\Gamma^2\cap \partial^*E))$} This case is handled in a similar way but with an extra issue: there is no $r$ small enough such that $B(x,r) \subset \Omega$. The set $\cF_n$ that we have built may not be included in $\Pi_n$. We will only treat the case $x\in \Gamma^1\setminus \overline{\partial ^* E}$ (the case $x\in \Gamma^2\cap \partial ^* E$ is very similar). To solve this issue, we can pick $\delta$ small enough depending on $\ep$ (small enough such that it satisfies condition \eqref{cond:deltalemphi} in lemma \ref{lem:phicassimple} where $\eta$ is replaced by $\ep$) and then by lemma \ref{lem:Omegabord}, we pick $r_0$ small enough depending on $\delta$ such that it satisfies for all $r\in]0,r_0]$
\begin{align}\label{eq:choixr2}
\cL^d((\Omega\cap B(x,r))\Delta B^-(x,r,n_\Omega(x)))\leq \delta \alpha_d r^d\,,
\end{align} 
\begin{align}\label{eq:surfacedepasse}
(\Omega\cap B(x,r,n_\Omega(x)))\Delta B^+ (x,r,n_\Omega(x))\subset\left\{\,y\in B(x,r):|(y-x)\cdot n_\Omega(x)|\leq \delta \|y-x\|_2\,\right\}
\end{align}
and
\begin{align}\label{eq:blaba}
\left|\frac{1}{\alpha_{d-1} r_0^{d-1}}\cH^{d-1}(\partial^*\Omega\cap B(x,r_0))-1\right|\leq \ep\,.
\end{align}
Moreover, up to choosing a smaller $r_0$ we can assume that $B(x,r_0)\cap (E\cup \Gamma^2)=\emptyset$ (this is possible since $d_2(x,\Gamma^2\cup E)>0$. 
Note that for $x\in \Gamma^1 \setminus\overline{ \partial ^* E}$ the set $E_n\cap B(x,r_0)$ looks like $\Omega\cap  B^+(x,r_0,n_\Omega(x))$. Whereas for $y\in\partial ^* E\cap \Gamma^2$ $E_n\cap B(y,r)$ looks like $B^-(y,r,n_\Omega(x))$. 
We have for $r\leq r_0$, for $n$ large enough, using proposition \ref{prop:minkowski}
\begin{align*}
&\cL^ d((E_n\cap B^-(x,r,n_\Omega(x)))\cup ( \cV_\infty(\Omega,1/n)\cap B^+(x,r,n_\Omega(x)))\\
&\leq \cL^ d((E_n\Delta E) \cup (E\cap B^-(x,r,n_\Omega(x))) +\cL^d(\Omega\cap B^+(x,r,n_\Omega(x)))+ \cL^ d( \cV_\infty(\Omega,1/n)\setminus \Omega)\\
&\leq \cL^d(E_n\Delta E)+\cL^d (E\cap B^-(x,r,n_\Omega(x))+\cL^d((\Omega\cap B(x,r))\Delta B^-(x,r,n_\Omega(x)))+\frac{8d}{n}\cH^{d-1}(\partial \Omega)\\
&\leq 3\delta  \alpha_d r^d\,
\end{align*}
where we use in the last inequality that $E\cap B(x,r_0)=\emptyset$,
and 
\begin{align}\label{eq:difsymcas2}
\card \left(\left(E_n\cap B^-(x,r,n_\Omega(x))\cap\sZ_n^d\right)\cup \left( \Omega_n\cap B^+(x,r,n_\Omega(x)) \right)\right)
&\leq 4\delta  \alpha_d r^d n^d\,.
\end{align}
By the same arguments (see inequality \eqref{eq:contxgamma0}), there exists $\gamma_0$ in $\{1/n,\dots,(\lfloor nr\sqrt{\delta}\rfloor -1)/n\}$ such that 
$$\card(E_n\cap \partial^-_n B(\gamma_0))+\card(E_n^c\cap \Omega_n\cap \partial^+_n B(\gamma_0))\leq 6\sqrt{\delta} \alpha_d r^{d-1}n^{d-1}\,.$$
 We denote by $X^+$ and $X^-$ the following set of edges:
$$X^+=\{\langle y, z\rangle\in \Pi_n: y\in \partial ^+_nB(\gamma_0)\cap \Omega_n\}\,,\qquad
X^-=\{\langle y, z\rangle\in \Pi_n: y\in \partial ^-_nB(\gamma_0)\cap E_n\}\,.$$ 
Let $\cG'_n$ be a minimal cutset for $\phi_n(\partial^+_n B(\gamma_0)\cup (\Gamma_n^1\setminus \partial ^-_n B(\gamma_0)),\partial ^-_n B(\gamma_0),B(\gamma_0)\cap \Omega_n)$. Set
$$\cF'_n=(\cE_n\setminus B(\gamma_0))\cup X^+ \cup X^- \cup \cG'_n\,.$$
The set $\cF'_n$ may not be included in $\Pi_n$, but we claim that $\cF'_n\cap \Pi_n$ is a $(\Gamma_n^1,\Gamma_n^2)$- cutset. Once we prove this result, the remaining of the proof is the same than in the case $x\in \partial ^* E\cap \Omega$ using lemma \ref{lem:phicassimple}.

Let $\gamma$ be a path from $\Gamma_n^1$ to $\Gamma_n^2$ in $\Omega_n$. 

$\triangleright$ If $\gamma\cap ((\cE_n\setminus B(\gamma_0))\cup X^+\cup X^-)\neq \emptyset$ then $\gamma\cap(\cF'_n\cap \Pi_n)\neq\emptyset$.

$\triangleright$  Let us assume that $\gamma\cap( (\cE_n\setminus B(\gamma_0))\cup X^+\cup X^-)= \emptyset$. Note that for $n$ large enough, we have $\Gamma_n^2\cap B(\gamma_0)=\emptyset$. We aim to prove that $\gamma\cap\cG'_n\neq\emptyset$. As in the previous case, we can extract from $\gamma$ an excursion in $B(\gamma_0)$ that starts at $z\in (\partial_n B(\gamma_0)\cup \Gamma_n^1) \cap E_n$ and ends at $w\in\partial_n B(\gamma_0)\cap E_n^c\cap\Omega_n$. Since $\gamma\cap X^+=\emptyset$ then $w\in\partial_n  ^- B(\gamma_0)$. Similarly, if $z\in\partial_n B(\gamma_0)$ since $\gamma\cap X^-=\emptyset$ we have $z\in \partial^+_n B(\gamma_0)$. Since $\cG'_n$ is a cutset that cuts $\partial^+_n B(\gamma_0)\cup (\Gamma_n^1\setminus \partial ^-_n B(\gamma_0))$ from $\partial ^-_n B(\gamma_0)$ in $B(\gamma_0)\cap \Omega_n$, we have $\cG'_n\cap \gamma\neq \emptyset$. We conclude as in the previous case that 
$$\limsup_{r\rightarrow 0} \frac{\nu(B(x,r))}{\alpha_{d-1}r^{d-1}}\leq \nu_G(n_\Omega(x))\,.$$

\noindent {\bf Step 4. We prove that $\nu$ is supported on $\fE $.} We recall that $$\fE =(\partial ^* E\cap\Omega)\cup(\partial ^* \Omega \cap (\Gamma^1\setminus \overline{\partial^* E}))\cup (\partial ^* E\cap \Gamma^2)\,.$$  Let us denote by $\lambda$ the following measure
$$\lambda=\cH^{d-1}|_{\fE}\,.$$
By Lebesgue decomposition theorem, there exists $\nu_\lambda$ and $\nu_s$ such that $\nu_\lambda$ is absolutely continuous with respect to $\lambda$, there exists a Borelian set $A$ such that $\lambda(A^c)=\nu_s(A)=0$ and
$$\nu=\nu_{\lambda}+\nu_s\,.$$
There exists $f\in L^1(\lambda)$ such that $\nu_\lambda=f \lambda$. 
Moreover, since $\lambda$ is a Radon measure, it is of Vitali type (see for instance theorem 4.3. in \cite{rigot}). 
By theorem 2.3 in \cite{rigot},
for $\lambda$-almost every $x$ we have
$$f(x)=\lim_{r\rightarrow 0}\frac{\nu(B(x,r))}{\lambda(B(x,r))}\geq 0\,.$$
Since $f\in L^1(\lambda)$, for $\lambda$-almost every $x$ we have
$$f(x)=\lim_{r\rightarrow 0}\frac{1}{\lambda(B(x,r))}\int_{B(x,r)\cap \fE}f(y)d\cH^{d-1}(y)\,.$$
We aim to prove that $\nu_s=0$. The main issue to prove this is that the reduced boundary $\partial^* E$ may be very different from the topological boundary $\partial E$, the set $\fE$ may not be a "continuous cutset". However, up to modifying $E$ on a set of null measure, we can assume that $\partial E =\overline{\partial ^* E}$ (see for instance remark 15.3 in \cite{maggi_2012}). 
We recall that $\cH^{d-1}(\fE)<\infty$.
Let $\ep>0$. By proposition \ref{prop:utilisationvitali}, there exists a finite family of disjoint closed balls $(B(x_i,r_i,v_i))_{i\in I_1\cup I_2\cup I_3}$ such that for $i\in I_1$, we have $x_i\in \partial ^* E\cap \Omega$, for $i\in I_2$, we have $x_i\in \partial ^* \Omega \cap (\Gamma^1\setminus \overline{\partial^* E})$ and for $i\in I_3$, we have $x_i\in \partial ^* E\cap \Gamma^2$, and the following properties hold
\begin{align}\label{eq:af100}
\cH^{d-1}(\fE\setminus \cup_{i\in I_1\cup I_2\cup I_3}B(x_i,r_i)))\leq \ep\,,
\end{align}
\begin{align}\label{eq:af200}
\forall i \in I_1\cup I_2\cup I_3\quad  \forall\, 0<r\leq r_i\qquad \left|\frac{1}{\alpha_{d-1} r^{d-1}}\cH^{d-1}(\fE\cap B(x_i,r))-1\right|\leq \ep\,,
\end{align}
\begin{align}\label{eq:af400}
\forall i\in I_1\cup I_2\cup I_3\qquad \left|\int_{B(x_i,r_i)\cap\fE}f(y)d\cH^{d-1}(y)-\nu(B(x_i,r_i))\right|\leq \ep\alpha_{d-1} r_i^{d-1}\,,
\end{align}
\begin{align}\label{eq:af500}
\forall i \in I_1\qquad B(x_i,r_i)\subset \Omega\quad\text{ and } \quad\cL^d((E\cap B(x_i,r_i)) \Delta B^-(x_i,r_i,v_i))\leq \ep \alpha_d r_i^d\,,
\end{align}
\begin{align}\label{eq:af600}
\forall i \in I_2\qquad d_2(B(x_i,r_i), \Gamma^2\cup E)>0 \quad\text{ and } \quad\  \cL^d((\Omega\cap B(x_i,r_i)) \Delta B^-(x_i,r_i,v_i))\leq \ep \alpha_d r_i^d\,,
\end{align}
\begin{align}\label{eq:af700}
\forall i \in I_3\qquad d_2(B(x_i,r_i), \Gamma^1)>0,  \quad\  &\cL^d((E\cap B(x_i,r_i)) \Delta B^-(x_i,r_i,v_i))\leq \ep \alpha_d r_i^d\nonumber\\\text{ and }\quad& \cL^d((\Omega\cap B(x_i,r_i)) \Delta B^-(x_i,r_i,v_i))\leq \ep \alpha_d r_i^d\,.
\end{align}
Set \[r_{min}=\min_{i\in I_1\cup I_2\cup I_3}r_i\,.\]Let $g$ be a continuous function compactly supported with values in $[0,1]$ such that $g=1$ on $\overline{\Omega}$.
For $i\in I_1\cup I_2\cup I_3$, let $g_i$ be a continuous function compactly supported with values in $[0,1]$ such that $g_i=1$ on $B(x_i,r_i(1-\sqrt{\ep}/4))$ and $g_i=0$ on $B(x_i,r_i)^c$. Hence, we have
\begin{align}\label{eq:nugi}
\nu(g_i)\leq \nu(B(x_i,r_i))\,.
\end{align}
Set $$U=\left\{F\in\cB(\sR^d): \cL^d(F\Delta E)\leq \ep\alpha_d r_{min}^d\right\}\times\left\{\begin{array}{c}\mu\in\cM(\sR^d):\qquad|\mu(g)-\nu(g)|\leq \ep\text{  and}\\\forall i\in I_1\cup I_2\cup I_3\quad |\mu(g_i)-\nu(g_i)|\leq \ep\alpha_{d-1}r_i^{d-1} \end{array}\right\}\,.
$$
Let $\ep_0>0$ we will choose later.
Thanks to inequality \eqref{eq:Enu}, there exists an increasing sequence $(a_n)_{n\geq 1}$ such that
$$\Prb(\exists \cE_{a_n}\in\sC_{a_n}(\ep_0):\,(\bR(\cE_{a_n}),\mu_{a_n}(\cE_{a_n}))\in U)>0\,.$$
For short, we will write $n$ instead of $a_n$.
On the event $\{\exists \cE_n\in\sC_n(\ep_0):\,(\bR(\cE_n),\mu_n(\cE_n))\in U\}$, we pick $\cE_n\in\sC_n(\ep_0)$ such that $(\bR(\cE_n),\mu_n(\cE_n))\in U$ (if there are several possible choices, we pick one according to a deterministic rule).
To shorten the notation, we write $\mu_n$ for $\mu_n(\cE_n)$ and $E_n$ for $\bR(\cE_n)$.
We can prove thanks to inequalities \eqref{eq:af500} as in the proof of inequality \eqref{eq:difsym} that for $n$ large enough 
\begin{align}\label{eq:difsym4b}
\forall i\in I_1 \qquad \card\left(\left((E\cap B(x_i,r_i))\Delta B^-(x_i,r_i,v_i)\right)\cap\sZ_n^d\right)\leq 4\ep\alpha_d r_i^dn^d
\end{align}
and
\begin{align}\label{eq:difsym4}
\forall i\in I_1 \qquad \card\left(\left((E_n\cap B(x_i,r_i))\Delta B^-(x_i,r_i,v_i)\right)\cap\sZ_n^d\right)\leq 4\ep\alpha_d r_i^d n^d\,.
\end{align}
For $i\in I_2$, thanks to \eqref{eq:af600}, as in the proof of inequality \eqref{eq:difsymcas2}, we have
\begin{align}\label{eq:difsymI2}
\card \left((E_n\cap B^-(x_i,r_i,v_i)\cap\sZ_n^d)\cup (\Omega_n\cap B^+(x_i,r_i,v_i) )\right)
&\leq 4\ep  \alpha_d r_i^d n^d\,.
\end{align}
Using proposition \ref{prop:minkowski} and \eqref{eq:af700}, for $i\in I_3$, for $n$ large enough
\begin{align*}
&\cL^ d((E_n^c\cap B^-(x_i,r_i,v_i))\cup ( \cV_\infty(\Omega,1/n)\cap B^+(x_i,r_i,v_i))\\
&\leq \cL^ d((E_n\Delta E) \cup (E^c\cap B^-(x_i,r_i,v_i)) +\cL^d(\Omega\cap B^+(x_i,r_i,v_i))+ \cL^ d( \cV_\infty(\Omega,1/n)\setminus \Omega)\\
&\leq \cL^d(E_n\Delta E)+\cL^d((E^c\cap B(x_i,r_i))\Delta B^-(x_i,r_i,v_i))+\cL^d((\Omega\cap B(x_i,r_i))\Delta B^-(x_i,r_i,v_i))+\frac{8d}{n}\cH^{d-1}(\partial \Omega)\\
&\leq 4\ep \alpha_d r^d\,
\end{align*}
and finally
\begin{align}\label{eq:difsymI2b}
\card \left((E_n^c\cap B^-(x_i,r_i,v_i)\cap\sZ_n^d)\cup (\Omega_n\cap B^+(x_i,r_i,v_i) )\right)
&\leq 8\ep  \alpha_d r_i^d n^d\,.
\end{align}
 For each $i\in I_1\cup I_2\cup I_3$ and $\alpha$ in $\{\lfloor nr_i\sqrt{\ep}/2\rfloor/n,\dots , (\lfloor nr_i\sqrt{\ep}\rfloor -1)/n\}$, we define
\[B_i(\alpha)=B_i(x_i,r_i-\alpha),.\]
By the same arguments as in the previous step, using a pigeon-hole principle, there exists $\alpha_i$ in \newline $\{\lfloor nr_i\sqrt{\ep}/2\rfloor/n,\dots,(\lfloor nr_i\sqrt{\ep} \rfloor -1)/n\}$ such that for $i\in I_1$
\begin{align}\label{eq:contcardI100}
\card((E_n\cup E)\cap \partial^+_n B_i(\alpha_i)\cap\sZ_n^d)+\card((E_n^c\cup E^c)\cap \partial^-_n B_i(\alpha_i)\cap\sZ_n^d)&\leq 20 \sqrt{\ep} \alpha_d r_i^{d-1}n^{d-1}\,.
\end{align}
 For $i\in I_2$ there exists 
  $\alpha_i$ in $\{\lfloor nr_i\sqrt{\ep}/2\rfloor/n,\dots,(\lfloor nr_i\sqrt{\ep} \rfloor -1)/n\}$ such that
\begin{align}\label{eq:contcardI200}
\card(\Omega_n\cap \partial^+_n B_i(\alpha_i))+\card(E_n\cap \partial^-_n B_i(\alpha_i)\cap\sZ_n^d)&\leq 20\sqrt{\ep} \alpha_d r_i^{d-1}n^{d-1}\,,
\end{align}
and for $i\in I_3$, there exists 
  $\alpha_i$ in $\{\lfloor nr_i\sqrt{\ep}/2\rfloor/n,\dots,(\lfloor nr_i\sqrt{\ep} \rfloor -1)/n\}$ such that
\begin{align}\label{eq:contcardI300}
\card(\Omega_n\cap \partial^+_n B_i(\alpha_i))+\card(E_n^c\cap \partial^-_n B_i(\alpha_i)\cap\sZ_n^d)&\leq 20\sqrt{\ep} \alpha_d r_i^{d-1}n^{d-1}\,.
\end{align}
 We denote by $X^+$ and $X^-$ the following set of edges:
\begin{align*}
X^+=&\bigcup_{i\in I_1 }\{\langle y, z\rangle\in \E_n^d: y\in \partial ^+_nB_i(\alpha_i)\cap (E\cup E_n)\}\cup \bigcup_{i\in I_2 \cup I_3}\{\langle y, z\rangle\in \E_n^d: y\in \partial ^+_nB_i(\alpha_i)\cap \Omega_n\}\,,
 \end{align*}
 \begin{align*}
X^-=&\bigcup_{i\in I_1 }\{\langle y, z\rangle\in \E_n^d: y\in \partial ^-_nB_i(\alpha_i)\cap (E^c\cup E_n^c)\}\cup \bigcup_{i\in I_2 }\{\langle y, z\rangle\in \E_n^d: y\in \partial ^-_nB_i(\alpha_i)\cap  E_n\}\\&\cup\bigcup_{i\in I_3 }\{\langle y, z\rangle\in \E_n^d: y\in \partial ^-_nB_i(\alpha_i)\cap  E_n^c\}\,.
 \end{align*}
Let us control the number of edges in $X^+\cup X^-$ using inequalities \eqref{eq:af200}, \eqref{eq:contcardI100}, \eqref{eq:contcardI200} and \eqref{eq:contcardI300}
\begin{align}\label{eq:contcardX}
\card(X^+ \cup X^-)&\leq 40d\frac{\alpha_d}{\alpha_{d-1}}\sum_{i\in I_1 \cup I_2\cup I_3}\sqrt{\ep} \alpha_{d-1} r_i^{d-1}n^{d-1}\nonumber\\
&\leq 40d \sqrt{\ep}\frac{\alpha_d}{\alpha_{d-1}} \sum_{i\in I_1 \cup I_2\cup I_3}\frac{1}{1-\ep}\cH^{d-1}(\fE\cap B(x_i,r_i))n^{d-1}\nonumber\\
&\leq 80d\sqrt{\ep}\frac{\alpha_d}{\alpha_{d-1}}\cH^{d-1}(\fE)n^{d-1}
\end{align}
where we use that $\ep\leq 1/2$.
Let us denote by $\cF_0$ the set of edges with at least one endpoint in
$\cV_2(\fE\setminus \cup_{i\in I_1\cup I_2\cup I_3}B_i(\alpha_i), d/n)$.
Using inequality \eqref{eq:af200}, we have
\begin{align}\label{eq:calculrbar}
\sum_{i\in I_1 \cup I_2\cup I_3}\cH^{d-1}(B(x_i,r_i)\cap \fE)-\cH^{d-1}(B_i(\alpha_i)\cap \fE)&\leq \sum_{i\in I_1 \cup I_2\cup I_3} (1+\ep)\alpha_{d-1}r_i^{d-1}- (1-\ep)\alpha_{d-1}(r_i-\alpha_i)^{d-1}\nonumber\\
&\leq \sum_{i\in I_1 \cup I_2\cup I_3} \left(1+ \ep -(1-\ep)(1-\sqrt{\ep})^{d-1}\right)\alpha_{d-1}r_i^{d-1}\nonumber\\
&\leq (1+\ep-(1-\ep)(1-(d-1)\sqrt\ep))\frac{1}{1-\ep}\cH^{d-1}(\fE)\nonumber\\
&\leq 2d\sqrt\ep\cH^{d-1}(\fE)
\end{align}
for small enough $\ep$ depending on $d$.
Hence, we have using proposition \ref{prop:minkowski}, inequalities \eqref{eq:af100} and \eqref{eq:calculrbar}, for $n$ large enough, 
\begin{align}\label{eq:contcardF0b}
\card(\cF_0)\leq 2d\cL^d(\cV_2(\fE\setminus \cup_{i\in I}B_i(\alpha_i), 2d/n))n^d\leq 10d^2(2d\sqrt{\ep}\cH^{d-1}(\fE)+\ep)n^{d-1}\,.
\end{align}
We claim that $\cF_n=X^+\cup X^- \cup\cF_0\cup_{i\in I_1\cup I_2\cup I_3}(\cE_n\cap B_i(\alpha_i))$ is a $(\Gamma_n^1,\Gamma_n^2)$-cutset. Let $\gamma$ be a path from $\Gamma_n^1$ to $\Gamma_n^2$ in $\Omega_n$. We write $\gamma=(x_0,e_1,x_1\dots,e_p,x_p)$. By definition of $E_n$, we have $x_0\in E_n$ and $x_p\in E_n^c$.

\noindent $\triangleright$ Let us assume that $x_0\in B_i(\alpha_i)$ for $i\in I_2$. Since for $n$ large enough $B_i(\alpha_i)\cap\Gamma_n^2=\emptyset$, the path $\gamma$ eventually exits the ball $B_i(\alpha_i)$.
 Let $x_r$ denotes the first point that reaches $\partial _nB_i(\alpha_i)$, \textit{i.e.},
$$r=\inf\left\{k\geq 0:\, x_k\in \partial_n B_i(\alpha_i)\right\}\,.$$ 
  We have $x_r\in \Omega_n$.
\begin{itemize}[$\cdot$]
\item If $x_r\in \partial^+_n B(\alpha_i)$, then $x_r\in \partial^+_n B(\alpha_i)\cap \Omega_n$ and $\langle x_r,x_{r+1}\rangle \in X^+$.
\item If  $x_r\in \partial^-_n B(\alpha_i)\cap E_n$, then $\langle x_r,x_{r+1}\rangle \in X^-$.
\item If $x_r\in \partial^-_n B(\alpha_i)\cap E_n^c$, since $x_0\in E_n$, then $\gamma \cap (\partial ^e E_n\cap B_i(\alpha_i))\neq\emptyset$ and so $\gamma \cap (\cE_n\cap B_i(\alpha_i))\neq\emptyset$.
\end{itemize}

\noindent $\triangleright$ Let us assume that $x_p\in B_i(\alpha_i)$ for $i\in I_3$. For $n$ large enough, we have $\Gamma_n^1\cap B_i(\alpha_i)=\emptyset$, and so $x_0\notin B_i(\alpha_i)$. Let $x_l$ denotes the last point to enter in $B_i(\alpha_i)$, \textit{i.e.},
\[l=\sup\left\{k\geq 0:\, x_k \in\partial_n B_i(\alpha_i)\right\}\,.\]
We have $x_l\in \Omega_n$.
\begin{itemize}[$\cdot$]
\item If $x_l\in \partial^+_n B(\alpha_i)$, then $x_l\in \partial^+_n B(\alpha_i)\cap \Omega_n$ and $\langle x_{l-1},x_{l}\rangle \in X^+$.
\item If  $x_l\in \partial^-_n B(\alpha_i)\cap E_n^c$, then $\langle x_{l-1},x_{l}\rangle \in X^-$.
\item If $x_l\in \partial^-_n B(\alpha_i)\cap E_n$, since $x_p\in E_n^c$, then $\gamma \cap (\partial ^e E_n\cap B_i(\alpha_i))\neq\emptyset$ and so $\gamma \cap (\cE_n\cap B_i(\alpha_i))\neq\emptyset$.
\end{itemize}

\noindent $\triangleright$ Let us assume $x_0\in \cV_2(\Gamma^1\setminus \overline{\partial^* E},d/n)\setminus \bigcup_{i\in I_2}B_i(\alpha_i)$ then $\langle x_0,x_1\rangle \cap \cF_0\neq \emptyset$.

\noindent$\triangleright$ Let us assume $x_p\in \cV_2(\Gamma^2\cap\partial ^* E,d/n)\setminus \bigcup_{i\in I_3}B_i(\alpha_i)$ then $\langle x_{p-1},x_p\rangle \cap \cF_0\neq \emptyset$.

\noindent$\triangleright$ Let us assume $x_0\in \cV_2(\Gamma^1\setminus \overline{\partial^* E},d/n)^c$ and $x_p\in \cV_2(\Gamma^2\cap\partial ^* E,d/n)^c$. Set $\widetilde E=E\cup (\cV_2(E,d/n)\setminus \Omega)$.
It follows that $x_0\in\widetilde E$ and $x_{p}\in \widetilde E^c$. 
Let us assume there exists an excursion of $\gamma$ inside a ball $B_i(\alpha_i)$, for $i\in I_1$ that starts at $x_r\in \partial_n B_i(\alpha_i)\cap \widetilde E=\partial_n B_i(\alpha_i)\cap  E$ and exits at $x_m\in \partial_n B_i(\alpha_i)\cap \widetilde E^c=\partial_n B_i(\alpha_i)\cap  E^c$ (where we use that $B_i(\alpha_i)\subset \Omega$).
\begin{itemize}[$\cdot$]
\item If $x_r\notin \partial ^-_n B_i(\alpha_i)\cap E_n$, then  $x_r\in \partial ^+_n B_i(\alpha_i) \cap E$ or  $x_r\in \partial ^-_n B_i(\alpha_i) \cap E_n^ c$. It follows that  $\langle x_{r-1},x_r\rangle \in X^-\cup X^+$.
\item If $x_m\notin \partial ^+_n B_i(\alpha_i)\cap E_n^c$, then  $x_r\in \partial ^-_n B_i(\alpha_i) \cap E^c$ or  $x_r\in \partial ^+_n B_i(\alpha_i) \cap E_n$. It follows that $\langle x_{m},x_{m+1}\rangle \in X^-\cup X^+$.
\item If $x_r\in E_n$ and $x_m\in E_n^c$, then $\gamma\cap (\cE_n\cap B_i(\alpha_i))\neq\emptyset$.
\end{itemize}
If no such excursion exists then the path $\gamma$ must cross $(\partial \widetilde E\setminus \partial (\cV_2(\Omega,d/n)))\setminus\cup_{i\in I_1}B_i(\alpha_i)$. We distinguish two cases.
\begin{itemize}[$\cdot$]
\item If $\gamma$ crosses $(\partial E\cap \Omega )\setminus\cup_{i\in I_1}B_i(\alpha_i)$. Since $\partial E=\overline{\partial ^* E}$, it follows that $\gamma\cap\cF_0\neq\emptyset$.
\item If $\gamma$ crosses $\partial \widetilde E$ outside $\Omega$. Then $\gamma$ must cross $\cV_2((\Gamma^1\setminus \overline{\partial ^* E})\setminus \cup_{i\in I_2}B_i(\alpha_i),d/n)$ or $ \cV_2((\Gamma^2\cap\partial ^* E)\setminus \cup_{i\in I_3}B_i(\alpha_i),d/n)$.  Hence, $\gamma\cap\cF_0\neq\emptyset$.
\end{itemize}
For all these cases, we have $\gamma\cap\cF_n\neq \emptyset$, thus $\cF_n$ is indeed a $(\Gamma_n^1,\Gamma_n^2)$-cutset.
Since $\cE_n\in\sC_n(\ep_0)$, using inequalities \eqref{eq:contcardX} and \eqref{eq:contcardF0b}, we have
\begin{align}\label{eq:contVen}
 V(\cE_n)&\leq V(\cF_n)+\ep_0n^{d-1}\nonumber\\&\leq V(X^+\cup X^- \cup\cF_0\cup_{i\in I_1\cup I_2\cup I_3}(\cE_n\cap B_i(\alpha_i)))+\ep_0 n ^{d-1}\nonumber\\
 & \leq M(\card(X^+\cup X^-)+ \card(\cF_0))+ \sum_{i\in I_1\cup I_2\cup I_3}V(\cE_n\cap B_i(\alpha_i))+\ep_0 n ^{d-1}\nonumber\\
 &\leq M\left(\left(80d\frac{\alpha_d}{\alpha_{d-1}}+20d^3\right)\sqrt{\ep}\cH^{d-1}(\fE)+10d^2\ep\right)n^{d-1}+ n ^{d-1} \sum_{i\in I_1\cup I_2\cup I_3}\mu_n(g_i)+\ep_0 n ^{d-1}
\end{align}
where we recall that by construction of $g_i$ and $\alpha_i$, we have $g_i=1$ on $B_i(\alpha_i)$.
Using the fact that $(E_n,\mu_n)\in U$, inequalities \eqref{eq:af200}, \eqref{eq:af400}, and \eqref{eq:nugi}, it follows that 
\begin{align*}
\sum_{i\in I_1\cup I_2\cup I_3}\mu_n(g_i)
&\leq \sum_{i\in I_1\cup I_2\cup I_3}\left(\nu(g_i)+\ep\alpha_{d-1}r_i^{d-1}\right)\\
&\leq \sum_{i\in I_1\cup I_2\cup I_3}\left(\nu(B(x_i,r_i))+\ep\alpha_{d-1}r_i^{d-1}\right)
\\&\leq \sum_{i\in I_1\cup I_2\cup I_3}\left(\int_{B(x_i,r_i)\cap \fE}f(y)d\cH^{d-1}(y)+2\ep \alpha_{d-1}r_i^{d-1}\right)\\
&\leq  \int_{\fE}f(y) d\cH^{d-1}(y)+\frac{2\ep}{1-\ep} \cH^{d-1}(\fE)\,.
\end{align*}
Moreover, we have $$\nu(\sR^d)\leq \nu(g)\leq \mu_n(g)+ \ep= V(\cE_n)/n^{d-1}+\ep\,.$$
Combining all the three previous inequalities, we get
\begin{align*}
\nu(\sR^d)&\leq  M\left(\left(80d\frac{\alpha_d}{\alpha_{d-1}}+20d^3\right)\sqrt{\ep}\cH^{d-1}(\fE)+10d^2\ep\right)+\int_{\fE}f(y) d\cH^{d-1}(y)+\frac{2\ep}{1-\ep} \cH^{d-1}(\fE)+\ep_0+\ep\,.
\end{align*}
By letting first $\ep_0$ go to $0$ and then $\ep$ go to $0$, we obtain that 
$$\nu(\sR^d)\leq \int_{\fE}f(y) d\cH^{d-1}(y)\,.$$
Since $\nu(\sR^d)\geq \nu_\lambda(\sR^d)=\int_{\fE}f(y) d\cH^{d-1}(y)$, we obtain that 
$$\nu(\sR^d)=\int_{\fE}f(y) d\cH^{d-1}(y)\,.$$
Thus, we have $\nu_s=0$ and $\nu$ is absolutely continuous with respect to $\lambda$.

\noindent {\bf Step 5. We prove that $(E,\nu)$ is minimal.}
Let $F\in\cB(\sR^d)$ such that $F\subset \Omega$ and $\cP(F,\Omega)<\infty$.  
Set $$\fF =(\partial ^* F\cap\Omega)\cup(\partial ^* \Omega \cap (\Gamma^1\setminus \overline{\partial^* F})\cup (\partial ^* F\cap \Gamma^2)\,.$$ 
Let us denote by $\fF_1$ and $\fF_2$ the following sets:
$$\fF_1=\fF\setminus \overline{\fE}\qquad\text{and}\qquad \fF_2=\fF\cap\fE\,.$$
Note that if $x\in\fF_1$, then $d_2(x,\fF_2)>0$. Indeed, if $d_2(x,\fF_2)=0$ then $x\in \overline{\fE}$ and this contradicts the fact that $x\in\fF_1$.
Let $\ep>0$. Let $\delta$ be small enough depending on $\ep$ as prescribed in inequality \eqref{cond:deltalemphi} in lemma \ref{lem:phicassimple}.
By proposition \ref{prop:utilisationvitali}, there exists a finite family of disjoint closed balls $(B(x_i,r_i,v_i))_{i\in I_1\cup I_2\cup I_3}$  of radius smaller than $\ep$ such that for $i\in I_1$, we have $x_i\in \partial ^* F\cap \Omega$, for $i\in I_2$, we have $x_i\in \partial ^* \Omega \cap (\Gamma^1\setminus \overline{\partial^* F})$ and for $i\in I_3$, we have $x_i\in \partial ^* F\cap \Gamma^2$. Moreover, we have that for any $i\in I_1\cup I_2\cup I_3$,
if $x_i\in \cF_1$, then $B(x_i,r_i)\cap \cF_2=\emptyset$, and the following properties hold:
\begin{align}\label{eq:am1}
\cH^{d-1}(\fF\setminus \cup_{i\in I_1\cup I_2\cup I_3}B(x_i,r_i)))\leq \ep\,,
\end{align}
\begin{align}\label{eq:am2}
\forall i \in I_1\cup I_2\cup I_3\quad  \forall\, 0<r\leq r_i\qquad \left|\frac{1}{\alpha_{d-1} r^{d-1}}\cH^{d-1}(\fF\cap B(x_i,r))-1\right|\leq \ep\,,
\end{align}
\begin{align}\label{eq:am3}
\forall i \in I_1\qquad B(x_i,r_i)\subset \Omega\text{ and } \cL^d((F\cap B(x_i,r_i)) \Delta B^-(x_i,r_i,v_i))\leq \ep \alpha_d r_i^d\,,
\end{align}
\begin{align}\label{eq:am3bis}
\forall i \in I_1\text{ such that $x_i\in\fF_2$}\qquad B(x_i,r_i)\subset \Omega\text{ and } \cL^d((E\cap B(x_i,r_i)) \Delta B^-(x_i,r_i,v_i))\leq \ep \alpha_d r_i^d\,,
\end{align}
\begin{align}\label{eq:am4}
\forall i \in I_2\qquad d_2(B(x_i,r_i), F\cup \Gamma ^2)>0 \quad\text{ and } \quad\  \cL^d((\Omega\cap B(x_i,r_i)) \Delta B^-(x_i,r_i,v_i))\leq \ep \alpha_d r_i^d\,,
\end{align}
\begin{align}\label{eq:am5}
\forall i \in I_3\qquad d_2(B(x_i,r_i), \Gamma ^1)>0, \quad  \cL^d((F\cap B(x_i,r_i)) \Delta B^-(x_i,r_i,v_i))\leq \ep \alpha_d r_i^d\nonumber\\\hfill
\text{and}\quad \cL^d((\Omega\cap B(x_i,r_i)) \Delta B^-(x_i,r_i,v_i))\leq \ep \alpha_d r_i^d\,,
\end{align}
\begin{align}\label{eq:am6}
\forall i\in I_2\cup I_3 \qquad  (\Omega\cap B(x_i,r_i,v_i))\Delta B^-(x_i,r_i,v_i))\subset\left\{\,y\in B(x_i,r_i):|(y-x_i)\cdot v_i|\leq \delta \|y-x_i\|_2\,\right\}\,,
\end{align}
\begin{align}\label{eq:am7}
\forall i\in I_1\cup I_2\cup I_3\quad\forall\, 0<r\leq r_i\qquad \left|\frac{1}{\alpha_{d-1} r^{d-1}}\int_{\fF\cap B(x_i,r)}\nu_G(n_\bullet(y))d\cH^{d-1}(y)-\nu_G(v_i)\right|\leq \ep\
\end{align}
where $\bullet= F$ if $x_i\in \partial ^* F$ and $\bullet=\Omega$ if $x_i\in \partial ^* \Omega\setminus \overline{\partial ^* F}$.
Set \[r_{min}=\min_{i\in I_1\cup I_2\cup I_3}r_i\,.\]
Let $g$ be a continuous function compactly supported such that $g=1$ on $\overline{\Omega}$.
For $i\in I_1\cup I_2\cup I_3$, let $g_i$ be a continuous function compactly supported with values in $[0,1]$ such that $g_i=1$ on $B(x_i,r_i(1-\sqrt{\ep}/4))$ and $g_i=0$ on $B(x_i,r_i)^c$. Hence, we have
\begin{align}\label{eq:nugibis}
\nu(g_i)\leq \nu(B(x_i,r_i))\,.
\end{align}
Set $$U=\left\{F\in\cB(\sR^d): \cL^d(F\Delta E)\leq \ep\alpha_d r_{min}^d\right\}\times\left\{\mu\in\cM(\sR^d):\begin{array}{c}\forall i\in I_1\cup I_2\cup I_3\\ \hfill|\mu(g_i)-\nu(g_i)|\leq \ep\alpha_{d-1}r_i^{d-1}\,,\\|\mu(g)-\nu(g)|\leq \ep \end{array}\right\}\,.
$$
Let $\ep_0>0$ we will choose later. Thanks to inequality \eqref{eq:Enu}, we have
\[\limsup_{n\rightarrow \infty}\frac{1}{n^{d-1}}\log \Prb(\exists \cE_n\in\sC_n(\ep_0):\,(\bR(\cE_n),\mu_n(\cE_n))\in U)\geq -K\,.\]
For $i\in I_1\cup I_3$ and $x_i\in \fF_1$, by the same arguments as in the previous step, there exists $\alpha_i$ in $\{\lfloor nr_i\sqrt{\ep}/2 \rfloor/n,\dots,(\lfloor nr_i\sqrt{\ep} \rfloor -1)/n\}$ such that
\begin{align}\label{eq:contcardI1p2}
\card(F\cap \partial^+_n B_i(\alpha_i)\cap\sZ_n^d)+\card(F^c\cap \partial^-_n B_i(\alpha_i)\cap\sZ_n^d)&\leq 16 \sqrt{\ep} \alpha_d r_i^{d-1}n^{d-1}\,.
\end{align}
 For $i\in I_2$ such that $x_i\in \fF_1$, there exists 
  $\alpha_i$ in $\{\lfloor nr_i\sqrt{\ep}/2 \rfloor/n,\dots,(\lfloor nr_i\sqrt{\ep} \rfloor -1)/n\}$ such that \begin{align}\label{eq:contcardI2et3}
\card(\Omega_n\cap \partial^+_n B_i(\alpha_i))+\card(\Omega_n^c\cap \partial^-_n B_i(\alpha_i))&\leq 16 \sqrt{\ep} \alpha_d r_i^{d-1}n^{d-1}\,.
\end{align}
The values of $\alpha_i$ are deterministic and depend on $n$ and $\ep$.
Let us denote by $\cE$ the following event
$$\cE=\bigcap_{\substack{i\in I_1\cup I_2\cup I_3:\\x_i\in\fF_1}}\overline G_n\left(x_i,r_i-\alpha_i,v_i,1,\nu_G(v_i)+\ep+\kappa_d M\sqrt\ep \right)\,$$
where $\kappa_d$ is given by lemma \ref{lem:inclusion}.
By lemma \ref{lem:inclusion}, we have
\begin{align*}
\Prb\left(\cE^c\right)
&\leq\sum_{\substack{i\in I_1\cup I_2\cup I_3:\\x_i\in\fF_1}}\Prb\left(\overline G_n\left(x_i,r_i-\alpha_i,v_i,1,\nu_G(v_i)+\ep+\kappa_d M\sqrt\ep\right)^ c\right)\\
&\leq\sum_{\substack{i\in I_1\cup I_2\cup I_3:\\x_i\in\fF_1}}\Prb\left(\overline G_n\left(x_i,r_i(1-\sqrt\ep),v_i,1,\nu_G(v_i)+\ep\right)^ c\right)\,.
\end{align*}
By lemma \ref{lem:phicassimple}, we have
$$\limsup_{n\rightarrow\infty}\frac{1}{n^{d-1}}\log\Prb(\cE^c)=-\infty\,.$$
Besides, we have
$$\limsup_{n\rightarrow \infty}\frac{1}{n^{d-1}}\log\Prb(\exists \cE_n\in\sC_n(\ep_0):\,(\bR(\cE_n),\mu_n(\cE_n))\in U, \cE^c)\leq \limsup_{n\rightarrow \infty}\frac{1}{n^{d-1}}\log \Prb(\cE^c)=-\infty\,.$$
Hence, by lemma \ref{lem:estimeanalyse}, 
$$\limsup_{n\rightarrow \infty}\frac{1}{n^{d-1}}\log \Prb(\exists \cE_n\in\sC_n(\ep_0):\,(\bR(\cE_n),\mu_n(\cE_n))\in U, \cE)\geq -K\,.$$
Thus, there exists an increasing sequence $(a_n)_{n\geq 1}$ such that
$$\Prb(\exists \cE_{a_n}\in\sC_{a_n}(\ep_0):\,(\bR(\cE_{a_n}),\mu_{a_n}(\cE_{a_n}))\in U,\cE)>0\,.$$
For short, we will write $n$ instead of $a_n$.
On the event $\{\exists \cE_n\in\sC_n(\ep_0):\,(\bR(\cE_n),\mu_n(\cE_n))\in U,\cE\}$, we choose $\cE_n\in\sC_n(\ep_0)$ such that $(\bR(\cE_n),\mu_n(\cE_n))\in U$ (if there are several possible choices, we pick one according to a deterministic rule).
To shorten the notation, we write $\mu_n$ for $\mu_n(\cE_n)$ and $E_n$ for $\bR(\cE_n)$.
As in the previous step, for $i\in I_1\cup I_2\cup I_3$ such that $x_i\in \fF_2$, there exists $\alpha_i$ in $\{\lfloor nr_i\sqrt{\ep}/2\rfloor/n,\dots,(\lfloor nr_i\sqrt{\ep} \rfloor -1)/n\}$ such that
if $i\in I_1$ then
\begin{align}\label{eq:contcardI1}
\card((E_n\cup F)\cap \partial^+_n B_i(\alpha_i)\cap\sZ_n^d)+\card((E_n^c\cup F^c)\cap \partial^-_n B_i(\alpha_i)\cap\sZ_n^d)&\leq 20 \sqrt{\ep} \alpha_d r_i^{d-1}n^{d-1}\,,
\end{align}
if $i\in I_2$, then
\begin{align}\label{eq:contcardI2}
\card(\Omega_n\cap \partial^+_n B_i(\alpha_i))+\card(E_n\cap \partial^-_n B_i(\alpha_i)\cap\sZ_n^d)&\leq 20 \sqrt{\ep} \alpha_d r_i^{d-1}n^{d-1}\,,
\end{align}
finally, if $i\in I_3$, then
\begin{align}\label{eq:contcardI3}
\card(\Omega_n\cap \partial^+_n B_i(\alpha_i)\cap\sZ_n^d)+\card(E_n^c\cap \partial^-_n B_i(\alpha_i)\cap\sZ_n^d)&\leq 20 \sqrt{\ep} \alpha_d r_i^{d-1}n^{d-1}\,.
\end{align}
 We denote by $X^+_1$, $X^+_2$, $X^-_1$ and $X^-_2$ the following set of edges:
 \begin{align*}
X^+_1=\bigcup_{i\in I_1:x_i\in\fF_1 }\{\langle y, z\rangle\in \E_n^d: y\in \partial ^+_nB_i(\alpha_i)\cap F\}\cup \bigcup_{i\in I_2\cup I_3 :x_i\in\fF_1}\{\langle y, z\rangle\in \E_n^d: y\in \partial ^+_nB_i(\alpha_i)\cap \Omega_n\}\,,
 \end{align*}
\begin{align*}
X^+_2=&\bigcup_{i\in I_1:x_i\in\fF_2 }\{\langle y, z\rangle\in \E_n^d: y\in \partial ^+_nB_i(\alpha_i)\cap (F\cup E_n)\}\cup \bigcup_{i\in I_2 \cup I_3:x_i\in\fF_2}\{\langle y, z\rangle\in \E_n^d: y\in \partial ^+_nB_i(\alpha_i)\cap \Omega_n\}\,,
 \end{align*}
  \begin{align*}
X^-_1=\bigcup_{i\in I_1:x_i\in\fF_1 }\{\langle y, z\rangle\in \E_n^d: y\in \partial ^-_nB_i(\alpha_i)\cap F^c\}\cup \bigcup_{i\in I_2\cup I_3 :x_i\in\fF_1}\{\langle y, z\rangle\in \E_n^d: y\in \partial ^-_nB_i(\alpha_i)\cap \Omega^c\}\,,
 \end{align*}
 \begin{align*}
X^-_2=&\bigcup_{i\in I_1:x_i\in\fF_2 }\{\langle y, z\rangle\in \E_n^d: y\in \partial ^-_nB_i(\alpha_i)\cap (F^c\cup E_n^c)\}\cup \bigcup_{i\in I_2 :x_i\in\fF_2}\{\langle y, z\rangle\in \E_n^d: y\in \partial ^-_nB_i(\alpha_i)\cap E_n\}\\
&\qquad\cup\bigcup_{i\in I_3:x_i\in\fF_2 }\{\langle y, z\rangle\in \E_n^d: y\in \partial ^-_nB_i(\alpha_i)\cap  E_n^c\}\,.
 \end{align*}
We can control the number of edges in $X^+_1\cup X^+_2\cup X^-_1\cup X^-_2$ by the same arguments as in the previous step
\begin{align}\label{eq:contcardX2}
\card(X^+_1\cup X^+_2\cup X^-_1\cup X^-_2)&\leq 80 d\frac{\alpha_d}{\alpha_{d-1}}\sqrt{\ep}\cH^{d-1}(\fF)\,.
\end{align}
Let us denote by $\cF_0$ the set of edges with at least one endpoint in 
$\cV_2(\fF\setminus \cup_{i\in I}B_i(\alpha_i), d/n)$.
We have by the same computations as in \eqref{eq:contcardF0b} 
\begin{align}\label{eq:contcardF0}
\card(\cF_0)\leq 10d^2(2d\sqrt{\ep}\cH^{d-1}(\fF)+\ep)n^{d-1}\leq 21d^3\sqrt{\ep}\cH^{d-1}(\fF)n ^{d-1}
\end{align}
for $\ep$ small enough depending on $d$ and $\fF$. On the event $\cE$,
for $i\in I_1\cup I_2\cup I_3$ such that $x_i\in\fF_1$, 
we denote by $\cG_n^i$ the cutset in the definition of the event $\overline G_n(x_i,r_i-\alpha_i,v_i,1,\nu_G(v_i)+\ep+\kappa_dM\sqrt \ep)$ (if there are several possible choices, we choose one according to a deterministic rule).
We claim that $$\cF_n=X^+_1\cup X^+_2\cup X^-_1\cup X^-_2 \cup\cF_0\bigcup_{i\in  I_1\cup I_2\cup I_3: x_i\in\fF_2}(\cE_n\cap B_i(\alpha_i))\bigcup_{i\in  I_1\cup I_2\cup I_3:x_i\in\fF_1}\cG_n^i$$ is a $(\Gamma_n^1,\Gamma_n^2)$-cutset. 

Let $\gamma$ be a path from $\Gamma_n^1$ to $\Gamma_n^2$ in $\Omega_n$. We write $\gamma=(x_0,e_1,x_1\dots,e_p,x_p)$.

\noindent $\triangleright$ Let us assume that $x_0\in B_i(\alpha_i)$ for $i\in I_2$ and $x_i\in\fF_2$ or $x_p\in B_j(\alpha_j)$ for $j\in I_3$ and $x_j\in \fF_2$. Then, by the same arguments as in the previous step, we have $\cF_n\cap \gamma \neq\emptyset$.

\noindent $\triangleright$ Let us assume that $x_0\in B_i(\alpha_i)$ for $i\in I_2$ and $x_i\in\fF_1$. If $x_0\in \partial_n^- B_i(\alpha_i)$, then $x_0 \in \partial_n^- B_i(\alpha_i)\cap \Omega^c$ and $\langle x_0,x_1\rangle\in X_1^-$. Let us assume $x_0\notin \partial_n^- B_i(\alpha_i)$.  Since for $n$ large enough $B_i(\alpha_i)\cap\Gamma_n^2=\emptyset$, the path $\gamma$ eventually exits the ball $B_i(\alpha_i)$.
Set \[r=\inf\left\{k\geq 0: \,x_k\in \partial_n B_i(\alpha_i)\right\}\,.\]
 We have $x_r\in \Omega_n$.
\begin{itemize}[$\cdot$]
\item If $x_r\in \partial^+_n B_i(\alpha_i)$, then $x_r\in \partial^+_n B_i(\alpha_i)\cap \Omega_n$ and $\langle x_r,x_{r+1}\rangle \in X^+_1$.
\item If  $x_r\in \partial^-_n B_i(\alpha_i)$, then since $\cG_n^i$ is a cutset between $\Gamma_n^1\setminus  \partial^-_n B_i(\alpha_i)$ and $ \partial^-_n B_i(\alpha_i)$ in $B_i(\alpha_i)$ then $\gamma\cap\cG_n^i \neq \emptyset$.
\end{itemize}

\noindent $\triangleright$ Let us assume that $x_p\in B_i(\alpha_i)$ for $i\in I_3$ and $x_i\in\fF_1$. By the same reasoning as in the previous case, we can prove that $\gamma\cap \cF_n\neq\emptyset$.

\noindent $\triangleright$ Let us assume $x_0\in \cV_\infty(\Gamma^1\setminus \overline{\partial ^* F},1/n)\setminus \bigcup_{i\in I_2}B_i(\alpha_i)$ then $\langle x_0,x_1\rangle \cap \cF_0\neq \emptyset$.

\noindent$\triangleright$ Let us assume $x_p\in \cV_\infty(\Gamma^2\cap\partial ^* E,1/n)\setminus \bigcup_{i\in I_3}B_i(\alpha_i)$ then $\langle x_{p-1},x_p\rangle \cap \cF_0\neq \emptyset$.

\noindent$\triangleright$ Let us assume $x_0\in \cV_\infty(\Gamma^1\setminus \overline{\partial ^* F},1/n)^c$ and $x_p\in \cV_\infty(\Gamma^2\cap\partial ^* F,1/n)^c$. Set $\widetilde{F}=F\cup (\cV_\infty(F,1/n)\setminus \Omega)$.
It follows that $x_0\in \widetilde F$ and $x_p\in\widetilde F^c$.
Let us assume there exists an excursion of $\gamma$ inside a ball $B_i(\alpha_i)$, for $i\in I_1$ and $x_i\in\fF_1$ that starts at $x_r\in \partial_n B_i(\alpha_i)\cap \widetilde F=\partial_n B_i(\alpha_i)\cap F$ and exits at $x_m\in \partial_n B_i(\alpha_i)\cap \widetilde F^c=\partial_n B_i(\alpha_i)\cap F^c$, where we use that $B_i(\alpha_i)\subset\Omega$.
\begin{itemize}[$\cdot$]
\item If $x_r\in \partial ^+_n B_i(\alpha_i)\cap F$, then $\langle x_{r-1},x_r\rangle \in  X^+_1$.
\item If $x_m\in \partial ^-_n B_i(\alpha_i)\cap F^c$, then $\langle x_{m},x_{m+1}\rangle \in X^-_1$.
\item If $x_r\in \partial ^-_n B_i(\alpha_i)$ and $x_m\in \partial ^+_n B_i(\alpha_i)$, then since $\cG_n^i$  cuts $\partial ^-_n B_i(\alpha_i)$ from $ \partial ^+_n B_i(\alpha_i)$ in $B_i(\alpha_i)$, we have $\gamma\cap \cG_n^i\neq\emptyset$.
\end{itemize}
Let us assume there exists an excursion of $\gamma$ inside a ball $B_i(\alpha_i)$, for $i\in I_1$ and $x_i\in\fF_2$ that starts at $x_r\in \partial_n B_i(\alpha_i)\cap F$ and exits at $x_m\in \partial_n B_i(\alpha_i)\cap F^c$.
\begin{itemize}[$\cdot$]
\item If $x_r\in \partial ^+_n B_i(\alpha_i)\cap F$, then $\langle x_{r-1},x_r\rangle \in  X^+_2$. If $x_r\in \partial ^-_n B_i(\alpha_i)\cap E_n^c$, then $\langle x_{r-1},x_r\rangle \in  X^-_2$. 
\item If $x_m\in \partial ^-_n B_i(\alpha_i)\cap F^c$, then $\langle x_{m},x_{m+1}\rangle \in X^-_2$. If $x_m\in \partial ^+_n B_i(\alpha_i)\cap E_n$, then $\langle x_{m-1},x_m\rangle \in  X^+_2$. 
\item Finally, if $x_r\in E_n$ and $x_m\in E_n^c$, then $\gamma\cap (\cE_n\cap B_i(\alpha_i))\neq \emptyset$.
\end{itemize}

If no such excursion exists then the path $\gamma$ must cross $(\partial \widetilde F\setminus \partial \cV_\infty(\Omega,1))\setminus\cup_{i\in I_1}B_i(\alpha_i)$. We conclude as in the previous step that $\gamma\cap\cF_0\neq\emptyset$.
For all these cases, we have $\gamma\cap\cF_n\neq \emptyset$, thus $\cF_n$ is a $(\Gamma_n^1,\Gamma_n^2)$-cutset.

Note that $B_i(\alpha_i)\subset B(x_i,r_i(1-\sqrt\ep/4))$ and so $g_i=1$ on $B_i(\alpha_i)$. On the event $\cE$, using inequalities \eqref{eq:am2} and \eqref{eq:am7}, we have
\begin{align}\label{eq:controlecapa}
\frac{1}{n ^{d-1} }V&\left(\bigcup_{\substack{i\in  I_1\cup I_2\cup I_3:\\ x_i\in\fF_2}}(\cE_n\cap B_i(\alpha_i))\bigcup_{\substack{i\in  I_1\cup I_2\cup I_3:\\x_i\in\fF_1}}\cG_n^i\right)\nonumber\\
 & \leq  \frac{1}{n ^{d-1} }\sum_{\substack{i\in I_1\cup I_2\cup I_3:\\ x_i\in \fF_2}}V(\cE_n\cap B_i(\alpha_i))+\frac{1}{n ^{d-1} } \sum_{\substack{i\in I_1\cup I_2\cup I_3:\\ x_i\in \fF_1}}V(\cG_n^i)\nonumber\\
 &\leq  \sum_{\substack{i\in I_1\cup I_2\cup I_3:\\ x_i\in \fF_2}}\mu_n(g_i)+\sum_{\substack{i\in I_1\cup I_2\cup I_3:\\ x_i\in \fF_1}}\left(\nu_G(v_i)+\ep+M\kappa_d\sqrt\ep\right)\alpha_{d-1}r_i^{d-1}\nonumber
\\
 &\leq   \sum_{\substack{i\in I_1\cup I_2\cup I_3:\\ x_i\in \fF_2}}\left(\nu(B(x_i,r_i))+\ep\alpha_{d-1}r_i^{d-1}\right)\nonumber\\
 &\qquad+ \sum_{\substack{i\in I_1\cup I_2\cup I_3:\\ x_i\in \fF_1}}\left(\int_{\fF\cap B(x_i,r_i)}\nu_G(n_\bullet(y))d\cH^{d-1}(y) +2M\kappa_d\sqrt \ep \alpha_{d-1}r_i^{d-1}\right)\nonumber\\
 &\leq \int_{\cV_2(\fF_2,\ep)\cap \fE}f(y)d\cH^{d-1}(y)+ \int_{\fF_1}\nu_G(n_\bullet(y))d\cH^{d-1}(y)+ 4M\kappa_d\sqrt \ep \cH^{d-1}(\fF)
 \,.
\end{align}
for small enough $\ep$ depending on $d$ and $M$. We have used the two following facts: 
\begin{enumerate}[(i)]
 \item for $i\in  I_1\cup I_2\cup I_3$ such that $x_i\in \fF_1$, we have by construction of the covering 
$$\fF\cap B(x_i,r_i)=\fF_1\cap B(x_i,r_i)\,,$$
\item for $x_i\in \fF_2$, since $r_i\leq \ep$, we have
$$\fF\cap B(x_i,r_i)\subset\cV_2(\fF_2,\ep)\cap B(x_i,r_i)\,.$$
\end{enumerate}
Besides, we have 
\begin{align}\label{eq:contcardvenbis}
V(\cE_n)=n ^{d-1}\mu_n(g)\geq n^{d-1}( \nu(g)-\ep)= n^{d-1}\left(\int_{\fE}f(y)d\cH^{d-1}(y)-\ep\right)\,.
\end{align}
Using that $\cE_n\in \sC_n(\ep_0)$, inequalities \eqref{eq:contcardX2}, \eqref{eq:contcardF0}, \eqref{eq:controlecapa} and \eqref{eq:contcardvenbis}, it follows  that
\begin{align*}
\int_{\fE}f(y)d\cH^{d-1}(y)-\ep &\leq  \kappa'\sqrt{\ep}\cH^{d-1}(\fF)+\int_{\cV_2(\fF_2,\ep)\cap \fE}f(y)d\cH^{d-1}(y)+\int_{\fF_1}\nu_G(n_\bullet(y))d\cH^{d-1}(y)+\ep_0
 \end{align*}
 where $\kappa'$ depends on $d$ and $M$.
Note that by the dominated convergence theorem, we have
$$\lim_{\ep\rightarrow 0}\int_{\cV_2(\fF_2,\ep)\cap \fE}f(y)d\cH^{d-1}(y)=\int_{\overline{\fF_2}\cap \fE}f(y)d\cH^{d-1}(y)=\int_{\overline{\fF}\cap \fE}f(y)d\cH^{d-1}(y)\,.$$
By letting first $\ep_0$ go to $0$, then $\delta$ go to $0$ and finally $\ep$ go to $0$, we obtain that 
$$\int_{\fE}f(y)d\cH^{d-1}(y)\leq \int_{\overline{\fF}\cap \fE}f(y)d\cH^{d-1}(y)+ \int_{\fF\setminus \overline{\fE}}\nu_G(n_\bullet(y))d\cH^{d-1}(y)\,.$$
The minimality of $(E,f)$ follows. 
\end{proof}
\begin{proof}[Proof of lemma \ref{lem:Zhang}] This proof is just an adaptation of the proof of Zhang in \cite{Zhang2017}. In \cite{Zhang2017}, Zhang controls the cardinal of a minimal cutset that cuts a given box from infinity. Actually, his proof may be adapted to any connected cutset (not necessarily minimal) with a control on its capacity. In our context, we have the following control on the capacity $V(\cE_n)\leq 11d^2M \cH^{d-1}(\Gamma^1) n^{d-1}$. We need the cutset to be connected to be able to upper-bound the number of possible realizations of the cutset $\cE_n$. To solve this issue, we do as in remark 19 in \cite{CT1}, we consider the union of $\cE_n$ with the edges that lie along $\Gamma$: it is always connected, and the number of edges we have added is upper-bounded by $c n^{d-1}$ for a constant $c$ that depends only on the domain $\Omega$ since $\Gamma$ is piecewise of class $\sC^1$ (see proposition \ref{prop:minkowski}). 
 Let $K\geq 0$, by an adaptation of Zhang's theorem, there exist positive constants $C_1$ and $C_2$ such that 
 \begin{align*}
 \forall n\geq 1\qquad \Prb(\exists \cE_n \text{ $(\Gamma_n^1\cup\Gamma_n^2)$-cutset such that $V(\cE_n)\leq 11d^2M \cH^{d-1}(\Gamma^1) n^{d-1}$ and $\card(\cE_n)\geq  \beta n ^{d-1}$})\\\hfill\leq C_1 \exp(-C_2\beta n^{d-1})\,.\end{align*}
 We set $\beta=K/C_2$. The result follows.
\end{proof}

 \section{Lower bound}\label{sec:lb}
 This section corresponds to the step 2 of the sketch of the proof in the introduction. For $(E,\nu)\in\fT_{\cM}$ such that $\widetilde{I}(E,\nu)<\infty$, we cover almost $\fE$ by a family of disjoint balls. In each ball, we build a cutset that is almost flat and has almost the same local capacity than $(E,\nu)$, the cost of this operation may be controlled by lower large deviations for the maximal flow in balls (see lemma \ref{lem:phicassimple}). We then fill the holes by adding a negligible amount of edges to merge all these cutsets inside the balls into a cutset in $\sC_n(\Gamma^1,\Gamma^2,\Omega)$. The main technical difficulty is to ensure that the cutset we have built is almost minimal. To do so, we have to ensure that anywhere outside a small region around $\fE$, the local maximal flow is not abnormally low.
 \begin{prop}[Lower bound]\label{prop:lowerbound}
 Let $(E,\nu)\in\fT_{\cM}$ such that $\widetilde{I}(E,\nu)<\infty$. Then, for any neighborhood $U$ of $(E,\nu)$, we have
 \[\lim_{ \ep\rightarrow 0}\liminf_{n\rightarrow\infty}\frac{1}{n^{d-1}}\log\Prb\left(\exists\cE_n\in\sC_n(\ep):(\bR(\cE_n),\mu_n(\cE_n))\in U\right)\geq -\widetilde{I}(E,\nu)\,.\]
  \end{prop}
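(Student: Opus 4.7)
The plan follows Step 2 of the sketch in the introduction. Write $\nu = f \cH^{d-1}|_{\fE}$ and fix small parameters $\ep, \eta, \delta > 0$. I would apply proposition \ref{prop:utilisationvitali} with a list of logical propositions that hold for $\cH^{d-1}$-a.e.\ $x \in \fE$: that $f$ is close to $f(x)$ in $L^1$ average on $\fE \cap B(x,r)$ (via Besicovitch differentiation, theorem \ref{thm:Besicovitch}, applied to $f\cdot \cH^{d-1}|_{\fE}$); that $\partial^* E$ or $\partial^* \Omega$ is flat in $B(x,r)$ up to tolerance $\delta$ (from \eqref{eq:propnormvec} and lemma \ref{lem:Omegabord}); that $B(x,r) \subset \Omega$ when $x \in \partial^* E \cap \Omega$ and $d_2(B(x,r),\Gamma^{3-k})>0$ when $x \in \Gamma^k$; and that the margin $f(x) \leq \nu_G(n_\bullet(x))$ built into the definition of $\fT$ is compatible with $\eta$. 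This produces a finite family of pairwise disjoint closed balls $(B(x_i,r_i,v_i))_{i \in I}$ covering all of $\fE$ save for $\cH^{d-1}$-measure at most $\ep$.

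For each $i \in I$ I introduce the event $\cE^{(i)} := \overline G_n(x_i,r_i,v_i,\delta,f(x_i)+\eta)$ of lemma \ref{lem:phicassimple}, which provides a cutset $\cE_n^{(i)}$ inside $B(x_i,r_i)\cap \Omega_n$, contained in the slab $\cyl(\disc(x_i,r_i,v_i),2\delta r_i)$, of capacity at most $(f(x_i)+\eta)\alpha_{d-1}r_i^{d-1}n^{d-1}$, with $\liminf_n n^{-(d-1)}\log\Prb(\cE^{(i)}) \geq -\alpha_{d-1}r_i^{d-1}\cJ_{v_i}(f(x_i))$. Since the balls are disjoint, the $\cE^{(i)}$ are independent. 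On $\bigcap_i \cE^{(i)}$ the cutsets $\cE_n^{(i)}$ are glued by appending a filler set $\cF_0$ of edges having at least one endpoint in the $(d/n)$-neighborhood of $(\fE \setminus \bigcup_i B(x_i,r_i(1-\sqrt{\ep}))) \cup \bigcup_i \partial B(x_i,r_i(1-\sqrt{\ep}))$; proposition \ref{prop:minkowski} bounds $\card(\cF_0) \leq c \sqrt{\ep}\, n^{d-1}$. A cutset-check entirely analogous to those carried out in Steps~4--5 of proposition \ref{prop:admissible} shows that $\cE_n := \cF_0 \cup \bigcup_i \cE_n^{(i)}$ lies in $\sC_n(\Gamma^1,\Gamma^2,\Omega)$, that $\bR(\cE_n)$ is close to $E$ in $\dis$ (by flatness of $\fE$ in each ball), and that $\mu_n(\cE_n)$ is close to $\nu$ in the weak topology (tested against continuous functions, ball $B_i$ contributes at most $(f(x_i)+\eta)\alpha_{d-1}r_i^{d-1}$, and the sum approximates $\int f\,d\cH^{d-1}$). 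Hence $(\bR(\cE_n),\mu_n(\cE_n)) \in U$ once the covering is taken fine enough.

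The hard part is showing $\cE_n \in \sC_n(\ep)$, i.e.\ $V(\cE_n) \leq \phi_n(\Gamma^1,\Gamma^2,\Omega) + \ep n^{d-1}$. The bound $V(\cE_n)/n^{d-1} \leq \capa(E,f) + c\eta$ is immediate by summing contributions. For a matching lower bound on every competing cutset $\cF_n$, I introduce an auxiliary ``typical flow'' event $\cA_{good}$, on a set of edges disjoint from those involved in the $\cE^{(i)}$, asserting that in a second Vitali covering of $\Omega$ by small cylinders $\cyl(A_j,h_j)$ (whose normals come from a compactness argument in $\sC_\beta$ on potential competing surfaces), the maximal flow across each cylinder is at least $(\nu_G(v_j)-\eta)\cH^{d-1}(A_j)n^{d-1}$. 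By theorem \ref{thm:lldtau} and lemma \ref{lem:phicassimple} each failure has probability $e^{-cn^{d-1}}$; polynomially many cylinders suffice, so $\Prb(\cA_{good}) \to 1$. On $\cA_{good}$ any competing $\cF_n$ (with $\bR(\cF_n)=F_n$ of perimeter $\leq \beta$, the complementary case being controlled by lemma \ref{lem:Zhang}) decomposes ball-by-ball and cylinder-by-cylinder to yield
\[\frac{V(\cF_n)}{n^{d-1}} \geq \int_{\overline{\fF_n}\cap\fE} f\,d\cH^{d-1} + \int_{(\fF_n\setminus\overline{\fE})\cap\partial\Omega} \nu_G(n_\Omega)\,d\cH^{d-1} + \int_{(\fF_n\setminus\overline{\fE})\setminus\partial\Omega} \nu_G(n_{F_n})\,d\cH^{d-1} - o(1)\,,\]
which by the minimality property defining $\fT$ is at least $\capa(E,f) - o(1) \geq V(\cE_n)/n^{d-1} - \ep$. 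This part is the main obstacle: the cylinders used in $\cA_{good}$ must be chosen simultaneously for \emph{all} admissible competitors $F_n$, requiring a compactness/semicontinuity argument on $\sC_\beta$ together with careful Vitali coverings.

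To conclude, since $\cA_{good}$ and $\bigcap_i \cE^{(i)}$ depend on disjoint edge sets,
\[\Prb(\exists \cE_n \in \sC_n(\ep):(\bR(\cE_n),\mu_n(\cE_n)) \in U) \geq \Prb(\cA_{good})\prod_{i\in I}\Prb(\cE^{(i)})\,.\]
Applying $n^{-(d-1)}\log$ and $\liminf_n$, then letting $\eta,\delta \to 0$ (by lower semicontinuity of $\cJ_v$) and finally $\ep \to 0$ (by dominated convergence on the expression $\sum_{i\in I}\alpha_{d-1}r_i^{d-1}\cJ_{v_i}(f(x_i))$, using $\cH^{d-1}(\fE\setminus\bigcup_i B(x_i,r_i))\leq \ep$ and the integrability of $\cJ_{n_\bullet(\cdot)}(f(\cdot))$ coming from $\widetilde I(E,\nu)<\infty$), the leading bound converges to $-\widetilde I(E,\nu)$, which is the conclusion of the proposition.
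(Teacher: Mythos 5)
Your overall skeleton matches the paper's: a Vitali covering of $\fE$, local low-flow events $\overline G_n$ in each ball providing the building blocks of $\cE_n$, a compactness argument over $\sC_\beta$ to control all competitors, and then independence of the balls to multiply probabilities. You correctly identify the almost-minimality of $\cE_n$ as the crux. However, there is a genuine gap in your mechanism for that crux, and it is precisely the point the paper works hardest to get around.

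You stipulate that $\cA_{good}$ is built ``on a set of edges disjoint from those involved in the $\cE^{(i)}$.'' That disjointness is essential for your final independence factorization $\Prb(\cA_{good})\prod_i\Prb(\cE^{(i)})$, but it makes $\cA_{good}$ blind to what happens \emph{inside} the balls $B(x_i,r_i)$ — exactly the regions where you have deliberately forced the flow to be abnormally low. A competitor $\cF_n$ may pass through those balls, and there $\cA_{good}$ gives you no lower bound on $V(\cF_n)$. The event $\cE^{(i)}=\overline G_n(\cdots)$ only asserts that \emph{some} cutset in $B_i$ has capacity $\leq (f(x_i)+\eta)\alpha_{d-1}r_i^{d-1}n^{d-1}$; it does not say that \emph{every} cutset in $B_i$ has capacity $\geq (1-\eta)f(x_i)\alpha_{d-1}r_i^{d-1}n^{d-1}$. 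So the term $\int_{\overline{\fF_n}\cap\fE}f\,d\cH^{d-1}$ in your claimed lower bound on $V(\cF_n)/n^{d-1}$ is unsupported. The paper closes this hole by introducing additional events $\cE^{(2)}$ and $\cE^{(3)}$, which are of the form $G_n(\cdot,\cdot,\cdot,\ep,(1-\eta)f(x))^c$ and live on the \emph{same} balls (or on balls overlapping them): they say the local flow cannot fall below $(1-\eta)f(x)$. These events are manifestly not independent of the construction events $\cE^{(4)}$, and the paper's Step~4 is a nontrivial computation — upper-bounding $\Prb(G_n(\cdots))$ by lemma \ref{lem: Gxrv}, lower-bounding $\Prb(\overline G_n(\cdots))$ by lemma \ref{lem:phicassimple}, and then invoking the convexity of $\cJ_v$ (inequalities \eqref{eq:434}, \eqref{eq:appconv}, \eqref{eq:435}) — showing that $\Prb\bigl((\cE^{(2)}\cap\cE^{(3)})^c\cap\cE^{(4)}\bigr)$ decays exponentially faster than $\Prb(\cE^{(4)})$, so that imposing $\cE^{(2)}\cap\cE^{(3)}$ does not change the exponential rate. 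Your proposal has no analogue of this conditioning/convexity step, and without it you cannot conclude $\cE_n\in\sC_n(\ep)$.

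A secondary (but real) issue in the same vein: even outside the construction balls, your $\cA_{good}$ is built on cylinders chosen once and for all from the $\sC_\beta$-covering, but the paper keeps both coverings present simultaneously and has to reconcile overlaps via the extra $\cE^{(2)}$ events (the balls $B(x_j^F,r_j^F)$ with $x_j^F\in\fF\cap\fE$ are the troublesome ones). If you restrict $\cA_{good}$ to the complement of $\cup_i B(x_i,r_i)$, you must still ensure that the part of $\fF_n$ that lies near $\fE$ but outside $\cup_i B_i$ is negligible, which is exactly what the paper's careful choice $r_j^F\leq\delta_F/4$ and $\delta_1\leq\delta_F/4$ arranges. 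That bookkeeping is doable, but it is downstream of first recognizing that the in-ball comparison is the hard case and cannot be dodged by disjointness.
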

\begin{proof}
Let $(E,\nu)\in\fT_{\cM}$ such that $\widetilde{I}(E,\nu)<\infty$. We write $\nu=f\cH^{d-1}|_{\fE}$. Let $\cU$ be a neighborhood of $(E,\nu)$ for the product topology $\cO'\times \cO$. There exists $E$, $\xi_0>0$, there exist $p$ continuous functions $g_1,\dots, g_p$ having compact support and $\xi_1>0$ such that 
$$\cU_0=\{F\in\cB(\sR^d): \cL^d(F\Delta E)\leq \xi_0\}\times\left\{\,\rho\in\cM(\sR^d):\forall i \in \{1,\dots,p\}\quad |\rho(g_i)-\nu(g_i)|\leq \xi_1\,\right\}\subset\cU\,.$$
The functions $g_1,\dots,g_p$ are uniformly continuous. Let $\eta>0$. Let $\ep>0$, we will choose later depending on $\eta$. There exists $\delta_0>0$ such that 
\begin{align}\label{eq:unifcontg}
\forall i\in \{1,\dots,p\}\quad\forall \,x,y\in \sR^d\qquad \|x-y\|_2\leq \delta_0\implies |g_i(x)-g_i(y)|\leq \ep\,.
\end{align}
Thanks to lemma \ref{lem:Zhang}, there exists $\beta>0$ such that
$$\liminf_{n\rightarrow \infty}\frac{1}{n^{d-1}}\log \Prb(\exists \cE_n\in \sC_n(\Gamma^1,\Gamma^ 2,\Omega):\, V(\cE_n)\leq 11d^2M \cH^{d-1}(\Gamma^1) n^{d-1},\,\card(\cE_n)\geq \beta n ^{d-1})\leq -2\widetilde{I}(E,\nu)\,.$$
We define the event
$$\cE^{(0)}=\left\{\exists \cE_n\in \sC_n(\Gamma^1,\Gamma^ 2,\Omega):\, V(\cE_n)\leq 11d^2M \cH^{d-1}(\Gamma^1) n^{d-1},\,\card(\cE_n)\geq \beta n ^{d-1}\right\}^c\,.$$ 

\noindent {\bf Step 1. Build a covering of $\sC_\beta$.}
Let $F\in\sC_\beta$ where we recall that $\sC_\beta$ was defined in \eqref{eq:defcbeta}. By dominated convergence theorem, we have
\begin{align*}
\lim_{\delta\rightarrow 0}\int_{(\fF\setminus \cV_2(\overline{\fE},\delta))}\nu_G(n_{\bullet}(y))d\cH^{d-1}(y)=\int_{(\fF\setminus \overline{\fE})}\nu_G(n_{\bullet}(y))d\cH^{d-1}(y)\,.
\end{align*}
Consequently, there exists $\delta_F >0$ such that
\begin{align}\label{eq:choixdeltaf}
\int_{(\fF\setminus \cV_2(\overline{\fE},\delta_F))}\nu_G(n_{\bullet}(y))d\cH^{d-1}(y)\geq \int_{(\fF\setminus \overline{\fE})}\nu_G(n_{\bullet}(y))d\cH^{d-1}(y)-\ep\,.
\end{align}
We recall that the notation $n_\bullet(x)$ is to lighten the expressions, $\bullet$ will correspond to $E$, $F$ or $\Omega$ depending on $x$.
By proposition \ref{prop:utilisationvitali}, there exists a finite covering $(B(x_i^F,r_i^F,v_i ^F))_{i\in I^F}$ of $(\fF\setminus \cV_2(\fE,\delta_F))\cup (\fE\cap \fF)$ such that $r_i ^F\leq \delta_F/4$ for any $i\in I^F$,
\begin{align}\label{eq:lbf1}
\cH^{d-1}(((\fF\setminus \cV_2(\fE,\delta_F))\cup (\fE\cap \fF))\setminus \cup_{i\in I^F}B(x _i^F,r _i^F))\leq \ep\,,
\end{align}
\begin{align}\label{eq:lbf2}
\forall i \in I^F\quad & \forall\, 0<r\leq r _i^F\qquad \left|\frac{1}{\alpha_{d-1} r^{d-1}}\cH^{d-1}(\fF\cap B(x _i^F,r))-1\right|\leq \ep\nonumber\\
&\text{and   if }x_i^F\in\fF\cap \fE\quad \forall\, 0<r\leq r _i^F\qquad \left|\frac{1}{\alpha_{d-1} r^{d-1}}\cH^{d-1}(\fE\cap B(x _i^F,r))-1\right|\leq \ep\,,
\end{align}
\begin{align}\label{eq:lbf3}
\forall i \in I^F \quad\text{such that}\quad x _i^F\in\partial^*F\cap\Omega\qquad B(x _i^F,r _i^F)\subset \Omega\quad\text{and}\nonumber\\\hfill \cL^d((F\cap B(x _i^F,r _i^F)) \Delta B^-(x _i^F,r _i^F,v _i^F))\leq \ep \alpha_d (r _i^F)^d\,,
\end{align}

\begin{align}\label{eq:lbf5}
\forall i \in I^F\quad\text{such that}\quad x _i^F\in \Gamma^1\setminus \overline{\partial ^* F}\qquad d_2(B(x _i^F,r _i^F), F\cup \Gamma ^2)>0 \quad\text{ and }\nonumber\\\hfill \quad\  \cL^d((\Omega\cap B(x _i^F,r _i^F)) \Delta B^-(x _i^F,r _i^F,v _i^F))\leq \ep \alpha_d (r _i^F)^d\,,
\end{align}
\begin{align}\label{eq:lbf6}
\forall i \in I^F\quad\text{such that}\quad x _i^F\in\partial^*\Omega \cap \Gamma^2\qquad  d_2(B(x _i^F,r _i^F),  \Gamma ^1)>0 \quad\text{ and }\nonumber \\\hfill\quad\  \cL^d((F\cap B(x _i^F,r _i^F)) \Delta B^-(x _i^F,r _i^F,v _i^F))\leq \ep \alpha_d (r _i^F)^d\,,
\end{align}
\begin{align}\label{eq:lbf8}
\forall i\in I^F\quad\text{such that}\quad& x_i^F\in\fF\setminus \fE\quad\forall\, 0<r\leq r _i^F \nonumber\\&\left|\frac{1}{\alpha_{d-1}r^{d-1}}\int_{\fF\cap B(x_i^F,r)}\nu_G(n_\bullet(y))d\cH^{d-1}(y)-\nu_G(v _i^F)\right|\leq \ep\,,
\end{align}
\begin{align}\label{eq:lbf9}
\forall i\in I^F\quad\text{such that}\quad x_i^F\in\fF\cap \fE\quad\forall\, 0<r\leq r _i^F\qquad \left|\frac{1}{\alpha_{d-1}r^{d-1}}\int_{\fE\cap B(x_i^F,r)}f(y)d\cH^{d-1}(y)-f(x_i^F)\right|\leq \ep\,,
\end{align}
\begin{align}\label{eq:lbf10}
\forall i\in I^F\quad\text{such that}&\quad x_i^F\in\fF\cap \fE\quad\forall\, 0<r\leq r _i^F\nonumber\\& \left|\frac{1}{\alpha_{d-1}r^{d-1}}\int_{\fE\cap B(x_i^F,r)}\cJ_{n_\bullet(y)}(f(y))d\cH^{d-1}(y)-\cJ_{v_i^F}(f(x_i^F))\right|\leq \ep\,.
\end{align}
Set $$\ep_F=\ep \alpha_d \min_{i\in I ^F}(r_i^F)^d\,.$$
Since $\sC_\beta$ is compact, we can extract from $(B_{\dis}(F,\ep_F), F\in\sC_\beta)$ a finite covering $(B_{\dis}(F,\ep_{F}),F \in \cA)$ of $\sC_\beta$ with $\card(\cA)<\infty$.
Since $\widetilde{I}(E,\nu)<\infty$ and $\cH^{d-1}(\fE)<\infty$,
we can choose $\delta_1$ small enough such that 
\begin{gather}\label{eq:choixdelta1}
\delta_1\leq \frac{1}{4}\min_{F\in\cA}\delta_F,\nonumber\\
\forall F\in\cA\quad \forall j\in I ^F\text{ such that }x_j^F\in\fF\cap\fE\quad \int_{\fE\cap( B(x_j^F,r_j^F+\delta_1) \setminus B(x_j^F,r_j^F))} \cJ_{n_\bullet(x)}(f(x))d\cH^{d-1}(x)\leq \ep\alpha_{d-1}(r_j^F)^{d-1}\nonumber\\
 \text{and}\quad \cH^{d-1}\left(\fE\cap\left( B(x_j^F,r_j^F+\delta_1) \setminus B(x_j^F,r_j^F)\right)\right)\leq \ep\alpha_{d-1}(r_j^F)^{d-1} \,.
\end{gather}
Set $$r_{min}= \min_{F\in\cA}\min_{i\in I^{F}}r_i^{F}\,.$$

\noindent{\bf Step 2. We build a covering of $\fE$ that depends on the covering of $\sC_\beta$.}
By proposition \ref{prop:utilisationvitali}, there exists a finite covering $(B(x_i^E,r_i ^E,v_i^E))_{i\in I ^E}$ of $\fE$ such that for all $i\in I^E$, $r_i^E\leq \min(r_{min},\delta_0,\delta_1)$ and the following properties hold:
\begin{align}\label{eq:lbe1}
\cH^{d-1}\left(\fE\setminus \bigcup_{i\in I^E}B(x_i^E,r_i^E)\right)\leq \ep\,,
\end{align}
\begin{align}\label{eq:lbe2}
\forall i \in I^E\quad  \forall\, 0<r\leq r_i\qquad \left|\frac{1}{\alpha_{d-1} r^{d-1}}\cH^{d-1}(\fE\cap B(x_i^E,r))-1\right|\leq \ep\,,
\end{align}
\begin{align}\label{eq:lbe3}
\forall i \in I^E \quad\text{such that}\quad x_i^E\in\partial^*E\cap\Omega\qquad B(x_i^E,r_i^E)\subset \Omega\quad\text{and }\nonumber\\\hfill \cL^d((E\cap B(x_i^E,r _i^E)) \Delta B^-(x_i^E,r _i^E,v _i^E))\leq \ep \alpha_d (r _i^E)^d\,,
\end{align}
\begin{align}\label{eq:lbe4}
\forall i \in I^E\quad\text{such that}\quad x_i^E\in \Gamma^1\setminus \overline{\partial ^* E}\qquad d_2(B(x_i^E,r _i^E), \Gamma^2\cup E)>0 \quad\text{ and }\nonumber\\\hfill \quad\  \cL^d((\Omega\cap B(x_i^E,r _i^E)) \Delta B^-(x_i^E,r _i^E,v _i^E))\leq \ep \alpha_d (r _i^E)^d\,,
\end{align}
\begin{align}\label{eq:lbe5}
\forall i \in I^E\quad\text{such that}\quad x_i^E\in\partial^*E \cap \Gamma^2\qquad d_2(B(x_i^E,r _i^E), \Gamma^1)>0,\nonumber\\\hfill \quad\  \cL^d((E\cap B(x_i^E,r _i^E)) \Delta B^-(x_i^E,r _i^E,v _i^E))\leq \ep \alpha_d (r _i^E)^d\\
\text{and}\quad \cL^d((\Omega\cap B(x_i^E,r _i^E)) \Delta B^-(x_i^E,r _i^E,v _i^E))\leq \ep \alpha_d (r _i^E)^d\,,
\end{align}
\begin{align}\label{eq:lbe6}
\forall i\in I^E\quad\text{such that}\quad x_i^E\in\partial^* \Omega \qquad  (\Omega\cap B(x_i^E,r_i^E)\Delta B^+ (x_i^E,r_i^E,v_i^E))\nonumber\\ \hfill \subset\left\{\,y\in B(x_i^E,r_i^E):|(y-x_i^E)\cdot v_i^E|\leq \delta \|y-x_i^E\|_2\right\}\,,
\end{align}
\begin{align}\label{eq:lbe7}
\forall i\in I^E\quad\forall\, 0<r\leq r _i^E\qquad \left|\frac{1}{\alpha_{d-1}r^{d-1}}\int_{\fE\cap B(x_i^E,r)}f(y)d\cH^{d-1}(y)-f(x_i^E)\right|\leq \ep\,,
\end{align}
\begin{align}\label{eq:lbe8}
\forall i\in I^E\quad\forall\, 0<r\leq r _i^E\qquad \left|\frac{1}{\alpha_{d-1}r^{d-1}}\int_{\fE\cap B(x_i^E,r)}\cJ_{n_\bullet(y)}(f(y))d\cH^{d-1}(y)-\cJ_{v_i^E}(f(x_i^E))\right|\leq \ep\,.
\end{align}
We need to slightly reduce the $r _i^E$.
Using inequality \eqref{eq:lbe3}, 
for $i\in I^E$ and $x_i^E\in\fE\setminus \partial ^* \Omega$, by the same arguments as in the previous section, there exists $\alpha_i$ in $\{1/n,\dots,(\lfloor nr _i^E\sqrt{\ep} \rfloor -1)/n\}$ such that
\begin{align}\label{eq:contcardI1p2b}
\card(E\cap \partial^+_n B_i(\alpha_i)\cap\sZ_n^d)+\card(E^c\cap \partial^-_n B_i(\alpha_i)\cap\sZ_n^d)&\leq 8 \sqrt{\ep} \alpha_d (r _i^E)^{d-1}n^{d-1}\,,
\end{align}
where $B_i(\alpha_i)=B(x_i^E,r_i^E-\alpha_i,v_i^E)$.
Using inequalities \eqref{eq:lbe4} and \eqref{eq:lbe5}, for $i\in I^E$ such that $x_i\in \partial^* \Omega$, there exists 
  $\alpha_i$ in $\{1/n,\dots,(\lfloor nr _i^E\sqrt{\ep} \rfloor -1)/n\}$ such that 
\begin{align}\label{eq:contcardI2et3b}
\card(\Omega_n\cap \partial^+_n B_i(\alpha_i))+\card(\Omega^c\cap \partial^-_n B_i(\alpha_i)\cap\sZ_n^d)&\leq 8 \sqrt{\ep} \alpha_d (r _i^E)^{d-1}n^{d-1}\,.
\end{align}
The values of $\alpha_i$ are deterministic and depend on $n$.
We denote by $X^+$ and $X^-$ the following set of edges:
\begin{align*}
X^+=\bigcup_{i\in I^E: x_i^E\in \fE\setminus \partial^* \Omega }\{\langle y, z\rangle\in \E_n^d: y\in \partial ^+_nB_i(\alpha_i)\cap E\}\cup \bigcup_{i\in I^E:x_i^E\in\partial ^* \Omega }\{\langle y, z\rangle\in \E_n^d: y\in \partial ^+_nB_i(\alpha_i)\cap \Omega_n\}\,,
 \end{align*}
 \begin{align*}
X^-=\bigcup_{i\in I^E: x_i^E\in\fE\setminus \partial ^* \Omega }\{\langle y, z\rangle\in \E_n^d: y\in \partial ^-_nB_i(\alpha_i)\cap E^c\}\cup \bigcup_{i\in I^E:x_i^E\in\partial ^* \Omega }\{\langle y, z\rangle\in \E_n^d: y\in \partial ^-_nB_i(\alpha_i)\cap \Omega^c\}\,.
 \end{align*}
By similar computations as in \eqref{eq:contcardX}, using \eqref{eq:lbe2},
we have
\begin{align}\label{eq:lbcontx}
\card(X^-\cup X^+)\leq 80 d\frac{\alpha_d}{\alpha_{d-1}}\sqrt{\ep}\cH^{d-1}(\fE)n^{d-1}\leq \eta n^{d-1}
\end{align} 
 for $\ep$ small enough depending on $\eta$, $d$ and $\fE$.

\noindent {\bf Step 3. We build a configuration on which the event $\{\exists \cE_n \in\sC_n(u(\eta)):(\bR(\cE_n),\mu_n(\cE_n))\in U\}$ occurs.}
  Set \[\ep'= \frac{\alpha_d}{\alpha_{d-1}} 2^{-d-1}\ep\]
and $\bar r_i^{E}=(1-\sqrt{\ep})r_i^E$, for $i\in I^E$.
Set
$$\cE^{(1)}=\bigcap_{F\in \cA}\bigcap_{\substack{i\in I ^F:\\ x_i^F\in \fF \setminus \fE}} G_n(x_i ^F,r_i^F,v_i^F,\ep,(1-\eta)\nu_G(v_i^F))^c\,,$$ 
$$\cE^{(2)}=\bigcap_{F\in \cA}\bigcap_{\substack{i\in I ^F:\\ x_i^F\in \fF \cap  \fE, \,f(x_i^F)\geq \eta}} G_n(x_i ^F,r_i^F,v_i^F,\ep,(1-\eta) f(x_i^F))^c\,,$$
$$\cE^{(3)}= \bigcap_{i\in I^E:\, f(x_i^E)\geq \eta} G_n(x_i ^E,\bar r_i ^E,v_i^E,\ep,(1-\eta) f(x_i^E))^c$$
and 
$$\cE^{(4)}= \bigcap_{i\in I^E} \overline G_n(x_i ^E,\bar r_i ^E,v_i^E,\ep',f(x_i^E)+\eta)\,.$$
The first event ensures that the flow is not abnormally low outside $\fE$. The three other events ensure that the local flow near $\fE$ is close to $f$.
For any $F\in\cA$, for any $i\in I ^F$ such that $x_i^F\in \fF \setminus \fE$, we have by construction $x_i^F \in \fF\setminus \cV_2(\fE,\delta_F)$, $r_i^F\leq \delta_F/4$ and for all $j\in I^E$, $r_j^E\leq \delta_F/4$. It follows that
$$B(x_i^F,r_i^F)\cap \left(\bigcup_{j\in I^E}B(x_j^E,\bar r_j ^E)\right)=\emptyset\,.$$
Hence, the event $\cE^{(1)}$ is independent of the event $\cE^{(4)}$. 
We claim that on the event 
$\cE^{(0)}\cap\cE^{(1)}\cap \cE^{(2)}\cap\cE^{(3)}\cap\cE^{(4)}$,
the following event occurs
$$\left\{\exists \cE_n \in\sC_n(u(\eta)):(\bR(\cE_n),\mu_n(\cE_n))\in U\right\}\,$$
for some function $u$ that goes to $0$ when $\eta$ goes to $0$.

 {\bf We build the cutset.} By lemma \ref{lem:inclusion}, we have $\overline G_n(x_i ^E,\bar r_i ^E,v_i^E,\ep',f(x_i^E)+\eta)\subset \overline G_n(x_i ^E,r_i ^E-\alpha_i,v_i^E,\ep',f(x_i^E)+\eta+M\kappa_d\sqrt{\ep})$ where $\kappa_d\geq 1$ is given by lemma \ref{lem:inclusion}. On the event $\cE^{(4)}$, for any $i\in I^E$, denote by $\cE_n(i)$ the cutset corresponding to the event $ \overline G_n(x_i ^E, r_i ^E-\alpha_i,v_i^E,\ep',f(x_i^E)+\eta+M\kappa_d\sqrt\ep)$.
Denote by $\cF_0$ the edges that have at least one endpoint in 
$$\cV_2\left(\fE\setminus \bigcup_{i\in I^E}B(x_i^E,r_i^E-\alpha_i),\frac{d}{n}\right)\,.$$
The set $\cE_n=\cF_0 \cup_{i\in I^E}\cE_n(i)\cup X^+ \cup X^-$ is in $\sC_n(\Gamma^1,\Gamma^2,\Omega)$. We do not prove that it is a $(\Gamma_n^1,\Gamma_n^2)$-cutset since the proof is very similar to the previous proofs.

{\bf We control the capacity of the cutset $\cE_n$ inside the balls $B(x_i^E,r_i^E-\alpha_i)$, for $i\in I^E$.}
Since  $\cE_n(i)$ is a cutset corresponding to the event $ \overline G_n(x_i ^E, r_i ^E-\alpha_i,v_i^E,\ep',f(x_i^E)+\eta+M\kappa_d\sqrt\ep)$, it follows that
$$\frac{V(\cE_n(i))}{\alpha_{d-1}( r_i ^E-\alpha_i)^{d-1} n^{d-1}}\leq f(x_i^E)+\eta+M\kappa_d\sqrt{\ep}\,.$$
If $f(x_i^E)<\eta$, it is enough to conclude that 
$$\left|\frac{V(\cE_n(i))}{\alpha_{d-1}( r_i ^E-\alpha_i)^{d-1} n^{d-1}}-f(x_i^E)\right|\leq \eta+M\kappa_d\sqrt \ep$$
occurs. Otherwise, let us define
\begin{align*}
\br(\cE_n(i))=\left\{\,x\in \sZ_n^d\cap B(x_i^E,\bar r_i^E,v_i^E): \begin{array}{c}\text{ there exists a path from $x$ to $\partial^-_n B(x_i^E,\bar r_i^E,v_i^E)$}\\ \text{ in $(\sZ_n^d\cap B(x_i^E,\bar r_i^E,v_i^E),\E_n^d\setminus \cE_n(i))$}\end{array}\right\}\,.
\end{align*}
Since $\cE_n(i)\cap B(x_i^E,\bar r_i^E)\subset \cyl(\disc(x_i^E,\bar r_i^E,v_i^E),2\ep' r_i^E)$, we have
\[\br(\cE_n(i))\Delta B^-(x_i^E,\bar r_i^E,v_i^E)\subset \cyl(\disc(x_i^E,\bar r_i^E,v_i^E),2\ep' r_i^E)\cap \sZ_n^d\,.\]
It follows that for $n$ large enough
\[\card(\br(\cE_n(i))\Delta B^-(x_i^E,\bar r_i^E,v_i^E))\leq 8\ep' \alpha_{d-1} (r_i^E)^d n^d \leq  4\ep \alpha_d (\bar r_i ^E)^dn^d\]
where we recall that 
\begin{align}\label{eq:defep'}
\ep'= \frac{\alpha_d}{\alpha_{d-1}} 2^{-d-1}\ep\,.
\end{align}
Hence, if $V(\cE_n(i)\cap B(x_i^E,\bar r_i^E))\leq (1-\eta) f(x_i^E)\alpha_{d-1}(\bar r_i^ E)^{d-1}n^{d-1}$ then the event 
$G_n(x_i ^E,\bar r_i ^E,v_i^E,\ep,(1-\eta) f(x_i^E))$ occurs. 
Consequently, on the event $ \overline G_n(x_i ^E,\bar r_i ^E,v_i^E,\ep',f(x_i^E)+\eta)\cap  G_n(x_i ^E,\bar r_i ^E,v_i^E,\ep,(1-\eta) f(x_i^E))^c$, we have 
$$V(\cE_n(i))\geq (1-\eta) f(x_i^E)\alpha_{d-1}(\bar r_i ^E)^{d-1} n^{d-1}\,$$
and
$$\frac{V(\cE_n(i))}{\alpha_{d-1}( r_i ^E-\alpha_i)^{d-1} n^{d-1}}\geq (1-\eta)(1-\sqrt\ep)^{d-1}f(x_i^E)\geq (1-\eta)(1-(d-1)\sqrt\ep)f(x_i^E)\geq f(x_i^E)-(\eta-(d-1)\sqrt\ep)M\,.$$
It follows that on the event $\cE^{(3)}\cap\cE^{(4)}$
\begin{align}\label{eq:controlevei}
\forall i\in I^E\qquad \left|\frac{V(\cE_n(i))}{\alpha_{d-1}( r_i ^E-\alpha_i)^{d-1} n^{d-1}}-f(x_i^E)\right| \leq \eta(1+M\kappa_d)\,.
\end{align}
where we choose $\ep$ small enough depending on $\eta$ and $d$ (we recall that $\kappa_d\geq 1$).

{\bf We prove that $(E_n,\mu_n(\cE_n))\in\cU_0$.}
For $i\in I^E$ such that $x_i^E\in \partial ^* E \cap\Omega$,  we have by construction that 
$\bR (\cE_n)\cap B(x_i^E, r_i ^E))\Delta  B^-(x_i^E, r_i ^E,v_i^E))\subset \cyl(\disc(x_i^E, r_i ^E,v_i^E),2\ep'  r_i ^E)$.
Using \eqref{eq:lbe3}, we have
\begin{align*}
\cL^d&((\bR(\cE_n)\cap B(x_i^E, r_i ^E))\Delta (E\cap B(x_i^E, r_i ^E)))\\
&\leq \cL ^d((\bR(\cE_n)\cap B(x_i^E, r_i ^E))\Delta  B^-(x_i^E, r_i ^E,v_i^E))+\cL^d(E\cap B(x_i^E, r_i ^E))\Delta  B^-(x_i^E, r_i ^E,v_i^E))\\
&\leq (4\ep' \alpha_{d-1}+\ep \alpha_ d)(r_i^E)^d\,.
\end{align*}
It is easy to check that for any $i\in I^E$
$$\cL^d((\bR(\cE_n)\cap B(x_i^E, r_i ^E))\Delta (E\cap B(x_i^E, r_i ^E)))\leq (4\ep' \alpha_{d-1}+\ep \alpha_ d)(r_i^E)^d\,.$$
It follows that for $n$ large enough, we have using proposition \ref{prop:minkowski}
\begin{align*}
\cL^d(\bR(\cE_n)\Delta E)&\leq \cL^d(\cV_2(\fE,d/n))+\sum_{i\in I^E}\cL^d((\bR(\cE_n)\cap B(x_i^E, r_i ^E))\Delta (E\cap B(x_i^E, r_i ^E)))\\
&\leq \frac{4d}{n}\cH^{d-1}(\fE)+  \sum_{i\in I^E}
(4\ep' \alpha_{d-1}+\ep \alpha_ d)(r_i^E)^d\\
&\leq 2 \ep\sum_{i\in I^E}\alpha_{d}( r_i ^E) ^d\leq 2\ep \cL^d(\cV_2(\Omega,1))
\end{align*}
where we use in the last inequality that the balls are disjoint and we recall that $\ep'$ was defined in \eqref{eq:defep'}. Then, we choose $\ep$ small enough depending on $\Omega$, $d$ and $\xi_0$ such that 
\begin{align}\label{eq:choixep0xi}
2\ep \cL^d(\cV_2(\Omega,1))\leq \xi_0\,.
\end{align}
Let us compute the cardinal of the set $\cF_0$.
By proposition \ref{prop:minkowski} and inequality \eqref{eq:lbe1} and \eqref{eq:lbe2} and by the same computations as in \eqref{eq:contcardF0b}, for $n$ large enough
\begin{align}\label{eq:contfnlb}
\card(\cF_0)&\leq 10d^2\cH^{d-1}\left(\fE\setminus \cup_{i\in I^E}B(x_i^E,r_i ^E-\alpha_i)\right)n^{d-1}\leq 10d^2\left(2d\sqrt \ep \cH^{d-1}(\fE)+\ep \right)n^{d-1}\leq \eta\, n^{d-1}
\end{align}
for $\ep$ small enough depending on $\eta$, $d$ and $\cH^{d-1}(\fE)$.
Let $j\in\{1,\dots,p\}$. 
Using \eqref{eq:unifcontg} and the fact that $ r_i ^E\leq \delta_0$ for any $i\in I^E$, we have by similar computations as in \eqref{eq:calculrbar}, using \eqref{eq:lbe1} and \eqref{eq:lbe2}
\begin{align}\label{eq:comblb1}
&\left|\nu(g_j)-\sum_{i\in I_E}\nu(B(x_i^E,r_i^E-\alpha_i))g_j(x_i^E)\right|\nonumber\\
&\quad\leq \|g_j\|_\infty \nu\left(\fE\setminus \cup_{i\in I^E}B(x_i^E,r_i^E-\alpha_i)\right)+\sum_{i\in I^E}\int_{B(x_i^E,r_i^E-\alpha_i)\cap\fE}|g_j(x)-g_j(x_i^E)|f(x)d\cH^{d-1}(x)\nonumber\\
&\quad\leq \|g_j\|_\infty M \cH^{d-1}(\fE\setminus \cup_{i\in I^E}B(x_i^E, r_i^E-\alpha_i))+\ep \int_{\fE}f(x)d\cH^{d-1}(x)\nonumber\\
&\quad\leq \|g_j\|_\infty M \left( \cH^{d-1}(\fE\setminus \cup_{i\in I^E}B(x_i^E, r_i ^E))+\sum_{i\in I^E}\left(\cH^{d-1}(B(x_i^E, r_i ^E)\cap\fE)-\cH^{d-1}(B(x_i^E, \bar r_i^E))\cap\fE)\right)\right)\nonumber\\&\qquad\quad+
\ep\nu(\overline \Omega)\nonumber\\
&\quad\leq \|g_j\|_\infty M (\ep +2d\sqrt \ep \cH^{d-1}(\fE))+ \ep\nu(\overline \Omega)\,.
\end{align}
Similarly, we have using \eqref{eq:contfnlb}
\begin{align}\label{eq:comblb2}
&\left|\mu_n(g_j)-\sum_{i\in I_E}\mu_n(B(x_i^E,r_i^E-\alpha_i))g_j(x_i^E)\right|\nonumber\\
&\qquad\leq \|g_j\|_\infty \mu_n\left(\Omega\setminus \cup_{i\in I^E}B(x_i^E,r_i^E-\alpha_i)\right)+\sum_{i\in I_E}\left(\mu_n(g_j\ind_{B(x_i^E,r_i^E-\alpha_i)})-\mu_n(B(x_i^E,r_i^E-\alpha_i))g_j(x_i^E)\right)\nonumber\\
&\qquad\leq M\|g_j\|_\infty \frac{1}{n ^{d-1}}(\card(\cF_0)+\card(X^+ \cup X^-))+\ep\mu_n(\overline \Omega)\,.
\end{align}
Let $i\in I^E$. Using \eqref{eq:lbe7} and \eqref{eq:controlevei}, we have
\begin{align*}
&\left|\mu_n(B(x_i^E,r_i^E-\alpha_i))-\nu(B(x_i^E,r_i^E-\alpha_i))\right|\\
&\qquad\leq \left|\mu_n(B(x_i^E,r_i^E-\alpha_i))-f(x_i^E)\alpha_{d-1}(r_i^E-\alpha_i)^{d-1}\right|+\left|f(x_i^E)\alpha_{d-1}(r_i^E-\alpha_i)^{d-1}-\nu(B(x_i^E,r_i^E-\alpha_i))\right|\\
&\qquad\leq \left(\eta \left(1+M\kappa_d\right)+ \ep \right)\alpha_{d-1}(r_i^E-\alpha_i)^{d-1}\,.
\end{align*} 
It follows using \eqref{eq:lbe2} that 
\begin{align}\label{eq:comblb3}
\sum_{i\in I ^E}\left|\mu_n(B(x_i^E,r_i^E-\alpha_i))g_j(x_i^E)-\nu(B(x_i^E,r_i^E-\alpha_i))g_j(x_i^E)\right|\leq 8\eta(1+M\kappa_d)\cH^{d-1}(\fE)\|g_j\|_\infty\,
\end{align}
where we recall that $\ep\leq \eta$.
Combining inequalities \eqref{eq:lbcontx}, \eqref{eq:contfnlb}, \eqref{eq:comblb1}, \eqref{eq:comblb2} and \eqref{eq:comblb3}, we have
\begin{align}\label{eq:defC0}
|\mu_n(g_j)-\nu(g_j)|\leq C_0\eta
\end{align}
where $C_0$ depends on $d$, $\fE$, $g_j$ and $M$.
For $\eta\leq \xi_1/C_0$, we have
$|\mu_n(g_j)-\nu(g_j)|\leq\xi_1$. Finally, we have $(E_n,\mu_n)\in\cU_0\subset \cU$.

{\bf We prove that $\cE_n\in\sC_n(u(\eta))$.} Let $\cG_n\in\sC_n(\Gamma^1,\Gamma^2,\Omega)$. If $V(\cG_n)\geq 11d^2M \cH^{d-1}(\Gamma^1) n^{d-1}$ then $V(\cG_n)\geq V(\cE_n)$. If $V(\cG_n)\leq 11d^2M \cH^{d-1}(\Gamma^1) n^{d-1}$, then on the event $\cE^{(0)}$, we have $\card(\cG_n)\leq \beta n^{d-1}$ and $\bR(\cG_n)\in\sC_\beta$. Let $F\in\cA$ such that $\cL^d(\bR(\cG_n)\Delta F)\leq \ep_F$. 
Let $i\in I^F$ such that $x_i^F\in\fF$. Thanks to the choice of $\ep_F$ and inequality \eqref{eq:lbf3},
when $x_i^F\in\Omega$ and $B(x_i^F,r_i ^F)\subset \Omega$, we can prove that
if $$V(\cG_n \cap B(x_i^F,r_i^F))\leq (1-\eta)\nu_G(v_i^F)\alpha_{d-1}(r_i^F)^{d-1}n^{d-1}\,,$$ then, the event $G_n(x_i ^F,r_i^F,v_i^F,\ep,(1-\eta)\nu_G(v_i^F))$ occurs. 
 By inequalities \eqref{eq:lbf5} and \eqref{eq:lbf6}, when $x_i^F\notin \Omega$, we can prove as in section 5.2 in \cite{CT3} that the same result holds.
Similarly, for $x_i^F\in\fF\cap \fE$ such that $f(x_i^F)\geq \eta$,  
if $$V(\cG_n \cap B(x_i^F,r_i^F))\leq (1-\eta)f(x_i^F)\alpha_{d-1}(r_i^F)^{d-1}n^{d-1}$$ then the event $G_n(x_i ^F,r_i^F,v_i^F,\ep,(1-\eta)f(x_i^F))$ occurs.
Hence, on the event $\cE^{(1)}\cap\cE^{(2)}$, using the fact that the balls are disjoint, we have 
\begin{align*}
\frac{V(\cG_n)}{n^{d-1}}&\geq \sum_{i\in I ^F}\frac{V(\cG_n\cap B(x_i^F,r_i^F))}{n^{d-1}}\\
&\geq (1-\eta)\sum_{i\in I^F: x_i ^F \in\fF\setminus \fE}\nu_G(v_i^F)\alpha_{d-1}(r_i^F)^{d-1}+ (1-\eta)\sum_{\substack{i\in I^F:\\ x_i ^F \in\fF\cap \fE,\,f(x_i^F)\geq \eta}}f(x_i^F)\alpha_{d-1}(r_i^F)^{d-1}\,.
\end{align*}
Besides, we have using inequalities \eqref{eq:lbf1}, \eqref{eq:lbf2} and \eqref{eq:lbf9}
\begin{align*}
\sum_{\substack{i\in I^F:\\ x_i ^F \in\fF\cap \fE,\,f(x_i^F)\geq \eta}}f(x_i^F)\alpha_{d-1}(r_i^F)^{d-1}&=\sum_{\substack{i\in I^F:\\ x_i ^F \in\fF\cap \fE}}f(x_i^F)\alpha_{d-1}(r_i^F)^{d-1}- \sum_{\substack{i\in I^F:\\ x_i ^F \in\fF\cap \fE,\,f(x_i^F)< \eta}}f(x_i^F)\alpha_{d-1}(r_i^F)^{d-1}\\
&\geq \sum_{\substack{i\in I^F:\\ x_i ^F \in\fF\cap \fE}}\left(\int_{B(x_i^F,r_i^F)\cap \fF}f(x)d\cH^{d-1}(x) - \ep\alpha_{d-1}(r_i^F)^{d-1}\right) \\
&\qquad-\eta \sum_{\substack{i\in I^F:\\ x_i ^F \in\fF\cap \fE,\,}}\alpha_{d-1}(r_i^F)^{d-1}\\
&\geq \int_{\fF\cap \fE}f(x)d\cH^{d-1}(x) -\ep M  -2(\ep+\eta) \cH^{d-1}(\fF\cap\fE)\\
&\geq \int_{\fF\cap \fE}f(x)d\cH^{d-1}(x) -\eta( M  +4(\beta +\cH^{d-1}(\partial \Omega))
\end{align*}
where we recall that $\ep\leq \eta$ and $E,F\in\sC_\beta$.
Using inequalities \eqref{eq:choixdeltaf}, \eqref{eq:lbf1}, \eqref{eq:lbf2} and \eqref{eq:lbf8} we have
\begin{align*}
\sum_{i\in I^F: x_i ^F \in\fF\setminus \fE}&\nu_G(v_i^F)\alpha_{d-1}(r_i^F)^{d-1}\\
&\geq \sum_{i\in I^F: x_i ^F \in\fF\setminus \fE}\left(\int_{\fF\cap B(x_i^F,r_i^F)}\nu_G(n_\bullet(y))d\cH^{d-1}(y)-\ep\alpha_{d-1}(r_i^F)^{d-1}\right)\\
&\geq\int_{\fF\setminus \cV_2(\overline{\fE},\delta_F)}\nu_G(n_\bullet(y))d\cH^{d-1}(y)-M\cH^{d-1}\left(\left(\fF\setminus \cV_2(\fE,\delta_F)\right)\setminus\bigcup_{i\in I^F}B(x_i^F,r_i^F)\right)-2\ep\cH^{d-1}(\fF)\\
&\geq\int_{\fF\setminus \cV_2(\overline{\fE},\delta_F)}\nu_G(n_\bullet(y))d\cH^{d-1}(y)-2\eta(M+\beta+\cH^{d-1}(\partial \Omega))\\
&\geq  \int_{(\fF\setminus \overline{\fE})}\nu_G(n_{\bullet}(y))d\cH^{d-1}(y)-\ep-2\eta(M+\beta+\cH^{d-1}(\partial \Omega))
\end{align*}
where we use in the second last inequality that $F\in\sC_\beta$.
Combining the three previous inequalities, using the fact that $(E,f)$ is minimal, we obtain
\begin{align*}
\frac{V(\cG_n)}{n^{d-1}}&\geq (1-\eta)\capa(E,f)-\eta(3M+6\beta+6\cH^{d-1}(\partial\Omega))\,.
\end{align*}
Note that $\capa(E,f)\leq 10dM\cH^{d-1}(\Gamma^1)$. It follows that
\begin{align}\label{eq:lbcompengn1}
\frac{V(\cG_n)}{n^{d-1}}&\geq \capa(E,f)-\eta(3M+6\beta+6\cH^{d-1}(\partial^*\Omega)-10dM\cH^{d-1}(\Gamma^1))\,.
\end{align}
Besides, on the event $\cE^{(4)}$, using \eqref{eq:lbe7}, \eqref{eq:lbcontx}, \eqref{eq:controlevei} and \eqref{eq:contfnlb},
we have
\begin{align}\label{eq:lbcompengn2}
\frac{V(\cE_n)}{n^{d-1}}&\leq \sum_{i\in I^E}\frac{V(\cE_n(i))}{n^{d-1}}+\frac{M}{n^{d-1}}(\card(\cF_0)+\card(X^ +\cup X^-))\nonumber\\
&\leq\sum_{i\in I^E}(f(x_i^E)+\eta(1+M\kappa_d))\alpha_{d-1}( r_i ^E) ^{d-1}+2\eta M\nonumber\\
&\leq \capa(E,f)+\sum_{i\in I^E}(\ep+\eta(1+M\kappa_d))\alpha_{d-1}( r_i ^E) ^{d-1}+2M\eta\nonumber\\
&\leq \capa(E,f)+\eta(3+M\kappa_d) \cH^{d-1}(\fE)+2M\eta\nonumber\\
&\leq \capa(E,f)+\eta\left((3+M\kappa_d) (\beta+\cH^{d-1}(\partial  \Omega))+2M\right)\,.
\end{align}
Finally, combining inequalities \eqref{eq:lbcompengn1} and \eqref{eq:lbcompengn2}, there exists a function $u$ (depending on $M$, $\beta$, $\Omega$) such that $\lim_{\eta\rightarrow 0 }u(\eta)=0$ and on the event $\cE^{(1)}\cap\cE^{(2)}\cap\cE^{(4)}$, for any $\cG_n\in\sC_n(\Gamma^1,\Gamma^2,\Omega)$
$$V(\cG_n)\geq V(\cE_n)-u(\eta)n^{d-1}\,.$$
 It follows that $\cE_n\in\sC_n(u(\eta))$. The claim is thus proved.

\noindent {\bf Step 4. We conclude by estimating the probabilities of the events.} We have 
\begin{align}\label{eq:basicub}
\Prb\left(\cE^{(4)}\cap \cE^{(1)}\right)=\Prb(\cE^{(2)}\cap\cE^{(3)}\cap \cE^{(4)}\cap \cE^{(1)})+\Prb \left(((\cE^{(2)})^c\cap\cE^{(4)}\cap \cE^{(1)})\cup((\cE^{(3)})^c\cap\cE^{(4)}\cap \cE^{(1)})\right)\,.
\end{align}
Moreover, we have
\begin{align}\label{eq:unionboundprob}
\Prb &\left(((\cE^{(2)})^c\cap\cE^{(4)}\cap \cE^{(1)})\cup((\cE^{(3)})^c\cap\cE^{(4)}\cap \cE^{(1)})\right)\nonumber\\
&\leq \Prb ((\cE^{(2)})^c\cap\cE^{(4)})+\Prb((\cE^{(3)})^c\cap\cE^{(4)})\nonumber\\
&\leq \sum_{F\in \cA}\sum_{\substack{j\in I ^F:\\ x_j^F\in \fF \cap  \fE\,,f(x_j^F)\geq\eta}} \Prb \left(\bigcap_{i\in I^E} \overline G_n(x_i^E,\bar r_i ^E,v_i^E,\ep',f(x_i^E)+\eta) \cap  G_n(x_j ^F,r_j^F,v_j^F,\ep,(1-\eta) f(x_j^F))\right)\nonumber\\
&\quad + \sum_{\substack{j\in I ^E:\\f(x_j^E)\geq\eta}}\Prb\left(\bigcap_{i\in I^E} \overline G_n(x_i^E,\bar r_i ^E,v_i^E,\ep',f(x_i^E)+\eta)\cap G_n(x_j ^E,\bar r_j ^E,v_j^E,\ep,(1-\eta) f(x_j^E))\right)\,.
\end{align}
Let us estimate $\Prb((\cE^{(3)})^c\cap\cE^{(4)})$.
Let $j\in I^E$ such that $f(x_j^E)\geq\eta$, since the balls are disjoint, we have using the independence 
\begin{align}\label{eq:431}
\log\Prb&\left(\bigcap_{i\in I^E} \overline G_n(x_i^E,\bar r_i ^E,v_i^E,\ep,f(x_i^E)+\eta)\cap G_n(x_j ^E,\bar r_j ^E,v_j^E,\ep,(1-\eta) f(x_j^E))\right)-\log\Prb(\cE^{(4)})\nonumber\\
&\leq \log\Prb\left(\bigcap_{i\in I^E: i\neq j} \overline G_n(x_i^E,\bar r_i ^E,v_i^E,\ep',f(x_i^E)+\eta)\cap G_n(x_j ^E,\bar r_j ^E,v_j^E,\ep,(1-\eta) f(x_j^E))\right)-\log\Prb(\cE^{(4)})\nonumber\\
&=  \log \Prb(G_n(x_j ^E,\bar r_j ^E,v_j^E,\ep,(1-\eta) f(x_j^E)))-\log\Prb(\overline G_n(x_j ^E,\bar r_j ^E,v_j^E,\ep', f(x_j^E)+\eta))\,.
\end{align}
Let $\kappa_0>0$ be given by lemma \ref{lem: Gxrv}.
Using lemma \ref{lem: Gxrv}, we have
\begin{align}\label{eq:432}
\limsup_{n\rightarrow\infty }\frac{1}{n^{d-1}}\log\Prb(G_n(x_j ^E,\bar r_j ^E,v_j^E,\ep,(1-\eta) f(x_j^E)))\leq  -g(\ep)\alpha_{d-1}(\bar r_j ^E)^{d-1}\cJ_{v_j^E}\left(\frac{ (1-\eta)f(x_j^E)+\kappa_0\sqrt{\ep}}{g(\ep)}\right)\,.
\end{align}
Let us choose $\ep'$ (and so $\ep$) small enough depending as $\delta$ in \eqref{cond:deltalemphi} (see lemma \ref{lem:phicassimple}). Using lemma \ref{lem:phicassimple}, we have
\begin{align}\label{eq:433}
\liminf_{n\rightarrow\infty }\frac{1}{n^{d-1}}\log\Prb(\overline G_n(x_j ^E,\bar r_j ^E,v_j^E,\ep',f(x_j^E)+\eta))\geq -\alpha_{d-1}(\bar r_j ^E) ^{d-1}\cJ_{v_j^E}(f(x_j^E))\,.
\end{align}
Pick $\ep$ small enough such that 
\begin{align}\label{eq:choixeplb}
\frac{1}{g(\ep)}\leq 1+\eta\qquad\text{and}\qquad 2\kappa_0\sqrt{\ep}\leq \frac{\eta^3}{2}\,,
\end{align}
(remember that $g(\ep)$ goes to $0$ when $\ep$ goes to $0$).
We recall that $f(x_j^E)\geq \eta$.
Using that the map $\cJ_{v_j^E}$ is non increasing, we get
\begin{align}\label{eq:434}
\cJ_{v_j^E}\left(\frac{ (1-\eta)f(x_j^E)+\kappa_0\sqrt{\ep}}{g(\ep)}\right)\geq\cJ_{v_j^E}\left(f(x_j^E)+2\kappa_0\sqrt{\ep}-\eta^2f(x_j^E)\right)\geq \cJ_{v_j^E}\left(f(x_j^E)-\frac{\eta^3}{2}\right)\,.
\end{align}
Using that $\cJ_{v_j^E}(\nu_G(v_j^E))=0$ and the convexity of the map $\cJ_{v_j^E}$, we obtain
\begin{align}\label{eq:appconv}
\cJ_{v_j^E}(f(x_j^E))\leq \lambda \cJ_{v_j^E}\left(f(x_j^E)-\frac{\eta^3}{2}\right)
\end{align}
where $\lambda\in]0,1[$ is chosen such that
$$f(x_j^E)=\lambda\left(f(x_j^E)-\frac{\eta^3}{2}\right)+(1-\lambda)\nu_G(v_j^E)\,.$$
We recall that $f(x_j^E)\leq \nu_G(v_j^E)$ since $(E,f)\in \fT$. If $f(x_j^E)=\nu_G(v_j^E)$ then inequality \eqref{eq:appconv} is trivial. Otherwise, such a $\lambda\in ]0,1[$ exists.
Choose $\ep$ small enough depending on $\eta$ and $\nu_G$ such that
$$\frac{ \sup_{v\in\sS^{d-1}}\nu_G(v)}{ \sup_{v\in\sS^{d-1}}\nu_G(v)+\eta^3/2}<g(\ep)\,.$$
Using that the map $x\mapsto x/(x+\eta^3/2)$ is non-decreasing on $\sR^+$, it follows that
\begin{align}\label{eq:435}
\lambda = \frac{ \nu_G(v_j^E)-f(x_j^E)}{ \nu_G(v_j^E)-f(x_j^E)+\eta^3/2}\leq \frac{ \sup_{v\in\sS^{d-1}}\nu_G(v)}{ \sup_{v\in\sS^{d-1}}\nu_G(v)+\eta^3/2}<g(\ep)\,.
\end{align}
Hence, we have
$\lambda<g(\ep)$.
Combining inequalities \eqref{eq:431}, \eqref{eq:432}, \eqref{eq:433}, \eqref{eq:434}, \eqref{eq:appconv} and \eqref{eq:435}, we obtain that
\begin{align}\label{eq:probub2}
\limsup_{n\rightarrow\infty }&\frac{1}{n^{d-1}}\left(\log \Prb\left(\bigcap_{i\in I^E} \overline G_n(x_i^E,\bar r_i ^E,v_i^E,\ep',f(x_i^E)+\eta)\cap G_n(x_j ^E,\bar r_j ^E,v_j^E,\ep,(1-\eta) f(x_j^E))\right)-\log\Prb(\cE^{(4)})\right)\nonumber\\
&\leq -g(\ep)\alpha_{d-1}(\bar r_j ^E)^{d-1}\cJ_{v_j^E}\left(f(x_j^E)-\frac{\eta^3}{2}\right)+ \lambda\alpha_{d-1}(\bar r_j ^E)^{d-1}\cJ_{v_j^E}\left(f(x_j^E)-\frac{\eta^3}{2}\right)<0\,.
\end{align}
Let us estimate $\Prb ((\cE^{(2)})^c\cap\cE^{(4)})$.
Let $F\in \cA$ and $j\in I ^F$ such that $ x_j^F\in \fF \cap  \fE$ and $f(x_j^F)\geq\eta$. We have
\begin{align*}
\Prb &\left(\bigcap_{i\in I^E} \overline G_n(x_i^E,\bar r_i ^E,v_i^E,\ep',f(x_i^E)+\eta) \cap  G_n(x_j ^F,r_j^F,v_j^F,\ep,(1-\eta) f(x_j^F))\right)\\
&\leq \Prb \left(\bigcap_{i\in I^E: B(x_i^E,\bar r_i ^E)\cap B(x_j^F,r_j^F)=\emptyset } \overline G_n(x_i^E,\bar r_i ^E,v_i^E,\ep',f(x_i^E)+\eta) \cap  G_n(x_j ^F,r_j^F,v_j^F,\ep,(1-\eta) f(x_j^F)) \right)\,.
\end{align*}
Using the independence and the previous inequality, it follows that 
\begin{align}\label{eq:eqprbe1}
\log\Prb &\left(\bigcap_{i\in I^E} \overline G_n(x_i^E,\bar r_i ^E,v_i^E,\ep',f(x_i^E)+\eta) \cap  G_n(x_j ^F,r_j^F,v_j^F,\ep,(1-\eta) f(x_j^F))\right)-\log\Prb(\cE^{(4)})\nonumber\\
&\leq \log\Prb(G_n(x_j ^F,r_j^F,v_j^F,\ep,(1-\eta) f(x_j^F)))-\log \Prb\left(\bigcap_{\substack{i\in I^E: \\B(x_i^E,\bar r_i ^E)\cap B(x_j^F,r_j^F)\neq\emptyset }} \overline G_n(x_i^E,\bar r_i ^E,v_i^E,\ep',f(x_i^E)+\eta)\right)
\end{align}
and thanks to lemma \ref{lem: Gxrv} and the choice of $\ep$ as in \eqref{eq:choixeplb}, we have
\begin{align}\label{eq:eqprbe2}
\limsup_{n\rightarrow\infty }\frac{1}{n^{d-1}}\log\Prb(G_n(x_j ^F,r_j^F,v_j^F,\ep,(1-\eta) f(x_j^F)))\leq  -g(\ep)\alpha_{d-1}(r_j^F)^{d-1}\cJ_{v_j^F}\left(f(x_j^F)- \frac{\eta^3}{2}\right)\,.
\end{align}
Besides, we have using the independence, lemma \ref{lem:phicassimple} and inequalities \eqref{eq:lbf2}, \eqref{eq:lbf10}, \eqref{eq:lbe2} and \eqref{eq:lbe8}
\begin{align}\label{eq:calcprbe3}
\liminf_{n\rightarrow\infty }&\frac{1}{n^{d-1}}\log \Prb\left(\bigcap_{i\in I^E: B(x_i^E,\bar r_i ^E)\cap B(x_j^F,r_j^F)\neq\emptyset } \overline G_n(x_i^E,\bar r_i ^E,v_i^E,\ep',f(x_i^E)+\eta)\right)\nonumber\\
&\geq -\sum_{i\in I^E: B(x_i^E,\bar r_i ^E)\cap B(x_j^F,r_j^F)\neq\emptyset }\alpha_{d-1}(\bar r_i ^E) ^{d-1}\cJ_{v_i^E}(f(x_i^E))\nonumber\\
&\geq -\sum_{i\in I^E: B(x_i^E,\bar r_i ^E)\cap B(x_j^F,r_j^F)\neq\emptyset }\left(\int_{B(x_i^E,\bar r_i ^E)\cap\fE}\cJ_{n_\bullet(x)}(f(x))d\cH^{d-1}(x) +\ep \alpha_{d-1}(\bar r_i^E)^{d-1}\right)\nonumber\\
&\geq -\int_{B(x_j^F,r_j^F)\cap \fE}\cJ_{n_\bullet(x)}(f(x))d\cH^{d-1}(x)- \int_{\fE\cap B(x_j^F,r_j^F+\delta_1) \setminus B(x_j^F,r_j^F)} \cJ_{n_\bullet(x)}(f(x))d\cH^{d-1}(x)\nonumber\\
&\quad -2\ep\left(\cH^{d-1}(\fE\cap B(x_i^F,r_i^F))+ \cH^{d-1}(\fE\cap( B(x_i^F,r_i^F+\delta_1)\setminus B(x_i^F,r_i^F))\right)\nonumber\\
&\geq -\alpha_{d-1}(r_j^F) ^{d-1}\cJ_{v_j^F}(f(x_j^F))- 2\ep \alpha_{d-1}(r_j^F) ^{d-1} -2\ep\left(\alpha_{d-1}(r_j^F) ^{d-1}+ 2\ep \alpha_{d-1}(r_j^F) ^{d-1}\right)\nonumber\\
&\geq -\alpha_{d-1}(r_j^F) ^{d-1}\cJ_{v_j^F}(f(x_j^F))- 7\ep \alpha_{d-1}(r_j^F) ^{d-1} 
\end{align}
where we use that $r_i^E\leq \delta_1$ and that $\delta_1$ was chosen such that it satisfies \eqref{eq:choixdelta1}.
We can choose $\ep$ small enough depending on $\eta$ such that
\begin{align}\label{eq:choixeplb2}
7\ep< \inf_{v\in\sS^{d-1}}\cJ_{v}\left(\nu_G(v)-\frac{\eta ^3}{2}\right)\left( g(\ep)-\frac{ \sup_{v\in\sS^{d-1}}\nu_G(v)}{ \sup_{v\in\sS^{d-1}}\nu_G(v)+\eta^3/2}\right) \,.
\end{align}
We recall that $g(\ep)$ goes to $1$ when $\ep$ goes to $0$. Hence, when $\ep$ goes to $0$, the limit of right hand side is positive.
Using the convexity as above, we obtain
\begin{align}\label{eq:utilisationconv2}
\cJ_{v_j^F}(f(x_j^F))\leq\lambda  \cJ_{v_j^F}\left(f(x_j^F)-\frac{\eta^3}{2}\right)
\end{align}
where \[\lambda\leq   \frac{ \sup_{v\in\sS^{d-1}}\nu_G(v)}{ \sup_{v\in\sS^{d-1}}\nu_G(v)+\eta^3/2}\,.\]
Combining inequalities \eqref{eq:eqprbe1}, \eqref{eq:eqprbe2}, \eqref{eq:calcprbe3} and \eqref{eq:utilisationconv2}, we get thanks to the choice of $\ep$ as in \eqref{eq:choixeplb2}
\begin{align}\label{eq:probub3}
\limsup_{n\rightarrow\infty }&\frac{1}{n^{d-1}}\log\left( \Prb\left(\bigcap_{i\in I^E} \overline G_n(x_i^E,\bar r_i ^E,v_i^E,\ep',f(x_i^E)+\eta)\cap G_n(x_j ^E,\bar r_j ^E,v_j^E,\ep,(1-\eta) f(x_j^E))\right)/\,\Prb(\cE^{(4)})\right)\nonumber\\
&\leq -g(\ep)\alpha_{d-1}(r_j^F)^{d-1}\cJ_{v_j^F}\left(f(x_j^F)-\frac{\eta^3}{2}\right)+ \lambda\alpha_{d-1}(r_j^F)^{d-1}\cJ_{v_j^F}\left(f(x_j^E)-\frac{\eta^3}{2}\right)+ 7\ep \alpha_{d-1}(r_j^F) ^{d-1}\nonumber\\
& <0\,.
\end{align}
Combining inequalities \eqref{eq:unionboundprob}, \eqref{eq:probub2}, \eqref{eq:probub3} with lemma \ref{lem:estimeanalyse} that
\begin{align}\label{eq:eqintub}
\limsup_{n\rightarrow\infty}\frac{1}{n^{d-1}}\log\Prb((\cE^{(2)}\cap\cE^{(3)})^c\cap\cE^{(4)}\cap \cE^{(1)})<\liminf_{n\rightarrow\infty}\frac{1}{n^{d-1}}\log\Prb(\cE^{(4)}) \,.\end{align}

Let us now estimate $\Prb(\cE^{(1)})$.
Using FKG inequality, we have
\begin{align*}
\Prb \left(\cE^{(1)}\right)\geq \prod_{F\in \cA}\prod_{\substack{i\in I ^F:\\ x_i^F\in \fF \setminus \fE}}\Prb(G_n(x_i ^F,r_i^F,v_i^F,\ep, (1-\eta)\nu_G(v_i^F))^c)\,.
\end{align*}
Let $F\in\cA$ and $i\in I^F$.
Thanks to the choice of $\ep$ in \eqref{eq:choixeplb} and lemma \ref{lem: Gxrv}, we have
$$\limsup_{n\rightarrow\infty}\frac{1}{n^{d-1}}\log\Prb(G_n(x_i ^F,r_i^F,v_i^F,\ep, (1-\eta)\nu_G(v_i^F)))<-g(\ep)\alpha_{d-1}(r_i^F)^{d-1}\cJ_{v_i^F}\left(\nu_G(v_i^F)-\frac{\eta^3}{2}\right)<0\,.$$
It follows by lemma \ref{lem:estimeanalyse} that 
$$\liminf_{n\rightarrow\infty}\frac{1}{n^{d-1}}\log\Prb(G_n(x_i ^F,r_i^F,v_i^F,\ep, (1-\eta)\nu_G(v_i^F))^c)=0$$
and 
\[\liminf_{n\rightarrow \infty}\frac{1}{n^{d-1}}\log 
\Prb \left(\cE^{(1)}\right)\geq \sum_{F\in \cA}\sum _{\substack{i\in I ^F:\\ x_i^F\in \fF \setminus \fE}}\liminf_{n\rightarrow \infty}\frac{1}{n^{d-1}}\log\Prb(G_n(x_i ^F,r_i^F,v_i^F,\ep, (1-\eta)\nu_G(v_i^F))^c) =0\,.\]
It follows that
\begin{align}\label{eq:prbE10}
\liminf_{n\rightarrow \infty}\frac{1}{n^{d-1}}\log 
\Prb \left(\cE^{(1)}\right)=0\,.
\end{align}
Using that $\cE^{(1)}$ and $\cE^{(4)}$ are independent equalities \eqref{eq:basicub} and \eqref{eq:prbE10}, inequality \eqref{eq:eqintub}
and lemma \ref{lem:estimeanalyse}
\begin{align}\label{eq:egalliminf}
\liminf_{n\rightarrow\infty}\frac{1}{n^{d-1}}\log\Prb\left(\cE^{(4)}\right)\leq \liminf_{n\rightarrow\infty}\frac{1}{n^{d-1}}\log\Prb\left(\cE^{(4)}\cap \cE^{(1)}\right)=\liminf_{n\rightarrow\infty}\frac{1}{n^{d-1}}\log\Prb(\cE^{(1)}\cap\cE^{(2)}\cap\cE^{(3)}\cap \cE^{(4)})\,.
\end{align}
Using \eqref{eq:egalliminf}, lemma \ref{lem:phicassimple}, inequalities \eqref{eq:lbe2} and \eqref{eq:lbe8}, we have
\begin{align*}
\liminf_{n\rightarrow\infty }\frac{1}{n^{d-1}}\log \Prb(\cE^{(1)}\cap\cE^{(2)}\cap\cE^{(3)}\cap \cE^{(4)})&\geq \liminf_{n\rightarrow\infty }\frac{1}{n^{d-1}}\log\Prb(\cE^{(4)})\\
&\geq  \liminf_{n\rightarrow\infty }\frac{1}{n^{d-1}}\log \Prb\left(\bigcap_{i\in I^E} \overline G_n(x_i^E,\bar r_i ^E,v_i^E,\ep',f(x_i^E)+\eta)\right)\\&\geq -\sum_{i\in I^E}\alpha_{d-1}\bar r_i ^E\cJ_{v_i^E}(f(x_i^E))\\
&\geq -\sum_{i\in I^E}\left( \int_{\fE\cap B(x_i^E,r_i^E)}\cJ_{n_\bullet(y)}(f(y))d\cH^{d-1}(y)+\ep\alpha_{d-1}( r_i ^E)^ {d-1}\right)\\
&\geq -\widetilde {I}(E,\nu)-2\ep\cH^{d-1}(\fE)\,.
\end{align*}
Besides, we have up to choosing a smaller $\ep$
$$\limsup_{n\rightarrow\infty }\frac{1}{n^{d-1}}\log \Prb((\cE^{(0)})^c)\leq -2\widetilde {I}(E,\nu)<-\widetilde {I}(E,\nu)-2\ep\cH^{d-1}(\fE)\,.$$
By lemma \ref{lem:estimeanalyse}, it follows that for small enough $\ep$
$$\liminf_{n\rightarrow\infty }\frac{1}{n^{d-1}}\log \Prb(\cE^{(0)}\cap \cE^{(1)}\cap\cE^{(2)}\cap\cE^{(3)}\cap \cE^{(4)})= \liminf_{n\rightarrow\infty }\frac{1}{n^{d-1}}\log \Prb(\cE^{(1)}\cap\cE^{(2)}\cap\cE^{(3)}\cap \cE^{(4)})\,.$$
For any $\eta>0$ small enough and $\ep$ small enough depending on $\eta$, we have
\begin{align*}
\liminf_{n\rightarrow\infty}\frac{1}{n^{d-1}}&\log\Prb\left(\exists\cE_n\in\sC_n(u(\eta)):(\bR(\cE_n),\mu_n(\cE_n))\in \cU\right)\\&\geq \liminf_{n\rightarrow\infty }\frac{1}{n^{d-1}}\log \Prb(\cE^{(0)}\cap \cE^{(1)}\cap\cE^{(2)}\cap\cE^{(3)}\cap \cE^{(4)})\geq -\widetilde{I}(E,\nu)-2\ep\cH^{d-1}(\fE)\,.
\end{align*}
By letting first $\ep$ go to $0$ and then $\eta$ go to $0$ we obtain the expected result.

\noindent{\bf Calibration of constants.} We explain here in which order the constants are chosen.
We first choose $\eta$ such that it satisfies
$$\eta\leq\frac{\xi_1}{C_0}$$ where the constant $C_0$ was defined in \eqref{eq:defC0}.
Then we choose $\ep$ small enough depending on $\eta$, $d$, $\Omega$ and $\xi_0$ such that it satisfies \eqref{eq:choixep0xi}, \eqref{eq:choixeplb} and \eqref{eq:choixeplb2}. Once $\ep$ is fixed, we can define $\delta_0$ as in \eqref{eq:unifcontg}, $\delta_F$ for each $F\in\sC_\beta$ as in \eqref{eq:choixdeltaf}, the covering of $\sC_\beta$. Once the covering of $\sC_\beta$ is chosen, we can define $\delta_1$ as in \eqref{eq:choixdelta1}. Finally, we can build a covering of $\fE$ depending on $\ep$ and on the covering of $\sC_\beta$.

\end{proof}


 \section{Upper bound} \label{sec:ub}
 The aim of this section is to prove the following result.
 
 \begin{prop}\label{prop:upperbound} Let $(E,f)\in\fT$. Write $\nu=f\cH^{d-1}|_{\fE}$.
 If $\cI(E,f)<\infty$, then for any $\delta_0\in]0,1[$, there exists a neighborhood $U$ of $(E,\nu)$ such that
 $$\limsup_{n\rightarrow\infty}\frac{1}{n^{d-1}}\log\Prb\left(\exists \cE_n\in \sC_n(\Gamma^1,\Gamma^2,\Omega): (\bR(\cE_n),\mu_n(\cE_n))\in U\right)\leq -(1-\delta_0)\cI(E,f)\,.$$
 If $\cI(E,f)=+\infty$, then for any $t>0$ there exists a neighborhood $U$ of $(E,\nu)$ such that
 $$\limsup_{n\rightarrow\infty}\frac{1}{n^{d-1}}\log\Prb\left(\exists \cE_n\in \sC_n(\Gamma^1,\Gamma^2,\Omega): (\bR(\cE_n),\mu_n(\cE_n))\in U\right)\leq -t\,.$$
 \end{prop}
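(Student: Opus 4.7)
The plan is to reverse the construction of the lower bound (proposition \ref{prop:lowerbound}): localize via test functions so that membership in the neighborhood forces, in each ball of a covering of $\fE$, the existence of an almost flat set with controlled capacity (i.e., the event $G_n$ of lemma \ref{lem: Gxrv} occurs), then conclude by independence. Assume first that $\cI(E,f)<\infty$ and fix $\eta>0$ small, to be determined by $\delta_0$ at the end. Apply proposition \ref{prop:utilisationvitali} to cover $\fE$ up to $\cH^{d-1}$-measure $\eta$ by a finite disjoint family of closed balls $(B(x_i,r_i,v_i))_{i\in I}$ with the following properties for each $i\in I$: $\cH^{d-1}(\fE\cap B(x_i,r))/(\alpha_{d-1}r^{d-1})\in[1-\eta,1+\eta]$ for $0<r\leq r_i$; $E$ (respectively $\Omega$ when $x_i\in\partial^*\Omega$) is $\eta\alpha_d r_i^d$-close in Lebesgue measure to $B^-(x_i,r_i,v_i)$ inside $B(x_i,r_i)$; the averages of $f$ and $\cJ_{n_\bullet(\cdot)}(f(\cdot))$ over $\fE\cap B(x_i,r_i)$ differ from $f(x_i)$ and $\cJ_{v_i}(f(x_i))$ by at most $\eta$; and the boundary localization of lemma \ref{lem:Omegabord} holds. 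Select continuous $g_i$ with compact support in $B(x_i,r_i)$, values in $[0,1]$, equal to $1$ on $B(x_i,r_i(1-\sqrt\eta))$, and set
\[\cU=\left\{(F,\mu):\ \cL^d(F\Delta E)\leq \eta\alpha_d r_{\min}^d,\ \forall i\in I\ |\mu(g_i)-\nu(g_i)|\leq \eta\alpha_{d-1}r_i^{d-1}\right\},\]
with $r_{\min}=\min_i r_i$, an open neighborhood of $(E,\nu)$.

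The core of the proof is then the inclusion
\[\left\{\exists\cE_n\in\sC_n(\Gamma^1,\Gamma^2,\Omega):(\bR(\cE_n),\mu_n(\cE_n))\in\cU\right\}\subset \bigcap_{i\in I}G_n\bigl(x_i,r_i(1-\sqrt\eta),v_i,\delta_i,\zeta_i\bigr),\]
with $\delta_i=O(\eta)$ and $\zeta_i=(f(x_i)+2\eta)(1-\sqrt\eta)^{-(d-1)}$. On the left-hand event, select $\cE_n$ by a deterministic rule and test $G_n^{(i)}$ with the witness $U_i=\br(\cE_n)\cap B(x_i,r_i(1-\sqrt\eta))\cap\sZ_n^d$. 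The cardinality control follows from the triangle inequality $\cL^d(\bR(\cE_n)\Delta E)+\cL^d((E\cap B_i)\Delta B^-)\leq 2\eta\alpha_d r_i^d$ and proposition \ref{prop:minkowski}, with the boundary case handled via $\br(\cE_n)\subset\Omega_n$ and lemma \ref{lem:Omegabord}. The capacity bound exploits the observation already used in section \ref{sec:admissible}: any edge $\langle x,y\rangle\in\partial^e\br(\cE_n)$ lying in $\Pi_n$ must belong to $\cE_n$ (otherwise $y$ would be reachable from $\Gamma_n^1$ through $x$), so that
\[V(\partial^eU_i\cap B(x_i,r_i(1-\sqrt\eta)))\leq V(\cE_n\cap B(x_i,r_i(1-\sqrt\eta)))\leq n^{d-1}\mu_n(\cE_n)(g_i)\leq n^{d-1}(f(x_i)+2\eta)\alpha_{d-1}r_i^{d-1},\]
where the last step combines membership in $\cU$ with the covering bound $\nu(g_i)\leq(f(x_i)+\eta)\alpha_{d-1}r_i^{d-1}$.

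Since the smaller balls are pairwise disjoint, the events $G_n^{(i)}$ depend on disjoint edge collections and are independent. Applying lemma \ref{lem: Gxrv} to each factor,
\[\limsup_{n\to\infty}\frac{1}{n^{d-1}}\log\Prb\left(\exists\cE_n:(\bR(\cE_n),\mu_n(\cE_n))\in\cU\right)\leq -\sum_{i\in I}g(\delta_i)\alpha_{d-1}(r_i(1-\sqrt\eta))^{d-1}\cJ_{v_i}\!\left(\frac{\zeta_i+\kappa_0\sqrt{\delta_i}}{g(\delta_i)}\right).\]
By continuity of $\cJ_{v_i}$ on its finite domain $[\delta_G\|v_i\|_1,\nu_G(v_i)]$ (theorem \ref{thm:lldtau}), the argument tends to $f(x_i)$ as $\eta\to 0$, so for $\eta$ small enough one has $\cJ_{v_i}(\cdots)\geq(1-\delta_0/2)\cJ_{v_i}(f(x_i))$. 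Combined with the covering approximation $\sum_i\alpha_{d-1}r_i^{d-1}\cJ_{v_i}(f(x_i))\geq\cI(E,f)-O(\eta)$, this produces the announced bound $-(1-\delta_0)\cI(E,f)$.

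For the case $\cI(E,f)=+\infty$, the same strategy is applied to a Borelian subset $A\subset\fE$ on which $\int_A\cJ_{n_\bullet(y)}(f(y))\,d\cH^{d-1}(y)\geq 2t$, which exists by truncation, yielding the upper bound $-t$. The main obstacle is the inclusion step at boundary balls $x_i\in\partial^*\Omega$: there $B(x_i,r_i)$ protrudes from $\Omega$, so the witness $U_i$ must be enlarged to include $(\sZ_n^d\setminus\Omega_n)\cap B_i$ or compared against $\Omega\cap B^-$ rather than $B^-$, and a pigeonhole argument on concentric balls (analogous to step 4 of proposition \ref{prop:admissible}) is needed to ensure the boundary contribution to $\partial^eU_i$ is carried by $\cE_n$. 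A secondary subtlety arises when $f(x_i)$ is close to $\nu_G(v_i)$: the argument $(\zeta_i+\kappa_0\sqrt{\delta_i})/g(\delta_i)$ may then exceed $\nu_G(v_i)$, but such balls contribute negligibly to $\cI(E,f)$ and can be removed from the covering at a controlled cost.
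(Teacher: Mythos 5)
Your overall structure mirrors the paper's: a Vitali covering of $\fE$ adapted to $E$, $\Omega$, $f$; a neighborhood $U$ built from bump functions localized in the balls; the deduction from $(\bR(\cE_n),\mu_n(\cE_n))\in U$ that the events $G_n(x_i,\cdot,v_i,\cdot,\cdot)$ all occur, with witness $\br(\cE_n)$ restricted to the ball; independence across the disjoint balls; and lemma \ref{lem: Gxrv} on each factor. Up to and including the product bound
\[-\sum_{i\in I}g(\delta_i)\alpha_{d-1}\bigl(r_i(1-\sqrt\eta)\bigr)^{d-1}\cJ_{v_i}\!\left(\frac{\zeta_i+\kappa_0\sqrt{\delta_i}}{g(\delta_i)}\right)\]
you are following the same road as the paper.

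The gap is in the passage to the limit. You invoke continuity of $\cJ_{v_i}$ at $f(x_i)$ to claim that for $\eta$ small enough one has $\cJ_{v_i}\bigl((\zeta_i+\kappa_0\sqrt{\delta_i})/g(\delta_i)\bigr)\geq(1-\delta_0/2)\cJ_{v_i}(f(x_i))$. But the covering $(x_i,r_i,v_i)_{i\in I}$ is produced by proposition \ref{prop:utilisationvitali} at the \emph{same} precision $\eta$: shrinking $\eta$ replaces the family of balls, hence the points $x_i$, the directions $v_i$ and the values $f(x_i)$ are not held fixed. There is no lower bound on the modulus of continuity of $\cJ_{v_i}$ at $f(x_i)$ over an $\eta$-dependent family of centers (it degenerates as $f(x_i)\downarrow\delta_G\|v_i\|_1$), so ``taking $\eta$ small'' cannot deliver this inequality uniformly in $i\in I(\eta)$. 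As written the step is circular, and it is the crux of the proposition.

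The paper avoids the pointwise estimate entirely. It rewrites the sum as $\int_\fE\cJ_{n_\bullet(c_\ep(x))}\bigl((f(c_\ep(x))+\kappa\ep+\kappa_0\sqrt\ep)/g(\ep)\bigr)\ind_{\fE_\ep}(x)\,d\cH^{d-1}(x)$ where $c_\ep(x)=x_i$ for $x\in B(x_i,r_i)$, sends $\ep_p=p^{-4}\to 0$, and uses Borel--Cantelli together with the Besicovitch-type covering conditions \eqref{eq:ubcond3}--\eqref{eq:ubcond4} to obtain that $(n_\bullet(c_{\ep_p}(x)),f(c_{\ep_p}(x)))\to(n_\bullet(x),f(x))$ for $\cH^{d-1}$-a.e.\ $x\in\fE$. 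Fatou's lemma then reduces the matter to the \emph{joint} lower semi-continuity of $(v,\lambda)\mapsto\cJ_v(\lambda)$, which is corollary \ref{cor:liminf} and itself rests on the weak triangle inequality of proposition \ref{prop:triangineq} --- a substantial preliminary your proposal omits. Single-variable continuity of $\cJ_v$ from theorem \ref{thm:lldtau} is not enough here, because the direction $n_\bullet(c_{\ep_p}(x))$ varies with $p$ as well. With $\limsup_{\ep\to 0}(\cdots)\geq\cI(E,f)$ established, the paper \emph{then} fixes a single $\ep$ achieving $\geq(1-\delta_0)\cI(E,f)$, and the covering and neighborhood attached to that $\ep$ furnish the required $U$; the infinite-rate case follows from the same $\limsup$ identity with no truncation needed.
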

 Before proving this result, let us introduce some properties of the rate function $\cJ_v$ that will be useful in the proof of proposition \ref{prop:upperbound}.
 \subsection{Properties of the rate function $\cJ_\vv$}
 We already know that for a fixed $v\in\sS^{d-1}$, the map $\lambda\mapsto \cJ_v(\lambda)$ is lower semi-continuous. We aim to prove in this section that the map $(\lambda,v)\mapsto \cJ_v(\lambda)$ is also lower semi-continuous.
 The rate function $\cJ$ satisfies the weak triangle inequality in the following sense.
 \begin{prop}[weak triangle inequality for $\cJ$]\label{prop:triangineq}
Let $(ABC)$ be a non-degenerate triangle in $\sR^d$ and let $\vv_A,\vv_B,\vv_C$ be the exterior normal unit vectors to the sides $[BC],[AC],[AB]$ in the plane spanned by $A,B,C$. Then, for any $\lambda,\mu \geq 0$
\[\cH^1([BC]) \cJ_{\vv_A}\left( \frac{\lambda\cH^1([AC])+\mu\cH^1([AB])}{\cH^1([BC])}\right)\leq \cH^1([AC])\cJ_{\vv_B}(\lambda)+\cH^1([AB])\cJ_{\vv_C}( \mu)\,.\]
\end{prop}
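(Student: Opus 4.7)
The plan is to deduce the inequality from the lower large deviation principle for $\tau_n$ (Theorem \ref{thm:lldtau}) by constructing, from cutsets in cylinders over the sides $[AC]$ and $[AB]$, a cutset in a cylinder over $[BC]$, and then applying the FKG inequality to the i.i.d.\ capacities. Since $\cJ_{\vv}$ is only defined for hyperrectangles of dimension $d-1$, the two-dimensional picture in the triangle's plane must first be extruded to $\sR^d$.

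\textbf{Extrusion and geometric comparison.} Fix a large parameter $L>0$, let $P$ be the affine plane spanned by $\{A,B,C\}$, let $Q\subset P^\perp$ be a closed $(d-2)$-cube of side $L$, and form
\[R_A=[BC]\times Q,\quad R_B=[AC]\times Q,\quad R_C=[AB]\times Q,\]
with respective normal vectors $\vv_A,\vv_B,\vv_C$ and $(d-1)$-volumes $\cH^1([BC])L^{d-2}$, $\cH^1([AC])L^{d-2}$, $\cH^1([AB])L^{d-2}$. I would then prove the following deterministic comparison: there exist $h_0>0$ and $C_0>0$ depending only on the triangle such that for any $0<h\leq h_0$, for any cutsets $\cE_B,\cE_C$ realizing $\tau_n(R_B,h)$ and $\tau_n(R_C,h)$, the set $\cE_B\cup\cE_C\cup\cF_0$ is a cutset for $\tau_n(R_A,h)$, where $\cF_0$ is a collar of edges sealing the lateral boundary $[BC]\times \partial Q$ and the corner neighborhood of $\{A\}\times Q$. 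Proposition \ref{prop:minkowski} gives $\card(\cF_0)\leq C_0(L^{d-3}n^{d-1}+L^{d-2}n^{d-2})$. The geometric content is purely two-dimensional: in the plane $P$, for $h$ a small fraction of the altitude from $A$ to $[BC]$, the two strips of width $2h$ along $[AC]$ and $[AB]$ meet at $A$ and their union, extended by $\cF_0$ to seal the $Q$-direction boundaries, separates the two sides of $[BC]$ inside the strip of width $2h$ around $[BC]$.

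\textbf{Probabilistic conclusion.} From the gluing and $V(\cE_B\cup\cE_C\cup\cF_0)\leq V(\cE_B)+V(\cE_C)+M\card(\cF_0)$ we deduce
\[\tau_n(R_A,h)\leq \tau_n(R_B,h)+\tau_n(R_C,h)+MC_0\bigl(L^{d-3}n^{d-1}+L^{d-2}n^{d-2}\bigr).\]
Hypothesis \ref{hypo:G} makes the capacities bounded i.i.d., and $\{\tau_n(R_\bullet,h)\leq s\}$ is a decreasing event in these capacities; hence FKG yields, with $a=\lambda\cH^{d-1}(R_B)n^{d-1}$ and $b=\mu\cH^{d-1}(R_C)n^{d-1}$,
\[\Prb\bigl(\tau_n(R_A,h)\leq a+b+MC_0(L^{d-3}n^{d-1}+L^{d-2}n^{d-2})\bigr)\geq \Prb(\tau_n(R_B,h)\leq a)\,\Prb(\tau_n(R_C,h)\leq b).\]
Taking $-\frac{1}{L^{d-2}n^{d-1}}\log$, letting $n\to\infty$, and using the LDP bounds of Theorem \ref{thm:lldtau} produces
\[\cH^1([BC])\,\cJ_{\vv_A}\!\left(\frac{\lambda\cH^1([AC])+\mu\cH^1([AB])}{\cH^1([BC])}+O(1/L)\right)\leq \cH^1([AC])\,\cJ_{\vv_B}(\lambda)+\cH^1([AB])\,\cJ_{\vv_C}(\mu),\]
the $O(1/L)$ arising from dividing the error term by $L^{d-2}n^{d-1}$ and sending $n\to\infty$ first. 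Letting $L\to\infty$ and invoking the lower semi-continuity of $\cJ_{\vv_A}$ (it is a good rate function) absorbs the $O(1/L)$ and yields the claim.

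\textbf{Main obstacle.} The technical heart is the deterministic gluing. One must pick $h$ small enough (relative to the triangle's minimum altitude) so that the two lateral strips around $[AC]$ and $[AB]$ are contained, up to the collar, in the strip around $[BC]$ and so that any top-to-bottom path in $\cyl(R_A,h)$ must cross $R_B$ inside $\cyl(R_B,h)$ or $R_C$ inside $\cyl(R_C,h)$ (or hit $\cF_0$); one must then control $\card(\cF_0)$ by Proposition \ref{prop:minkowski} so that it is negligible in front of $L^{d-2}n^{d-1}$ in the order $n\to\infty$, $L\to\infty$. This is reminiscent of the gluing arguments used to show that $\nu_G$ satisfies the triangle inequality (cf.\ Rossignol--Th\'eret \cite{RossignolTheretcont}).
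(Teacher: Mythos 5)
There is a genuine geometric error in the gluing step, which is the heart of the argument. You claim that for $h$ small (a small fraction of the altitude from $A$ to $[BC]$), any cutset $\cE_B$ for $\tau_n(R_B,h)$ together with any cutset $\cE_C$ for $\tau_n(R_C,h)$, plus a collar $\cF_0$ sealing the $Q$-direction and the corner at $A$, forms a cutset for $\tau_n(R_A,h)$. This is false. The cutsets $\cE_B$ and $\cE_C$ live in the slabs of thickness $2h$ around the planes through $[AC]\times Q$ and $[AB]\times Q$, while a path realizing the flow for $\tau_n(R_A,h)$ lives in the slab of thickness $2h$ around $[BC]\times Q$. For $h$ smaller than the altitude from $A$, a path crossing $R_A$ near the midpoint of $[BC]$ and staying within distance $h$ of the $[BC]$-plane never comes close to $[AC]$ or $[AB]$, hence meets neither $\cE_B$ nor $\cE_C$, and your collar $\cF_0$ only seals the lateral boundary and the neighborhood of $A$, not the interior of $[BC]$. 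So $\cE_B\cup\cE_C\cup\cF_0$ fails to separate $T'(R_A,h)$ from $B'(R_A,h)$. The sentence in your ``Main obstacle'' paragraph asserting that the strips around $[AC]$ and $[AB]$ are contained in the strip around $[BC]$ for $h$ small is exactly backwards: the strip around $[AC]$ contains the full segment $[AC]$ and in particular the vertex $A$, which lies at distance equal to the altitude from $[BC]$, so one would need $h$ at least as large as the altitude for that containment, not smaller.

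The correct geometry, which is what the paper uses, requires \emph{asymmetric} cylinder heights: take the cylinder over $R_A$ with a large height $h'$ (large enough that the entire prism $K$ over the triangle lies inside $\cyl(R_A,h')$), and cylinders over slightly shrunk rectangles $R_B^\ep\subset R_B$, $R_C^\ep\subset R_C$ with a small height $\ep$, chosen disjoint. Then the faces $R_B\cup R_C\cup T$ of $\partial K$ form a ``roof'' over $R_A$ contained in $\cyl(R_A,h')$, and one checks that $\mathrm E_B^\ep\cup \mathrm E_C^\ep\cup\mathrm E_n^\ep$ (where $\mathrm E_n^\ep$ is a collar near $T$ and near $R_B\setminus R_B^\ep$, $R_C\setminus R_C^\ep$) is a cutset for $\tau_n(R_A,h')$. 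A pleasant by-product of shrinking to $R_B^\ep$, $R_C^\ep$ is that the two small cylinders become disjoint, so one can use plain independence of the capacities on disjoint edge sets instead of FKG. Your use of FKG would have been needed if the two side cylinders overlapped near $A$, but since the gluing must be fixed anyway it is cleaner to shrink and use independence, as the paper does. The rest of your plan --- extruding in the $Q$-direction with side $L$, controlling the collar via Proposition \ref{prop:minkowski}, applying the LDP of Theorem \ref{thm:lldtau}, dividing by $L^{d-2}n^{d-1}$, and sending first the shrinking parameter then $L$ to their limits while invoking lower semi-continuity of $\cJ_{\vv_A}$ --- matches the paper's structure and would go through once the gluing is corrected.
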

\begin{proof}The proof is an adaptation of the proof of Proposition 11.6 in \cite{Cerf:StFlour}. This result was proved for the dimension $2$ in lemma 3.1. in \cite{RossignolTheretd2lower}. We only treat the case where the triangle $(ABC)$ is such that $\overrightarrow{ BA }\cdot \overrightarrow{BC}\geq 0$ and $\overrightarrow{CA}\cdot \overrightarrow{CB}\geq 0$ (actually we will only need this case in what follows). Once this case is proved, the other cases may be obtained as a straightforward adaptation of the end of the proof of Proposition 11.6. Let $(e_1,\dots,e_d)$ be an orthonormal basis such that $e_1$ and $e_2$ belong to the space spanned by $A,B,C$. Let $\ep,h$ be such that $0<\ep\leq 1\leq h$ and $\lambda,\mu\geq 0$. Let $K$ be the compact convex set defined by 
$$K=\left\{\,x+\sum_{i=3}^du_ie_i:\,x\in(ABC), \,(u_3,\dots,u_d)\in[0,h]^{d-2}\,\right\}\,.$$
The boundary of $K$ consists of the three following hyperrectangles 
$$R_A=\left\{\,x+\sum_{i=3}^du_ie_i:\,x\in[BC], \,(u_3,\dots,u_d)\in[0,h]^{d-2}\,\right\},$$
$$R_B=\left\{\,x+\sum_{i=3}^du_ie_i:\,x\in[AC], \,(u_3,\dots,u_d)\in[0,h]^{d-2}\,\right\},$$
$$R_C=\left\{\,x+\sum_{i=3}^du_ie_i:\,x\in[AB], \,(u_3,\dots,u_d)\in[0,h]^{d-2}\,\right\}$$
and the set 
$$T=\bigcup_{3\leq j\leq d}\left\{x+\sum_{i=3}^du_ie_i:\,x\in(ABC), \,u_j\in\{0,h\},\,(u_3,\dots ,u_{j-1},u_{j+1},\dots,u_d)\in[0,h]^{d-3}\,\right\}\,.$$
We can define $R_B^\ep$ and $R_C^\ep$ such that (see figure \ref{fig:r}):
\begin{itemize}[$\bullet$]
\item $R_B^\ep\subset R_B$, $R_C^\ep\subset R_C$ and $\cyl(R_B^\ep,\ep)\cap \cyl(R_C^\ep,\ep)=\emptyset$,
\item for $h'\geq 0$ large enough, $\cyl(R_B^\ep,\ep)\cup \cyl(R_C^\ep,\ep)\subset \cyl(R_A,h')$,
\item we have for any $h>0$, $\lim_{\ep\rightarrow 0}\cH^{d-1}(R_B\setminus R_B^\ep) =0$ and $\lim_{\ep\rightarrow 0}\cH^{d-1}(R_C\setminus R_C^\ep) =0$.
\end{itemize}
\begin{figure}[H]
\begin{center}
\def\svgwidth{0.5\textwidth}
   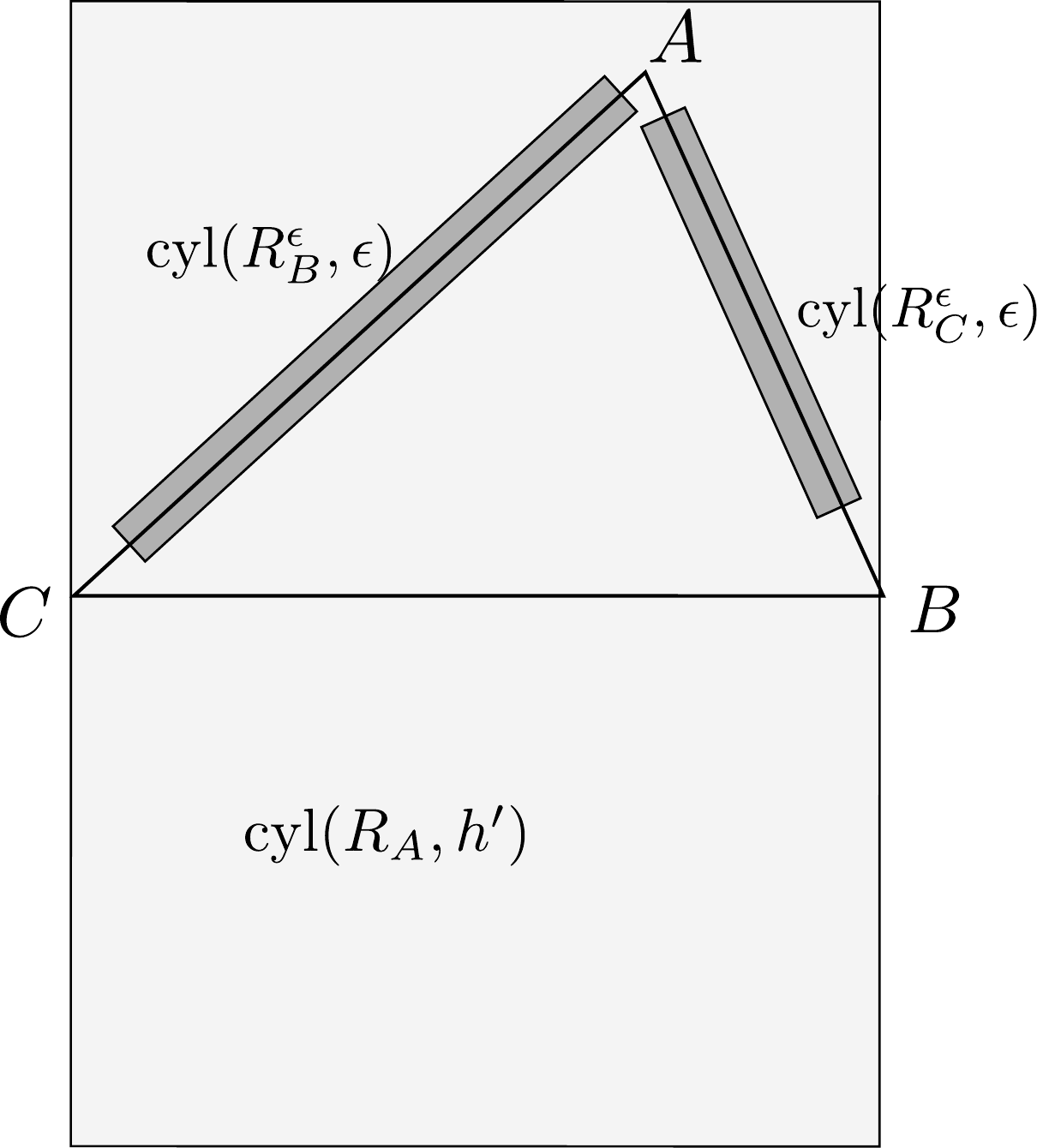
   \caption{\label{fig:r}Representation of $R_B^\ep$ and $R_C^\ep$.}
   \end{center}
\end{figure}
We denote by $\mathrm{E}_n^\ep$ the edges in $\E_n^d$ that have at least one endpoint in 
$$\cV_2( R_B\setminus R_B^\ep,d/n)\cup \cV_2( R_C\setminus R_C^\ep,d/n)\cup\cV_2(T,d/n)\,.$$
Using proposition \ref{prop:minkowski}, there exists a positive constant $c_d$ depending only on $d$ such that
\begin{align*}
\card(\mathrm{E}_n^\ep)&\leq c_d\left(\cH^{d-1}(R_B\setminus R_B^\ep)+ \cH^{d-1}(R_C\setminus R_C^\ep) + \cH^{d-1}(T)\right)n^{d-1}\\&\leq c_d\left(\cH^{d-1}(R_B\setminus R_B^\ep)+ \cH^{d-1}(R_C\setminus R_C^\ep) +2(d-2) \cH^{2}(ABC)h ^{d-3}\right)n^{d-1}\,.
\end{align*}
Set
$$\cE=\{\tau_n(R_B^\ep ,\ep)\leq \lambda\cH^{d-1}(R_B^\ep )n^{d-1}\}\cap \{\tau_n(R_C^\ep ,\ep)\leq \mu\cH^{d-1}(R_C^\ep )n^{d-1}\}\,.$$
Note that if $\mathrm E_B^\ep$ (respectively $\mathrm E_C^\ep$) is  a cutset for $\tau_n(R_B,\ep)$ (resp. $\tau_n(R_C,\ep)$), then $\mathrm E_B^\ep\cup \mathrm E_C^\ep \cup \mathrm E_n^\ep$ is a cutset for $\tau_n(R_A,h')$.
Hence, we have
$$\cE\subset \{\tau_n(R_A, h')\leq (\lambda\cH^{d-1}(R_B)+ \mu\cH^{d-1}(R_C))n^{d-1}+ \card(\mathrm{E}_n^\ep)M\}\,.$$
Hence, we have using the independence
\begin{align*}
\Prb&\left(\frac{\tau_n(R_A,h)}{n^{d-1}}\leq \lambda\cH^{d-1}(R_B)+ \mu\cH^{d-1}(R_C)+ c_d\left(f(\ep) +2(d-2) \cH^{2}(ABC)h ^{d-3}\right)M\right)\\
&\qquad \geq\Prb\left(\tau_n(R_A,h)\leq (\lambda\cH^{d-1}(R_B)+ \mu\cH^{d-1}(R_C))n^{d-1}+ \card (\mathrm{E}_n^\ep)M\right)\\
&\qquad \geq \Prb(\tau_n(R_B^\ep,\ep)\leq \lambda\cH^{d-1}(R_B^\ep)n^{d-1})\Prb(\tau_n(R_C^\ep,\ep)\leq \mu\cH^{d-1}(R_C^\ep)n^{d-1})\,
\end{align*}
where $f(\ep)=\cH^{d-1}(R_B\setminus R_B^\ep)+ \cH^{d-1}(R_C\setminus R_C^\ep)$.
Using theorem \ref{thm:lldtau}, we have
\begin{align*}
\cH^{d-1}(R_A)\cJ_{\vv_A}&\left(\frac{\lambda\cH^{d-1}(R_B)+ \mu\cH^{d-1}(R_C)+c_d\left(f(\ep) +2(d-2) \cH^{2}(ABC)h ^{d-3}\right)M)}{\cH^{d-1}(R_A)}\right)\\
&\leq  \cH^{d-1}(R_B^\ep)\cJ_{\vv_B}(\lambda)+ \cH^{d-1}(R_C^\ep)\cJ_{\vv_C}(\mu)\,.
\end{align*}
We recall that $\cH^{d-1}(R_A)=h^{d-2}\cH^1([BC])$, $\cH^{d-1}(R_B)=h^{d-2}\cH^1([AC])$ and $\cH^{d-1}(R_C)=h^{d-2}\cH^1([AB])$. We also recall that $\cJ_{\vv_A}$ is lower semi-continuous.
By letting $\ep$ goes to $0$ we obtain
\begin{align*}
&\cH^{d-1}(R_A)\cJ_{\vv_A}\left(\frac{\lambda\cH^{1}([AC])+ \mu\cH^{1}([AB])+2c_d(d-2) \cH^{2}(ABC)h ^{-1}M}{\cH^{1}([BC])}\right)\\
&\leq \liminf_{\ep\rightarrow 0} \cH^{d-1}(R_A)\cJ_{\vv_A}\left(\frac{\lambda\cH^{d-1}(R_B)+ \mu\cH^{d-1}(R_C)+c_d\left(f(\ep) +2(d-2) \cH^{2}(ABC)h ^{d-3}\right)M)}{\cH^{d-1}(R_A)}\right)\\
&\leq  \cH^{d-1}(R_B)\cJ_{\vv_B}(\lambda)+ \cH^{d-1}(R_C)\cJ_{\vv_C}(\mu)\,.
\end{align*}
By dividing the inequality by $h^{d-2}$, we obtain
\begin{align*}
\cH^{1}([BC])\cJ_{\vv_A}&\left(\frac{\lambda\cH^{1}([AC])+ \mu\cH^{1}([AB])+2c_d(d-2) \cH^{2}(ABC)h ^{-1})M)}{\cH^{1}([BC])}\right)\\
&\leq  \cH^{1}([AC])\cJ_{\vv_B}(\lambda)+ \cH^{1}([AB])\cJ_{\vv_C}(\mu)\,.
\end{align*}
Letting $h$ go to infinity, using again the fact that $\cJ_{\vv_A}$ is lower semi-continuous, yields the result:
\begin{align*}
\cH^{1}([BC])\cJ_{\vv_A}&\left(\frac{\lambda\cH^{1}([AC])+ \mu\cH^{1}([AB])}{\cH^{1}([BC])}\right)\leq  \cH^{1}([AC])\cJ_{\vv_B}(\lambda)+ \cH^{1}([AB])\cJ_{\vv_C}(\mu)\,.
\end{align*}
\end{proof}

We can deduce from proposition \ref{prop:triangineq} the lower semi-continuity of $\cJ$.
\begin{cor}[The function $\cJ$ is lower semi-continuous]\label{cor:liminf}For any $\lambda\geq 0$, $\vv\in\sS^{d-1}$, for any sequence $(\lambda_n)_{n\geq 1}$  of positive real numbers that converges towards $\lambda$, for any sequence $(\vv_n)_{n\geq1}$ of $\sS^{d-1}$ that converges towards $\vv$, we have
$$\liminf_{n\rightarrow\infty}\cJ_{\vv_n}(\lambda_n)\geq \cJ_{\vv}(\lambda)\,.$$ 
\end{cor}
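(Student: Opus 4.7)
The strategy is to reduce the joint lower semi-continuity of $(\lambda, v) \mapsto \cJ_v(\lambda)$ to the one-variable lower semi-continuity of $\lambda \mapsto \cJ_v(\lambda)$ (which we already know, since $\cJ_v$ is a good rate function by Theorem~\ref{thm:lldtau}) via the weak triangle inequality of Proposition~\ref{prop:triangineq}. First I would dispose of the degenerate cases. If $\lambda > \nu_G(v)$, then $\cJ_v(\lambda) = +\infty$; since $\nu_G$ is a norm, hence continuous, we have $\nu_G(v_n) \to \nu_G(v)$, so $\lambda_n > \nu_G(v_n)$ for all large $n$ and thus $\cJ_{v_n}(\lambda_n) = +\infty$. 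The symmetric case $\lambda < \delta_G\|v\|_1$ is handled using continuity of $v \mapsto \|v\|_1$. Finally, at $\lambda = \nu_G(v)$, $\cJ_v(\lambda) = 0$ and the inequality is vacuous.

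For the main case $\lambda \in [\delta_G\|v\|_1, \nu_G(v)[$, where $\cJ_v(\lambda)$ is finite, the plan is to apply Proposition~\ref{prop:triangineq} to a sequence of triangles $T_n = (A_n B_n C_n)$ built in the plane spanned by $v$ and $v_n$, chosen so that the outer normals satisfy $v_A = v$ and $v_B = v_n$. The third normal $v_C = v_C^{(n)}$ and its side length $\ell_C^{(n)}$ are then determined by the closure relation $\ell_A v + \ell_B v_n + \ell_C v_C^{(n)} = 0$; writing $\theta_n$ for the angle between $v$ and $v_n$, setting $\ell_A = \ell_B = 1$ gives $\ell_C^{(n)} = 2\cos(\theta_n/2)$ and $v_C^{(n)} \to -v$. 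The inequality then reads
\[
\cJ_v\!\bigl(\lambda_n + \ell_C^{(n)}\mu_n\bigr) \leq \cJ_{v_n}(\lambda_n) + \ell_C^{(n)}\cJ_{v_C^{(n)}}(\mu_n)
\]
for any $\mu_n \geq 0$, which we intend to use as a machine transferring information about $\cJ_{v_n}$ back to $\cJ_v$.

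The delicate point is choosing $\mu_n$ and $(\ell_A,\ell_B)$ so that, after taking $\liminf_n$, the left-hand side has argument $\lambda$ and the $\cJ_{v_C^{(n)}}$ contribution becomes negligible. One picks $\mu_n$ close to $\nu_G(v_C^{(n)})$ to make $\cJ_{v_C^{(n)}}(\mu_n)$ as small as possible (using continuity of the rate function at its zero and continuity of $\nu_G$), then invokes the lower semi-continuity of $\cJ_v$ in its first argument to pass to the limit in the left-hand side. The resulting bound is $\cJ_v(\lambda) \leq \liminf_n \cJ_{v_n}(\lambda_n)$.

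The hard part is that the closure constraint forces $\ell_C^{(n)} \to 2$ and $v_C^{(n)} \to -v$, so the error term cannot be made to vanish by a single application of the triangle inequality; moreover, a triangle with $v_A$ and $v_B$ almost equal must be obtuse at $C$, whereas the proof of Proposition~\ref{prop:triangineq} was written only for triangles acute at $B$ and $C$. I would overcome both obstacles simultaneously by replacing the single triangle by a chain of $N$ triangles obtained by subdividing the rotation from $v$ to $v_n$ into $N$ small steps; each intermediate triangle can be arranged to be acute and to contribute an error of order $O(\theta_n/N)$, so that summing the $N$ applications and letting $N \to \infty$ absorbs the error. This is where the bulk of the technical work lies, and it relies crucially on the continuity and monotonicity properties of $\cJ_v$ recalled in Theorem~\ref{thm:lldtau}.
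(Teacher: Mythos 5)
Your high-level plan matches the paper's: transport the known one-variable lower semi-continuity of $\lambda \mapsto \cJ_v(\lambda)$ through the weak triangle inequality of Proposition~\ref{prop:triangineq}, placing the capacity $\nu_G$ on the third side so that its rate vanishes. But the triangle you build is the wrong one, and the chain you propose to repair it does not work.

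Setting exterior normals $v_A = v$, $v_B = v_n$ with unit leg lengths forces, via the closure relation, $\ell_C = \|v + v_n\|_2 \to 2$ and $v_C \to -v$, so after putting $\mu_n = \nu_G(v_C^{(n)})$ on the third side the argument on the left becomes $\lambda_n + \ell_C^{(n)}\nu_G(v_C^{(n)}) \to \lambda + 2\nu_G(v) > \nu_G(v)$, which lies in the region where $\cJ_v$ is $+\infty$; the resulting inequality is vacuous. Subdividing the rotation into $N$ steps and redoing the same construction on each small arc does not help: each intermediate triangle again has $\ell_C = 2\cos(\theta_n/(2N)) \to 2$, so the per-step shift in the argument is $O(1)$, not $O(\theta_n/N)$, and nothing telescopes. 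The chain therefore does not absorb the error, and the obtuseness at $C$ is a symptom of the same wrong choice, not a separate obstacle.

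The correct triangle, which is the one the paper uses, is the thin isoceles triangle $OAA_n$ with $A$ and $A_n$ the images of $v$ and $v_n$ under a fixed $\pi/2$-rotation in the plane $(O,v,v_n)$; then the two legs $[OA]$ and $[OA_n]$ have length $1$ and lie on lines perpendicular to $v$ and $v_n$, while the base has length $\|AA_n\|_2 = \|v - v_n\|_2 \to 0$. The geometric fact you are missing is that with this (or any non-degenerate) triangle whose two unit legs are perpendicular to $v$ and $v_n$, the exterior normals to those legs are $v$ and $-v_n$ up to a global sign, never $v$ and $v_n$: if both exterior normals were close to $v$, closure would again force a long third side, which is exactly the dead end you hit. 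This sign flip is harmless because $\cJ_w = \cJ_{-w}$ (the flow $\tau_n$ is unchanged when the top and bottom of the cylinder are exchanged). Applying Proposition~\ref{prop:triangineq} to $OAA_n$ with $\mu = \nu_G(v'_n)$ on the short base then gives
\[
\cJ_v\bigl(\lambda_n + \|v - v_n\|_2\,\nu_G(v'_n)\bigr) \leq \cJ_{v_n}(\lambda_n)\,,
\]
and lower semi-continuity of $\cJ_v$ in its first argument concludes in one step. Since the angles at $O$ and at $A$ are respectively $\theta_n$ and $\pi/2 - \theta_n/2$, this triangle satisfies the acuteness hypotheses under which Proposition~\ref{prop:triangineq} was proved, so no extension of the proposition, no chain, and no preliminary case analysis on the location of $\lambda$ are needed.
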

\begin{proof}Let $\lambda\geq 0$. Let $\vv\in\sS^{d-1}$. Let $(\lambda_n)_{n\geq 1}$ be a sequence of positive real numbers that converges towards $\lambda$. Let $(\vv_n)_{n\geq1}$ be a sequence of $\sS^{d-1}$ that converges towards $\vv$. Let $\cP_n$ be the plan spanned by $(O,\vv,\vv_n)$. Let $A$ and $A_n$ be points of $\sR^d$ in $\cP_n$ such that $\|\overrightarrow{ OA}\|_2=\|\overrightarrow{ OA_n}\|_2=1$,
$\overrightarrow{OA} \cdot \vv =\overrightarrow{OA_n} \cdot \vv_n=0$.
Note that $$\|\overrightarrow{ AA_n}\|_2=\|\overrightarrow{OA_n}-\overrightarrow{OA}\|_2=\|\vv_n-\vv\|_2$$ where we use that there is an isometry (rotation of $\pi/2$ centered at O in the plane $\cP_n$) that sends $\overrightarrow{OA}$ on $\vv$ and $\overrightarrow{OA_n}$ on $\vv_n$ and such that $\overrightarrow{OA_n}\cdot \overrightarrow{OA}\geq 0$.
 It follows that $$\lim_{n\rightarrow 0}\cH^1([AA_n])=0\,.$$
Let $\vv'_n$ be the exterior normal unit vector to the side $[AA_n]$ of the triangle $OAA_n$ in $\cP_n$ (see figure \ref{fig:triangle}).
\begin{figure}[H]
\begin{center}
\def\svgwidth{0.3\textwidth}
   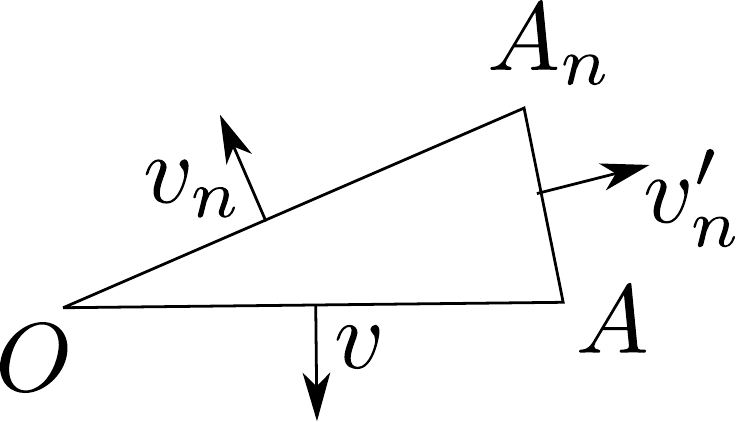
   \caption{\label{fig:triangle}The triangle $OAA_n$ in the plane $\cP_n$}
   \end{center}
\end{figure}
 
Applying proposition \ref{prop:triangineq}, we have
\[\cH^1([OA])\cJ_{\vv}\left(\frac{\cH^1([OA_n])\lambda_n +\cH^1([AA_n])\nu_G(v'_n)}{\cH^1([OA])}\right)\leq \cH^1([AA_n])\cJ_{\vv'_n}(\nu_G(v'_n))+\cH^1([OA_n])\cJ_{\vv_n}(\lambda_n)\]
and it yields that
$$\cJ_{\vv}(\lambda_n +\cH^1([AA_n])\nu_G(v'_n))\leq \cH^1([AA_n])\cJ_{\vv'_n}(\nu_G(v'_n))+\cJ_{\vv_n}(\lambda_n)=\cJ_{\vv_n}(\lambda_n)\,.$$
Besides, we have
$$\lambda_n + \cH^1([AA_n])\inf_{\vv_0\in\sS^{d-1}}\nu_G(v_0)\leq \lambda_n +\cH^1([AA_n])\nu_G(v'_n)\leq \lambda_n + \cH^1([AA_n])\sup_{\vv_0\in\sS^{d-1}}\nu_G(v_0)\,.$$
As a result, we have
$$\lim_{n\rightarrow \infty}\lambda_n +\cH^1([AA_n])\nu_G(v'_n)=\lambda\,.$$
Since the function $\alpha\mapsto \cJ_{\vv}(\alpha)$ is lower semi-continuous, 
we have 
$$\cJ_{\vv}(\lambda)\leq\liminf_{n\rightarrow\infty}\cJ_{\vv}(\lambda_n +\cH^1([AA_n])\nu_G(v'_n))\leq \liminf_{n\rightarrow\infty}\cJ_{\vv_n}(\lambda_n)\,.$$
The result follows.
\end{proof}

 \subsection{Proof of proposition \ref{prop:upperbound}}
 This section corresponds to the step 3 of the sketch of the proof in the introduction. For $(E,f)\in \fT$, we cover $\fE$ by small disjoint balls. Then we prove that we can build a neighborhood $U$ of $(E,\nu)$ adapted to this covering such that we can upperbound the probability $\Prb(\exists \cE_n\in\sC_n(\Gamma^1,\Gamma^2,\Omega): (\bR(\cE_n),\mu_n(\cE_n))\in U)$ using estimates on lower large deviations for the maximal flow in a ball.  
 \begin{proof}[Proof of proposition \ref{prop:upperbound}]
Let $\ep>0$. By proposition \ref{prop:utilisationvitali}, there exists a finite family of disjoint closed balls $(B(x_i,r_i,v_i))_{i\in I_1\cup I_2\cup I_3}$ such that for $i\in I_1$, we have $x_i\in \partial ^* E\cap \Omega$, for $i\in I_2$, we have $x_i\in \partial ^* \Omega \cap (\Gamma^1\setminus \overline{\partial^* E})$ and for $i\in I_3$, we have $x_i\in \partial ^* E\cap \Gamma^2$ and the following properties hold:
\begin{align}\label{eq:ubcond1}
\cH^{d-1}(\fE\setminus \cup_{i\in I_1\cup I_2\cup I_3}B(x_i,r_i)))\leq \ep\,,
\end{align}
\begin{align}\label{eq:ubcond2}
\forall i \in I_1\cup I_2\cup I_3\quad  \forall\, 0<r\leq 2 r_i\qquad \left|\frac{1}{\alpha_{d-1} r^{d-1}}\cH^{d-1}(\fE\cap B(x_i,r))-1\right|\leq \ep\,,
\end{align}
\begin{align}\label{eq:ubcond3}
\forall i\in I_1\qquad \left\|\frac{1}{\cH^{d-1}(\partial^* E\cap B(x_i,r_i))}\int_{\partial^* E\cap B(x_i,r_i)}n_E(x)d\cH^{d-1}(x) -n_{E}(x_i)\right\|_2\leq \ep\,,
\end{align}
\begin{align}\label{eq:ubcond3bis}
\forall i\in I_2\cup I_3\qquad \left\|\frac{1}{\cH^{d-1}(\partial^* \Omega\cap B(x_i,r_i))}\int_{\partial^* \Omega\cap B(x_i,r_i)}n_\Omega(x)d\cH^{d-1}(x) -n_{\Omega}(x_i)\right \|_2\leq \ep\,,
\end{align}
\begin{align}\label{eq:ubcond4}
\forall i\in I_1\cup I_2\cup I_3\qquad \left|\frac{1}{\cH^{d-1}(\fE\cap B(x_i,r_i))}\nu(B(x_i,r_i))-f(x_i)\right|\leq \ep\,,
\end{align}
\begin{align}\label{eq:ubcond5}
\forall i \in I_1\qquad B(x_i,r_i)\subset \Omega\quad\text{ and } \quad\cL^d((E\cap B(x_i,r_i)) \Delta B^-(x_i,r_i,v_i))\leq \ep \alpha_d r_i^d\,,
\end{align}
\begin{align}\label{eq:ubcond6}
\forall i \in I_2\qquad d_2(B(x_i,r_i), \Gamma^2 \cup E)>0 \quad\text{ and } \quad\  \cL^d((\Omega\cap B(x_i,r_i)) \Delta B^-(x_i,r_i,v_i))\leq \ep \alpha_d r_i^d\,,
\end{align}
\begin{align}\label{eq:ubcond7}
\forall i \in I_3\qquad d_2(B(x_i,r_i), \Gamma^1)>0 \quad\text{ and } \quad\  \cL^d((E\cap B(x_i,r_i)) \Delta B^-(x_i,r_i,v_i))\leq \ep \alpha_d r_i^d\,.
\end{align}
Let $i\in I_1\cup I_2\cup I_3$. Using inequality \eqref{eq:ubcond2}, we have
\begin{align}\label{eq:deltai}
\nu(B(x_i,r_i(1+\ep))\setminus B(x_i,r_i))&=\int_{(B(x_i,r_i(1+\ep))\setminus B(x_i,r_i))\cap\fE}f(y)d\cH^{d-1}(y)\nonumber\\ &\leq M\left(\cH^{d-1}(\fE\cap B(x_i,r_i(1+\ep)))-\cH^{d-1}(\fE\cap B(x_i,r_i))\right)\nonumber\\
&\leq M\left((1+\ep)^{d}- 
(1-\ep)\right)\alpha_{d-1}r_i^{d-1}\nonumber\\
& \leq M (1+ (2^d-1)\ep -1+\ep)\alpha_{d-1}r_i^{d-1}\leq M 2^d\ep\alpha_{d-1}r_i^{d-1}\,.
\end{align}
Let $f_i$ be a continuous function taking its values in $[0,1]$ with support included in $B(x_i,r_i(1+\ep))$ and such that $f_i=1$ on $B(x_i,r_i)$.
Using inequalities \eqref{eq:ubcond2} and \eqref{eq:ubcond4}, we have
\begin{align*}
\nu(f_i)\leq \nu(B(x_i,r_i))+\nu(B(x_i,r_i(1+\ep))\setminus B(x_i,r_i))&\leq  (f(x_i)+\ep)\cH^{d-1}(\fE\cap B(x_i,r_i))+ 2^dM\ep\alpha_{d-1}r_i ^{d-1}
\\&\leq ((f(x_i)+\ep)(1+\ep)+ 2^dM\ep)\alpha_{d-1}r_i ^{d-1}\\
&\leq (f(x_i)+\ep(2+2^dM +M))\alpha_{d-1}r_i ^{d-1}\,.
\end{align*}
We set \[\ep_E= \ep\alpha_ d\min_{i\in I_1\cup I_2\cup I_3}r_i^d\,.\]
Let $U$ be the weak neighborhood of $(E,\nu)$ in $\cB(\sR^d)\times\cM(\sR^d)$ defined by
\begin{align}\label{eq:defU}
U=\{F\in\cB(\sR^d): \cL^d(E\Delta F)\leq\ep_E\}\times\left\{\,\rho\in\cM(\sR^d):\forall i \in I_1\cup I_2\cup I_3\quad |\rho(f_i)-\nu(f_i)|\leq \ep\alpha_{d-1}r_i ^{d-1}\,\right\}\,.
\end{align}
On the event $\{\exists \cE_n\in \sC_n(\Gamma^1,\Gamma^2,\Omega): (\bR(\cE_n),\mu_n(\cE_n))\in U\}$, we pick $\cE_n\in \sC_n(\Gamma^1,\Gamma^2,\Omega)$ such that $(\bR(\cE_n),\mu_n(\cE_n))\in U\,.$
If there are several possible choices, we choose according to a deterministic rule.
We write $\mu_n=\mu_n(\cE_n)$ and $E_n=\bR(\cE_n)$.
 Let $i\in I_1\cup I_2\cup I_3$. We have 
$$\mu_n(B(x_i,r_i))\leq\mu_n(f_i)\leq \nu(f_i)+\ep\alpha_{d-1}r_i ^{d-1}\leq (f(x_i)+(3+ 2^d M+M)\ep)\alpha_{d-1}r_i ^{d-1}\,.$$ 
Moreover, using that $\cL^d(E_n\Delta E)\leq \ep \alpha_ dr_i ^d$, we can prove as in section 5.2. in \cite{CT3} that the following event occurs
$$\bigcap_{i\in I_1\cup I_3} G_n(x_i,r_i,n_{E}(x_i),\ep, f(x_i)+\kappa\ep)\bigcap _{i\in I_2}G_n(x_i,r_i,n_{\Omega}(x_i),\ep, f(x_i)+\kappa\ep)$$ where we recall that the event $G_n$ was defined in section \ref{section:upperboundball} and where we set $\kappa=3+2^dM+M$.
Using that the balls are disjoint, we have by independence
\begin{align*}
\Prb&\left(\exists \cE_n\in \sC_n(\Gamma^1,\Gamma^2,\Omega): (\bR(\cE_n),\mu_n(\cE_n))\in U\right)\\
&\qquad\leq \prod_{i\in I_1\cup I_3}\Prb(G_n(x_i,r_i,n_{E}(x_i),\ep, f(x_i)+\kappa\ep)\prod_{i\in I_2}\Prb(G_n(x_i,r_i,n_{\Omega}(x_i),\ep, f(x_i)+\kappa\ep)\,.
\end{align*}
Let $\kappa_0>0$ be given by lemma \ref{lem: Gxrv}.
By lemma \ref{lem: Gxrv}, we have
\begin{align*}
\limsup_{n\rightarrow\infty}&\frac{1}{n^{d-1}}\log\Prb\left(\exists \cE_n\in \sC_n(\Gamma^1,\Gamma^2,\Omega): (\bR(\cE_n),\mu_n(\cE_n))\in U\right)\\
&\leq -g(\ep)\sum_{i\in I_1\cup I_3}\alpha_{d-1}r_i ^{d-1}\cJ_{n_E(x_i)}\left(\frac{f(x_i)+\kappa\ep+\kappa_0\sqrt{\ep}}{g(\ep)}\right)\\
&\qquad-g(\ep)\sum_{i\in  I_2}\alpha_{d-1}r_i ^{d-1}\cJ_{n_\Omega(x_i)}\left(\frac{f(x_i)+\kappa\ep+\kappa_0\sqrt{\ep}}{g(\ep)}\right)\\
&\leq -\frac{g(\ep)}{1+\ep}\sum_{i\in I_1\cup I_3}\cH^{d-1}(\partial^* E\cap B(x_i,r_i))\cJ_{n_E(x_i)}\left(\frac{f(x_i)+\kappa\ep+\kappa_0\sqrt{\ep}}{g(\ep)}\right)\\
&\qquad-\frac{g(\ep)}{1+\ep}\sum_{i\in  I_2}\cH^{d-1}(\partial^* \Omega\cap B(x_i,r_i))\cJ_{n_\Omega(x_i)}\left(\frac{f(x_i)+\kappa\ep+\kappa_0\sqrt{\ep}}{g(\ep)}\right)
\end{align*}
where we use inequality \eqref{eq:ubcond2} in the last inequality.
Set $\fE_\ep=\cup_{i\in I_1\cup I_2\cup I_3}B(x_i,r_i)\cap(\partial ^* E\cup\partial ^* \Omega)$.
For any $x\in \fE_\ep$, we denote by $c_\ep(x)$ the unique $x_i\in I_1\cup I_2\cup I_3$ such that $x\in B(x_i,r_i)$.
It follows that
\begin{align*}
\limsup_{n\rightarrow\infty}&\frac{1}{n^{d-1}}\log\Prb\left(\exists \cE_n\in \sC_n(\Gamma^1,\Gamma^2,\Omega): (\bR(\cE_n),\mu_n(\cE_n))\in U\right)\\
&\leq -\frac{g(\ep)}{1+\ep}\int_{\fE}\cJ_{n_\bullet(c_\ep(x))}\left(\frac{f(c_\ep(x))+\kappa\ep+\kappa_0\sqrt{\ep}}{g(\ep)}\right)\ind_{\fE_\ep}(x)d\cH^{d-1}(x)
\end{align*}
where $n_\bullet(x)=n_\Omega(x)$ for $x\in\partial ^* \Omega\setminus \partial^* E$ and $n_\bullet(x)=n_E(x)$ for $x\in\partial^* E$.
We aim to study the limit of the right hand side when $\ep$ goes to $0$ along a given sequence.  
For any $i\in I_1\cup I_2\cup I_3$, thanks to inequality \eqref{eq:ubcond4}, we have
$$\cH ^{d-1}\left(\{y\in B(x_i,r_i)\cap \fE:\,|f(y)-f(x_i)| \geq \sqrt{\ep}\}\right)\leq \sqrt{\ep}\,\cH^{d-1}(B(x_i,r_i)\cap \fE)\,.$$
Moreover, thanks to inequalities \eqref{eq:ubcond3} and \eqref{eq:ubcond3bis}, we have for $i\in I_1\cup I_3$
$$\cH ^{d-1}\left(\{y\in B(x_i,r_i)\cap \partial ^* E :\,\|n_E(y)-n_E(x_i)\|_2 \geq \sqrt{\ep}\}\right)\leq \sqrt{\ep}\,\cH^{d-1}(B(x_i,r_i)\cap \fE)$$
and for $i\in I_2$
$$\cH ^{d-1}\left(\{y\in B(x_i,r_i)\cap (\partial ^* E \cup\partial ^* \Omega):\,\|n_\Omega(y)-n_\Omega(x_i)\|_2 \geq \sqrt{\ep}\}\right)\leq \sqrt{\ep}\,\cH^{d-1}(B(x_i,r_i)\cap \fE)\,.$$
Set 
\begin{align*}
\cN_\ep= &\bigcup_{i\in I_1\cup I_2\cup I_3} \left\{y\in B(x_i,r_i)\cap \fE:\,|f(y)-f(x_i)| \geq \sqrt{\ep}\right\}\\
&\qquad\cup\bigcup_{i\in I_1\cup I_3}\left \{y\in B(x_i,r_i)\cap \partial ^* E:\,\|n_E(y)-n_E(x_i)\|_2 \geq \sqrt{\ep}\right\}\\
&\quad \cup \bigcup_{i\in I_2}\left \{y\in B(x_i,r_i)\cap \partial ^* \Omega\setminus \overline{\partial ^* E}:\,\|n_\Omega(y)-n_\Omega(x_i)\|_2 \geq \sqrt{\ep}\right\} \,.
\end{align*}
Since the balls are disjoint, we have 
\begin{align}\label{eq:contrnep}
\cH^{d-1} (\cN_\ep)&\leq 2 \sqrt{\ep}\sum _{i\in I_1\cup I_2\cup I_3}\cH ^{d-1}(B(x_i,r_i)\cap\fE )\leq 2\sqrt{\ep}\cH^{d-1}(\fE)&\leq 2 \sqrt{\ep}(\cP(E,\Omega)+\cH^ {d-1}(\partial ^* \Omega))\,.
\end{align}
For any $p\geq 1$, set $\ep_p=p ^{-4}$. Let $ x\in\liminf_{p\rightarrow\infty}(\fE_{\ep_p}\setminus \cN_{\ep_p})=\cup_{p\geq 0}\cap_{m\geq p}(\fE_{\ep_m}\setminus \cN_{\ep_m})$. Hence, for $p$ large enough, we have $x\in \cap_{m\geq p}(\fE_{\ep_m}\setminus \cN_{\ep_m})$ and by construction for $m\geq p$
$$ |f(x)-f(c_{\ep_m}(x))|\leq \sqrt{\ep_m}\,.$$
It implies that 
$$\lim_{m\rightarrow \infty} f(c_{\ep_m}(x))=f(x)\,.$$
Similarly, if $x\in \partial ^* E\cap  \liminf_{p\rightarrow\infty}(\fE_{\ep_p}\setminus \cN_{\ep_p})$,
$$\lim_{p\rightarrow \infty} n_E(c_{\ep_p}(x))=n_E(x)\,$$
and if $x\in \partial ^* \Omega\setminus \overline{\partial ^* E}\cap  \liminf_{p\rightarrow\infty}(\fE_{\ep_p}\setminus \cN_{\ep_p})$,
$$\lim_{p\rightarrow \infty} n_\Omega(c_{\ep_p}(x))=n_\Omega(x)\,.$$
Using inequality \eqref{eq:ubcond1} and inequality \eqref{eq:contrnep}, we have 
\begin{align*}
\sum_{p\geq 1}\left(\cH^{d-1}(\cN_{\ep_p})+\cH^{d-1}(\fE\setminus \fE_{\ep_p})\right)\leq \sum_{p \geq 1}\left(\frac{2}{p^2}(\cP(E,\Omega)+\cH^ {d-1}(\partial ^* \Omega))+\frac{1}{p^4}\right)<\infty\,.
\end{align*}
By Borel-Cantelli lemma, it yields
$$\cH^{d-1}\left(\limsup_{p\rightarrow \infty}\,\cN_{\ep_p}\cup (\fE\setminus \fE_{\ep_p})\right)=0\,.$$
Hence, we get
\[\cH^{d-1}\left(\fE\cap (\limsup_{p\rightarrow \infty}\,\cN_{\ep_p}\cup (\fE\setminus \fE_{\ep_p}))^c\right)=\cH^{d-1}(\fE)\]
and
\begin{align}\label{eq:negneg}
\liminf_{p\rightarrow\infty}\ind_{\fE_{\ep_p}\setminus \cN_{\ep_p}}=\ind_{\fE} \qquad\text{$\cH^{d-1}$-almost everywhere}\,.
\end{align}
We have
\begin{align*}
\limsup_{\ep\rightarrow 0}&\frac{g(\ep)}{1+\ep}\int_{\fE}\cJ_{n_\bullet(c_\ep(x))}\left(\frac{f(c_\ep(x))+\kappa\ep+\kappa_0\sqrt{\ep}}{g(\ep)}\right)\ind_{\fE_\ep}(x)d\cH^{d-1}(x)\\
&\geq \limsup_{p\rightarrow\infty}\frac{g(\ep_p)}{1+\ep_p}\int_{\fE}\cJ_{n_\bullet(c_{\ep_p}(x))}\left(\frac{f(c_{\ep_p}(x))+\kappa\ep_p+\kappa_0\sqrt{\ep_p}}{g(\ep_p)}\right)\ind_{\fE_{\ep_p}\setminus \cN_{\ep_p}}(x)d\cH^{d-1}(x)\\
&\geq \liminf_{p\rightarrow\infty}\frac{g(\ep_p)}{1+\ep_p}\int_{\fE}\cJ_{n_\bullet(c_{\ep_p}(x))}\left(\frac{f(c_{\ep_p}(x))+\kappa\ep_p+\kappa_0\sqrt{\ep_p}}{g(\ep_p)}\right)\ind_{\fE_{\ep_p}\setminus \cN_{\ep_p}}(x)d\cH^{d-1}(x)\\
&\geq\int_{\fE}\liminf_{p\rightarrow\infty}\cJ_{n_\bullet(c_{\ep_p}(x))}\left(\frac{f(c_{\ep_p}(x))+\kappa\ep_p+\kappa_0\sqrt{\ep_p}}{g(\ep_p)}\right)\ind_{\fE_{\ep_p}\setminus \cN_{\ep_p}}(x)d\cH^{d-1}(x)
\end{align*}
where we use Fatou lemma in the last inequality.
Using corollary \ref{cor:liminf} and \eqref{eq:negneg}, we have
for $\cH^{d-1}$ almost every $x\in\fE$
$$\liminf_{p\rightarrow\infty}\cJ_{n_{\bullet}(c_{\ep_p}(x))}\left(\frac{f(c_{\ep_p}(x))+\kappa\ep_p+\kappa_0\sqrt{\ep_p}}{g(\ep_p)}\right)\geq \cJ_{n_{\bullet}(x)}(f(x))\,. $$
Finally, it follows that
\begin{align*}
\limsup_{\ep\rightarrow 0}&\frac{g(\ep)}{1+\ep}\int_{\fE}\cJ_{n_\bullet(c_\ep(x))}\left(\frac{f(c_\ep(x))+\kappa\ep+\kappa_0\sqrt{\ep}}{g(\ep)}\right)\ind_{\fE_\ep}(x)d\cH^{d-1}(x)\geq \cI(E,f)\,.
\end{align*}
\noindent {\bf Calibration of constants.}
Let $\delta_0>0$. If $\cI(E,f)<\infty$, we can choose $\ep$ such that 
\begin{align*}
&\frac{g(\ep)}{1+\ep}\int_{\fE}\cJ_{n_\bullet(c_\ep(x))}\left(\frac{f(c_\ep(x))+\kappa\ep+\kappa_0\sqrt{\ep}}{g(\ep)}\right)\ind_{\fE_\ep}(x)d\cH^{d-1}(x)\geq (1-\delta_0)\cI(E,f)\,.
\end{align*}
If $\cI(E,f)=\infty$, for any $t>0$, there exists $\ep$ such that  
\begin{align*}
\frac{g(\ep)}{1+\ep}\int_{\fE}\cJ_{n_\bullet(c_\ep(x))}\left(\frac{f(c_\ep(x))+\kappa\ep+\kappa_0\sqrt{\ep}}{g(\ep)}\right)\ind_{\fE_\ep}(x)d\cH^{d-1}(x)\geq t\,.
\end{align*}
The result follows by choosing the covering associated to this $\ep$ and its associated neighborhood as defined in \eqref{eq:defU}.
\end{proof}

\section{Lower large deviation principle\label{sect:goodtaux}}

We recall that we endow $\cM(\sR^d)$ with the weak topology and $\cB(\sR^d)$ with the topology associated with the distance $\dis$.
We denote by $\cU$ the basis of neighborhood of the origin of $\cB(\sR^d)\times \cM(\sR^d)$ for the associated product topology.

\begin{prop}[Lower semi-continuity of the rate function]\label{prop:lsc}The function $(E,\nu)\mapsto\widetilde{I}(E,\nu)$ is lower semi-continuous, \textit{i.e.}, for any $(E_0,\nu_0)\in\cB(\sR^d)\times \cM(\sR^d)$, for any $t\geq 0$ such that $t<\widetilde{I}(E_0,\nu_0)$, there exists a neighborhood $U(E_0,\nu_0)$ such that for any $(E,\nu)\in U(E_0,\nu_0)$, we have $\widetilde{I}(E,\nu)\geq t$.
\end{prop}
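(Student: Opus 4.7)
The plan is to deduce lower semi-continuity of $\widetilde{I}$ by combining the upper bounds established in propositions \ref{prop:admissible} and \ref{prop:upperbound} with the lower bound of proposition \ref{prop:lowerbound}. Fix $(E_0,\nu_0)\in\cB(\sR^d)\times\cM(\sR^d)$ and $t<\widetilde{I}(E_0,\nu_0)$; the goal is to build an open neighborhood $U$ of $(E_0,\nu_0)$ on which $\widetilde{I}\geq t$.

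First, I would produce $U$ together with a uniform upper bound on probabilities by distinguishing three cases. If $(E_0,\nu_0)\notin\fT_{\cM}$, proposition \ref{prop:admissible} applied with $K=t$ furnishes a neighborhood $U$ satisfying $\lim_{\ep\to 0}\limsup_{n\to\infty}\frac{1}{n^{d-1}}\log\Prb_n^\ep(U)\leq -t$. If $(E_0,\nu_0)\in\fT_{\cM}$ with $\nu_0=f_0\cH^{d-1}|_{\fE_0}$ and $\cI(E_0,f_0)=\infty$, the second part of proposition \ref{prop:upperbound} applied with $t$ produces a neighborhood $U$ and an upper bound of order $-t$ on $\Prb(\exists\cE_n\in\sC_n(\Gamma^1,\Gamma^2,\Omega):(\bR(\cE_n),\mu_n(\cE_n))\in U)$. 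If $\cI(E_0,f_0)<\infty$, I choose $\delta_0\in(0,1)$ so that $(1-\delta_0)\cI(E_0,f_0)>t$ and invoke the first part of proposition \ref{prop:upperbound}. Since $\sC_n(\ep)\subset\sC_n(\Gamma^1,\Gamma^2,\Omega)$, the second and third cases also control $\Prb_n^\ep(U)$; in every case I obtain a neighborhood $U$ such that
\[\lim_{\ep\to 0}\limsup_{n\to\infty}\frac{1}{n^{d-1}}\log\Prb_n^\ep(U)\leq -t\,.\]
Because the neighborhoods produced by propositions \ref{prop:admissible} and \ref{prop:upperbound} are defined by non-strict inequalities involving $\dis(E_0,\cdot)$ and finitely many continuous linear functionals $\rho\mapsto \rho(f_i)$, replacing $U$ by its interior still keeps $(E_0,\nu_0)$ inside and preserves the bound, so I may assume $U$ is open.

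Next, I would verify that $\widetilde{I}(E,\nu)\geq t$ for every $(E,\nu)\in U$. If $\widetilde{I}(E,\nu)=\infty$ this is trivial. Otherwise $(E,\nu)\in\fT_{\cM}$ has finite rate and, $U$ being open, it is itself a neighborhood of $(E,\nu)$. Proposition \ref{prop:lowerbound} then yields
\[-\widetilde{I}(E,\nu)\leq \lim_{\ep\to 0}\liminf_{n\to\infty}\frac{1}{n^{d-1}}\log\Prb_n^\ep(U)\,,\]
and chaining with the upper bound above gives $\widetilde{I}(E,\nu)\geq t$, which is the desired lower semi-continuity.

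The only real subtlety lies in the compatibility of the two bounds: the upper bound of proposition \ref{prop:upperbound} refers to arbitrary cutsets in $\sC_n(\Gamma^1,\Gamma^2,\Omega)$ while $\Prb_n^\ep$ is defined using the smaller class $\sC_n(\ep)$, and the inclusion $\sC_n(\ep)\subset\sC_n(\Gamma^1,\Gamma^2,\Omega)$ is exactly what lets the upper bound survive the passage to $\Prb_n^\ep$. Once that is checked, the proof reduces to the bookkeeping above; no further large-deviation input is required.
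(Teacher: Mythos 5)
Your proof is correct and follows essentially the same route as the paper: you combine the upper bounds from propositions \ref{prop:admissible} and \ref{prop:upperbound} to get a neighborhood on which $\limsup\frac{1}{n^{d-1}}\log\Prb_n^\ep\leq -t$, and then play this against the lower bound from proposition \ref{prop:lowerbound} applied to any $(E,\nu)\in U$ with $\widetilde I(E,\nu)<\infty$. Your explicit remarks about choosing $\delta_0$ so that $(1-\delta_0)\cI(E_0,f_0)>t$ and about shrinking $U$ to its interior so that it is a neighborhood of each of its points are minor points of care that the paper handles implicitly, but the argument is the same.
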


\begin{proof} Let $(E_0,\nu_0)\in\cB(\sR^d)\times \cM(\sR^d)$.
 Let $t>0$ such that $\widetilde I(E_0,\nu_0)>t$. We claim that there exists 
a neighborhood $U$ of $(E_0,\nu_0)$ such that 
\begin{align}\label{eq:sec6lsc}
\lim_{\ep\rightarrow 0}\liminf_{n\rightarrow \infty}\frac{1}{n^{d-1}}\log \Prb(\exists \cE_n\in\sC_n(\ep):\,(\bR(\cE_n),\mu_n(\cE_n))\in U)\leq -t\,.
\end{align}
We first admit this claim and show how it implies proposition \ref{prop:lsc}.
Let $U$ be a neighborhood of $(E_0,\nu_0)$ such that \eqref{eq:sec6lsc} holds.
Let $(E,\nu)\in U$. 
 If $\widetilde{I}(E,\nu)=+\infty$, then we have trivially $\widetilde{I}(E,\nu)\geq t$. If $\widetilde{I}(E,\nu)<\infty$, since $U$ is also a neighborhood of $(E,\nu)$, then we have using proposition \ref{prop:lowerbound}
\begin{align*}
-\widetilde{I}(E,\nu)&\leq\lim_{\ep\rightarrow 0}\liminf_{n\rightarrow\infty}\frac{1}{n^{d-1}}\log\Prb\left(\exists \cE_n\in \sC_n(\ep): (\bR(\cE_n),\mu_n(\cE_n))\in U\right)\leq -t\,.
\end{align*}
It follows that $\widetilde{I}(E,\nu)\geq t$. Hence, \[\forall (E,\nu)\in U\qquad \widetilde{I}(E,\nu)\geq t\,.\] The result follows.
It remains to prove the existence of $U$ such that inequality \eqref{eq:sec6lsc} holds.
We first consider the case where $(E_0,\nu_0)\in\fT_{\cM}$. By a straightforward application of the proposition \ref{prop:upperbound}, there exists a neighborhood $U$ of $(E_0,\nu_0)$ such that
\[\limsup_{n\rightarrow\infty}\frac{1}{n^{d-1}}\log\Prb\left(\exists \cE_n\in \sC_n(\Gamma^1,\Gamma^2,\Omega): (\bR(\cE_n),\mu_n(\cE_n))\in U\right)\leq -t\,.\]
It follows that
\[\lim_{\ep\rightarrow 0}\liminf_{n\rightarrow\infty}\frac{1}{n^{d-1}}\log\Prb\left(\exists \cE_n\in \sC_n(\ep): (\bR(\cE_n),\mu_n(\cE_n))\in U\right)\leq -t\,.\]
 We now consider the case where $(E_0,\nu_0)\notin\fT_{\cM}$. By proposition \ref{prop:admissible}, there exists 
a neighborhood $U$ of $(E_0,\nu_0)$ such that 
$$\lim_{\ep\rightarrow 0}\liminf_{n\rightarrow \infty}\frac{1}{n^{d-1}}\log \Prb(\exists \cE_n\in\sC_n(\ep):\,(\bR(\cE_n),\mu_n(\cE_n))\in U)\leq -t\,.$$
\end{proof}
To prove theorem \ref{thm:pgd}, it is sufficient to prove that $\widetilde{I}$ is a good rate function, a tightness result and that the local estimates are satisfied (see section 6.2 in \cite{Cerf:StFlour}, even if it is not a real large deviation principle, we can follow exactly the same steps as in section 6.2). Theorem \ref{thm:pgd} is thus a direct consequence of the following proposition.

\begin{prop}\label{prop:goodettightness}

The function $\widetilde{I}$ is a good rate function. There exist positive constants $c$ and $\lambda_0$ such that
\[\forall \lambda\geq \lambda_0 \quad\forall U \in\mathcal{U}\quad \lim_{\ep\rightarrow 0}\limsup_{n\rightarrow \infty}\frac{1}{n^{d-1}}\log \Prb\left(\exists \cE_n\in \sC_n(\ep): (\bR(\cE_n),\mu_n(\cE_n))\notin \widetilde{I}^{-1} ([0,\lambda])+U\right)\leq -c\lambda\,.\]
Moreover, the following local estimates are satisfied 
\begin{align*}
\forall (E,\nu)\in \cB(\sR^d)\times \cM(\sR^d)&\quad\forall U\in\cU \\ &\lim_{ \ep\rightarrow 0}\liminf_{n\rightarrow\infty}\frac{1}{n^{d-1}}\log\Prb\left(\exists\cE_n\in\sC_n(\ep):(\bR(\cE_n),\mu_n(\cE_n))\in (E,\nu)+U\right)\geq -\widetilde{I}(E,\nu)\,,
\end{align*}
\begin{align*}
\forall (E,\nu)\in \cB(\sR^d)&\times \cM(\sR^d) \text{  such that  } \widetilde I (E,\nu)<\infty, \,\forall \ep>0 \quad \exists U\in \cU \\ &\hfill\lim_{ \ep\rightarrow 0}\limsup_{n\rightarrow\infty}\frac{1}{n^{d-1}}\log\Prb\left(\exists\cE_n\in\sC_n(\ep):(\bR(\cE_n),\mu_n(\cE_n))\in (E,\nu)+U\right)\leq -(1-\ep)\widetilde{I}(E,\nu)\,.
\end{align*}
\end{prop}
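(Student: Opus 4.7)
The plan is to establish the three claims in sequence, leveraging the propositions already in hand.

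\medskip

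\noindent\textbf{Local estimates.} The local lower estimate is essentially a restatement of Proposition \ref{prop:lowerbound}: when $\widetilde{I}(E,\nu) = +\infty$ the asserted bound is vacuous, and when $\widetilde{I}(E,\nu) < \infty$ we necessarily have $(E,\nu) = (E, f\cH^{d-1}|_\fE) \in \fT_\cM$, so Proposition \ref{prop:lowerbound} delivers the liminf $\geq -\widetilde{I}(E,\nu)$ in every neighborhood. For the local upper estimate, the hypothesis $\widetilde{I}(E,\nu) < \infty$ again forces $(E,\nu) \in \fT_\cM$, and Proposition \ref{prop:upperbound} applied with $\delta_0 = \ep$ furnishes a neighborhood $U$ on which $\limsup_n \tfrac{1}{n^{d-1}}\log \Prb(\cdots) \leq -(1-\ep)\cI(E,f) = -(1-\ep)\widetilde I(E,\nu)$.

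\medskip

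\noindent\textbf{Exponential tightness.} Given $\lambda \geq 1$, lemma \ref{lem:Zhang} produces $\beta_\lambda>0$ such that
\[
\Prb\bigl(\exists\,\cE_n \in \sC_n(\Gamma^1,\Gamma^2,\Omega): V(\cE_n)\leq 11d^2M\cH^{d-1}(\Gamma^1)n^{d-1},\ \card(\cE_n)\geq \beta_\lambda n^{d-1}\bigr) \leq C_1 e^{-2\lambda n^{d-1}}.
\]
As in step 1 of Proposition \ref{prop:admissible}, for $\ep\leq d^2M\cH^{d-1}(\Gamma^1)$ every $\cE_n \in \sC_n(\ep)$ satisfies $V(\cE_n)\leq 11 d^2 M \cH^{d-1}(\Gamma^1)n^{d-1}$, so on the complementary good event $\cP(\bR(\cE_n),\Omega)\leq \beta_\lambda$, $\mu_n(\cE_n)(\sR^d)\leq 11 d^2M \cH^{d-1}(\Gamma^1)$, and $\mu_n(\cE_n)$ is supported in $\overline{\cV_\infty(\Omega,1)}$. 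Thus $(\bR(\cE_n),\mu_n(\cE_n))$ lies in the compact set
\[
K_\lambda := \sC_{\beta_\lambda} \times \bigl\{\mu \in \cM(\sR^d): \mu(\sR^d)\leq 11d^2M\cH^{d-1}(\Gamma^1),\ \mathrm{supp}(\mu)\subseteq \overline{\cV_\infty(\Omega,1)}\bigr\}.
\]
The residual set $K_\lambda \setminus (\widetilde I^{-1}([0,\lambda])+U)$ is closed in $K_\lambda$ (by lower semicontinuity of $\widetilde I$, Proposition \ref{prop:lsc}), hence compact. At each of its points $(E_0,\nu_0)$ we produce a suitable neighborhood: if $(E_0,\nu_0) \notin \fT_\cM$, Proposition \ref{prop:admissible} with $K=2\lambda$ gives one on which the probability is at most $e^{-2\lambda n^{d-1}}$; if $(E_0,\nu_0)\in \fT_\cM$ then $\widetilde I(E_0,\nu_0) > \lambda$ and Proposition \ref{prop:upperbound} with $\delta_0 = 1/2$ gives one on which the probability is at most $e^{-\lambda n^{d-1}/2}$. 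A finite subcover, together with the Zhang bound above and lemma \ref{lem:estimeanalyse}, yields the desired tightness estimate with $c = 1/2$ and $\lambda_0$ of order $1$.

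\medskip

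\noindent\textbf{Good rate function.} Lower semicontinuity of $\widetilde I$ is already Proposition \ref{prop:lsc}, so level sets are closed; we only need precompactness of $\widetilde I^{-1}([0,\lambda])$. Fix $\lambda_0$ so large that $2\lambda_0 > \lambda$ in the Zhang estimate. Suppose for contradiction that $(E,\nu) \in \widetilde I^{-1}([0,\lambda])$ with $(E,\nu)\notin K_{\lambda_0}$. Since $K_{\lambda_0}$ is closed we can pick $U \in \cU$ with $((E,\nu)+U)\cap K_{\lambda_0} = \emptyset$, so the probability that $(\bR(\cE_n),\mu_n(\cE_n))\in (E,\nu)+U$ is bounded above by the probability that $(\bR(\cE_n),\mu_n(\cE_n))\notin K_{\lambda_0}$, which is at most $C_1 e^{-2\lambda_0 n^{d-1}}$. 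But the local lower estimate from the first paragraph forces $\liminf_n \tfrac{1}{n^{d-1}}\log \Prb(\cdots \in (E,\nu)+U)\geq -\lambda > -2\lambda_0$, a contradiction. Hence $\widetilde I^{-1}([0,\lambda]) \subseteq K_{\lambda_0}$, which is compact, completing the proof.

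\medskip

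\noindent The main obstacle is the tightness step: one must thread Zhang's lemma (giving an a priori perimeter bound on essentially-minimal cutsets) through the compactness-reduction, and match the decay exponent in Propositions \ref{prop:admissible} and \ref{prop:upperbound} to an absolute constant $c$ independent of $\lambda$ and $U$. The good-rate-function conclusion is then a short logical consequence of tightness plus the local lower bound, in the spirit of standard large deviations theory.
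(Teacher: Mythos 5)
Your proposal is correct and follows essentially the same structure as the paper's proof: Zhang's cardinality bound provides exponential confinement to a compact set of pairs, Propositions \ref{prop:admissible} and \ref{prop:upperbound} supply the covering neighborhoods for the tightness estimate, and the local estimates are direct restatements of Propositions \ref{prop:lowerbound} and \ref{prop:upperbound}.

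A few remarks on where you diverge from the paper, none of which is a gap. In the tightness step the paper obtains $c=1$ by keeping the parameters $K$ (in Proposition \ref{prop:admissible}) and $\ep_0$ (the $\delta_0$ in Proposition \ref{prop:upperbound}) free all the way through the finite subcover, arriving at $-\min((1-\ep_0)\lambda, K)$ and then letting $K\to\infty$, $\ep_0\to 0$; you fix $\delta_0=1/2$ and $K=2\lambda$ up front and land on $c=1/2$, which is weaker but of course suffices for the stated ``there exist positive constants $c$ and $\lambda_0$.'' The paper also chooses a different decomposition of $\sC_\beta\times\mathbb{M}$: rather than covering only the residual set $K_\lambda\setminus(\widetilde{I}^{-1}([0,\lambda])+U)$, they cover all of $\sC_\beta\times\mathbb{M}$ but shrink the neighborhoods of the sub-level points to lie inside $\widetilde{I}^{-1}([0,\lambda])+U$, so those contribute probability zero to the bad event. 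For the good-rate-function step, your contradiction via disjointness of $(E,\nu)+U$ from the compact $K_{\lambda_0}$ is equivalent to the paper's version, which invokes lower semicontinuity of $F\mapsto\cP(F,\Omega)$ directly to manufacture a neighborhood $U_0$ of $E$ with $\cP(\cdot,\Omega)\geq\beta$ on $U_0$ and then contradicts the Zhang rate with Proposition \ref{prop:lowerbound}; note your parenthetical ``by lower semicontinuity of $\widetilde{I}$'' for closedness of the residual set is unnecessary (openness of $\widetilde{I}^{-1}([0,\lambda])+U$ already follows from $U$ being open), while lower semicontinuity of $\widetilde{I}$ is what you correctly use for closedness of the level sets. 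Finally, when $(E_0,\nu_0)\in\fT_\cM$ with $\widetilde I(E_0,\nu_0)=+\infty$, you should invoke the second branch of Proposition \ref{prop:upperbound} (with $t=\lambda/2$) rather than the first branch ``with $\delta_0=1/2$''; the first branch only applies when $\cI(E,f)<\infty$. This is a small omission and does not affect the validity of the argument.
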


\begin{proof}[Proof of proposition \ref{prop:goodettightness}]

{\noindent \bf Step 1. We prove that $\widetilde{I}$ is a good rate function.} 
 Let us prove that its level sets are compact.
Set
\begin{align}\label{eq:setM}
\mathbb{M}=\left\{\nu\in\cM(\sR^d):\nu(\cV_2(\Omega,1))\leq 11d^2M \cH^{d-1}(\Gamma^1), \,\nu(\cV_2(\Omega,1)^c)=0\right\}\,.
\end{align} 
 The set $\mathbb{M}$ is relatively compact for the weak topology by Prohorov theorem. Let us check that this set is closed. Let $(\nu_n)_{n\geq 1}$ be a sequence of elements in $\mathbb{M}$ that converges weakly towards $\nu$.
By Portmanteau theorem, we have 
$$\nu(\cV_2(\Omega,1)^c)\leq \liminf_{n\rightarrow\infty}\nu_n(\cV_2(\Omega,1)^c)=0\,$$
and
$$\nu(\sR^d)\leq \liminf_{n\rightarrow\infty}\nu_n(\sR^d)\leq 11d^2M\cH^{d-1}(\Gamma^1)\,.$$
It follows that 
$$\nu(\cV_2(\Omega,1))\leq 11d^2M\cH^{d-1}(\Gamma^1)$$
and $\nu\in\mathbb{M}$. The set $\mathbb{M}$ is compact for the weak topology.
 Let $\lambda>0$. 
By lemma \ref{lem:Zhang}, there exists $\beta>0$ depending on $\lambda$ and $\Omega$ such that  
$$\forall n\geq 1\qquad \Prb(\exists \cE_n \in\sC_n(\Gamma^1,\Gamma^2,\Omega): V(\cE_n)\leq 11d^2M \cH^{d-1}(\Gamma^1) n^{d-1},\,\card(\cE_n)\geq \beta n ^{d-1})\leq \exp(-2\lambda n^{d-1})$$
and so
\begin{align}\label{eq:contradiper}
\liminf_{n\rightarrow \infty }\frac{1}{n^{d-1}}\log \Prb\left(\exists \cE_n \in\sC_n(\Gamma^1,\Gamma^2,\Omega):\, V(\cE_n)\leq 11d^2M \cH^{d-1}(\Gamma^1) n^{d-1},\, \card(\cE_n)\geq \beta n ^{d-1}\right)\leq -2\lambda\,.
\end{align}
Let us assume there exists $(E,\nu)\in\fT_{\cM}$ such that $\cP(E,\Omega)>\beta$ and $\widetilde{I}(E,\nu)\leq \lambda$.
Since $F\mapsto \cP(F,\Omega)$ is lower semi-continuous, there exists a neighborhood $U_0$ of $E$ such that for any $F\in U_0$, $\cP(F,\Omega)\geq \beta$.
Let $U_1$ be a neighborhood of $\nu$. By proposition \ref{prop:lowerbound}, we have 
$$ -\widetilde I (E,\nu)\leq \lim_{\ep\rightarrow 0}\liminf_{n\rightarrow\infty}\frac{1}{n^{d-1}}\log \Prb(\exists \cE_n\in\sC_n(\ep):\,(\bR(\cE_n),\mu_n(\cE_n))\in U_0\times U_1)\,.$$
Using that $\widetilde I (E,\nu)\leq \lambda$, it follows that 
\begin{align*}
-\lambda&\leq \lim_{\ep\rightarrow 0}\liminf_{n\rightarrow\infty}\frac{1}{n^{d-1}}\log \Prb(\exists \cE_n\in\sC_n(\ep):\,(\bR(\cE_n),\mu_n(\cE_n))\in U_0\times U_1)\\&\leq \liminf_{n\rightarrow \infty }\frac{1}{n^{d-1}}\log \Prb(\exists \cE_n \in\sC_n(\Gamma^1,\Gamma^2,\Omega):\, V(\cE_n)\leq 11d^2M \cH^{d-1}(\Gamma^1) n^{d-1},\, \bR(\cE_n)\in U_0)\\
&\leq \liminf_{n\rightarrow \infty }\frac{1}{n^{d-1}}\log \Prb(\exists \cE_n \in\sC_n(\Gamma^1,\Gamma^2,\Omega):\, V(\cE_n)\leq 11d^2M \cH^{d-1}(\Gamma^1) n^{d-1},\,\cP(\bR(\cE_n),\Omega)\geq \beta)\\
&=\liminf_{n\rightarrow \infty }\frac{1}{n^{d-1}}\log \Prb(\exists \cE_n \in\sC_n(\Gamma^1,\Gamma^2,\Omega):\, V(\cE_n)\leq 11d^2M \cH^{d-1}(\Gamma^1) n^{d-1},\, \card(\cE_n)\geq \beta n ^{d-1})\,,
\end{align*}
this contradicts inequality \eqref{eq:contradiper}. Consequently, if $\widetilde{I}(E,\nu)\leq \lambda$, then $\cP(E,\Omega)\leq \beta$.
It follows that the set $\widetilde{I}^{-1}([0,\lambda])\subset \sC_\beta\times \mathbb{M}$. Since the set $\sC_\beta$ is compact for the topology associated to the distance $\dis$ and $\mathbb{M}$ is compact for the weak topology, then the set $\sC_\beta\times \mathbb{M}$ is compact for the associated product topology. Besides, since $\widetilde{I}$ is lower semi-continuous, its level sets are closed. It follows that $\widetilde{I}^{-1}([0,\lambda])$ is compact. 
This implies that $\widetilde{I}$ is a good rate function.

{\noindent \bf Step 2. We prove the $\widetilde{I}$-tightness.} Let $\lambda \geq 0$. Let $U \in\mathcal{U}$. Let $K>0$. Let $\ep_0>0$.
For any $(E,\nu)\in \fT_{\cM}$ such that $\widetilde{I}(E,\nu)<+\infty$, by proposition \ref{prop:upperbound}, there exists a neighborhood $U_{(E,\nu)}$ such that 
$$\limsup_{n\rightarrow \infty }\frac{1}{n^{d-1}}\log \Prb(\exists\cE_n\in\sC_n(\Gamma^1,\Gamma^2,\Omega): \,(\bR(\cE_n),\mu_n(\cE_n))\in U_{(E,\nu)})\leq -(1-\ep_0)\widetilde{I}(E,\nu)\,.$$
For $(E,\nu)$ such that $\widetilde{I}(E,\nu)\leq \lambda$, up to taking $U_{(E,\nu)}\cap (\widetilde{I}^{-1}([0,\lambda]+ U)$, we can assume that $U_{(E,\nu)}\subset (\widetilde{I}^{-1}([0,\lambda]+U)$.
By proposition \ref{prop:upperbound}, for $(E,\nu)\in\fT_{\cM}$ such that $\widetilde{I}(E,\nu)=+\infty$, there exists a neighborhood $U_{(E,\nu)}$ such that 
$$\limsup_{n\rightarrow \infty }\frac{1}{n^{d-1}}\log \Prb(\exists\cE_n\in\sC_n(\Gamma^1,\Gamma^2,\Omega): \,(\bR(\cE_n),\mu_n(\cE_n))\in U_{(E,\nu)})\leq -K\,.$$
Using lemma \ref{lem:Zhang}, there exists $\beta>0$ depending on $\lambda$ and $\Omega$ such that  
$$\limsup_{n\rightarrow \infty }\frac{1}{n^{d-1}}\log \Prb \left(\exists \cE_n \in\sC_n(\Gamma^1,\Gamma^2,\Omega):\, V(\cE_n)\leq 11d^2M \cH^{d-1}(\Gamma^1) n^{d-1},\, \card(\cE_n)\geq \beta n ^{d-1}\right)\leq -\lambda\,.$$
For $(E,\nu)\in\sC_\beta \times \mathbb{M}\setminus \fT_{\cM}$, by proposition \ref{prop:admissible},
there exists a neighborhood $U_{(E,\nu)}$ such that 
$$\lim_{\ep\rightarrow 0}\limsup_{n\rightarrow \infty }\frac{1}{n^{d-1}}\log \Prb(\exists\cE_n\in\sC_n(\ep): \,(\bR(\cE_n),\mu_n(\cE_n))\in U_{(E,\nu)})\leq -K\,.$$
The set $\sC_\beta=\{E\in\cB(\sR^d): \cP(E,\Omega)\leq \beta\}$ is compact for the topology associated to the distance $\dis$. The set $\mathbb{M}$ defined in \eqref{eq:setM} is compact for the weak topology.
Therefore, we can extract from $(U_{(E,\nu)},(E,\nu)\in\sC_\beta \times \mathbb{M})$ a finite covering 
$(U_{(E_i,\nu_i)})_{i=1,\dots, N}$ of $\sC_\beta\times \mathbb{M}$. Let $\ep>0$.
We have
\begin{align*}
\Prb&\left(\exists \cE_n\in\sC_n(\ep):\,(\bR(\cE_n),\mu_n(\cE_n))\notin \widetilde{I}^{-1} ([0,\lambda])+U\right)\\&\leq \sum_{i=1}^N\Prb\left(\exists \cE_n\in\sC_n(\ep):\,(\bR(\cE_n),\mu_n(\cE_n))\notin \widetilde{I}^{-1} ([0,\lambda])+U,\,(\bR(\cE_n),\mu_n(\cE_n))\in U_{(E_i,\nu_i)} \right)\\
&\quad+\Prb\left( \exists \cE_n \in\sC_n(\Gamma^1,\Gamma^2,\Omega):\, V(\cE_n)\leq 11d^2M \cH^{d-1}(\Gamma^1) n^{d-1},\, \card(\cE_n)\geq \beta n ^{d-1}\right)\,.
\end{align*}
If $\widetilde{I}(E_i,\nu_i)\leq\lambda$, since by construction $U_{(E_i,\nu_i)}\subset (\widetilde{I}^{-1} ([0,\lambda])+U)$, then
$$\Prb\left(\exists \cE_n\in\sC_n(\ep):\,(\bR(\cE_n),\mu_n(\cE_n))\in (\widetilde{I}^{-1} ([0,\lambda])+U) ^c\cap  U_{(E_i,\nu_i)} \right)=0\,.$$
By lemma \ref{lem:estimeanalyse}, it follows that
\begin{align*}
\lim_{\ep\rightarrow 0}\limsup_{n\rightarrow\infty}\frac{1}{n^{d-1}}&\log\Prb\left(\exists \cE_n\in\sC_n(\ep):\,(\bR(\cE_n),\mu_n(\cE_n))\notin \widetilde{I}^{-1} ([0,\lambda])+U\right)\\
&\leq -\min\left((1-\ep_0)\min\left\{\widetilde{I}(E_i,\nu_i):\,i=1,\dots,N,\,\widetilde{I}(E_i,\nu_i)\in]\lambda,\infty[\right\},K\right)\\&\leq -\min((1-\ep_0)\lambda,K)\,.
\end{align*}
By letting first $K$ go to infinity and then $\ep_0$ go to $0$, we obtain 
\begin{align*}
\lim_{\ep\rightarrow 0}\limsup_{n\rightarrow\infty}\frac{1}{n^{d-1}}\log\Prb\left(\exists \cE_n\in\sC_n(\ep):\,(\bR(\cE_n),\mu_n(\cE_n))\notin \widetilde{I}^{-1} ([0,\lambda])+U\right)\leq -\lambda\,.
\end{align*}
\noindent {\bf Step 3. We prove the local estimates.}
Finally, note that the local estimates are direct consequences of propositions \ref{prop:lowerbound} and \ref{prop:upperbound}.
This concludes the proof.

\end{proof}

We now study the lower large deviations for the maximal flow $\phi_n$. 
We recall the definition of $J$:
$$\forall \lambda\geq 0\qquad J(\lambda)=\inf\left\{\widetilde{I}(E,\nu):\,(E,\nu)\in\cB(\sR^d)\times \cM(\sR^d),\,\nu(\sR^d)=\lambda\right\}\,.$$
Let $\lambda_{min}$ be defined by $$\lambda_{min}=\inf\{\lambda\geq 0: J(\lambda)<\infty\}\,.$$
We first prove this intermediate result on $J$.
\begin{prop}\label{prop:propJ}
The function $J$ is lower semi-continuous. The function $J$ is finite on $]\lambda_{min},\phi_\Omega]$ Moreover, $J$ is decreasing on $]\lambda_{min},\phi_\Omega]$, $J(\lambda)=\infty$ for $\lambda\in[0,\lambda_{min}[\cup ]\phi_{\Omega},+\infty[$ and
\begin{align*}
\forall \lambda\in]0,\phi_\Omega[\qquad -J(\lambda^-)&\leq \liminf_{n\rightarrow\infty}\frac{1}{n^{d-1}}\log\Prb(\phi_n(\Gamma^1,\Gamma^2,\Omega)\leq\lambda n ^{d-1})\\
&\leq \limsup_{n\rightarrow\infty}\frac{1}{n^{d-1}}\log\Prb(\phi_n(\Gamma^1,\Gamma^2,\Omega)\leq\lambda n ^{d-1})\leq-J(\lambda)\,,
\end{align*}
where $J(\lambda^-)$ is the left hand limit of $J$ at $\lambda$.
\end{prop}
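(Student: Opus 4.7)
The plan is to deduce the lower large deviation principle for $\phi_n/n^{d-1}$ from the cutset-level LDP of Theorem~\ref{thm:pgd} by a contraction-type argument. The starting identity is
\[
\{\phi_n \leq \lambda n^{d-1}\} \;=\; \{\exists\, \cE_n \in \sC_n(\ep) : \mu_n(\cE_n)(\sR^d) \leq \lambda\},
\]
valid for every $\ep \geq 0$ because the minimal cutset $\cE_n^{min} \in \sC_n(0) \subset \sC_n(\ep)$ realizes $\phi_n$. First I would fix $g \in \sC_c(\sR^d,\sR)$ with $g \equiv 1$ on $\cV_2(\Omega,1)$ so that $\mu_n(\cE_n)(g) = \mu_n(\cE_n)(\sR^d)$ for every $n \geq 1$ and every cutset $\cE_n$; since $\nu \mapsto \nu(g)$ is continuous for the weak topology, the set $\cB(\sR^d) \times \{\nu : \nu(g) \leq \lambda\}$ is closed and $\cB(\sR^d) \times \{\nu : \nu(g) < \lambda\}$ is open. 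Applying Theorem~\ref{thm:pgd} to these sets, together with the observation that $\widetilde I < \infty$ forces $\nu$ to be supported in $\overline \Omega$ so that $\nu(g) = \nu(\sR^d)$, reduces the two variational problems to $\inf_{\mu \leq \lambda} J(\mu)$ and $\inf_{\mu < \lambda} J(\mu)$ respectively, yielding the bounds $-J(\lambda^-) \leq \liminf \leq \limsup \leq -J(\lambda)$ once the monotonicity of $J$ is established.

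For the structural properties of $J$, I would start with lower semi-continuity: given $\lambda_n \to \lambda$ with $\liminf J(\lambda_n) < \infty$, pick near-optimizers $(E_n,\nu_n) \in \fT_\cM$; Step~1 of Proposition~\ref{prop:goodettightness} bounds their perimeter uniformly, so they live in the compact set $\sC_\beta \times \mathbb{M}$. A convergent subsequence, combined with continuity of $\nu \mapsto \nu(g)$ on $\mathbb M$ and the lower semi-continuity of $\widetilde I$ (Proposition~\ref{prop:lsc}), yields $J(\lambda) \leq \liminf J(\lambda_n)$. For $\lambda > \phi_\Omega$, the minimality condition in $\fT$ combined with $f \leq \nu_G(n_\bullet)$ implies $\capa(E,f) \leq \cI_\Omega(F)$ for every competitor $F$, hence $\capa(E,f) \leq \phi_\Omega$, giving $J(\lambda) = \infty$. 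Taking $E^* \in \Sigma^a$ and $f^* = \nu_G(n_\bullet)$ yields a pair in $\fT$ with capacity $\phi_\Omega$ and zero rate, so $J(\phi_\Omega) = 0$.

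Finiteness on $]\lambda_{min}, \phi_\Omega]$ and the decreasing property both rest on a scaling lemma: whenever $(E,f) \in \fT$, the pair $(E,tf)$ lies in $\fT$ for every $t \in [0,1]$, because multiplying the minimality inequality by $t \leq 1$ only weakens the non-negative boundary terms on its right-hand side. Applied to the extremal pair $(E^*, \nu_G|_{\fE^*})$, this produces the explicit bound $J(t\phi_\Omega) \leq \int \cJ_{n_\bullet(x)}(t\nu_G(n_\bullet(x)))\,d\cH^{d-1}(x)$ for $t \in [0,1]$, finite on the subinterval where $t\nu_G$ stays in the effective domain of each $\cJ_v$. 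To get the inequality $J(\lambda_1) \geq J(\lambda_2)$ for $\lambda_1 < \lambda_2 \leq \phi_\Omega$, I would combine the elementary monotonicity $\Prb(\phi_n \leq \lambda_1 n^{d-1}) \leq \Prb(\phi_n \leq \lambda_2 n^{d-1})$ with the upper and lower LDP bounds to obtain $J(\lambda_2) \leq J(\lambda_1^-)$, and then deform a near-optimizer at $\lambda_1$ via the scaling $(E,f) \mapsto (E,tf)$ with $t \uparrow 1$ to refine this to the pointwise statement through continuity of the integrated rate. The main obstacle will be precisely this last refinement: the scaling preserves $\fT$-membership only in the downward direction, so one cannot simply push the capacity of an optimizer upward, and disentangling $J(\lambda_1^-)$ from $J(\lambda_1)$ requires a careful interplay between the scaling, the lower semi-continuity of $J$, and the two-sided LDP bounds — a delicate point because $J$ is not a priori left-continuous.
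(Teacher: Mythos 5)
The overall strategy -- deduce the $\phi_n$-LDP from Theorem~\ref{thm:pgd} by contraction through the total-mass functional $\nu\mapsto\nu(g)$, establish the structural properties of $J$, and close the loop -- is the same as the paper's, and your identity $\{\phi_n\leq\lambda n^{d-1}\}=\{\exists\,\cE_n\in\sC_n(\ep):\mu_n(\cE_n)(\sR^d)\leq\lambda\}$ and the reductions of the two variational problems to $\inf_{\mu\leq\lambda}J(\mu)$ and $\inf_{\mu<\lambda}J(\mu)$ are all correct, as are your arguments for lower semi-continuity, $J(\phi_\Omega)=0$, and $J=+\infty$ on $]\phi_\Omega,\infty[$. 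The scaling lemma $(E,f)\mapsto(E,tf)$, $t\in[0,1]$, is also a valid observation.

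The gap is the decreasing property of $J$ on $]\lambda_{min},\phi_\Omega]$, and it is fatal for two reasons. First, the argument you sketch is circular: the LDP bounds in the form $-J(\lambda^-)\leq\liminf\leq\limsup\leq -J(\lambda)$ already presuppose that $\inf_{\mu\leq\lambda}J(\mu)=J(\lambda)$ and $\inf_{\mu<\lambda}J(\mu)=J(\lambda^-)$, i.e., that $J$ is decreasing; the unconditional output of Theorem~\ref{thm:pgd} is only in terms of $\inf_{\mu\leq\lambda}J(\mu)$, and feeding the elementary inclusion $\{\phi_n\leq\lambda_1 n^{d-1}\}\subset\{\phi_n\leq\lambda_2 n^{d-1}\}$ back into those bounds yields only the tautology that $\lambda\mapsto\inf_{\mu\leq\lambda}J(\mu)$ is nonincreasing, not that $J$ itself is. Second, the scaling lemma moves mass in the wrong direction: it produces pairs $(E,tf)$ with smaller mass $t\capa(E,f)$ and \emph{larger} rate $\cI(E,tf)\geq\cI(E,f)$ (since $\cJ_v$ is decreasing), so it gives an upper bound on $J$ at lower masses, never the inequality $J(\lambda')\geq J(\lambda)$ for $\lambda'<\lambda$ that you need. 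You rightly flag this as the obstacle, but do not resolve it.

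The missing idea, which is the heart of the paper's Step~1, is not a multiplicative rescaling of $f$ but an \emph{affine interpolation of measures}. Given $(E,f)\in\fT$ with $\capa(E,f)=\lambda'$, set $f_E=\nu_G(n_\bullet)\ind_\fE$ as in \eqref{eq:deffE} and $\mu_E=f_E\,\cH^{d-1}|_\fE$, so that $\mu_E(\sR^d)=\cI_\Omega(E)\geq\phi_\Omega$ and $\cI(E,f_E)=0$. For any target mass $\lambda\in]\lambda',\phi_\Omega[$ one can find $\alpha\in]0,1[$ with $\alpha\lambda'+(1-\alpha)\cI_\Omega(E)=\lambda$, and the convexity of each $\cJ_v$ gives $\cI(E,\alpha f+(1-\alpha)f_E)\leq\alpha\,\cI(E,f)$. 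This pushes the mass \emph{upward} while strictly \emph{shrinking} the rate -- exactly the opposite of your scaling -- and minimizing over $(E,f)$ at mass $\lambda'$ yields $J(\lambda)\leq\alpha J(\lambda')<J(\lambda')$, which is the strict decrease. This same argument then gives $\inf_{\mu\leq\lambda}J(\mu)=J(\lambda)$ for $\lambda\leq\phi_\Omega$, and finiteness propagates from a single point in $]\lambda_{min},\phi_\Omega]$ to the whole interval; without it your explicit bound $J(t\phi_\Omega)\leq\int\cJ_{n_\bullet}(t\nu_G(n_\bullet))\,d\cH^{d-1}$ only covers the subinterval where $t\nu_G$ stays in the effective domain of each $\cJ_v$, which need not reach down to $\lambda_{min}$.
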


\begin{proof}[Proof of proposition \ref{prop:propJ}]
{\noindent \bf Step 1. We prove that the infimum in the definition of $J$ is attained and that the function $J$ has the desired properties.} Let $\lambda\in[0,\phi_\Omega]$ such that $J(\lambda)<\infty$.
Let $t>J(\lambda)$. Let $\beta$ be such that for any $(E,\nu)\in\fT_{\cM}$ such that $\widetilde{I}(E,\nu)\leq t $ then $\cP(E,\Omega)\leq \beta$ (see proof of proposition \ref{prop:goodettightness}, step 1).
It follows that
$$J(\lambda)=\inf\left\{\widetilde{I}(E,\nu):\,(E,\nu)\in\sC_\beta\times\mathbb{M},\,\nu(\sR^d)=\lambda\right\}\,.$$
Let $(\nu_n)_{n\geq 1}$ be a sequence such that for any $n\geq 1$, $\nu_n(\sR^d)=\lambda$ and the sequence weakly converges towards $\nu$. It yields that $\nu(\sR^d)=\lambda$.
Consequently, the set 
$$\left\{\,(E,\nu)\in\sC_\beta\times\mathbb{M},\,\nu(\sR^d)=\lambda\right\}$$
is compact.
Since the function $\widetilde{I}$ is lower semi-continuous, it attains its minimum over this set: there exists $(E,\nu)\in\fT_{\cM}$ such that $\nu(\sR^d)=\lambda$ and $J(\lambda)=\widetilde{I}(E,\nu)$.

Let $\lambda\in]\lambda_{min}, \phi_{\Omega}[$.
Let $(E,\mu)\in\fT_{\cM}$ such that $\mu(\sR^d)=\lambda'\in]\lambda_{min},\lambda[$ . We write $\mu=f \cH^{d-1}|_{\fE}$ with $(E,f)\in\fT$.
Let $f_E$ be the function defined by
\begin{align}\label{eq:deffE}
\forall x\in\fE \qquad f_E(x)=\left\{\begin{array}{lll} \nu_G(n_E(x))&\mbox{if}& x\in \partial ^* E\cap \Omega \\
\nu_G(n_\Omega(x))&\mbox{if}& x \in \partial^* \Omega  \cap( (\Gamma^1\setminus \overline{\partial^*E})\cup (\Gamma ^2\cap \partial ^*E))
\end{array}
\right.
\end{align}
We set $\mu_E=f_E\cH^{d-1}|_{\fE}$.
By construction, we have 
$$\mu_E(\sR^d)=\cI_{\Omega}(E)\geq \phi_{\Omega}\,.$$
Since $\lambda\in ]\lambda',\phi_{\Omega}[$, there exists $\alpha \in]0,1[$ such that
$(\alpha \mu+ (1-\alpha)\mu_E)(\sR^d)=\lambda$.
By convexity of $\cJ_{\vv}$ we have
$$J(\lambda)\leq \cI(E,\alpha f+(1-\alpha)f_E)\leq \alpha \cI(E,f)+(1-\alpha)\cI(E,f_E)\leq \alpha  \cI(E,f)\,$$
where we use the fact that $\cJ_{n_E(x)}(\nu_G(n_E(x))=0$ and $\cJ_{n_\Omega(x)}(\nu_G(n_\Omega(x))=0$.
It follows that 
\begin{align}\label{eq:decJ}
\widetilde{I}(E,\mu)\geq \frac{1}{\alpha}J(\lambda)> J(\lambda)\,.
\end{align}
It follows that for $\lambda\leq \phi_\Omega$,
\begin{align*}
J(\lambda)
&= \inf\left\{\widetilde{I}(E,\nu):\,(E,\nu)\in\cB(\sR^d)\times \cM(\sR^d),\,\nu(\sR^d)\leq \lambda\right\}\,.
\end{align*}
Taking the infimum in \eqref{eq:decJ} for any $(E,\nu)$ such that $\mu(\sR^d)=\lambda'$, we obtain
$J(\lambda')>J(\lambda)$.
Hence, the function $J$ is decreasing on $]\lambda_{min},\phi_{\Omega}[$. It follows that $J$ is finite on $]\lambda_{min},\phi_{\Omega}]$. We claim that $J(\phi_\Omega)=0$. By theorem \ref{thm:CerfTheret}, the set $\Sigma^a$ is not empty, there exists $F\subset \Omega$ such that $\cI_\Omega(F)=\phi_\Omega=\capa(F,\nu_G(n_\bullet)\ind_\fF)$. It is easy to check that $(F,\nu_G(n_\bullet)\ind_\fF)\in\fT$ and $\cI(F,\nu_G(n_\bullet)\ind_\fF)=0$ using that $\cJ_{n_\bullet}(\nu_G(n_\bullet))=0$ (see theorem \ref{thm:lldtau}). It follows that $J(\phi_\Omega)=0$.

Moreover for any $(E,\nu)\in\fT_{\cM}$, the condition of minimality implies that $\nu(\sR^d)\leq\capa(F,\nu_G(n_\bullet)\ind_\fF)= \phi_\Omega$. It follows that for any $\lambda>\phi_\Omega$, we have $J(\lambda)=+\infty$.

{\noindent \bf Step 2. We prove a lower bound.}
Let $\lambda>0$. 
We aim to prove that 
\begin{align}\label{eq:pgdflowliminf}
\forall \delta>0 \qquad -J(\lambda)\leq \liminf_{n\rightarrow\infty}\frac{1}{n^{d-1}}\log \Prb\left(\phi_n(\Gamma^1,\Gamma^2,\Omega)\leq(\lambda+\delta)n ^{d-1}\right)\,.
\end{align}
The result is clear if $J(\lambda)=+\infty$. Let us now assume that $J(\lambda)<\infty$.
Let $(E,\nu)\in\fT_{\cM}$ such that $\widetilde{I}(E,\nu)=J(\lambda)$ and $\nu(\sR^d)=\lambda$. 
 Let $g\in\sC_c(\sR^d,\sR)$ such that $g=1$ on $\cV_2(\Omega,1)$ and $g\geq 0$. Hence, we have $\nu(g)=\lambda$. Let $\delta>0$.
Let $U_0$ be a neighborhood of $E$ and $U_1=\{\mu\in\cM(\sR^d):|\mu(g)-\nu(g)|\leq \delta\}$.
We have
\begin{align*}
\forall \ep>0 \qquad \Prb(\exists \cE_n\in\sC_n(\ep):\,(\bR(\cE_n),\mu_n(\cE_n))\in U_0\times U_1)
&\leq \Prb \left(\exists \cE_n \in \sC_n(\Gamma^1,\Gamma^2,\Omega):\, V(\cE_n)\leq (\lambda+\delta)n^{d-1}\right)\\
&= \Prb (\phi_n(\Gamma^1,\Gamma^2,\Omega)\leq(\lambda+\delta)n ^{d-1})\,.
\end{align*}
Using proposition \ref{prop:lowerbound}, it follows that
\begin{align*}
-J(\lambda)=-\widetilde{I}(E,\nu)\leq \liminf_{n\rightarrow\infty}\frac{1}{n^{d-1}}\log \Prb\left(\phi_n(\Gamma^1,\Gamma^2,\Omega)\leq(\lambda+\delta)n ^{d-1}\right)\,.
\end{align*}

{\noindent \bf Step 3. We prove an upper bound.}
Let $\lambda\in[0,\phi_\Omega]$. We aim to prove that
\begin{align}\label{eq:pgdfluxlimsup}
\forall \delta>0\qquad\limsup_{n\rightarrow\infty}\frac{1}{n^{d-1}}\log\Prb(\phi_n(\Gamma^1,\Gamma^2,\Omega)\leq \lambda n ^{d-1})\leq -J\left(\min(\lambda+\delta,\phi_\Omega)\right).
\end{align}
Let $K>0$, $\delta>0$ and $\ep_0>0$.
Thanks to proposition \ref{prop:upperbound}, to each $(E,\nu)\in\fT_{\cM}$ such that $\widetilde{I}(E,\nu)<\infty$, we can associate a neighborhood $U_{(E,\nu)}$ of $(E,\nu)$ such that
$$\limsup_{n\rightarrow \infty }\frac{1}{n^{d-1}}\log \Prb(\exists\cE_n\in\sC_n(\Gamma^1,\Gamma^2,\Omega): \,(\bR(\cE_n),\mu_n(\cE_n))\in U_{(E,\nu)})\leq -(1-\ep_0)\widetilde{I}(E,\nu)\,.$$
Up to taking $U_{(E,\nu)}\cap(\cB(\sR^d)\times \{\mu\in\cM(\sR^d):|\mu(g)-\nu(g)|< \delta\}$, we can assume that $$U_{(E,\nu)}\subset(\cB(\sR^d)\times \{\mu\in\cM(\sR^d):|\mu(g)-\nu(g)|< \delta\}\,.$$ We recall that $g$ was defined in the previous step.
Since $\widetilde{I}$ is a good rate function, the set $\widetilde{I}^{-1}([0,K])$ is compact
we can extract from $(U_{(E,\nu)}, (E,\nu)\in \widetilde{I}^{-1}([0,K]))$ a finite covering $(U_{(E_i,\nu_i)},i=1,\dots,N)$ of $\widetilde{I}^{-1}([0,K])$.
From lemma 6.6 in \cite{Cerf:StFlour}, there exists a neighborhood $U$ of $0$ such that 
$$\widetilde{I}^{-1}([0,K])+ U \subset \bigcup_{i=1}^NU_{(E_i,\nu_i)}\,.$$
Let $\ep>0$. It is easy to check that the $(\Gamma_n^1,\Gamma_n^2)$-cutset $\cE_n^{min}$ that achieves the minimal capacity (if there are several cutsets that achieve the minimal capacity, we choose one according to a deterministic rule) is in $\sC_n(\ep)$.
Using the previous inclusion, we have
\begin{align*}
\Prb(\phi_n(\Gamma^1,\Gamma^2,\Omega)\leq\lambda n ^{d-1}) &\leq \Prb(\exists \cE_n\in \sC_n(\ep): V(\cE_n)\leq \lambda n ^{d-1})\\
&\leq\sum_{i=1}^N \Prb(\exists \cE_n\in \sC_n(\Gamma_1,\Gamma_2,\Omega): V(\cE_n)\leq \lambda n ^{d-1},(\bR(\cE_n),\mu_n(\cE_n))\in U_{(E_i,\nu_i)} )\\
&+\Prb( \exists \cE_n\in \sC_n(\ep); (\bR(\cE_n),\mu_n(\cE_n))\notin \widetilde{I}^{-1}([0,K])+ U)\,.
\end{align*}
Note that on the event $\{\exists \cE_n\in \sC_n(\Gamma_1,\Gamma_2,\Omega): V(\cE_n)\leq \lambda n ^{d-1}\}$, if for $i\in \{1,\dots,N\}$, $(\bR(\cE_n),\mu_n(\cE_n))\in U_{(E_i,\nu_i)}$ then we have 
$$\nu_i(\sR^d)=\nu_i(g)\leq \mu_n(\cE_n)(g)+\delta \leq \lambda +\delta\,.$$
Consequently, for $i$ such that $\nu_i(\sR^d)>\lambda +\delta$, we have
$$\Prb(\exists \cE_n\in \sC_n(\Gamma_1,\Gamma_2,\Omega): V(\cE_n)\leq \lambda n ^{d-1},(\bR(\cE_n),\mu_n(\cE_n))\in U_{(E_i,\nu_i)} )=0\,.$$
Using proposition \ref{prop:goodettightness}, we have
\begin{align*}
\lim_{\ep\rightarrow 0}\limsup_{n\rightarrow\infty}\frac{1}{n^{d-1}}\log\Prb\left(\exists \cE_n\in\sC_n(\ep):\,(\bR(\cE_n),\mu_n(\cE_n))\notin \widetilde{I}^{-1} ([0,K])+U\right)\leq -c K\,.
\end{align*}
Using lemma \ref{lem:estimeanalyse}, it follows that 
\begin{align*}
\limsup_{n\rightarrow\infty}\frac{1}{n^{d-1}}&\log\Prb(\phi_n(\Gamma^1,\Gamma^2,\Omega)\leq\lambda n ^{d-1})\\&\leq -\min\left((1-\ep_0)\min\{\widetilde{I}(E_i,\nu_i):\nu_i(\sR^d)\leq\lambda+\delta, i=1,\dots,N\} ,cK\right)\\&\leq -\min((1-\ep_0)J\left(\min(\lambda+\delta,\phi_\Omega)\right),cK)\,
\end{align*}
where we use that for any $i\in\{1,\dots,N\}$, since $(E_i,\nu_i)\in\fT_{\cM}$, by the minimality condition, we have $\nu_i(\sR^d)\leq \phi_\Omega$.
If $J\left(\min(\lambda+\delta,\phi_\Omega)\right)=+\infty$, then
by letting $K$ go to infinity, we obtain
\begin{align*}
\limsup_{n\rightarrow\infty}\frac{1}{n^{d-1}}\log\Prb(\phi_n(\Gamma^1,\Gamma^2,\Omega)\leq \lambda n ^{d-1})\leq -\infty = -J\left(\min(\lambda+\delta,\phi_\Omega)\right).
\end{align*}
If $J\left(\min(\lambda+\delta,\phi_\Omega)\right)<\infty$, then
by letting $K$ go to infinity and then $\ep_0$ go to $0$, we obtain
\begin{align*}
\limsup_{n\rightarrow\infty}\frac{1}{n^{d-1}}\log\Prb(\phi_n(\Gamma^1,\Gamma^2,\Omega)\leq \lambda n ^{d-1})\leq -J\left(\min(\lambda+\delta,\phi_\Omega)\right)\,.
\end{align*}
The result follows.

{\noindent \bf Step 4. We prove that the function $J$ is lower semi-continuous.} Let $\lambda>0$.
Let $(\lambda_n)_{n\geq 1}$ be a sequence of non negative real number that converges towards $\lambda$. If $\liminf_{n\rightarrow \infty }J(\lambda_n)=+\infty$ there is nothing to prove. Let us assume that 
$\liminf_{n\rightarrow \infty }J(\lambda_n)<\infty$. Let $\psi$ be an extraction such that
\begin{align}\label{eq:lscJ}
\lim_{n\rightarrow \infty}J(\lambda_{\psi(n)})= \liminf_{n\rightarrow \infty }J(\lambda_n)\,
\end{align}
and for all $n\geq 1$, $J(\lambda_{\psi(n)})<\infty$.
There exists $(E_n,\nu_n)\in\fT_{\cM}$ such that 
$$\nu_n(\sR^d)=\lambda_{\psi(n)}\qquad \text{and}\qquad J(\lambda_{\psi(n)})=\widetilde{I}(E_n,\nu_n)\,.$$
Since the sequence $(J(\lambda_{\psi(n)}))_{n\geq 1}$ converges, there exists $m>0$ such that 
$$\forall n \geq 1\qquad J(\lambda_{\psi(n)})\leq m\,.$$
Since $\widetilde{I}$ is a good rate function, the set $\widetilde{I}^{-1}([0,m])$ is compact. We can extract from the sequence $(E_n,\nu_n)_{n\geq 1}$ a sequence $(E_{\phi(n)},\nu_{\phi(n)})_{n\geq 1}$ that converges towards $(E,\nu)\in \widetilde{I}^{-1}([0,m])$ ($E_n$ converges for the distance $\dis$ and $\nu_n$ converges weakly towards $\nu$). Since $\widetilde{I}$ is lower semi-continuous, we have
$$\lim_{n\rightarrow\infty}J(\lambda_{\psi(n)})=\liminf_{n\rightarrow\infty}J(\lambda_{\psi(\phi(n))})=\liminf_{n\rightarrow \infty}\widetilde{I}(E_{\phi(n)},\nu_{\phi(n)})\geq \widetilde{I}(E,\nu)\,.$$
Since $\nu_{\phi(n)}$ weakly converges towards $\nu$,  we have 
$$\nu(\sR^d)=\lim_{n\rightarrow\infty}\nu_{\phi(n)}(\sR^d)=\lim_{n\rightarrow\infty}\lambda_{\psi(\phi(n))}=\lambda\,.$$
Combining the two previous inequalities we obtain
$$\lim_{n\rightarrow\infty}J(\lambda_{\psi(n)})\geq \widetilde{I}(E,\nu)\geq J(\lambda)\,.$$
By \eqref{eq:lscJ}, we have
$$J(\lambda)\leq \liminf_{n\rightarrow\infty}J(\lambda_n)\,.$$
It follows that $J$ is lower semi-continuous on $\sR^+$.

{\noindent \bf Step 5. Conclusion.}
 Using inequality \eqref{eq:pgdflowliminf}, we have 
\[\forall \lambda> 0\qquad \liminf_{n\rightarrow\infty}\frac{1}{n^{d-1}}\log\Prb(\phi_n(\Gamma^1,\Gamma^2,\Omega)\leq\lambda n ^{d-1})\geq -\lim_{\delta \rightarrow 0}J(\lambda-\delta):=-J(\lambda^-)\]
and using inequality \eqref{eq:pgdfluxlimsup} and the fact that $J$ is lower semi-continuous \[\forall \lambda\in[0,\phi_{\Omega}]\qquad\limsup_{n\rightarrow\infty}\frac{1}{n^{d-1}}\log\Prb(\phi_n(\Gamma^1,\Gamma^2,\Omega)\leq\lambda n ^{d-1})\leq -\lim_{\delta \rightarrow 0}J\left(\min(\lambda+\delta,\phi_\Omega)\right)\leq -J(\lambda)\,.\] 
\end{proof}

\begin{proof}[Proof of theorem \ref{thm:lldmf}]
Note that for any $\lambda\geq \phi_{\Omega}$, we have $J(\lambda)=+\infty$ since by definition there does not exist any $(E,\nu)\in\fT_{\cM}$ such that $\nu(\sR^d)=\lambda>\phi_\Omega$ because of the condition of minimality.

{\bf$\bullet$ Lower bound.}
We prove the local lower bound:
$$\forall \lambda\geq 0 \quad\forall\ep>0\qquad \liminf_{n\rightarrow\infty}\frac{1}{n^{d-1}}\log \Prb\left(\frac{\phi_n(\Gamma^1,\Gamma^2,\Omega)}{n^{d-1}}\in]\lambda-\ep,\lambda+\ep[\right)\geq-J(\lambda)\,.$$
Let $\lambda>0$ and $\ep>0$. If $J(\lambda)=+\infty$, there is nothing to prove. If $\lambda=\phi_\Omega$, since $J(\phi_\Omega)=0$ and by the law of large numbers for $\phi_n(\Gamma^1,\Gamma^2,\Omega)$ (see theorem \ref{thm:CerfTheret}), the result follows.
 Otherwise, $\lambda<\phi_\Omega$ and we have
\begin{align*}
\Prb\left(\frac{\phi_n(\Gamma^1,\Gamma^2,\Omega)}{n^{d-1}}\in]\lambda-\ep,\lambda+\ep[\right)\geq \Prb\left(\frac{\phi_n(\Gamma^1,\Gamma^2,\Omega)}{n^{d-1}}\leq \lambda+\delta\right)-\Prb\left(\frac{\phi_n(\Gamma^1,\Gamma^2,\Omega)}{n^{d-1}}\leq \lambda-\ep \right)
\end{align*}
where $\delta>0$ is chosen small enough such that $\lambda+\delta<\phi_\Omega$ and $\delta\leq \ep$.
Since $J(\lambda+\delta)<J(\lambda-\ep)$, by proposition \ref{prop:propJ} and lemma \ref{lem:estimeanalyse}, it leads to
$$\liminf_{n\rightarrow\infty}\frac{1}{n^{d-1}}\log \Prb\left(\frac{\phi_n(\Gamma^1,\Gamma^2,\Omega)}{n^{d-1}}\in]\lambda-\ep,\lambda+\ep[\right)\geq -J((\lambda+\delta) ^-)\geq -J(\lambda)$$
where we use that $J$ is decreasing on $]\lambda_{min},\phi_{\Omega}[$.

{\bf$\bullet$ Upper bound.} We have to prove that for all closed subset $\cF$ of $\sR^+$
\[\limsup_{n\rightarrow\infty}\frac{1}{n^{d-1}}\log \Prb\left(\frac{\phi_n(\Gamma^1,\Gamma^2,\Omega)}{n^{d-1}}\in\cF\right)\leq-\inf_{\cF}J\,.\]
Let $\cF$ be a closed subset of $\sR^+$. If $\phi_\Omega\in\cF$, then $\inf_{\cF}J=0$ and  the result is obvious.
We suppose now that $\phi_{\Omega}\notin\cF$. We consider $\cF_1=\cF\cap[0,\phi_\Omega]$ and $\cF_2=\cF\cap[\phi_{\Omega},+\infty[$. 
We claim that 
\begin{align}\label{eq:f2inutile}
\limsup_{n\rightarrow\infty}\frac{1}{n^{d-1}}\log\Prb\left(\frac{\phi_n(\Gamma^1,\Gamma^2,\Omega)}{n^{d-1}}\in\cF\right)= \limsup_{n\rightarrow\infty}\frac{1}{n^{d-1}}\log\Prb\left(\frac{\phi_n(\Gamma^1,\Gamma^2,\Omega)}{n^{d-1}}\in\cF_1\right)\,.
\end{align}
Cerf and Théret proved in theorem 1 in \cite{CT2} that the upper large deviations of $\phi_n(\Gamma^1,\Gamma^2,\Omega)$ are of volume order. Hence, if $\cF_2$ is not empty, we have $f_2=\inf\cF_2>\phi_\Omega$ and
$$\limsup_{n\rightarrow\infty}\frac{1}{n^{d-1}}\log\Prb\left(\frac{\phi_n(\Gamma^1,\Gamma^2,\Omega)}{n^{d-1}}\in \cF_2\right)=-\infty\,.$$
This equality trivially holds when $\cF_2$ is empty. By lemma \ref{lem:estimeanalyse}, we deduce equality \eqref{eq:f2inutile}.
Let us assume that $\cF_1$ is not empty.
Let $f_1=\sup \cF_1$. Since $\cF$ is closed, we have $f_1\in\cF_1$ and $f_1<\phi_\Omega$, and using equality \eqref{eq:f2inutile}, we have
\[\limsup_{n\rightarrow\infty}\frac{1}{n^{d-1}}\log\Prb\left(\frac{\phi_n(\Gamma^1,\Gamma^2,\Omega)}{n^{d-1}}\in\cF\right)\leq \limsup_{n\rightarrow\infty}\frac{1}{n^{d-1}}\log\Prb\left(\frac{\phi_n(\Gamma^1,\Gamma^2,\Omega)}{n^{d-1}}\leq f_1\right)\,.\]
Using proposition \ref{prop:propJ}, it yields that
$$\limsup_{n\rightarrow\infty}\frac{1}{n^{d-1}}\log\Prb\left(\frac{\phi_n(\Gamma^1,\Gamma^2,\Omega)}{n^{d-1}}\in\cF\right)\leq \limsup_{n\rightarrow\infty}\frac{1}{n^{d-1}}\log\Prb\left(\frac{\phi_n(\Gamma^1,\Gamma^2,\Omega)}{n^{d-1}}\leq f_1\right)=-J(f_1)=-\inf_{\cF}J$$
since $J$ is decreasing on $]\lambda_{min},\phi_{\Omega}]$.
Let us assume that $\cF_1=\emptyset$.
Using equality \eqref{eq:f2inutile}, we have
$$\limsup_{n\rightarrow\infty}\frac{1}{n^{d-1}}\log\Prb\left(\frac{\phi_n(\Gamma^1,\Gamma^2,\Omega)}{n^{d-1}}\in\cF\right)=-\infty=-\inf_{\cF_1}J=-\inf_{\cF}J\,.$$

{\bf$\bullet$ Property of $\lambda_{min}$.}
We claim that for all $\lambda<\lambda_{min}$, there exists $N\geq 1$ such that
\[\forall n\geq N\qquad\Prb\left(\frac{\phi_n(\Gamma^1,\Gamma^2,\Omega)}{n^{d-1}}\leq \lambda\right)=0\,.\]
If $\lambda_{min}=0$, we have nothing to prove. Let us assume $\lambda_{min}>0$.
We recall that \[\delta_G=\inf\left\{t: \Prb(t(e)\leq t)>0\right\}\,.\]
Note that the function 
$$\cI_0: F\mapsto \int_{\fF}\|n_\bullet(x)\|_1d\cH^{d-1}(x)$$
is lower semi-continuous. This can be deduced from the lower semi-continuity of the surface energy (see section 14.2 in \cite{Cerf:StFlour}).
Therefore, the infimum of $\cI_0$ is achieved on the following compact set
$$\{F\in\cB(\sR^d): \,F\subset \Omega, \,\cP(F,\Omega)\leq 2d\cH^{d-1}(\Gamma^1)\}\,.$$
We denote by $F_0$ a set that achieves the infimum.
Let $\lambda\in [\lambda_{min},\phi_\Omega]$. Let $(E,\nu)\in\fT_\cM$ such that $\nu(\sR^d)=\lambda$ and $\widetilde{I}(E,\nu)=J(\lambda)$. Write $\nu=f\cH^{d-1}|_{\fE}$. We have $f\geq \delta_G \|n_\bullet\|_1$ $\cH^{d-1}$-almost everywhere on $\fE$, if not, by theorem \ref{thm:lldtau}, it contradicts the fact that $\int_{\fE}\cJ_{n_\bullet(x)}(f(x))d\cH^{d-1}(x)<\infty$.
Hence, we have $$\lambda\geq \int_{\fE}\delta_G\|n_\bullet\|_1d\cH^{d-1}=\delta_G\, \cI_0(E)\geq \delta_G\, \cI_0(F_0)\,.$$
It yields that
$$\lambda_{min}\geq \delta_G\, \cI_0(F_0)\,.$$
For any $\ep>0$, we define $f_\ep$ as follows
\begin{align}\label{eq:deffep}
\forall x\in\fF_0 \qquad f_\ep(x)= (1+\ep)\delta_G\|n_{\bullet}(x)\|_1\,.
\end{align}
As long as $G$ is not a dirac mass (the study of large deviations is trivial in that case), we have $$\forall v\in\sS^{d-1}\qquad\nu_G(v)> \delta_G\|v\|_1\,.$$ Thus, we can choose $\ep$ small enough such that 
$$\forall v\in\sS^{d-1} \qquad (1+\ep)\delta_G\|v\|_1\leq \nu_G(v)\,.$$
We have \[\cI(F_0,f_\ep)=\int_{\fF_0}\cJ_{n_\bullet(x)}(f_\ep(x))d\cH^{d-1}(x)\leq \cH^{d-1}(\fF_0)\sup_{v\in\sS^{d-1}}\cJ_v((1+\ep)\delta_G\|v\|_1)<\infty\,\] We claim that $(F_0,f_\ep)$ is minimal. Let $E\subset \Omega$ such that $\cP(E,\Omega)<\infty$. We set
\[\forall x\in \fE\qquad g(x) = \left\{
    \begin{array}{ll}
        f_\ep(x)  & \mbox{if } x\in\fF_0\cap\fE\\
        \nu_G(n_\bullet(x))& \mbox{if }x\in\fE\setminus \fF_0\,.
    \end{array}
\right.
\]
Let us prove that $\capa(E,g)\geq \capa(F_0,f_\ep)$. 
We distinguish two cases. We assume first that $\cP(E,\Omega)\leq 2d \cH^{d-1}(\Gamma^1)$. 
Since $F_0$ achieves the infimum on $F_0$ on the set $\{F\in\cB(\sR^d): \,F\subset \Omega, \,\cP(F,\Omega)\leq 2d\cH^{d-1}(\Gamma^1)\}$, then $\cI_0(F_0)\leq \cI_0(E)$ and
\begin{align}\label{eq:capmin2}
\int_{\fE\setminus \fF_0}\|n_\bullet(x)\|_1d\cH^{d-1}(x)\geq \int_{\fF_0\setminus \fE}\|n_\bullet(x)\|_1d\cH^{d-1}(x)\,.
\end{align}
Thanks to the choice of $\ep$, we have
\begin{align}\label{eq:capmin1}
\int_{\fE\setminus \fF_0}\nu_G(n_\bullet(x))d\cH^{d-1}(x)\geq (1+\ep)\delta_G \int_{\fE\setminus \fF_0}\|n_\bullet(x)\|_1d\cH^{d-1}(x)\,.
\end{align}
Combining inequalities \eqref{eq:capmin2} and \eqref{eq:capmin1}, it yields that
\begin{align*}
\capa(E,g)&\geq \int_{\fE\cap\fF_0}f_\ep(x)d\cH^{d-1}(x)+ \int_{\fE\setminus \fF_0}\nu_G(n_\bullet(x))d\cH^{d-1}(x)\\
&\geq\int_{\fE\cap\fF_0}f_\ep(x)d\cH^{d-1}(x)+ \int_{\fF_0\setminus \fE}(1+\ep)\delta_G\|n_\bullet(x)\|_1d\cH^{d-1}(x)=\capa(F_0,f_\ep)\,.
\end{align*}
Let us assume that $\cP(E,\Omega)> 2d\cH^{d-1}(\Gamma^1)$.
It follows that
\begin{align*}
\capa(E,g)&\geq \int_{\fE}\delta_G\|n_\bullet(x)\|_1d\cH^{d-1}(x)\geq\delta_G\cH^{d-1}(\fE)\geq \delta_G \cP(E,\Omega)\geq 2d\delta_G\cH^{d-1}(\Gamma^1)\,.
\end{align*}
Besides, we have
\begin{align*}
\cI_0(F_0)\leq \cI_0(\emptyset)= \int_{\Gamma^1}\|n_\Omega(x)\|_1d\cH^{d-1}(x)\leq d\cH^{d-1}(\Gamma^1)\,.
\end{align*}
Hence, we have
\[\capa(F_0,f_\ep)=(1+\ep)\delta_G \cI_0(F_0)\leq (1+\ep)\delta_G d\cH^{d-1}(\Gamma^1)\leq \capa(E,g)\,.\]
Finally, $(F_0,f_\ep)$ is minimal and $(F_0,f_\ep)\in\fT$. It follows that
$$\lambda_{min}\leq (1+\ep)\delta_G\, \cI_0(F_0)\,.$$
By letting $\ep$ go to $0$, we obtain that
$$\lambda_{min}=\delta_G\, \cI_0(F_0)\,.$$
Let $\lambda<\lambda_{min}$. Let us assume there exists an increasing sequence $(a_n)_{n\geq 1}$ such that
\[\Prb\left(\frac{\phi_{a_n}(\Gamma^1,\Gamma^2,\Omega)}{a_n^{d-1}}\leq \lambda\right)>0\,.\]
On the event $\{\phi_{a_n}(\Gamma^1,\Gamma^2,\Omega)\leq \lambda a_n^{d-1}\}$ we pick $\cE_{a_n}\in\sC_{a_n}(\Gamma^1,\Gamma ^2,\Omega)$ such that $V(\cE_{a_n})\leq \lambda a_n^{d-1}$. It follows that $\card(\cE_{a_n})\leq \lambda  a_n^{d-1}/\delta_G$. 
Besides, we have 
$$\cI_0(\bR(\cE_{a_n}))a_n^{d-1}\leq \card(\cE_{a_n})\,.$$
It follows that $\bR(\cE_{a_n})$ belongs to the compact set $\{F\subset \Omega:\cI_0(F)\leq \lambda/\delta_G\}$, up to extracting a subsequence again, we can assume that $\lim_{n\rightarrow\infty}\dis(\bR(\cE_{a_n}),E)=0$ for some $E\subset \Omega$.
Since the function $\cI_0$ is lower semi continuous,
it follows that $$\cI_0(E)\leq \liminf_{n\rightarrow \infty}\cI_0(\bR(\cE_{a_n}))\leq   \lambda/\delta_G <\cI_0(F_0)\,.$$
This contradicts the minimality of $F_0$. The result follows.

%
%

\end{proof}

\bibliographystyle{plain}

\begin{thebibliography}{}

\end{thebibliography}


\begin{thebibliography}{10}

\bibitem{Cerf:StFlour}
Rapha{\"e}l Cerf.
\newblock The {W}ulff crystal in {I}sing and percolation models.
\newblock In {\em \'Ecole d'\'Et\'e de Probabilit\'es de Saint Flour}, number
  1878 in Lecture Notes in Mathematics. Springer-Verlag, 2006.

\bibitem{CT3}
Rapha\"el Cerf and Marie Th\'eret.
\newblock Lower large deviations for the maximal flow through a domain of
  {$\Bbb R^d$} in first passage percolation.
\newblock {\em Probab. Theory Related Fields}, 150(3-4):635--661, 2011.

\bibitem{CT2}
Rapha\"el Cerf and Marie Th\'eret.
\newblock Upper large deviations for the maximal flow through a domain of
  {$\Bbb R^d$} in first passage percolation.
\newblock {\em Ann. Appl. Probab.}, 21(6):2075--2108, 2011.

\bibitem{CT1}
Rapha\"el Cerf and Marie Th\'eret.
\newblock Maximal stream and minimal cutset for first passage percolation
  through a domain of {$\Bbb{R}^d$}.
\newblock {\em Ann. Probab.}, 42(3):1054--1120, 2014.

\bibitem{dembin2020large}
Barbara Dembin and Marie Théret.
\newblock Large deviation principle for the streams and the maximal flow in
  first passage percolation.
\newblock {\em Arxiv preprint arXiv:2010.05526}, 2020.

\bibitem{FAL}
K.~J. Falconer.
\newblock {\em The geometry of fractal sets}, volume~85 of {\em Cambridge
  Tracts in Mathematics}.
\newblock Cambridge University Press, Cambridge, 1986.

\bibitem{FED}
Herbert Federer.
\newblock {\em Geometric measure theory}.
\newblock Die Grundlehren der mathematischen Wissenschaften, Band 153.
  Springer-Verlag New York Inc., New York, 1969.

\bibitem{maggi_2012}
Francesco Maggi.
\newblock {\em Sets of Finite Perimeter and Geometric Variational Problems: An
  Introduction to Geometric Measure Theory}.
\newblock Cambridge Studies in Advanced Mathematics. Cambridge University
  Press, 2012.

\bibitem{rigot}
Severine Rigot.
\newblock Differentiation of measures in metric spaces.
\newblock {\em to be published in "CIME-CIRM Course on New Trends on Analysis
  and Geometry in Metric Spaces"}, 2018.

\bibitem{Rossignol2010}
Rapha\"el {Rossignol} and Marie {Th\'eret}.
\newblock Lower large deviations and laws of large numbers for maximal flows
  through a box in first passage percolation.
\newblock {\em Annales de l'I.H.P. Probabilit{\'e}s et statistiques},
  46(4):1093--1131, 2010.

\bibitem{RossignolTheretd2lower}
Rapha{\"e}l Rossignol and Marie Th{\'e}ret.
\newblock Lower large deviations for the maximal flow through tilted cylinders
  in two-dimensional first passage percolation.
\newblock {\em ESAIM Probab. Stat.}, 17:70--104, 2013.

\bibitem{RossignolTheretcont}
Rapha\"{e}l Rossignol and Marie Th\'{e}ret.
\newblock Existence and continuity of the flow constant in first passage
  percolation.
\newblock {\em Electron. J. Probab.}, 23:Paper No. 99, 42, 2018.

\bibitem{TheretUppertau14}
Marie Th{\'e}ret.
\newblock Upper large deviations for maximal flows through a tilted cylinder.
\newblock {\em ESAIM Probab. Stat.}, 18:117--129, 2014.

\bibitem{Zhang}
Yu~Zhang.
\newblock Critical behavior for maximal flows on the cubic lattice.
\newblock {\em Journal of Statistical Physics}, 98(3-4):799--811, 2000.

\bibitem{Zhang2017}
Yu~Zhang.
\newblock Limit theorems for maximum flows on a lattice.
\newblock {\em Probability Theory and Related Fields}, May 2017.

\end{thebibliography}

\end{document}